\documentclass[12pt, a4paper, twoside, openright]{book} 

\usepackage{amsmath}
\usepackage{amssymb}
\usepackage{amsthm}
\usepackage[british]{babel}
\usepackage{booktabs}
\usepackage{cjhebrew}
\usepackage{enumitem}
\usepackage{fancyhdr}
\usepackage[inner=40mm,outer=20mm]{geometry}
\usepackage{graphicx}
\usepackage{imakeidx}
\makeindex[intoc]
\usepackage{mathtools} 
\usepackage{microtype}
\usepackage{multirow}
\usepackage{pgfplots}
\usepackage{setspace}
\usepackage{stmaryrd}
\usepackage[labelformat=simple]{subcaption}
\usepackage{tabularx}
\usepackage{tikz}
\usepackage{titlesec}
\usepackage{xifthen}
\usepackage{xparse}
\usepackage{xstring}
\usepackage[colorlinks=true,linkcolor=blue,citecolor=blue,pagebackref]{hyperref}
%
\usetikzlibrary{arrows}
\usetikzlibrary{decorations.pathmorphing}
\usetikzlibrary{math}
\usetikzlibrary{patterns}
\usetikzlibrary{plotmarks}

\tikzset{wavy/.style={decorate, decoration={snake}}}

\pgfplotsset{compat=newest}

\theoremstyle{plain}
\newtheorem{theorem}{Theorem}
\newtheorem{lemma}[theorem]{Lemma}
\newtheorem{corollary}[theorem]{Corollary}
\newtheorem{proposition}[theorem]{Proposition}
\newtheorem{fact}[theorem]{Fact}
\newtheorem{observation}[theorem]{Observation}

\theoremstyle{definition}

\newtheorem{conjecture}[theorem]{Conjecture}

\newtheorem{problem}[theorem]{Problem}
\newtheorem{question}[theorem]{Question}

\theoremstyle{remark}
\newtheorem{remark}[theorem]{Remark}

\newenvironment{subproof}[1][Proof]%
{\begin{proof}[#1]}{\end{proof}}

\renewcommand{\interleave}{\obslash} 

\renewcommand*{\backref}[1]{}
\renewcommand*{\backrefalt}[4]
{
    \ifcase #1 Not cited.
    \or        Cited on page~#2.
    \else      Cited on pages~#2.
    \fi
}

\newcommand{\extindex}[2][]{%
    \ifthenelse{\equal{#1}{}}%
    {\index{#2}}%
    {\index{#1!#2}\index{#2|see {#1}}}%
}
\newcommand{\extindof}[3]{\index{#1!#2}\index{#2 #3|see {#1}}}

\DeclareCaptionLabelFormat{cont}{#1~#2\alph{ContinuedFloat}}
\captionsetup[ContinuedFloat]{labelformat=cont}

\newcommand{\ball}[2]{#1 \balloon #2}
\newcommand{\ballgen}[3]{#2 \balloon_{#1} #3}
\newcommand{\ballij}[2]{#1 \balloon_{i,j} #2}
\newcommand{\balloon}{\circledcirc}
\newcommand{\ballwedgek}[2]{\ballwedgex{k}{#1}{#2}}
\newcommand{\ballwedgex}[3]{#2\,\triangle_{#1}\,#3}
\newcommand{\bbN}{\mathbb{N}} 
\newcommand{\bbR}{\mathbb{R}} 

\newcommand{\bphi}{{\phi^*}}
\newcommand{\btau}{\tau^*}
\newcommand{\chain}[1]{\mathcal{#1}}

\newcommand{\chainb}{\chain{B}}
\newcommand{\chainc}{\chain{C}}
\newcommand{\chaing}{\chain{G}}
\newcommand{\chainm}{\chain{M}}
\newcommand{\chainr}{\chain{R}}

\newcommand{\contrib}[2]{\mathfrak{C}_{#1,#2}}
\newcommand{\cO}{O} 

\newcommand{\cprime}{c^\prime}
\newcommand{\cS}{\mathcal{S}} 
\newcommand{\dtwo}{21}
\newcommand{\dwmtitleline}[2][12]{{\Huge \textbf{\MakeUppercase{#2}}}\\[#1pt]}
\newcommand{\e}{\mathrm{e}}
\newcommand{\E}{\mathrm{E}} 
\newcommand{\emptyperm}{\epsilon}
\newcommand{\ex}[1]{\overline{#1}}
\newcommand{\familyil}[1]{\mathcal{F}_{\interleave}(#1)}
\newcommand{\familysum}[1]{\mathcal{F}_{\oplus}(#1)}

\newcommand{\ildtwokb}{\left(\ildtwok\right)}
\newcommand{\ildtwok}{\nils{k}{\dtwo}}
\newcommand{\ildtwokpb}{\left(\ildtwokp\right)}
\newcommand{\ildtwokp}{\nils{{k+1}}{\dtwo}}
\newcommand{\ilone}{\interleave 1}
\newcommand{\inflateall}[2]{#1[#2]}
\newcommand{\inflatesome}[3]{#1_{#2}[#3]}
\newcommand{\minusone}{\ominus 1}
\newcommand{\mobfn}[2]{\mu[#1,#2]}
\newcommand{\mob}{M\"{o}bius }
\newcommand{\mobmaxn}{\mobmaxx{n}}
\newcommand{\mobmaxx}[1]{\absmax(#1)}
\newcommand{\mobp}[1]{\mu[#1]}
\newcommand{\mobxfn}[3]{\mu_#1[#2,#3]}
\newcommand{\NE}{\mathrm{NE}} 
\newcommand{\nils}[2]{\interleave^{#1}#2}
\newcommand{\nsums}[2]{\oplus^{#1}#2}
\newcommand{\oneil}{1 \interleave}
\newcommand{\oneminus}{1 \ominus} 
\newcommand{\oneplus}{1 \oplus} 
\newcommand{\oneplusone}{1 \oplus 1}
\newcommand{\order}[1]{\ensuremath{\left\lvert#1\right\rvert}}

\newcommand{\plusone}{\oplus 1}
\newcommand{\redset}{\redx{\pi}}
\newcommand{\redx}[1]{\mathsf{R_{#1}}}
\newcommand{\sE}{\mathcal{E}} 
\newcommand{\shapes}{\mathcal{S}}
\newcommand{\simpcomp}{\Delta}
\newcommand{\skewinterleave}{\oslash}
\newcommand{\sNEle}{\sNE_\lambda(\mathrm{even},\pi)}
\newcommand{\sNElo}{\sNE_\lambda(\mathrm{odd},\pi)}
\newcommand{\sNEl}{\sNE_\lambda(*,\pi)}
\newcommand{\sNE}{\mathcal{NE}} 

\newcommand{\sumrab}{\left(\sumra\right)}
\newcommand{\sumra}{\nsums{r}{\alpha}}
\newcommand{\weightgen}[3]{W(#1,#2,#3)}
\newcommand{\weightosc}[3]{W_{io}(#1,#2,#3)}

\newcommand{\zpmfp}{39.95\%}
\newcommand{\zpmfr}{0.3995}

\DeclareMathOperator{\Av}{Av}

\DeclareMathOperator{\Dim}{Dim} 

\DeclareMathOperator{\Img}{Img} 
\DeclareMathOperator{\maxk}{MaxK}
\DeclareMathOperator{\mink}{MinK}
\DeclareMathOperator{\absmax}{AbsMax_{\mu}}
\DeclareMathOperator{\rawmink}{RawMinK}
\DeclareMathOperator{\src}{src}
\DeclareMathOperator{\truemobmax}{Max_{\mu}}
\DeclareMathOperator{\truemobmin}{Min_{\mu}}

%
%
\makeatletter
\def\mymod#1{\allowbreak\mkern10mu({\operator@font mod}\,\,#1)}
\makeatother

%
%
\makeatletter
\def\cleardoublepage{\clearpage\if@twoside \ifodd\c@page\else
    \hbox{}
    \vspace*{\fill}
    \vspace{\fill}
    \thispagestyle{empty}
    \newpage
    \if@twocolumn\hbox{}\newpage\fi\fi\fi}
\makeatother

\newcommand{\tinydot}[1]{
    \node at #1 {\tiny $\bullet$};
}
\newcommand{\evensmallerdot}[1]{
    \node at #1 {\scriptsize $\bullet$};
}
\newcommand{\smallerdot}[1]{
    \node at #1 {\footnotesize $\bullet$};
}
\newcommand{\smalldot}[1]{
    \node at #1 {\small $\bullet$};
}

\newcommand{\normaldot}[1]{
    \node at #1 {\normalsize $\bullet$};
}
\newcommand{\hiddendot}[1]{
    \node at #1 {\normalsize \textcolor{white}{$\bullet$}};
}
\newcommand{\opendot}[1]{
    \node at #1 {\normalsize \textcolor{white}{$\bullet$}};
    \node at #1 {\normalsize $\circ$};
}
\newcommand{\embeddot}[2]{
    \node at #1 {\normalsize \textcolor{#2}{$\bullet$}};
    \node at #1 {\normalsize \textcolor{#2}{$\circ$}};
}
\newcommand{\embedcir}[2]{
    \node at #1 {\normalsize \textcolor{white}{$\bullet$}};
    \node at #1 {\normalsize \textcolor{#2}{$\circ$}};
}
\newcommand{\embedast}[2]
{
    \node at #1 {\normalsize \textcolor{white}{$\bullet$}};
    \node at #1 {\normalsize \textcolor{#2}{$\ast$}};
}
\newcommand{\plotgrid}[2]{
    \foreach \i in {0,1,...,#2}{
        \draw [color=darkgray] (0.5, {\i+0.5})--({#1+0.5}, {\i+0.5});
    };
    \foreach \i in {0,1,...,#1}{
        \draw [color=darkgray] ({\i+0.5}, 0.5)--({\i+0.5}, {#2+0.5});
    };
}
\newcommand{\plotopengrid}[2]{
	\foreach \i in {0,1,...,#2}{
		\draw [color=darkgray] (0.5, {\i+0.5})--({#1+1.5}, {\i+0.5});
	};
	\foreach \i in {0,1,...,#1}{
		\draw [color=darkgray] ({\i+0.5}, 0.5)--({\i+0.5}, {#2+1.5});
	};
}
\newcommand{\plotperm}[1]{
    \foreach \j [count=\i] in {#1} {
        \normaldot{(\i,\j)}{};
    };
}
\newcommand{\darkhline}[2]{
    \draw [color=red, thick]
    (0.5,{#1+0.5}) -- ({#2+0.5},{#1+0.5});
}
\newcommand{\darkvline}[2]{
    \draw [color=red, thick]
    ({#1+0.5},0.5) -- ({#1+0.5},{#2+0.5});
}
\newcommand{\plotpermgrid}[1]{
    \foreach \i [count=\permsize] in {#1} {\global\let\n=\permsize};
    \foreach \i in {0,1,2,...,\n}{
        \draw [color=darkgray] ({\i+0.5}, 0.5)--({\i+0.5}, {\n+0.5});
        \draw [color=darkgray] (0.5, {\i+0.5})--({\n+0.5}, {\i+0.5});
    }
    \plotperm{#1};
}

%
%
%
\newcommand{\point}[2]{
    \filldraw (#1,#2) circle (9pt);%
}
%
%
\newcommand{\permnode}[3]{
    \StrLen{#3}[\n];
    \foreach \i in {0,...,\n}{
        \draw [color=darkgray] ({\i+0.5+#1-\n/2}, {0.5+#2})--({\i+0.5+#1-\n/2}, {\n+0.5+#2});
        \draw [color=darkgray] ({0.5+#1-\n/2}, {\i+0.5+#2})--({\n+0.5+#1-\n/2}, {\i+0.5+#2});
    };
    \foreach \i in {1,...,\n}{%
        \StrChar{#3}{\i}[\j];
        \point{\i+#1-\n/2}{\j+#2};%
    };
    \node (l#3) [align=center, below] at ({\n/2+0.5+#1-\n/2},{#2+0.3}) {\scriptsize{#3}}; %
    \node (u#3) [align=center, above] at ({\n/2+0.5+#1-\n/2},{\n+#2-0.2}) {}; %
}
%
%
\newcommand{\link}[2]{
    \draw (l#1) -- (u#2);	
}
%
%

%
%

%
%
\newcommand{\dnode}[3]{\node (p#1) at (#2,#3)  {\scriptsize{$\bullet$}}}
%
%
\newcommand{\tnode}[4]{%
    \node (p#1) at (#2,#3)%
    {#4};%
}    
%
%
\newcommand{\pnode}[3]{%
    \node (p#1) at (#2,#3)%
    {#1};%
}
%
%
\newcommand{\ddline}[2]{
    \foreach \i in {#2} {
        \draw [->] (p#1)  -- (p\i);
    };
}
%
%
\newcommand{\dline}[2]{
    \foreach \i in {#2} {
        \draw (p#1)  -- (p\i);
    };
}
%
%

%
%

%
%
\newcommand{\sqat}[3]{
    \foreach \i in {0,1,...,#1}{
        \draw [color=darkgray] ({\i+0.5+#2}, 0.2+#3)--({\i+0.5+#2}, {#1+0.8+#3});
    };
    \foreach \i in {0,1,...,#1}{
        \draw [color=darkgray] ({0.3+#2}, {\i+0.5+#3})--({#1+0.8+#2}, {\i+0.5+#3});
    };
}

%
%

\newcommand{\spoint}[2]{
    \filldraw (#1,#2) circle (3pt);
}

\newcommand{\cell}[3]{
    \node at (#1,#2) {#3};
}
\newcommand{\scell}[3]{
    \cell{#1}{#2}{\small{#3}}
}

\setlength{\headheight}{25pt}

\pagestyle{fancy}
\renewcommand{\chaptermark}[1]{\markboth{\MakeUppercase{#1}}{}}
\fancyhead[LE]{\footnotesize \thepage}
\fancyhead[RE]{\footnotesize \leftmark}
\fancyhead[LO]{\footnotesize \rightmark}
\fancyhead[RO]{\footnotesize \thepage}
\fancyfoot[C]{}

\allowdisplaybreaks
\setlength{\emergencystretch}{3pt}
\setlength{\parindent}{0pt}
\setlength{\parskip}{9pt plus 3pt minus 1pt}

\microtypecontext{spacing=nonfrench}

\onehalfspacing 

\begin{document}
    
    \frontmatter
    
        \begin{center}
        \thispagestyle{empty}   
        \hspace{0pt}
        \vfill
        \dwmtitleline{On the \mob}
        \dwmtitleline{function of}
        \dwmtitleline{permutations}
        \dwmtitleline{under}
        \dwmtitleline{the pattern}
        \dwmtitleline[36]{containment order}
        \textbf{\MakeUppercase{David William Marchant}} \\[48pt]
        \includegraphics[width=0.4\textwidth]{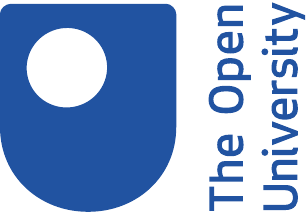}
        \\[36pt] 
        A thesis submitted to The Open University \\
        for the degree of Doctor of Philosophy \\
        in Mathematics
        \\[36pt] 
        April 2020
        \vfill
    \end{center}

    \chapter*{Abstract}
\addcontentsline{toc}{chapter}{Abstract}

We study several aspects of the \mob function,
$\mobfn{\sigma}{\pi}$, 
on the poset of permutations under the pattern containment order.

First, we consider cases where 
the lower bound of the poset is indecomposable.
We show that $\mobfn{\sigma}{\pi}$
can be computed by considering just the
indecomposable permutations contained
in the upper bound.
We apply this to the case where the
upper bound is an increasing oscillation,
and give a method for computing
the value of the \mob function 
that only involves evaluating
simple inequalities.

We then consider conditions on an interval which guarantee that
the value of the \mob function is zero.
In particular, we show that if a permutation $\pi$ contains 
two intervals of length 2, 
which are not order-isomorphic to one another,
then 
$\mobfn{1}{\pi} = 0$.
This allows us to prove that the proportion of 
permutations of length $n$ with principal \mob 
function equal to zero is asymptotically 
bounded below by $(1-1/e)^2\ge\zpmfr$. 
This is the first 
result determining the value of $\mobfn{1}{\pi}$ 
for an asymptotically positive proportion of 
permutations~$\pi$. 

Following this, we 
use ``2413-balloon'' permutations to
show that the growth of the 
principal \mob 
function on the permutation poset
is exponential.  
This improves on previous work, which 
has shown that the growth 
is at least polynomial.

We then generalise 2413-balloon permutations,
and find a recursion for the value
of the principal \mob function of these
generalisations.

Finally, we look back at the results
found, and discuss ways to relate 
the results from each chapter.
We then consider further research avenues.

    \chapter*{Dedication}
\addcontentsline{toc}{chapter}{Dedication}

This thesis is dedicated to Jo.
Five years ago she agreed that I could stop work
in order to study for a PhD --
which has been the hardest I have ever worked.
There is no way for me to adequately express my gratitude for
her support,
her tolerance for my mathematical adventures,
or the way that she makes me complete.
\begin{center}
    \cjRL{\Large 'a:niy l:dwodiy w:dwodiy liy} \\[2pt]   
    \cjRL{\normalsize +sir ha+s*iyriym w;g}
\end{center} 

    \chapter*{Acknowledgements}
\addcontentsline{toc}{chapter}{Acknowledgements}

I would like to thank my supervisor, Robert Brignall, 
for introducing me to the \mob function
on the permutation pattern poset, and for
offering me the chance to study under his supervision.
Robert has always been patient and forbearing in our
interactions, and has allowed me the freedom
to find my own mathematical ``voice''.

As a part-time student, living some distance 
from The Open University campus, my opportunities 
to meet with other PhD students have been
somewhat limited.  Where these opportunities
have arisen, I have been warmly welcomed.
I would especially like to thank 
Grahame Erskine,
Jakub Slia\v{c}an,
James Tuite,
Margaret Stanier,
Olivia Jeans,
and
Rob Lewis
for their support and discussions.

Part-time students are not entitled to any funding support
from the School of Mathematics and Statistics at The Open University.
I am therefore most grateful to the School for
providing funding for conferences and visits over the last
four years.  I am also grateful for the support received
from the National Science Foundation for 
contributions towards travel and accommodation costs 
for attending conferences, 
and for the support from Charles University in Prague
for my visit in 2019.

The Permutation Patterns community is small and vibrant.
I was made to feel welcome by everyone I encountered.
I would particularly like to thank
Einar Steingr\'imsson,
V{\'i}t Jel{\'i}nek,
and
Jan Kyn{\v c}l 
for their support and encouragement.
I would also like to thank the 
anonymous referees of the papers 
which underpin this thesis
for all their work.

    \chapter*{Declarations}
\addcontentsline{toc}{chapter}{Declarations}

Some chapters of this thesis are based on
work that has been published.
The relevant chapters are as follows:
\begin{enumerate}
    \item Chapter~\ref{chapter_incosc_paper}
    is based on published joint work with
    Robert Brignall.
    The paper~\cite{Brignall2017a}
    was published in 
    \emph{Discrete Mathematics}.
    \item Chapter~\ref{chapter_oppadj_paper}
    is based on published joint work with
    Robert Brignall,
    V{\'i}t Jel{\'i}nek
    and
    Jan Kyn{\v c}l.
    The paper~\cite{Brignall2020}
    was published in
    \emph{Mathematika}.
    I thank the London Mathematical Society
    for granting permission to 
    include edited extracts from the published
    article in this thesis.
    \item Chapter~\ref{chapter_2413_balloon_paper}
    is based on published sole work by the author.
    The paper~\cite{Marchant2020}
    was published in
    \emph{The Electronic Journal of Combinatorics}.
\end{enumerate}    
None of the results appear in any other thesis,
and all co-authors 
have agreed with the inclusion of joint work in this thesis.
Where work has been published,
the publishers have given permission for
edited extracts of the published article
to be included in this thesis.

This thesis is approximately 
49,000
words.

    \microtypesetup{protrusion=false} 
    \tableofcontents 
    \microtypesetup{protrusion=true}  
    
    \mainmatter 
    
    \chapter{Overview}
\label{chapter_overview}

This thesis is primarily concerned with the 
\mob function on the poset of
permutations
ordered by classic pattern containment.

This thesis consists of eight chapters.

\section{Introductory material}

This chapter (Chapter~\ref{chapter_overview}), 
is an overview of the thesis.
Following this overview, 
we have Chapter~\ref{chapter_common_definitions},
which defines the terminology and notation for the
subject area as a whole.  
Subsequent chapters will also include 
definitions of  
terminology and notation that is only used in those chapters.
This is then followed, 
in Chapter~\ref{chapter_background_and_history},
with a brief overview of the history of
the subject area, 
and a description of the motivation for the work
described in this thesis.  

\section{Chapters based on published material}

Chapters~\ref{chapter_incosc_paper},
\ref{chapter_oppadj_paper}, and
\ref{chapter_2413_balloon_paper}
are based on material that has been published
in peer-reviewed journals.  
Chapter~\ref{chapter_balloon_permutations_preprint}
is based on material currently
being prepared for publication.

These chapters start with a section (``Preamble'') that introduces the subject matter.
This is based on the abstract of
the published paper, but may include additional
material 
to help place the subject into the context
of this thesis.
This is then followed by sections  
that are based on the published material.
We then conclude each chapter with a section (``Chapter summary'')
that summarises the impact of the results,
discusses how they relate to this thesis,
and considers possible avenues for further research
following on from the 
results described in the chapter. 

\section{Conclusion}

Chapter~\ref{chapter_conclusion}
looks back at the results from
chapters~\ref{chapter_incosc_paper},
\ref{chapter_oppadj_paper}, 
\ref{chapter_2413_balloon_paper},
and~\ref{chapter_balloon_permutations_preprint}.
Here we summarise the results that we presented in the 
preceding four chapters,
and discuss whether it is possible
to find a common theme (beyond the obvious 
``related to the \mob function'')
in the work presented.
We then consider possible avenues for 
future research.

\section{Details of chapters based on published material}

We now provide a more detailed 
description of the chapters based
on published material,
or on material being prepared for publication.

In the description 
that follows,
we may use terminology
that is in common use in the field,
but which will not be formally
defined until Chapter~\ref{chapter_common_definitions}.

\subsection{The \mob function of permutations with an indecomposable lower bound}

Chapter~\ref{chapter_incosc_paper}
is based on a published paper,
``The \mob function of permutations 
with an indecomposable lower bound''~\cite{Brignall2017a},
which is joint work with 
Robert Brignall.
In this paper we show that,
given some interval $[\sigma, \pi]$
in the permutation poset,
if $\sigma$ is sum (resp. skew) indecomposable, 
then the value of the \mob function
$\mobfn{\sigma}{\pi}$ depends solely
on the sum (resp. skew) indecomposable
permutations contained in the upper bound $\pi$.

The basic methodology is to 
first use existing results to show that 
certain permutations that are contained
in the interval do not contribute
to the value of the \mob function.
We then show that the permutations that remain
can be partitioned into families, 
defined by a single sum (resp. skew) indecomposable 
permutation $\alpha$, and that the net contribution
of a family will be in
$\{
\pm \mobfn{\sigma}{\alpha},
0
\}$.
We derive a $\{ \pm 1, 0\}$ weighting function 
$\weightgen{\sigma}{\alpha}{\pi}$,
and using this, we then show that
$\mobfn{\sigma}{\pi}$ can be calculated 
by summing the value of 
$\mobfn{\sigma}{\alpha} \weightgen{\sigma}{\alpha}{\pi}$, 
over all permutations $\alpha$
that are sum (resp. skew) indecomposable
and contained in the interval.

We then set $\pi$ to be 
an increasing oscillation.
This allows us to define a revised weighting function
specific to these intervals
which can be computed 
by using simple inequalities.
This then leads to a fast 
algorithm for calculating
$\mobfn{\sigma}{\pi}$, 
where $\pi$ is an
increasing oscillation.

We then have some conjectures relating
to the long-term behaviour of 
the absolute value of $\mobfn{1}{\pi}$,
where $\pi$ is an increasing oscillation.

The chapter concludes by summarising the 
impact of the results,
particularly from a computational perspective.

\subsection{Zeros of the \mob function of permutations}

Chapter~\ref{chapter_oppadj_paper}
is based on a published paper
``Zeros of the \mob function of permutations''~\cite{Brignall2020},
which is joint work with
Robert Brignall,
V{\'i}t Jel{\'i}nek
and
Jan Kyn{\v c}l.

In this paper we show that if a permutation $\pi$
contains two opposing adjacencies,
then $\mobfn{1}{\pi} = 0$.  
We then use this result to show that
the proportion of permutations
of length $n$ with 
principal \mob function equal to zero
is, asymptotically,
bounded below by \zpmfr.

We start by showing that if a poset $P$
has a particular structure,
then $\mobp{P} = 0$.
We then show that if a
permutation $\pi$
contains two opposing adjacencies,
then the poset interval $[1, \pi]$
has the required structure,
and it follows that 
$\mobfn{1}{\pi} = 0$.
We then provide a second proof of the same result
based on normal embeddings.
The techniques used in both proofs are
used in later, more complicated, settings.

We then show that if $\sigma$ is any permutation,
and 
$\phi$ meets certain requirements,
then any permutation $\pi$ that contains
an interval order-isomorphic to $\phi$
has $\mobfn{\sigma}{\pi} = 0$.  
We use this result to show 
if $\sigma$ meets a particular condition,
and $\pi$ contains an interval copy in the form
$\alpha \oplus 1 \oplus \beta$,
then $\mobfn{\sigma}{\pi} = 0$.
We then show that if
$\sigma = 1$, then $\sigma$ meets the condition
required,
and thus we prove that
if a permutation contains
an interval copy in the form
$\alpha \oplus 1 \oplus \beta$,
then $\mobfn{1}{\pi} = 0$.

In the next part of this chapter, we show that,
asymptotically, the proportion
of permutations of length $n$ that
have a principal \mob function value of zero, $d_n$,
is bounded below by 
$\left( 1 - \dfrac{1}{\e} \right)^2 \approxeq \zpmfr$.

We then use the techniques already introduced 
to show that there are pairs of permutations,
$\alpha, \beta$,
such that if $\pi$ contains interval copies of $\alpha$ 
and $\beta$,
then $\mobfn{1}{\pi} = 0$.
We further show that there are individual permutations
$\alpha$ with the property
that 
if $\pi$ contains an interval copies of $\alpha$,
then $\mobfn{1}{\pi} = 0$.

We then discuss 
further ways in which we could find
a permutation $\pi$ where
the presence of a specific interval
or intervals in $\pi$ 
would guarantee $\mobfn{1}{\pi} = 0$.
We discuss $d_n$, including 
a conjecture on an upper bound for $d_n$.

The chapter concludes 
by summarising the 
impact of the results.
We show that there is some numerical evidence
that a large proportion of permutations
with multiple non-opposing adjacencies
have a principal \mob function value of zero,
and show that if we could 
prove this for a positive proportion 
of these permutations, then
we could improve the lower bound
of $d_n$.

We discuss extending the opposing adjacency
result to more general poset intervals
and show that this is not possible in all cases.
We then present two minor results.
The first shows that certain intervals 
$[\sigma, \pi]$ have $\mobfn{\sigma}{\pi} = 0$.
The second result shows that if 
$\sigma$ is adjacency-free, 
and $\pi$ is an inflation of $\sigma$,
and $\mobfn{\sigma}{\pi} = 0$,
then we have some information about 
the permutations used in the inflation.

\subsection{2413-balloons and the growth of the \mob function}

Chapter~\ref{chapter_2413_balloon_paper}
is based on a published paper
``2413-balloons and the growth of the \mob function''~\cite{Marchant2020},
which is sole work by the author.
In this paper we show that
the growth of the principal 
\mob function on the permutation poset is exponential.

We start by defining the ``2413-balloon'' of some permutation $\beta.$
The resulting permutation has extremal points
that are order-isomorphic to 2413, 
and the non-extremal points are an interval copy of $\beta$.
A double 2413-balloon is the result of 
ballooning a permutation that is already a 2413-balloon.

We take a poset where the upper bound is a double 2413-balloon,
and the lower bound is 1, and we show how we can partition
the chains in the poset into three sets.
We then show that two of these subsets
contribute zero to the value of the
\mob function.

The remaining set of chains has the property that
the second-highest element of every chain
is in a particular set of permutations.
We show that the Hall sum over
the remaining chains is equivalent
to summing the \mob function over the set of permutations.
The permutations in this set have the property
that they can all be formed 
as the sum of $\beta$ and either one, two or three 
copies of the permutation $1$.
For example, one of these permutations is 
$1 \oplus \beta$, and another is
$1 \ominus ((\beta \ominus 1) \oplus 1)$.
If we let $\rho$ be one of the
permutations in the set, then
this means that we can use a well-known result
to show that
$\mobfn{1}{\rho} = \pm \mobfn{1}{\beta}$.

We use this to show that if
$\beta$ is a 2413-balloon,
and $\pi$ is the 2413-balloon of $\beta$, then
$\mobfn{1}{\pi} = 2 \mobfn{1}{\beta}$,
and this 
in turn allows us to show that 
the growth of the principal 
\mob function on the permutation poset is exponential.

We then consider 2413-balloons where the permutation
being ballooned is not itself a 2413-balloon.
Using a similar argument to that used for
double 2413-balloons, 
we derive an expression for 
$\mobfn{1}{\pi}$, where $\pi$ is the 2413-balloon
of some permutation $\beta$, and $\beta$ is not a 2413-balloon.
For all but trivial cases we prove
that $\mobfn{1}{\pi} = \mobfn{1}{\beta}$.

We discuss generalising the ``balloon''
operation.
We provide two conjectures which,
up to symmetry, cover all generalised 2413-balloons.

The chapter concludes 
with a brief discussion 
of a set of permutations
where it is believed that 
the growth of the principal 
\mob function is also exponential, 
but the ``growth rate'' is faster than that found 
for double 2413-balloons.
We also discuss generalised balloons.

\subsection{The principal \mob function of balloon permutations}

Chapter~\ref{chapter_balloon_permutations_preprint}
is based on an unpublished paper which is
being prepared for submission in parallel with this thesis,
and which is sole work by the author.
In this paper we generalise the
2413-balloon permutations used in Chapter~\ref{chapter_2413_balloon_paper},
and derive an expression for the value
of the principal \mob function of these permutations.

We start by defining a method of
constructing a permutation from two smaller permutations
$\alpha$ and $\beta$.  This construction method 
requires that the constructed permutation
contains $\beta$ as an interval copy,
and that the remaining points are
order-isomorphic to $\alpha$.
We call such a permutation a ``balloon'' permutation,
which we write as $\ballij{\alpha}{\beta}$.
We describe several sub-types of balloon permutation,
including one that we call a ``wedge'' permutation.

We show that the chains in the poset interval $[1, \ballij{\alpha}{\beta}]$
can be partitioned into three sets.  
We further show that one of these subsets contributes zero
to the value of the \mob function.
We then prove that
the contribution of a second subset
can be written as the sum of 
the principal \mob function of a set of permutations,
all of which contain $\beta$ as an interval copy.
This leads to an expression for  
$\mobfn{1}{\ballij{\alpha}{\beta}}$.
This expression includes a ``correction factor'',
expressed as a sum over a particular set of (hard to handle) chains.

We then consider wedge permutations,
and we show that the correction factor is always zero,
thus leading to a simplified expression for the 
principal \mob function of a wedge permutation.

We further show that the principal \mob function
of a wedge permutation is always a multiple of 
the principal \mob function of $\beta$.

We discuss some of the problems that need to be overcome
in order to extend our result to any interval
where the upper bound is a balloon permutation.

    
    \newcommand{\sectionbreak}{\clearpage} 

    \chapter{Common definitions}
\label{chapter_common_definitions}

%
%
A \emph{permutation}\extindex{permutation}
of length $n$ is 
an ordering of the natural numbers $1, \ldots, n$.
For short (length less than 10) permutations,
we write the permutation without delimiters,
so $2413$ represents a permutation of length 4,
where the first element has value $2$.
For longer permutations, we use commas 
to delimit values, so,
for example, we would write
$1,7,4,3,10,9,2,5,8,6$.
We may occasionally use commas as delimiters
in short permutations,
where this will aid the reader.
We use $\pi_i$ to refer to the 
$i$-th element of the permutation $\pi$, 
so, for example, if $\pi = 2413$, then $\pi_1 = 2$.
We let $\emptyperm$ denote the unique permutation of length~$0$.
We write the length of a permutation $\pi$ as $\order{\pi}$.

%
%
If $L$ is a list of distinct integers,
then we can treat $L$ as a permutation
by replacing the $i$-th smallest entry with $i$.
As an example, if $L = 4,9,2,6$, then 
$L$ represents the permutation 2413.

%
%
A permutation $\pi$ can be represented graphically
by plotting the points $(i, \pi_i)$,
with $1 \leq i \leq \order{\pi}$, as 
shown in Figure~\ref{figure_example_permutation_plot}.
Throughout we treat a permutation and its plot
interchangeably.
\begin{figure}
    \centering
    \begin{tikzpicture}[scale=0.3]
        \plotpermgrid{1,7,4,3,10,9,2,5,8,6}
        \opendot{(3,4)}
        \opendot{(6,9)}
        \opendot{(7,2)}
        \opendot{(10,6)}
    \end{tikzpicture}
    \caption{The permutation $1,7,4,3,10,9,2,5,8,6$,
        highlighting one of the copies of $2413$ that it contains.}
    \label{figure_example_permutation_plot}
\end{figure}
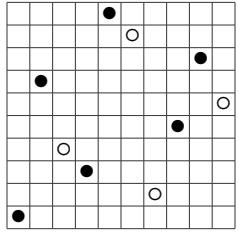

%
%
The set of all permutations of length $n$ is written $\cS_n$.
A sequence of numbers $a_1,a_2,\dots,a_n$ is 
\emph{order-isomorphic}\extindex{order-isomorphic}
to a sequence $b_1,b_2,\allowbreak\dots,b_n$ 
if for every $i,j \in [1,n]$ we have $a_i<a_j \Leftrightarrow b_i<b_j$. 
A permutation $\pi \in\cS_n$ 
\emph{contains}\extindex[permutation]{contains}
a permutation 
$\sigma\in\cS_k$ 
as a 
\emph{pattern}\extindex[permutation]{pattern} 
if $\pi$ has a subsequence of length $k$  
order-isomorphic to~$\sigma$.
We say that $\pi$ 
\emph{avoids}\extindex[permutation]{avoids}
$\sigma$ if $\pi$ does not contain $\sigma$.
%
%
There are other ways to define pattern containment, 
some of which are more specific, and others
more general.  We discuss some of these
in Chapter~\ref{chapter_background_and_history},
and in that chapter we refer to containment as defined above
as 
\emph{classic pattern containment}\extindex{classic pattern containment}.
Throughout the rest of this document we omit 
the qualifier ``classic''.

%
%
As an example of containment,
the permutation $2413$ is contained in the permutation
$1,7,4,3,10,9,2,5,8,6$, as shown
in Figure~\ref{figure_example_permutation_plot}.

%
%
If $\sigma$ is contained in $\pi$,
then there will be at least 
one set of points of $\pi$, with cardinality $\order{\sigma}$,
such that the set of points 
is order-isomorphic to $\sigma$.
We call such a set of points an 
\emph{embedding}\extindex[permutation]{embedding}
of $\sigma$ into $\pi$.
The points highlighted in
Figure~\ref{figure_example_permutation_plot},
(4,9,2,6),
represent one possible embedding of $2413$
in $1,7,4,3,10,9,2,5,8,6$.
%
%
%
Typically, where embeddings are used,
the arguments used 
require that only some of the embeddings 
are counted, and these are generally referred
to as 
\emph{normal embeddings}\extindex{normal embedding}.
It is notable that the precise definition of a
normal embedding varies between papers, and indeed,
in Brignall et al~\cite{Brignall2020},
several different definitions of normal embedding
are used.

We note here that 
one problem with embeddings
arises in cases such as $\mobfn{1}{24153}$.
Here there are plainly only five ways
to embed the permutation $1$ 
into $24153$, however
$\mobfn{1}{24153} = 6$,
and thus the embedding approach
is not sufficient.
One possible solution to this issue
is to count the 
normal embeddings and then
add a correction factor.

%
%
The set of all permutations,
ordered by pattern containment, 
is a 
\emph{poset}\extindex{poset} 
(partially ordered set).

%
%
If we have two permutations $\sigma$ and $\pi$
such that $\sigma$ is not contained in $\pi$,
and $\pi$ is not contained in $\sigma$,
then we say that $\sigma$ and $\pi$ are 
\emph{incomparable}\extindex[permutation]{incomparable}.

%
%
\extindex[poset]{interval}
A closed interval $[\sigma, \pi]$ in a poset is the set
defined as $\{ \tau : \sigma \leq \tau \leq \pi \}$.
A half-open interval $[\sigma, \pi)$ is the set
$\{ \tau : \sigma \leq \tau < \pi \}$,
and the open interval $(\sigma, \pi)$ is the set
$\{ \tau : \sigma < \tau < \pi \}$,
%
%
The 
\emph{\mob function}\extindex[\mob function]{$\mobfn{\sigma}{\pi}$}, 
$\mobfn{\sigma}{\pi}$, 
is defined for an ordered pair of elements $(\sigma, \pi)$
from any poset.
If $\sigma \not\leq \pi$, then $\mobfn{\sigma}{\pi} = 0$,
and if $\sigma = \pi$, then $\mobfn{\sigma}{\pi} = 1$.  The remaining
possibility is that $\sigma < \pi$,
and in this case we have
\begin{align}
\mobfn{\sigma}{\pi} &
= 
- \sum_{\lambda \in [\sigma, \pi)} \mobfn{\sigma}{\lambda}.
\label{equation_mobius_function}
\end{align}
%
%
If we have $\sigma < \pi$, then from the definition above
we also have
\begin{align*}
\sum_{\lambda \in [\sigma, \pi]} \mobfn{\sigma}{\lambda}
& = 0.
\end{align*}
%
%
For posets that possess a unique smallest element $\hat{0}$,
we define the 
\emph{principal \mob function}\extindex[principal \mob function]{$\mobp{\pi}$}, 
$\mobp{\pi} = \mobfn{\hat{0}}{\pi}$.
%
%
We will occasionally want to discuss
the \mob function of a poset $P$ 
with a unique minimal element $\hat{0}$, 
and unique maximal element $\hat{1}$.  Here, we define
$
\mobp{P} 
= 
\mobfn{\hat{0}}{\hat{1}}
=
\sum_{\hat{0} \leq x < \hat{1}} \mobfn{\hat{0}}{x}
$.

%
%
The 
\emph{Hasse diagram}\extindex{Hasse diagram} 
of a poset $P$ is a directed graph,
where two vertices $u$ and $w$ are connected from $u$ to $w$
by a directed arc 
if and only if $w < u$, and there is no $v$ such that $w < v < u$.
The Hasse diagram of the permutation poset $[1, 13524]$ is
shown in Figure~\ref{figure_example_hasse_diagram}.
Note that we sometimes omit the arrowheads for clarity,
as Hasse diagrams, by convention, are always drawn
with the arc direction downwards.
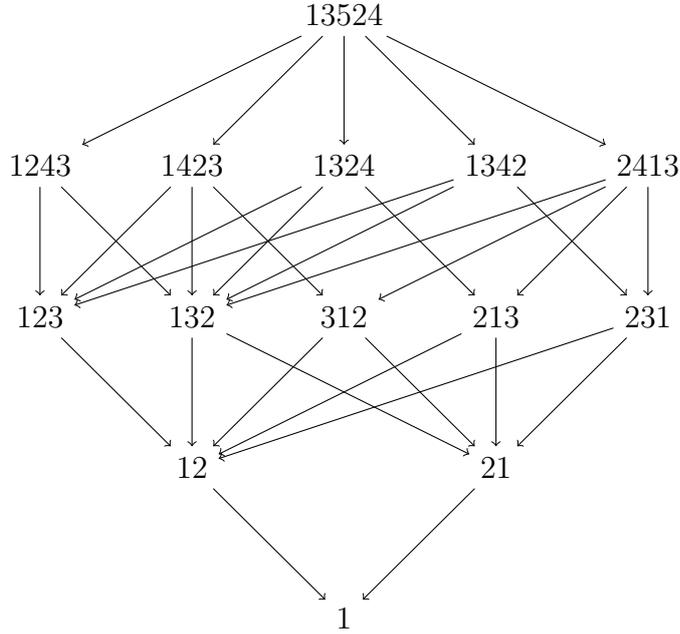
\begin{figure}
    \centering
        \begin{tikzpicture}[xscale=1,yscale=2]
        \pnode{13524}{0}{5};
        \pnode{1243}{-4}{4};
        \pnode{1423}{-2}{4};
        \pnode{1324}{ 0}{4};
        \pnode{1342}{ 2}{4};
        \pnode{2413}{ 4}{4};
        \pnode{123}{-4}{3};
        \pnode{132}{-2}{3};
        \pnode{312}{ 0}{3};
        \pnode{213}{ 2}{3};
        \pnode{231}{ 4}{3};
        \pnode{12}{-2}{2};
        \pnode{21}{ 2}{2};
        \pnode{1}{0}{1};
        \ddline{13524}{1243,1423,1324,1342,2413};
        \ddline{1243}{123,132}
        \ddline{1423}{123,132,312}
        \ddline{1324}{123,132,213}
        \ddline{1342}{123,132,231}
        \ddline{2413}{132,213,231,312}
        \ddline{123}{12}
        \ddline{132}{12,21}
        \ddline{213}{12,21}
        \ddline{312}{12,21}
        \ddline{231}{12,21}
        \ddline{12}{1}
        \ddline{21}{1}
        \end{tikzpicture}		
    \caption{The Hasse diagram of the poset interval $[1, 13524]$.}
    \label{figure_example_hasse_diagram}
\end{figure}

%
%
A 
\emph{chain}\extindex{chain} 
in a poset interval $[\sigma, \pi]$ is,
for our purposes,
a subset of the elements in the interval $[\sigma, \pi]$,
where the subset includes the elements $\sigma$ and $\pi$,
and every distinct pair of elements of the subset are comparable.
This last clause means that the subset has a total order.
If a chain $c$ has $k$ elements,
then we say that the length of $c$, written $\order{c}$,
is $k - 1$.
%
%
One way to visualise a chain is to first choose
a path in the Hasse diagram from the highest entry
to the lowest,
and then a chain is found by 
choosing a (possibly improper)
subset of the elements on the path, 
ensuring that the first and last elements 
(the upper and lower bounds of the poset)
are included in the subset.

%
%
Chains in a poset interval are related to
the \mob function by 
Hall's Theorem~\cite[Proposition 3.8.5]{Stanley2012},
which says 
that 
\begin{align*}
\mobfn{\sigma}{\pi} = 
\sum_{c \in \chainc(\sigma, \pi)} (-1)^{\order{c}}  =
\sum_{i=1}^{\order{\pi} - 1} (-1)^i K_i
\end{align*}
where $\chainc(\sigma, \pi)$ is the set of chains 
in the poset interval $[\sigma, \pi]$, 
and $K_i$ is the number of chains of length $i$.

%
%
If $\chainc$ is a subset of the chains 
in some poset interval $[\sigma, \pi]$,
then the 
\emph{Hall sum}\extindex[chain]{Hall sum} 
of $\chainc$ is
$\sum_{c \in \chainc} (-1)^{\order{c}}$.

%
%
A \emph{parity-reversing involution}\extindex{parity-reversing involution}, 
$\Phi: \chainc \mapsto \chainc$,
is an involution 
such that for any $c \in \chainc$,
the parities of $c$ and $\Phi(c)$
are different.

%
%
A simple corollary to Hall's Theorem is
\begin{corollary}
    \label{corollary-halls-corollary}
    If we can find a set of chains $\chainc$
    with a parity-reversing involution,
    then the Hall sum of $\chainc$ is zero.
\end{corollary}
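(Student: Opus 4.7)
The plan is to observe that the Hall sum over $\chainc$ is by definition $\sum_{c \in \chainc} (-1)^{\order{c}}$, so the statement is purely combinatorial and does not actually need to invoke Hall's Theorem itself — the corollary follows from a standard pairing argument on the summation.

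First I would use the parity-reversing property to note that $\Phi$ can have no fixed point: if $\Phi(c) = c$ then the parities of $c$ and $\Phi(c)$ would coincide, contradicting the defining property of $\Phi$. Since $\Phi$ is an involution with no fixed points, it partitions $\chainc$ into orbits of size exactly two, namely the pairs $\{c, \Phi(c)\}$.

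Next I would choose a set of representatives, one from each orbit, and rewrite the Hall sum as a sum over these orbits. For each orbit $\{c, \Phi(c)\}$, the contribution to the Hall sum is
\[
(-1)^{\order{c}} + (-1)^{\order{\Phi(c)}},
\]
and since $c$ and $\Phi(c)$ have opposite parities this equals $0$. Summing over all orbits gives a Hall sum of $0$, which is exactly the claim.

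There is no real obstacle here; the only point requiring a sentence of care is the observation that a parity-reversing involution is automatically fixed-point-free, which is what guarantees that the orbits genuinely have size two and the pairwise cancellation goes through cleanly.
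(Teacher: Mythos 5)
Your proposal is correct and is essentially the paper's own argument: the paper also cancels chains in pairs via the involution (stated as "the number of odd-length chains equals the number of even-length chains"), and your extra remark that a parity-reversing involution is automatically fixed-point-free just makes explicit a point the paper leaves implicit.
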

\begin{proof}
    Because there is a parity-reversing involution, 
    the number of chains in $\chainc$ with odd length is
    equal to the number of chains with even length,
    so $\sum_{c \in \chainc} (-1)^{\order{c}}  = 0$.  	
\end{proof}

%
%
In Chapters~\ref{chapter_2413_balloon_paper} 
and~\ref{chapter_balloon_permutations_preprint}
we will want to
show that there is a parity-reversing involution 
on a set of chains $\chainc$.
Our basic methodology, given
a set of chains $\chainc$,
and a chain $c \in \chainc$,
will be to construct a chain $\cprime$
by using a parity-reversing involution $\Phi$.
Strictly speaking, $\Phi$ 
is a function that maps a set of permutations
(which is a chain)
to a set of permutations
(which may not be a chain).
As examples, if $\Phi(c)$
removes the largest or smallest element of $c$,
or adds an element so that $\Phi(c)$ does not have
a total order, then $\Phi(c)$ is not a chain.
To show that $\Phi$ is a parity-reversing involution
we will need to show that $\Phi(c)$ is a chain in $\chainc$,
and that $c$ and $\Phi(c)$ have opposite parities.
In our discussions, we will typically set
$\cprime = \Phi(c)$, 
and then show that the 
set of permutations $\cprime$ is a chain.
We will then, without further comment, 
treat $\cprime$ as a chain.

%
%
When discussing chains, in general we will only be interested
in a small subset of the chain containing two or three elements.
We say that a 
\emph{segment}\extindex[chain]{segment}
of some chain $c$ 
is a non-empty subset of the elements in $c$ with the property that
any element not in the segment is 
either less than every element in the segment,
or is greater than every element in the segment.

%
%
A 
\emph{direct sum}\extindex[permutation]{direct sum} 
of two permutations $\alpha$ and $\beta$
of lengths $m$ and $n$ respectively
is the permutation 
$
\alpha_1, \ldots, \alpha_m,
\beta_1 + m, \ldots, \beta_n + m
$.
We write a sum as $\alpha \oplus \beta$.
A 
\emph{skew sum}\extindex[permutation]{skew sum}, 
$\alpha \ominus \beta$,
is the permutation
$
\alpha_1 +n, \ldots, \alpha_m + n,
\beta_1, \ldots, \beta_n
$.
As examples,
$321 \oplus 213 = 321546$, 
and 
$321 \ominus 213 = 654213$, 
and these are shown in 
Figure~\ref{figure_examples-of-sums}.
%
%
A 
\emph{sum-indecomposable}\extindex[permutation]{sum-indecomposable} 
(resp. 
\emph{skew-indecomposable}\extindex[permutation]{skew-indecomposable})
permutation is a permutation
that cannot be written as the 
direct sum (resp. skew sum) of two smaller permutations.
If a permutation is not sum-indecomposable,
then it is 
\emph{sum-decomposable}\extindex[permutation]{sum-decomposable},
and 
if a permutation is not skew-indecomposable,
then it is 
\emph{skew-decomposable}\extindex[permutation]{skew-decomposable},
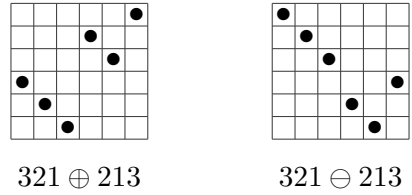
\begin{figure}
    \begin{center}
        \begin{subfigure}[t]{0.22\textwidth}
            \centering
            \begin{tikzpicture}[scale=0.3]   
            \plotpermgrid{3,2,1,5,4,6}
            \end{tikzpicture}
            \caption*{$321 \oplus 213$}
        \end{subfigure}
        \begin{subfigure}[t]{0.22\textwidth}
            \centering
            \begin{tikzpicture}[scale=0.3]                    
            \plotpermgrid{6,5,4,2,1,3}
            \end{tikzpicture}
            \caption*{$321 \ominus 213$}
        \end{subfigure}
    \end{center}
    \caption{Examples of direct and skew sums.} 
    \label{figure_examples-of-sums}	
\end{figure}    

%
%
Given a permutation $\pi$, 
the 
\emph{finest}\extindex{finest sum decomposition} 
sum decomposition  (resp. skew decomposition)
of $\pi$ is a decomposition into the maximum number
of sum-indecomposable (resp. skew-indecomposable)
permutations.
As examples, using Figure~\ref{figure_examples-of-sums},
the finest sum decomposition of $321546$
is $321 \oplus 21 \oplus 1$,
and the finest skew-decomposition of 
$654213$
is
$1 \ominus 1 \ominus 1 \ominus 213$.

%
%
Let $\alpha$ be a permutation, 
and $r$ a positive integer.
Then $\sumra$\extindex{$\sumra$}
is $\alpha \oplus \alpha \oplus \ldots \oplus \alpha \oplus \alpha$,
with $r$ occurrences of $\alpha$.
If $S$ is a set of permutations, then
$\nsums{r}{S} = \cup_{\lambda \in S} \{ \nsums{r}{\lambda} \}$.

%
%
A 
\emph{layered}\extindex[permutation]{layered} 
permutation 
is a permutation that can be written
as the direct sum of one or more
decreasing permutations.
Egge and Mansour, in~\cite{Egge2004}, show that
layered permutations can also be defined as
permutations that avoid the permutations 
$231$ and $312$.
The first example in
Figure~\ref{figure_examples-of-sums}
is a layered permutation.

%
%
If a permutation $\pi$ can be written as 
$\oneplus\oneplus\tau$,
$\oneminus\oneminus\tau$,
$\tau\plusone\plusone$, or
$\tau\minusone\minusone$,
where $\tau$ is non-empty 
(so $\order{\pi} \geq 3$),
then we say that $\pi$
has a \emph{long corner}\extindex{long corner}.

%
%
We will occasionally want to discuss
situations where some permutation $\pi$ is
known to
have a sum-decomposable (resp. skew-decomposable) decomposition,
but we do not know exactly which 
permutations form the decomposition.
In such cases we will write
$\pi = \alpha_1 \oplus \ldots \oplus \alpha_n$
or
$\pi = \alpha_1 \ominus \ldots \ominus \alpha_n$.
Similarly, if we want to discuss 
an arbitrary set of permutations 
then we will write
$\{ \alpha_1, \ldots, \alpha_n \}$.
It will always be clear from the context
whether $\alpha_i$ refers to 
the $i$-th element of the permutation $\alpha$,
the $i$-th permutation in a sum,
or the $i$-th permutation in a set of permutations.

%
%
An 
\emph{interval}\extindex[permutation]{interval} 
in a permutation $\pi$ is a 
non-empty contiguous set of indexes $i, i+1, \ldots, j$
such that the set of values
$\{ \pi_i, \pi_{i+1}, \ldots, \pi_j \}$ is also contiguous.
Every permutation $\pi$ has intervals of 
length 1 and of length $\order{\pi}$,
which we call 
\emph{trivial intervals}\extindex{trivial interval}.
A 
\emph{simple}\extindex[permutation]{simple} 
permutation 
is a permutation that only has 
trivial intervals.
As examples, $1324$ is not simple, as, for example, 
the second and third points $(32)$
form a non-trivial interval,
whereas $2413$ is simple.

%
%
We say that 
$\pi$ has an 
\emph{interval copy}\extindex[permutation]{interval copy} 
of a permutation $\alpha$ if it contains an interval 
of length $\order{\alpha}$ whose 
elements form a subsequence order-isomorphic to~$\alpha$.

%
%
We note here that the term ``interval'' is used
in relation to both posets and permutations.
This is standard terminology in the field, 
and when we use the term ``interval''
it will be clear from the context
whether we are referring to a
poset interval or an interval in a permutation.

%
%
An interval of length 2 is termed an 
\emph{adjacency}\extindex{adjacency}
in this thesis.  
An adjacency is clearly order-isomorphic
to either $12$, an 
\emph{up-adjacency}\extindex{up-adjacency},
or to $21$, a 
\emph{down-adjacency}\extindex{down-adjacency}.
If a permutation contains 
at least one up-adjacency
and at least one down-adjacency,
then we say that the permutation has
\emph{opposing adjacencies}\extindex{opposing adjacencies}.
An interval of length 3 that is monotonic,
that is, order-isomorphic to either 123 or 321,
is a 
\emph{triple-adjacency}\extindex{triple-adjacency}.
We note here that some sources 
use ``adjacency''
to refer to a non-trivial interval 
of any length
that is monotonic.

%
%
A 
\emph{descent}\extindex{descent} 
in a permutation $\pi$ is a position $i$
such that $\pi_i > \pi_{i+1}$.  
Similarly, an 
\emph{ascent}\extindex{ascent} 
in a permutation $\pi$ is
a position $i$ such that $\pi_i < \pi_{i+1}$.

%
%
A 
\emph{permutation class}\extindex{permutation class}
is a set of permutations $C$
with the property that if $c \in C$, and $d < c$, then $d \in C$.
Every permutation class can be defined by the minimal
set of permutations that are not contained in the class,
and this minimal set is referred to as the 
\emph{basis}\extindex[permutation class]{basis}.
If a permutation class $C$ has basis $B$, then we write
$C = \Av (B)$.
Where we want to discuss a permutation
class $C$ that contains a specific set 
of (normally simple) permutations,
we refer to $C$ as a 
\emph{hereditary class}\extindex[permutation class]{hereditary class}.
There is no difference between a permutation class
and a hereditary class, 
the distinction is simply used to draw attention
to the properties of the class that we are discussing.

%
%
Permutations can be represented
by plotting points in a square grid,
as described earlier.
It is clear that any symmetry of the square,
if applied to a permutation plot,
will result in another  
permutation plot.  
If $\alpha$ is a permutation,
then a reflection in a vertical bisector of the plot
is called a 
\emph{reversal}\extindex[permutation]{reversal}, 
written $\alpha^R$,
a reflection in a horizontal bisector of the plot
is called a 
\emph{complement}\extindex[permutation]{complement}, 
written $\alpha^C$.
The inverse of a permutation, written $\alpha^{-1}$
is also a symmetry.
These three operations are the generating set
of the group of symmetries of permutations.

We can now see that
for any permutations $\sigma$ and $\pi$, and any symmetry $S$,
\[
\mobfn{\sigma}{\pi}
=
\mobfn{\sigma^S}{\pi^S}.
\]

We will occasionally want to discuss 
permutations
where we want a unique representative
from the symmetries.  We say that such
a representative is the 
\emph{canonical}\extindex[permutation]{canonical}
form of the permutation,
and for our purposes we choose the
symmetry which is smallest under the lexicographic order.
As an example, 
2413 and 3142 are symmetries of one another,
and the canonical representation is 2413.

    \chapter{Background and history}
\label{chapter_background_and_history}

\section{Permutations}

%
%
The first, albeit implicit, reference to permutations
in the literature appears to be
due to Euler in~\cite{Euler1755},
where he describes polynomials
which essentially define what are now known as 
the Eulerian numbers $A_{n,m}$.
In permutational terms, $A_{n,m}$ 
counts the number of permutations of length $n$
that have $m$ descents.
%
%
The next significant set of results
comes some 150 years later,
where MacMahon~\cite{MacMahon1915} 
has a result that,
interpreted in permutational terms,
shows that $\Av(123)$ is counted by the Catalan numbers.
%
%
The Erd\H{o}s-Szekeres theorem~\cite{Erdos1935} 
can be interpreted as saying that
a permutation of length $(a-1)(b-1)+1$
must contain either an increasing sequence of length $a$ 
or a decreasing sequence of length $b$.

%
%
The study of pattern avoidance in permutations
can be said to have started with exercise 2.2.1(5)
in Knuth~\cite{Knuth1968}, where readers are
essentially asked to show that 
a permutation that can be stack-sorted
must avoid 231.
%
%
This work was further developed
in the 1970s and 1980s
in papers by 
Knuth~\cite{Knuth1970},  
Rogers~\cite{Rogers1978},
Rotem~\cite{Rotem1981},
and
Simion and Schmidt~\cite{Simion1985}.

%
%
This initial development then turned into
a veritable explosion of papers, 
most of which are too specific to
relate to this general background.
%
%
A good summary of the 
way in which the field has developed
can be found in 
the book by Kitaev~\cite{Kitaev2011},
the book by Bona~\cite{Bona2016Book},
the survey article by Steingr{\'{i}}msson~\cite{Steingrimsson2013},
and the chapter by Vatter on permutation classes in~\cite{Handbook2015}.

\section{The \mob function}

%
%
The \mob function was first defined
in the context of number theory
by August \mob in 1832 in~\cite{Mobius1832}.
In that paper, \mob defines
$\mu(n): \bbN \mapsto \bbN$ 
as 0 if $n$ has a repeated prime factor, and as
$(-1)^k$ if $n$ is the product of $k$ distinct prime factors.
If we say that a positive integer $a$ 
is contained in a positive integer $b$
if $a$ divides $b$, then the integers 
under this relationship form a poset,
and $\mu(n) = \mobfn{1}{n}$.

%
%
The number-theoretic \mob function has been extensively
studied since its definition.
The combinatorial \mob function does not seem
to have any significant presence in the literature
until a seminal paper by Rota in 1964~\cite{Rota1964a},
which made an explicit link between
the principle of inclusion--exclusion
and the combinatorial \mob function.

%
%
While there are many papers that
have results relating to the \mob function on a variety of
posets, we refer the reader to 
Cameron~\cite{Cameron1994}
or
Stanley~\cite{Stanley2012}
for a general background to the area.

%
%
The classic definition of the \mob function,
as given in Equation~\ref{equation_mobius_function},
is, essentially, a recursive sum over the elements of the poset.
This thesis, in general, restricts itself to this view.  
A simple consequence of the fundamental definition
is
Hall's Theorem~\cite[Proposition 3.8.5]{Stanley2012} 
which defines the \mob function as a sum
over the chains in the poset.
There are, however, other ways in which we can understand the
\mob function, and 
in order to provide a broad background,
we briefly describe two of them here.

\subsection{Simplicial complexes}

%
%
Given a set of vertices $V$, 
a 
\emph{simplicial complex}\extindex{simplicial complex} 
$\simpcomp$
is a non-empty set of subsets of $V$ such that
if $v \in V$, then $\{ v \} \in \simpcomp$;
and
if $G \in \simpcomp$, and
$F \subset G$, then
$F \in \simpcomp$.
If $F \in \simpcomp$,
then we say that $F$ has dimension
$\order{F} - 1$.  
We then refer to $F$ as a 
\emph{face}\extindex[simplicial complex]{face} of $V$.
Note that the empty subset $\emptyset$
is a face of $V$.
If we have two elements of a poset $\sigma$ and $\pi$,
with $\sigma < \pi$, and
$(\sigma, \pi)$ is non-empty, 
then we can set $V$ to be 
the set of chains in the 
open interval $(\sigma, \pi)$,
and this gives a simplicial complex $\simpcomp$.
%
%
Given a simplicial complex $\simpcomp$,
the reduced Euler characteristic of $\simpcomp$,
$\chi (\simpcomp)$ is defined as
\[
\chi (\simpcomp)
=
\sum_{k=-1}^{\Dim \simpcomp} (-1)^k f_k (\simpcomp),
\]
where $f_k (\simpcomp)$ is the number of faces of dimension $k$.
%
%
If we have a poset $P$ with 
unique minimal and maximal elements 
$\hat{0}$ and $\hat{1}$ respectively,
and set $\simpcomp$ to be the 
chains in the open interval $(\hat{0}, \hat{1})$,
then
Hall's 
Theorem~(see, for example, Stanley~\cite[Proposition 3.8.5]{Stanley2012} 
or Wachs~\cite[Proposition 1.2.6]{Wachs2006})
gives us that
$
\mobp{P} = \chi (\simpcomp)
$.

%
%
Since a simplicial complex is a topological entity, 
in addition to the possibility of using the 
\mob function to determine the value of the 
reduced Euler characteristic, it is possible
to pose questions about the topology of the poset.
While this approach has been taken in some papers 
(discussed in 
Section~\ref{section_background_permutation_poset_and_mobius} 
below), 
this thesis does not use this approach or provide 
any topological results.  
The interested reader is referred to
the material in~\cite{Wachs2006}
for further details.

\subsection{Incidence algebras and incidence matrices}

%
%
A poset $P$ is 
\emph{locally finite}\extindex[poset]{locally finite}
if, for every $\sigma, \pi \in P$,
the interval $[\sigma, \pi]$
has a finite number of elements.
%
%
Following Rota~\cite{Rota1964a}, we define
an 
\emph{incidence algebra}\extindex{incidence algebra}
by first taking
a locally finite partially ordered set $P$,
and considering the set of all 
real-valued functions $f(x,y)$,
where
$x, y \in P$ and 
$f(x,y) = 0$ if $x \not\leq y$.
We then define the incidence algebra 
of $P$ by convolution:
\[
(f * g)(x,y) = \sum_{x \leq z \leq y} f(x,z) g(z,y).
\]
%
%
This algebra has an identity element,
normally written as $\delta(x,y)$, 
which is defined as
\[
\delta(x,y) = 
\begin{cases}
1 & \text{If } x = y \\
0 & \text{Otherwise}.
\end{cases}
\]
%
%
The 
\emph{zeta function}\extindex[incidence algebra]{zeta function}
is defined as
\[
\zeta(x,y)  = 
\begin{cases}
1 & \text{If } x \leq y \\
0 & \text{Otherwise}.
\end{cases}
\]
%
%
With these definitions, 
it can be shown that the \mob function is the convolutional inverse
of $\zeta$, so
\[
    \zeta * \mu (x,y) = \mu * \zeta (x,y) = \delta(x,y).
\]
%
%
Let $Z$ be a square matrix, 
with rows and columns indexed by the elements of a poset $P$,
and with $Z_{x,y} = \zeta(x,y)$.
We call this the 
\emph{zeta matrix}\extindex[incidence algebra]{zeta matrix}
of $P$.
%
%
It is now possible to show that if $x$ and $y$ are 
elements of the poset, then
\[
    \mobfn{x}{y} = (Z^{-1})_{x,y}.
\]

%
%
We remark here that the result above implies that
we can determine the value of the \mob function
for every interval in a poset
by (simply) calculating the inverse of the 
zeta matrix.  
%
%
We have some computational evidence that
using the ``matrix inverse'' method
to determine the value of the \mob function
for a significant number of intervals
in a poset is computationally more efficient 
than using the fundamental definition
given in Equation~\ref{equation_mobius_function}.
On the other hand, if we want to
determine the value of the \mob function 
for a single interval, then the 
fundamental definition seems to be
significantly faster than the matrix inverse method.
Of course, our ideal is to find
methods that can determine
the value of the \mob function
faster than either the matrix inverse method,
or using the fundamental definition.

\section{The \mob function for general posets}
\label{section_background_mobius_various_posets}

%
%
Before we move on to consider the 
\mob function of the permutation poset under classic 
pattern containment, we divert slightly to review
some results relating to the \mob function
on other posets.  
We start by remarking that,
for a general poset, 
using the recursive definition of
the \mob function is computationally hard.
Our purpose in this section is to 
establish that determining the \mob function 
need not be computationally hard in some cases.
We refer the reader to~\cite{Kitaev2011} for a good overview of 
most of the containment types discussed in this section.

%
%
There are some well-known cases where an explicit
formula exists for the \mob function.
For example, 
the \mob function on 
a Boolean algebra is given by 
$\mobfn{R}{S} = (-1)^{\#(S-R)}$
(see, for instance, Example 3.8.3 in~\cite{Stanley2012}).

%
%
A slightly more complex example is given
by the poset of subspaces of a vector space 
$V \subseteq GF(q)^n$.
If we have $U \subseteq W \subseteq V$, then
\[
\mobfn{U}{W} =
(-1)^{k} q^{\binom{k}{2}},
\text{ where }
k = \dim(W) - \dim(U).    
\]
This result is attributed to Hall
in Rota~\cite{Rota1964a}.
%
%
%
The poset of subspaces of a vector space
is an example of a lattice,
and the proof given in Rota utilises this fact.
The \mob function for general lattices 
is also well-known
(see, for instance, Section 3.9 in~\cite{Stanley2012}).

%
%
We now turn to posets that are
defined by free monoids over alphabets,
or by permutations using a containment 
other than classic pattern containment.

%
%
Bj\"orner completely determined the \mob function of subword order 
in~\cite{BjornerSubword}, and then 
completely determined the \mob function for factor order
in~\cite{Bjorner1993}.

%
%
Sagan and Vatter, 
in~\cite{Sagan2006},
considered ordered partitions (compositions) of an integer,
with a partial order given by subwords,
and completely determined the \mob function on this poset.
This paper also has the first result for
the permutation poset under classic pattern containment,
which we discuss in 
Section~\ref{section_background_permutation_poset_and_mobius}.

%
%
Bernini, Ferrari and Steingr\'{\i}msson,
in~\cite{Bernini2011},
considered permutations using consecutive pattern containment.
For most intervals $[\sigma, \pi]$ they have a set of explicit
formulae for $\mobfn{\sigma}{\pi}$,
based mainly on how many times $\sigma$ occurs in $\pi$.
For intervals not covered by their formulae,
they provide a polynomial algorithm to calculate the 
\mob function.  
We consider that the \mob function
of permutations under consecutive pattern containment
is, therefore, completely known.
Sagan and Willenbring,
in~\cite{Sagan2012},
reproduced this result using a 
technique known as discrete Morse theory.

%
%
Bernini and Ferrari, in~\cite{Bernini2017},
introduced the quasi-consecutive pattern poset
of permutations,
where $\sigma$ is contained in $\pi$ if
$\pi$ contains an occurrence of $\sigma$ where all entries are adjacent,
except possibly the first and second.
They completely determine the \mob function
for any interval $[\sigma, \pi]$ where
$\sigma$ occurs exactly once in $\pi$.

%
%
A recent preprint by 
Bernini, 
Cervetti, 
Ferrari and
Steingr\'{\i}msson~\cite{Bernini2019}
considers the poset of Dyck paths,
where we say that a path $P$ contains a path $Q$
if the steps in $Q$ are a subsequence 
of the steps in $P$.  
The preprint includes 
expressions for  
the \mob function 
of some specific intervals in this poset.

%
%
The posets discussed so far are, in some way,
simpler than the 
permutation poset under classic pattern containment,
and we have seen that 
the \mob function has either been completely determined,
or, as in the last two examples,  has been determined for a 
particular subset of intervals in the poset.
We now consider posets that, in some sense, generalize 
classic pattern containment.

%
%
Mesh patterns are a generalization of classic pattern containment
on permutations,
and the poset of mesh patterns contains 
the poset of permutations as an induced subposet.
We refer the reader 
to~\cite{Branden2011} for a formal definition of mesh patterns.
%
%
In~\cite{Smith2018a},
Smith and Ulfarsson present some 
initial results on the \mob
function of the mesh pattern poset, 
and show that 
as $n \to \infty$, the proportion
of mesh patterns $p$ of length $n$
with $\mobfn{1^\emptyset}{p} = 0$
approaches 1,
where $1^\emptyset$ is the unshaded
singleton mesh pattern.
%
%
The mesh pattern $1^\emptyset$
corresponds to the permutation $1$ in
the induced poset of permutations.
In the permutation pattern poset, 
we know~\cite{Albert2003} that the number of   
simple permutations of length $n$
is, asymptotically, $\frac{n!}{\e^2}$,
and it is generally believed
that for most simple permutations
the value of the principal 
\mob function is non-zero,
thus in the permutation pattern poset
we do not expect
the proportion
of permutations $\pi$ of length $n$
with $\mobp{\pi} = 0$
to approach 1.

%
%
Smith generalised pattern containment in~\cite{Smith2019},
and found some explicit formulae for the \mob function.
These formulae have the general form
\[
\mobfn{\sigma}{\pi}
=
(-1)^{\order{\pi} - \order{\sigma}} E(\sigma, \pi)
+
\sum_{\lambda \in [\sigma, \pi)} 
\mobfn{\sigma}{\lambda} 
\mu[ \hat{P}(\lambda, \pi) ],
\]
where 
$E(\sigma, \pi)$ counts specific types of embeddings
of $\sigma$ into $\pi$,
and
$\hat{P}(\lambda, \pi)$ is a poset derived from 
the interval $[\lambda, \pi]$.

Although this last example does have a complete
characterisation of the \mob function
on all intervals of the poset,
from a computational perspective 
the result is only useful if, outside the result given,
we can show that the second term is zero.
This result is a generalisation
of some of the results given
by Smith in~\cite{Smith2016a},
which we discuss in the following
section.

\section{The \mob function of the permutation poset under pattern containment}
\label{section_background_permutation_poset_and_mobius}

%
%
The study of the \mob function in the (classic) permutation poset
was introduced by
Wilf~\cite{Wilf2002}, who wrote
\begin{quote}
    We can partially order the set of all permutations 
    of all numbers of letters by
    declaring that $\sigma \leq \tau$ if
    $\sigma$ is contained as a pattern in $\tau$. 
    It would be interesting to study
    this as a poset. 
    For example, what can be said about its \mob function?
\end{quote}  

%
%
The first result in this area was by Sagan and Vatter~\cite{Sagan2006}.
Their paper primarily concerns itself with 
the \mob function of a composition poset.
An integer composition can be thought of
as an ordered list of positive integers,
and a layered permutation 
can be completely specified by 
such a list,
so there is a bijection between
integer compositions and layered permutations.
From this it follows that
there is a bijection between
the poset of compositions of integers and 
the poset of layered permutations.
In the final section of their paper, 
they use 
this bijection
to essentially give
an expression for the
\mob function on 
intervals in the poset of layered permutations
under classic pattern containment.

%
%
Steingr\'{\i}msson and Tenner~\cite{Steingrimsson2010} found a 
large class of pairs of permutations $(\sigma, \pi)$ 
where $\mobfn{\sigma}{\pi} = 0$.  
They show that the (poset) interval
$[\sigma, \pi]$ has 
$\mobfn{\sigma}{\pi} = 0$
if $\pi$ contains a non-trivial interval
where none of the elements of
the (permutation) interval 
are part of an embedding of 
$\sigma$ into $\pi$.
They also show that 
if there is exactly one
embedding of $\sigma$ into $\pi$,
and the complement of the embedding satisfies
certain conditions, then
$\mobfn{\sigma}{\pi} \in \{0, \pm 1\}$.

%
%
In a seminal paper,
Burstein, Jel{\'{i}}nek, Jel{\'{i}}nkov{\'{a}} 
and Steingr{\'{i}}msson~\cite{Burstein2011} found
a recursion for the \mob function
for sum/skew decomposable permutations
in terms of the sum/skew indecomposable 
permutations in the lower and upper bounds.
They also found a method to determine
the \mob function for separable permutations
by counting embeddings.
The recursions for decomposable permutations
are used extensively
in this thesis.

%
%
McNamara and Steingr{\'{i}}msson~\cite{McNamara2015}
investigated the topology of intervals in the 
permutation poset, and in doing so
found a single recurrence
equivalent to the recursions in~\cite{Burstein2011}.

%
%
We now compare the recursions from
Burstein et al~\cite{Burstein2011} 
with those from 
McNamara and Steingr{\'{i}}msson~\cite{McNamara2015}.
The recursions from Burstein, Jel{\'{i}}nek, Jel{\'{i}}nkov{\'{a}} 
and Steingr{\'{i}}msson~\cite{Burstein2011}
can be written as follows.
\begin{quotation}
    \begin{proposition}[{%
            McNamara and 
            Steingr{\'{i}}msson
            \cite[Proposition 8.3]{McNamara2015},
            following 
            Burstein, 
            Jel{\'{i}}nek, 
            Jel{\'{i}}nkov{\'{a}} and 
            Steingr{\'{i}}msson 
            \cite[Proposition 1]{Burstein2011}%
        }]
        \label{BJJS-proposition-1-as-in-mcnamara}
        Let $\sigma$ and $\pi$ be non-empty permutations with
        finest decompositions $\sigma= \sigma_1 \oplus \ldots \oplus \sigma_s$
        and
        $\pi = \pi_1 \oplus \ldots \oplus \pi_t$,
        where $t \geq 2$. 
        Suppose that $\pi_1 = 1$.  
        Let $k \geq 1$ be the largest integer such that all the components 
        $\pi_1, \ldots, \pi_k$ are equal
        to 1, 
        and let $\ell \geq 0$ be the largest integer such that all the components 
        $\sigma_1, \ldots, \sigma_\ell$ are equal
        to 1. Then
        \[
        \mobfn{\sigma}{\pi}
        =
        \begin{cases}
        0 & \text{if $k-1 > \ell$,} \\
        -\mobfn{\sigma_{> k-1}}{\pi_{> k}} & \text{if $k-1 = \ell$,} \\
        \mobfn{\sigma_{> k}}{\pi_{> k}} - \mobfn{\sigma_{> k-1}}{\pi_{> k}} & \text{if $k-1 < \ell$.}
        \end{cases}
        \]    
    \end{proposition}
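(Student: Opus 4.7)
The plan is to proceed by strong induction on $|\pi| - |\sigma|$. The base case (where $\pi$ covers $\sigma$) can be verified directly in each of the three formulas. For the inductive step, I would combine the recursive definition
\[
\mobfn{\sigma}{\pi} = -\sum_{\sigma \leq \lambda < \pi} \mobfn{\sigma}{\lambda}
\]
with a classification of the elements $\lambda \in [\sigma, \pi)$ by the number $m(\lambda)$ of initial components equal to $1$ in the finest sum decomposition of $\lambda$, so that $m(\sigma) = \ell$ and $m(\pi) = k$.

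The key preliminary step is a structural lemma describing, for each fixed $j \geq 0$, the set $\{\lambda \in [\sigma, \pi] : m(\lambda) = j\}$ and relating intervals $[\sigma, \lambda]$ with $m(\lambda) = j$ to ``shifted'' intervals of the form $[\sigma_{>j'}, \pi_{>k}]$ in a smaller poset. This is delicate because containment interacts badly with prefixes of the finest decomposition: for example $12 \leq 2134$ even though $2134$ has finest decomposition $21 \oplus 12$ and so has zero initial $1$'s. Thus $m(\lambda)$ can be arbitrarily small relative to $\ell$, and the lemma requires tracking exactly which initial $1$'s of $\pi$ can or must appear in an embedding of $\sigma$ into $\lambda$.

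For the case $k - 1 > \ell$, I would exhibit a parity-reversing involution on $\chainc(\sigma, \pi)$ and conclude via Corollary~\ref{corollary-halls-corollary}. The natural candidate: since $\pi$ has at least two more initial $1$'s than $\sigma$, there is always ``room'' to toggle the presence of an intermediate element $\lambda$ in a chain whose initial $1$ count sits strictly between $\ell$ and $k$; inserting or removing such a $\lambda$ (via a canonical choice, e.g.\ the one obtained by prepending one $1$ to the element just below it in the chain) changes the length by one while keeping a chain in $[\sigma, \pi]$. For the cases $k - 1 = \ell$ and $k - 1 < \ell$, I would apply the inductive hypothesis to each term $\mobfn{\sigma}{\lambda}$ on the right-hand side; by the first case (just established), most terms vanish, and the surviving terms should telescope into $-\mobfn{\sigma_{>k-1}}{\pi_{>k}}$ or into $\mobfn{\sigma_{>k}}{\pi_{>k}} - \mobfn{\sigma_{>k-1}}{\pi_{>k}}$, respectively.

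The principal technical obstacle is verifying that the canonical toggling operation for the first case is genuinely well-defined on all chains (i.e.\ never produces a non-chain or leaves $[\sigma, \pi]$), and, in the other two cases, showing that the contributions from $\lambda$ with small $m(\lambda)$ really do cancel by the inductive hypothesis while exactly the expected boundary terms survive. Both difficulties trace back to the same subtle issue identified above: one cannot argue purely at the level of sum decompositions, but must reason about explicit embeddings, in particular about the unique or canonical way in which the initial $1$'s of $\sigma$ sit inside $\lambda$ and inside $\pi$.
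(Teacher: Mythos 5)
You should first note that the thesis never proves this statement: it is quoted verbatim, with attribution, from McNamara and Steingr\'imsson's restatement of Burstein, Jel\'inek, Jel\'inkov\'a and Steingr\'imsson, and is used later as an imported tool. So there is no internal proof to compare against, and your argument has to stand on its own. As written, it does not: it is a programme whose two decisive steps are left open, and you acknowledge as much when you describe the ``principal technical obstacle''.

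The concrete gap is in the case $k-1>\ell$, where the whole argument hinges on a parity-reversing involution that is never actually constructed. The suggested toggle --- insert or remove an intermediate chain element ``obtained by prepending one $1$ to the element just below it'' --- is not well-defined on $[\sigma,\pi]$: prepending a $1$ can leave the interval. For instance, with $\pi=1\oplus 1\oplus 21=1243$ and $\lambda=123\leq\pi$, the permutation $1\oplus\lambda=1234$ is not contained in $\pi$, so the element you would insert need not lie in $[\sigma,\pi]$ at all; moreover nothing in your description makes the map an involution (a chain may contain several elements whose leading-$1$ count lies strictly between $\ell$ and $k$, the canonical insertion target may already be present in the chain, and the removed element must be recoverable from the modified chain). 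The same unresolved issue undermines the cases $k-1=\ell$ and $k-1<\ell$: the claimed cancellation of the terms $\mobfn{\sigma}{\lambda}$ with small $m(\lambda)$ and the survival of exactly the boundary terms are asserted, not derived, and they depend on precisely the ``structural lemma'' you defer --- which, as your own example $12\leq 2134$ shows, cannot be argued at the level of finest decompositions alone. To turn this into a proof you would need either to fix a canonical embedding of $\sigma$ (or of the chain elements) into $\pi$ and toggle a specific leading $1$ of $\pi$ inside that embedding, in the spirit of the sign-reversing-involution arguments on embeddings used in Chapter~\ref{chapter_oppadj_paper}, or to abandon the chain formulation and carry out the recursive-sum analysis of $\sum_{\lambda\in[\sigma,\pi)}\mobfn{\sigma}{\lambda}$ as in the original paper of Burstein et al.; in its present form the proposal is a plausible strategy, not a proof.
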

    The remaining case is $\pi_1 > 1$, and is covered by the next proposition.
    \begin{proposition}[{%
            McNamara and 
            Steingr{\'{i}}msson
            \cite[Proposition 8.4]{McNamara2015},
            following 
            Burstein, 
            Jel{\'{i}}nek, 
            Jel{\'{i}}nkov{\'{a}} and 
            Steingr{\'{i}}msson 
            \cite[Proposition 2]{Burstein2011}%
        }]
        Let $\sigma$ and $\pi$ be non-empty permutations with
        finest decompositions 
         $\sigma= \sigma_1 \oplus \ldots \oplus \sigma_s$
        and
        $\pi = \pi_1 \oplus \ldots \oplus \pi_t$,
        where $t \geq 2$.
        Suppose that
        $\pi_1 > 1$. Let $k \geq 1$ be the largest integer such that all the 
        $\pi_1, \ldots, \pi_k$ are equal
        to $\pi_1$. Then
        \[
        \mobfn{\sigma}{\pi} =
        \sum_{i=1}^s
        \sum_{j=1}^k
        \mobfn{\sigma_{\leq i}}{\pi_{1}}
        \mobfn{\sigma_{> i}}{\pi_{> j}}.
        \]
    \end{proposition}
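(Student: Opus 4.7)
The plan is to prove the formula by induction on $|\pi| + |\sigma|$, combined with a combinatorial dissection of the interval $[\sigma, \pi]$ that is made possible by the sum-indecomposability of $\pi_1$ together with the hypothesis $\pi_1 > 1$. The preceding Proposition~\ref{BJJS-proposition-1-as-in-mcnamara} will serve both as a sibling case (when the first component is $1$) and as a source of inductive values.

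First I would fix notation: write $\pi = \pi_1^{\oplus k} \oplus \pi_{>k}$, where $\pi_{>k} = \pi_{k+1} \oplus \ldots \oplus \pi_t$ and $\pi_{k+1} \neq \pi_1$. The key structural observation is that, because $\pi_1$ is sum-indecomposable with $\pi_1 > 1$, the finest decomposition of any $\lambda \in [\sigma,\pi]$ interacts rigidly with that of $\pi$: each component of $\lambda$ embeds into exactly one $\pi_b$, and the induced map on indices is non-decreasing. In particular, $\lambda$ admits a canonical representation $\lambda = \alpha \oplus \beta$ with $\alpha \leq \pi_1$ and $\beta \leq \pi_{>j}$ for some $1 \leq j \leq k$, where $\alpha$ is chosen as large as possible.

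Next, for each pair $(i,j)$ with $1 \leq i \leq s$ and $1 \leq j \leq k$, I would identify the sub-poset of $[\sigma, \pi]$ consisting of permutations $\alpha \oplus \beta$ with $\sigma_{\leq i} \leq \alpha \leq \pi_1$ and $\sigma_{>i} \leq \beta \leq \pi_{>j}$. This sub-poset is isomorphic to a direct product of the two intervals $[\sigma_{\leq i}, \pi_1]$ and $[\sigma_{>i}, \pi_{>j}]$, so by the product formula for the \mob function of a direct product of posets (Stanley, Proposition 3.8.2), its Hall-sum contribution is exactly $\mobfn{\sigma_{\leq i}}{\pi_{1}} \mobfn{\sigma_{>i}}{\pi_{>j}}$. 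Summing over $(i,j)$ gives the desired expression.

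The main obstacle will be showing that the contributions from these product sub-posets account for \emph{all} of $\mobfn{\sigma}{\pi}$: any chain in $[\sigma, \pi]$ containing a $\lambda$ whose split between the $\pi_1$-copies and the tail is not of the canonical form above must collectively contribute zero. I expect this to follow from a parity-reversing involution that toggles the smallest ambiguously placed element of such a non-canonical $\lambda$, pairing chains of opposite parity in the sense of Corollary~\ref{corollary-halls-corollary}. Verifying that this involution is well-defined on entire chains (not merely on single elements), and that it restricts cleanly so that no valid $(i,j)$-contribution is cancelled or double-counted, is the most delicate step; once this cancellation is in place, the remaining algebra is a routine inclusion-exclusion.
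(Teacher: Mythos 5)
You should note at the outset that the thesis does not prove this proposition at all: it is quoted verbatim from McNamara--Steingr{\'i}msson, following Burstein, Jel{\'i}nek, Jel{\'i}nkov{\'a} and Steingr{\'i}msson, so there is no internal proof to compare yours against, and your outline has to stand on its own. Judged that way, its central step fails. The claim that the set of permutations $\alpha\oplus\beta$ with $\sigma_{\leq i}\leq\alpha\leq\pi_1$ and $\sigma_{>i}\leq\beta\leq\pi_{>j}$, taken as an induced subposet of $[\sigma,\pi]$, is isomorphic to the product $[\sigma_{\leq i},\pi_1]\times[\sigma_{>i},\pi_{>j}]$ is false. The map $(\alpha,\beta)\mapsto\alpha\oplus\beta$ is neither injective (whenever $12\leq\pi_1$ and $12\leq\pi_{>j}$, the pairs $(12,1)$ and $(1,12)$ both map to $123$) nor order-reflecting: with $\pi_1=21$ one has $21\oplus1=213\leq 1324=1\oplus213$, although $21\not\leq 1$. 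Pattern containment can slide points across the chosen split, so the induced order is strictly coarser than the product order, and the sum-indecomposability of $\pi_1$ does not prevent this. Consequently the product formula for the M\"obius function of a product of posets cannot be invoked, and the phrase ``Hall-sum contribution of the sub-poset'' has no meaning until you exhibit an explicit partition of the chains of $[\sigma,\pi]$ into blocks indexed by $(i,j)$ plus a residual block and compute the Hall sum of each block.

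That unperformed bookkeeping is where the real work lies, and it is compounded by two further problems you do not address. First, the same permutation, indeed the same chain, lies in many of your $(i,j)$ sub-posets, so double counting has to be ruled out explicitly. Second, your ``canonical split with $\alpha$ as large as possible'' is not stable along a chain: consecutive elements of a chain can force different values of $j$ and different maximal choices of $\alpha$, so the proposed involution that ``toggles the smallest ambiguously placed element'' is not actually defined on chains, let alone shown to be parity-reversing, to land in the correct block, or to fix exactly the chains whose contribution you wish to keep. As written, the argument establishes neither that the non-canonical chains cancel nor that the surviving ones contribute $\mobfn{\sigma_{\leq i}}{\pi_{1}}\mobfn{\sigma_{>i}}{\pi_{>j}}$; it also never says how the empty permutation is handled when $i=s$. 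A more promising repair is induction on $\order{\pi}$ using the defining recurrence (Equation~\ref{equation_mobius_function}), classifying $\lambda\in[\sigma,\pi)$ according to how its components distribute over $\pi_1^{\oplus k}$ and the tail and invoking the companion Proposition~\ref{BJJS-proposition-1-as-in-mcnamara} for the case of a trivial first component; if you insist on a chain-partition argument in the spirit you sketch, the blocks and the involution must be written down and verified explicitly.
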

\end{quotation}
The recursion found by McNamara and Steingr{\'{i}}msson
can be written as follows.
\begin{quotation}
    \begin{proposition}[{%
            McNamara and 
            Steingr{\'{i}}msson
            \cite[Proposition 8.1]{McNamara2015}%
        }]
    Consider permutations $\sigma$ and $\pi$ and let 
    $\pi = \pi_1 \oplus \ldots \oplus \pi_t$
    be the finest
    decomposition of $\pi$. Then
    \[
    \mobfn{\sigma}{\pi}
    =
    \sum_{\sigma = \varsigma_1 \oplus \ldots \oplus \varsigma_t}
    \prod_{1 \leq m \leq t}
    \begin{cases}
    \mobfn{\varsigma_m}{\pi_m} + 1 & 
    \text{If $\varsigma_m = \emptyperm$ and $\pi_{m-1} = \pi_m$,} \\
    \mobfn{\varsigma_m}{\pi_m} &
    \text{otherwise, }
    \end{cases}
    \]
    where the sum is over all direct sums 
    $\sigma = \varsigma_1 \oplus \ldots \oplus \varsigma_t$,
    such that
    $\emptyperm \leq \varsigma_m \leq \pi_m$
    for all $1 \leq m \leq t$.
    \end{proposition}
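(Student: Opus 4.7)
The plan is to prove the formula by induction on $t$, the number of parts in the finest sum decomposition of $\pi$, using Propositions~8.3 and~8.4 of Burstein et al.\ \cite{Burstein2011} (the two propositions quoted just above) as the inductive engine. Those recursions already express $\mobfn{\sigma}{\pi}$, when $\pi$ is sum-decomposable, in terms of Möbius values of strictly shorter finest decompositions, so the substance of the proof is matching the claimed closed form against the recursive unfolding.

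For the base case $t = 1$, $\pi = \pi_1$ is sum-indecomposable, so the outer sum has exactly one term $\varsigma_1 = \sigma$ with the single factor $\mobfn{\sigma}{\pi_1} = \mobfn{\sigma}{\pi}$; the $+1$ clause is vacuous since there is no $\pi_0$ against which $\pi_1$ can be compared. The degenerate cases are equally tame: if $\sigma \not\leq \pi$ then no tuple $(\varsigma_1, \ldots, \varsigma_t)$ with $\varsigma_m \leq \pi_m$ can exist (matching $\mobfn{\sigma}{\pi} = 0$), and if $\sigma = \pi$ the size constraint $|\varsigma_1| + \cdots + |\varsigma_t| = |\pi|$ together with $\varsigma_m \leq \pi_m$ forces $\varsigma_m = \pi_m$, producing the unique contribution $\prod_m \mobfn{\pi_m}{\pi_m} = 1$.

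For the inductive step $t \geq 2$, I would split on whether $\pi_1 > 1$ or $\pi_1 = 1$. In the first case, Proposition~8.4 rewrites $\mobfn{\sigma}{\pi}$ as $\sum_i \sum_{j=1}^k \mobfn{\sigma_{\leq i}}{\pi_1}\, \mobfn{\sigma_{>i}}{\pi_{>j}}$, and each factor $\mobfn{\sigma_{>i}}{\pi_{>j}}$ has a finest decomposition of length $t - j < t$, so the inductive hypothesis applies. After substitution, each decomposition $\sigma = \varsigma_1 \oplus \cdots \oplus \varsigma_t$ with $\varsigma_1 \neq \emptyperm$ arises from a choice of $i$ (the cut of $\sigma$ whose prefix lives in the first block) together with the position $j$ in the initial run $\pi_1 = \cdots = \pi_k$ at which the first non-empty later component appears; summing over $j$ is exactly what the $+1$ clause does to the intermediate empty slots $\varsigma_2 = \cdots = \varsigma_j = \emptyperm$. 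Decompositions with $\varsigma_1 = \emptyperm$ are then absorbed by the same identification. The case $\pi_1 = 1$ is analogous but invokes Proposition~8.3, whose three sub-cases $k-1 > \ell$, $k-1 = \ell$, $k-1 < \ell$ correspond to $\sigma$ having strictly fewer, equal, or strictly more leading $1$s than $\pi$; each of these reads off from the formula after applying the inductive hypothesis to $\mobfn{\sigma_{>k-1}}{\pi_{>k}}$ and $\mobfn{\sigma_{>k}}{\pi_{>k}}$.

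The main obstacle is the bookkeeping around the $+1$ correction for adjacent identical blocks. Syntactically it is a tiny adjustment, but in the proof it is precisely the ingredient that reconciles two distinct enumerations: the Burstein et al.\ recursions peel off a specific prefix indexed by $i$ and $j$, whereas the formula here enumerates all consistent tuples $(\varsigma_1, \ldots, \varsigma_t)$ directly. When $\pi$ has a long run of identical blocks, many distinct tuples produce the same overall contribution under the recursion, and the $+1$ clause is what makes the two counts agree. I would organise this by first verifying the identity when the leading block of $\pi$ is non-repeated, then the case $\pi_1 = \pi_2$ only, and finally extending to arbitrary-length leading runs of repeated blocks; at each step the inductive hypothesis applied to the suffix handles everything after the first block of identical parts, while the $+1$ term accounts for the shifted decompositions at the boundary between the initial run and its complement.
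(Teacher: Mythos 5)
The thesis itself gives no proof of this proposition: it is quoted as background from McNamara and Steingr{\'i}msson, and in the original source the logical direction is the reverse of yours (their Propositions 8.3 and 8.4, i.e.\ the Burstein et al.\ recursions, are obtained from 8.1). So your plan --- induct on $t$ and derive the closed form from the Burstein--Jel{\'i}nek--Jel{\'i}nkov{\'a}--Steingr{\'i}msson recursions --- is a legitimate independent route, and it does go through; but the bookkeeping you defer is exactly where the content lies, and your description of the mechanism needs one correction. For $\pi_1>1$, after substituting the inductive hypothesis into $\sum_i\sum_{j=1}^k \mobfn{\sigma_{\le i}}{\pi_1}\,\mobfn{\sigma_{>i}}{\pi_{>j}}$, map each triple (choice of $i$, of $j$, and of an inner tuple for $\pi_{>j}$) to the full tuple $(\sigma_{\le i},\emptyperm,\dots,\emptyperm,\varsigma_{j+1},\dots,\varsigma_t)$. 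A fixed tuple with $\varsigma_2=\dots=\varsigma_r=\emptyperm$ and $\varsigma_{r+1}\neq\emptyperm$ is hit by every $j\le\min(r,k)$, but all of these preimages except $j=\min(r,k)$ contribute zero, because their leading inner factor is $\mobfn{\emptyperm}{\pi_{j+1}}=0$ (as $\pi_{j+1}=\pi_1>1$); so it is not that ``summing over $j$ is what the $+1$ clause does'' --- rather, empty slots inside the leading run carry the factor $\mobfn{\emptyperm}{\pi_m}+1=1$ on the right-hand side, tuples with $\varsigma_1=\emptyperm$ vanish outright since $\mobfn{\emptyperm}{\pi_1}=0$ (so nothing needs absorbing, consistent with the recursion's $i\ge1$), and the single surviving $j$ reproduces each tuple's product exactly. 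For $\pi_1=1$ the computation to make explicit is that an empty slot in positions $2\le m\le k$ gives $\mobfn{\emptyperm}{1}+1=0$, forcing $\varsigma_2=\dots=\varsigma_k=1$, while $\varsigma_1\in\{\emptyperm,1\}$ contributes $-1$ or $+1$; this is precisely what produces the three cases $k-1>\ell$, $k-1=\ell$, $k-1<\ell$ of Proposition 8.3. Finally, fix the empty-permutation conventions ($\mobfn{\emptyperm}{\emptyperm}=1$, $\mobfn{\emptyperm}{1}=-1$, $\mobfn{\emptyperm}{\tau}=0$ for $\tau>1$) so the degenerate inner terms with $\sigma_{>i}=\emptyperm$ or $\pi_{>j}=\emptyperm$ are covered by the inductive hypothesis. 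With these points written out, your induction closes.
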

\end{quotation}
We remark here that the recursion from 
McNamara and Steingr{\'{i}}msson
is, in some sense, a nicer recursion than that found by 
Burstein, Jel{\'{i}}nek, Jel{\'{i}}nkov{\'{a}} 
and Steingr{\'{i}}msson.
Despite this, in 
this thesis
we use the 
Burstein et al recursions 
as these
are easier to work with
in the context of our results.

%
%
Smith~\cite{Smith2013}
found an explicit formula for the \mob function on the interval
$[1, \pi]$ for all permutations $\pi$ with a single descent.
Smith~\cite{Smith2016}
has explicit expressions for the 
\mob function $\mobfn{\sigma}{\pi}$
when $\sigma$ and $\pi$ have the same number of descents.
In~\cite{Smith2016a}, Smith found
an expression that determines the
\mob function for all intervals in
the poset.  
The main result is
\begin{align}
\label{equation_smith_all_intervals}
\mobfn{\sigma}{\pi}
&=
(-1)^{\order{\pi} - \order{\sigma}}
\NE(\sigma, \pi)
+
\sum_{\lambda \in [\sigma, \pi)}
\mobfn{\sigma}{\lambda}
\sum_{S \in EZ^{\lambda,\pi}} (-1)^{\order{S}}
\end{align}
where 
$\NE(\sigma, \pi)$
is the number of normal embeddings of $\sigma$ into $\pi$,
and
$EZ^{\lambda,\pi}$
is a set of sets of embeddings of $\lambda$ into $\pi$
that satisfy a particular condition.

One view of this result is that
it tells us that the value of the \mob function
on an interval $[\sigma, \pi]$
is given by the number of normal embeddings
of $\sigma$ into $\pi$, plus a correction factor.
Smith notes~\cite[Remark 22]{Smith2016a} that 
95\% of intervals with $\order{\pi} \leq 8$ have
$\mobfn{\sigma}{\pi}
=
(-1)^{\order{\pi} - \order{\sigma}}
\NE(\sigma, \pi)$,
so in these cases
the correction factor is zero.

Smith remarks~\cite[Remark 23]{Smith2016a} that
where we can show that 
\[
\sum_{\lambda \in [\sigma, \pi)}
\mobfn{\sigma}{\lambda}
\sum_{S \in EZ^{\lambda,\pi}} (-1)^{\order{S}}
= 0,
\] 
the normal embedding approach 
can determine the value of the \mob function in
polynomial time, whereas using the recursive
formula of Equation~\ref{equation_mobius_function}
is exponential complexity.

One approach to showing that
$
\sum_{\lambda \in [\sigma, \pi)}
\mobfn{\sigma}{\lambda}
\sum_{S \in EZ^{\lambda,\pi}} (-1)^{\order{S}}
= 0
$
would be to find permutations $\sigma$ and $\pi$
such that for any 
$\lambda \in [\sigma, \pi)$,
$EZ^{\lambda,\pi}$ is empty, 
as this would force the second term to be zero.
Some small-scale experiments by the author
suggest that it is significantly more likely that
some of the sets
$EZ^{\lambda,\pi}$
are non-empty, 
and therefore when
$
\sum_{\lambda \in [\sigma, \pi)}
\mobfn{\sigma}{\lambda}
\sum_{S \in EZ^{\lambda,\pi}} (-1)^{\order{S}}
= 0
$
it is likely to be because,
taken across every $\lambda \in [\sigma, \pi)$,
the number of sets in 
$EZ^{\lambda,\pi}$ with even order
is the same as the number of sets with odd order.

%
%
Brignall and Marchant~\cite{Brignall2017a}
showed that if the lower bound of an interval is indecomposable,
then the \mob function depends only on the indecomposable permutations 
contained in the upper bound. 
They then used this result to find a fast polynomial algorithm
for computing $\mobp{\pi}$ where $\pi$
is an increasing oscillation.  
This paper forms the basis of Chapter~\ref{chapter_incosc_paper} 
of this thesis.

%
%
Brignall, Jel{\'{i}}nek, Kyn{\v{c}}l and Marchant~\cite{Brignall2020}
prove that if a permutation $\pi$ contains
opposing adjacencies, then
$\mobp{\pi} = 0$.
They then use this to
show that the proportion of permutations of length $n$ with principal \mob 
function equal to zero is asymptotically bounded below by 
$(1-1/e)^2 \ge 0.3995$.
This paper forms the basis of Chapter~\ref{chapter_oppadj_paper} of this thesis.

%
%
Jel{\'{i}}nek, Kantor, Kyn{\v{c}}l and Tancer~\cite{Jelinek2020}
show how to construct a sequence of permutations
$\pi_n$
with length $2n + 2$,
and they show that for $n \geq 2$,
\[
\mobp{\pi_n}
=
-\binom{n+2}{7}
-\binom{n+1}{7}
+ 2\binom{n+2}{5}
-\binom{n+2}{3}
-\binom{n}{2}
-2n,
\]
and thus demonstrate that 
the absolute value of the \mob function
grows according to the seventh power of the length.
In their paper they also show that 
if $f:[\sigma, \pi] \to \mathbb{R}$ is any
function satisfying $f(\pi) = 1$, then
\[
\mobfn{\sigma}{\pi}
=
f(\sigma) 
- 
\sum_{\lambda \in [\sigma, \pi)}
\mobfn{\sigma}{\lambda}
\sum_{\tau \in [\lambda, \pi]}
f(\tau)
\]
and as a corollary, they then show that
\[
\mobfn{\sigma}{\pi}
=
(-1)^{\order{\pi} - \order{\sigma}} \E(\sigma, \pi)
- 
\sum_{\lambda \in [\sigma, \pi)}
\mobfn{\sigma}{\lambda}
\sum_{\tau \in [\lambda, \pi]}
(-1)^{\order{\pi} - \order{\tau}} \E(\tau, \pi),
\]
where $\E(\alpha, \beta)$ is the number of
embeddings of $\alpha$ into $\beta$.
We note that the formula in the corollary
has a similar structure to
Equation~\ref{equation_smith_all_intervals}
described above,
although in general it suffers from the same restrictions
as Smith's equation.

%
%
Finally, Marchant~\cite{Marchant2020} showed how to
construct a sequence of permutations
$\pi_1$, $\pi_2$, $\pi_3, \ldots$
with lengths $n, n+4, n+8, \ldots$     
such that $\mobfn{1}{\pi_{i+1}} = 2 \mobfn{1}{\pi_{i}}$,
and this gives us that the
growth of the 
principal \mob 
function on the permutation poset
is exponential.  
This paper forms the basis of Chapter~\ref{chapter_2413_balloon_paper} 
of this thesis.

\section{Motivation}

%
%
It seems reasonably clear that the \mob function
of the permutation poset under classic pattern containment
is a non-trivial problem.  This contrasts with 
some of the posets described in~\ref{section_background_mobius_various_posets},
where the \mob function is completely determined.

%
%
As we have described, the study of the 
permutation poset under classic pattern containment
was initiated by Wilf in 2002 in~\cite{Wilf2002}.
Anecdotally, Wilf is believed to have later said
that the \mob function on the permutations pattern poset
was \textit{``A mess. Don't touch it''}.

We state here that we think that
Wilf's reported view is somewhat pessimistic.
While we think that it is unlikely that
there is a polynomial-time procedure
for computing the value of the \mob
on an arbitrary interval of the permutation poset,
we believe, and we hope to show in this
thesis, that there is considerable scope
for further research in this area.

%
%
The permutation pattern poset
is the subject of considerable research activity
outside of the \mob function,
and we claim that the permutation 
pattern poset is the underlying
object for many studies 
related to patterns in permutations.
This then means that 
research into the 
\mob function on the permutation pattern poset
may lead to a better understanding of this
poset, and hence to results in
other areas related to permutation patterns.

%
%
We can summarise our motivation for research in this area 
by saying that
the \mob function of the permutation poset under
classic pattern containment
is not well-understood, and indeed
up until recently
the proportion of 
permutations where we had a (computationally) simple
way to determine the value of the principal \mob function
was, asymptotically, zero.  

The paper which forms the basis of Chapter~\ref{chapter_oppadj_paper}
shows that, asymptotically, 
the proportion of permutations
where the principal \mob function is zero is at least \zpmfr.
The corollary to this result, however, is that we do not yet
have an effective means to compute the 
principal \mob function for 60\% of all permutations.

Further, research into the 
\mob function on the permutation pattern poset
may lead to a better understanding of the 
intrinsic properties of the poset,
which in turn may lead to results
in related areas.

    \chapter{The \mob function of permutations with an indecomposable lower bound}
\chaptermark{Permutations with an indecomposable lower bound}
\label{chapter_incosc_paper}

\section{Preamble}

This chapter
is based on a published paper~\cite{Brignall2017a},
which is joint work with 
Robert Brignall.

In this chapter, we
show that the \mob function
of an interval in a permutation poset
where the lower bound is sum (resp. skew) indecomposable depends solely
on the sum (resp. skew) indecomposable permutations contained
in the upper bound,
and that this can simplify the calculation
of the \mob sum.
For increasing oscillations,
we give a recursion for the \mob sum
which only involves evaluating
simple inequalities.

\section{Introduction}

Recall that the \mob function on a poset interval $[\sigma, \pi]$
is defined by
\begin{align}
\label{incosc_equation_mobius_function}
\mobfn{\sigma}{\pi}
& =
\begin{cases}
1 & \text{if $\sigma = \pi$,} \\
- \sum_{\lambda \in [\sigma, \pi)} \mobfn{\sigma}{\lambda} &\text{otherwise.}
\end{cases}
\end{align}

%
%
Our motivation for this 
chapter 
is to find 
a 
\emph{contributing set}\extindex{contributing set} 
$\contrib{\sigma}{\pi}$
that is significantly smaller than
the poset interval $[\sigma, \pi)$,
and a $\{0, \pm1 \}$ weighting function $\weightgen{\sigma}{\alpha}{\pi}$
such that
\begin{align}
\label{equation_contributing_sum}
\mobfn{\sigma}{\pi} 
& = 
- \sum_{\alpha \in \contrib{\sigma}{\pi}}
\mobfn{\sigma}{\alpha} \weightgen{\sigma}{\alpha}{\pi}.
\end{align}
Plainly, in Equation~\ref{equation_contributing_sum},
we could set 
$\contrib{\sigma}{\pi} = [\sigma, \pi)$,
and $\weightgen{\sigma}{\alpha}{\pi} = 1$,
which is equivalent to
Equation~\ref{incosc_equation_mobius_function}.

One approach here would be to
take a permutation $\beta$ 
such that
$\sigma < \beta < \pi$.
We could then set 
$\contrib{\sigma}{\pi} 
= 
\{
\lambda : 
\lambda \in [\sigma, \pi) 
\text{ and } 
\lambda \not\in [\sigma, \beta]
\}$,
and $\weightgen{\sigma}{\alpha}{\pi} = 1$,
since, from Equation~\ref{incosc_equation_mobius_function},
$\sum_{\lambda \in [\sigma, \beta]} \mobfn{\sigma}{\lambda} = 0$.
This approach was used in 
Smith~\cite{Smith2013},
who determined the \mob function on the interval
$[1, \pi]$ for all permutations $\pi$ with a single descent.
Smith's paper is unusual, in that it provides an explicit formula
for the value of the \mob function.

Our approach is different.  
We identify individual elements (say $\lambda$), of the poset
that have $\mobfn{\sigma}{\lambda} = 0$.  
We also
show that there are pairs of elements, 
$\lambda$ and $\lambda^\prime$,
where $\mobfn{\sigma}{\lambda} = - \mobfn{\sigma}{\lambda^\prime}$,
and so we can exclude these pairs of elements.
Finally, we show that there are
quartets of permutations $\lambda_1, \ldots, \lambda_4$
where $\sum_{i=1}^{4} \mobfn{\sigma}{\lambda_i} = 0$;
and that we can systematically identify these quartets.
By excluding these permutations from
$\contrib{\sigma}{\pi}$
we can significantly reduce the number of elements
in $\contrib{\sigma}{\pi}$ 
compared to the number of elements in the
interval $[\sigma, \pi)$. 
This approach results in
the ability to compute 
$\mobfn{\sigma}{\pi}$, 
where
$\sigma$ is indecomposable,
much faster than evaluating 
Equation~\ref{incosc_equation_mobius_function}.
For increasing oscillations, we will show
that the elements of $\contrib{\sigma}{\pi}$
can be determined using simple inequalities,
and that as a consequence
$\mobfn{\sigma}{\pi}$ can be determined
using inequalities.
With this approach, we 
have computed $\mobfn{1}{\pi}$,
where $\pi$ is an increasing oscillation,
up to $\order{\pi} = \text{2,000,000}$. 

Our main tool in the first part of
this chapter comes from the results of
Burstein, Jel{\'{i}}nek, Jel{\'{i}}nkov{\'{a}} 
and Steingr{\'{i}}msson~\cite{Burstein2011}.
They found
a recursion for the \mob function
for sum/skew decomposable permutations
in terms of the sum/skew indecomposable 
permutations in the lower and upper bounds.
They also found a method to determine
the \mob function for separable permutations
by counting embeddings.
We use the recursions for 
decomposable permutations to underpin the first
part of this chapter.

In this chapter  
we
show that the \mob function
on intervals with a sum indecomposable lower bound
depends only on the sum indecomposable permutations
contained in the upper bound.
We provide a weighting function that
determines which sum indecomposable permutations
contribute to the \mob sum.
We then consider increasing oscillations.
For these permutations, we show how we can
find all of the permutations that contribute to the \mob sum
by applying simple numeric inequalities,
which leads to a fast polynomial algorithm
for determining the \mob function.

We start with some essential definitions and notation
relevant to this chapter
in Section~\ref{incosc_section-definitions-and-notation},
then in Section~\ref{section-preliminary-lemmas} 
we provide a number of preliminary lemmas.
We conclude this section with a theorem
that gives $\mobfn{\sigma}{\pi}$,
where $\sigma$
is a sum indecomposable permutation,
for all $\pi$.
In Section~\ref{section-increasing-oscillations}
we consider $\mobfn{\sigma}{\pi}$
where $\sigma$
is a sum indecomposable permutation,
and $\pi$ is an increasing oscillation.
We finish with some concluding remarks in 
Section~\ref{incosc_section-concluding-remarks}.

\section{Definitions and notation}\label{incosc_section-definitions-and-notation}

When discussing the \mob function, 
$\mobfn{\sigma}{\pi} = - \sum_{\lambda \in [\sigma, \pi)} \mobfn{\sigma}{\lambda}$,
we will frequently be examining the value of
$\mobfn{\sigma}{\lambda}$ for a specific permutation 
$\lambda$.  We say that this is the 
\emph{contribution}\extindex{contribution} 
that 
$\lambda$ makes to the sum.  If we have a set of permutations
$S \subseteq [\sigma, \pi)$
such that
$\sum_{\lambda \in S} \mobfn{\sigma}{\lambda} = 0$, 
then we say that the set $S$ makes
\emph{no net contribution}\extindex{net contribution}
to the sum.

The 
\emph{interleave}\extindex[permutation]{interleave} 
of two permutations
$\alpha$ and $\beta$ is formed by taking
the sum $\alpha \oplus \beta$,
and then exchanging the value of the largest point from
$\alpha$ with the value of the smallest point from $\beta$.
We can also view this as increasing the largest
point from $\alpha$ by 1, and simultaneously
decreasing the smallest point from $\beta$ by 1.
We write an interleave as $\alpha \interleave \beta$.
For example, $321 \interleave 213 = 421536$,
see Figure~\ref{figure_examples-of-sums-and-interleaves}.

For completeness, we also define
a 
\emph{skew interleave}\extindex[permutation]{skew interleave},
$\alpha \skewinterleave \beta$,
which is formed 
by taking the skew sum $\alpha \ominus \beta$,
and then exchanging the smallest point from
$\alpha$ with the largest point from $\beta$.
As an example,
$321 \skewinterleave 213 = 653214$,
as shown in 
Figure~\ref{figure_examples-of-sums-and-interleaves}.

The interleave operations, 
$\interleave$ and $\skewinterleave$,
are not associative, as
$\oneil \oneil 1$ could represent
$231$ or $312$.  To avoid this ambiguity,
we require that the permutation $1$
can either be interleaved to the left
or to the right, but not both.  
It is easy to see that this restriction
establishes associativity.
We note here that, with this restriction,
an expression involving $\oplus$ and $\interleave$
represents a unique permutation
regardless of the order in which the operations
are applied.

Let $\alpha$ be a permutation with length greater than 1.  
We will frequently want to refer to 
permutations that 
have the form
$\alpha \interleave \alpha \interleave 
\ldots
\interleave \alpha \interleave \alpha$.
If there are $n$ copies of $\alpha$ being
interleaved, then we will write this as
$\nils{n}{\alpha}$\extindex{$\nils{n}{\alpha}$}, 
so, for example, 
we have $\nils{3}{(21)} = 21 \interleave 21 \interleave 21 = 315264$.

\begin{figure}
    \begin{footnotesize}
        \begin{center}
            \begin{subfigure}[t]{0.22\textwidth}
                \begin{center}
                    \begin{tikzpicture}[scale=0.3]   
                    \plotpermgrid{3,2,1,5,4,6}
                    \end{tikzpicture}
                \end{center}
                \caption*{$321 \oplus 213$}
            \end{subfigure}
            \begin{subfigure}[t]{0.22\textwidth}
                \begin{center}
                    \begin{tikzpicture}[scale=0.3]                   
                    \plotpermgrid{6,5,4,2,1,3}
                    \end{tikzpicture}
                \end{center}
                \caption*{$321 \ominus 213$}
            \end{subfigure}
            \begin{subfigure}[t]{0.22\textwidth}
                \begin{center}
                    \begin{tikzpicture}[scale=0.3]                    
                    \plotpermgrid{4,2,1,5,3,6}
                    \end{tikzpicture}
                \end{center}
                \caption*{$321 \interleave 213$}
            \end{subfigure}
            \begin{subfigure}[t]{0.22\textwidth}
                \begin{center}
                    \begin{tikzpicture}[scale=0.3]                    
                    \plotpermgrid{6,5,3,2,1,4}
                    \end{tikzpicture}
                \end{center}
                \caption*{$321 \skewinterleave 213$}
            \end{subfigure}
        \end{center}
    \end{footnotesize}
    \caption{Examples of direct and skew sums and interleaves.} 
    \label{figure_examples-of-sums-and-interleaves}	
\end{figure}
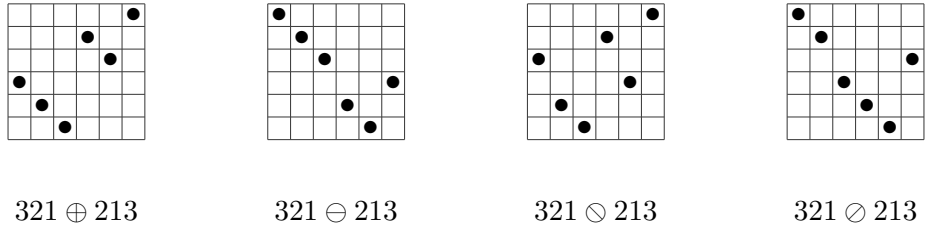    

For the remainder of this chapter, 
by symmetry it suffices to 
discuss permutations in relation to 
sums and interleaves only.
For the same reason, 
references to (in)decomposable permutations 
may omit the ``sum'' qualifier.

The 
\emph{increasing oscillating sequence}\extindex{increasing oscillating sequence} 
is the sequence
\[
4, 1, 6, 3, 8, 5, 10, 7, \ldots, 2k+2, 2k-1, \ldots.
\]
The start of the sequence is depicted in 
Figure~\ref{figure_increasing_oscillating_sequence}.  
\begin{figure}
    \centering
    \begin{subfigure}[c]{0.4\textwidth}
        \centering
        \begin{tikzpicture}[scale=0.3]
        \plotopengrid{12}{14}
        \plotperm{4, 1, 6, 3, 8, 5}
        \smalldot{(7,10)}  
        \smalldot{(8,7)}
        \smallerdot{(9, 12)}
        \smallerdot{(10,  9)}
        \evensmallerdot{(11, 14)}
        \evensmallerdot{(12, 11)}
        \tinydot{(13, 16)}
        \tinydot{(14, 13)}
        \end{tikzpicture}
    \end{subfigure}
    \caption{A depiction of the start of the increasing oscillating sequence.}
    \label{figure_increasing_oscillating_sequence}
\end{figure}
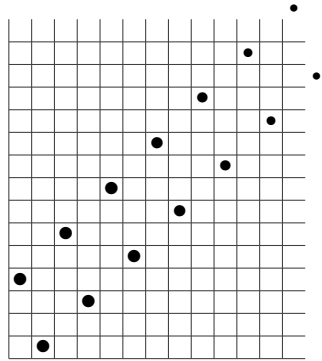  
An 
\emph{increasing oscillation}\extindex[permutation]{increasing oscillation}
is a simple permutation contained in
the increasing oscillating sequence.
For lengths greater than three, 
there are exactly two increasing oscillations
of each length.  
Let $W_n$ be the increasing oscillation 
with $n$ elements which starts with a descent,
and
let $M_n$ be the increasing oscillation
with $n$ elements which starts with an ascent.  Then
\begin{align*}
W_{2n}   & = \nils{n}{\dtwo},  &
M_{2n}   & = \oneil \left( \nils{n-1}{\dtwo} \right) \ilone,  \\
W_{2n-1} & = \left( \nils{n-1}{\dtwo} \right) \ilone, \qquad \text{and} &
M_{2n-1} & = \oneil \left( \nils{n-1}{\dtwo} \right). \\
\end{align*}
Note that $W_n = M_n^{-1}$.

There are instances where,
for some permutation $\alpha$, 
we are interested in the set of
permutations 
$
\{\alpha, \;
\oneplus \alpha, \;
\alpha \plusone, \;
\oneplus \alpha \plusone
\}$.
Given a permutation $\alpha$, we
refer to this set 
as $\familysum{\alpha}$,
and we 
say that this set 
is the 
\emph{family}\extindex{family} 
of $\alpha$.
If $S$ is a set of permutations, then
$\familysum{S}= \cup_{\alpha \in S} \{\familysum{\alpha} \}$.

There are also some instances where we 
are interested in the set of
permutations 
$
\familyil{\alpha} = 
\{\alpha, \;
\oneil \alpha, \;
\alpha \ilone, \;
\oneil \alpha \ilone
\}$.
Note that every increasing oscillation
is an element of 
$
\familyil{\ildtwok}
$
for some $k \geq 1$.

\section{Preliminary lemmas and main theorem}\label{section-preliminary-lemmas}

In this section our aim is to show that 
if $\sigma$ is indecomposable, then for any $\pi \geq \sigma$
there is a $\{0, \pm 1\}$ weighting function
$\weightgen{\sigma}{\alpha}{\pi}$ 
and a set of permutations
$\contrib{\sigma}{\pi}$, such that
\[
\mobfn{\sigma}{\pi} 
= 
- \sum_{\alpha \in \contrib{\sigma}{\pi}}
\mobfn{\sigma}{\alpha} \weightgen{\sigma}{\alpha}{\pi}.
\]

If $\pi$ is 
the identity permutation
$1,2, \ldots, n$ or its reverse,
then $\mobfn{\sigma}{\pi}$ is trivial for any $\sigma$, and we
exclude the identity and its reverse from
being the upper bound of any interval under consideration.

As noted earlier, our approach is to show that 
there are permutations, pairs of permutations, and
quartets of permutations in 
$[\sigma, \pi)$ 
that make no net contribution to the sum.

We use Proposition 1 and 2, and Corollary 3 
from
Burstein, 
Jel{\'{i}}nek, 
Jel{\'{i}}nkov{\'{a}} and 
Steingr{\'{i}}msson~\cite{Burstein2011}.
Note that we have already introduced these 
propositions on page~\pageref{BJJS-proposition-1-as-in-mcnamara},
but we repeat them here for ease of use.
We start with some required notation.
If $\pi$ is a non-empty permutation with decomposition
$\pi_\oneplus \ldots \oplus \pi_n$, then 
for any integer $i$ with $0 \leq i \leq n$, 
$\pi_{\leq i}$ is the permutation 
$\pi_\oneplus \ldots \oplus \pi_i$,
and 
$\pi_{> i}$ is the permutation
$\pi_{i+1} \oplus \ldots \oplus \pi_n$.
An empty sum of permutations is defined as $\varepsilon$, 
and in particular $\pi_{\leq 0} = \pi_{> n} = \varepsilon$.
We can see that 
$\mobfn{\varepsilon}{\varepsilon} = 1$,
$\mobfn{\varepsilon}{1} = -1$ and
$\mobfn{\varepsilon}{\tau} = 0$ for any $\tau > 1$.
We now recall the results from 
Burstein, 
Jel{\'{i}}nek, 
Jel{\'{i}}nkov{\'{a}} and 
Steingr{\'{i}}msson:
\begin{proposition}[{%
    Burstein, 
    Jel{\'{i}}nek, 
    Jel{\'{i}}nkov{\'{a}} and 
    Steingr{\'{i}}msson 
    \cite[Proposition 1]{Burstein2011}}]
    \label{BJJS_proposition_1}
    Let $\sigma$ and $\pi$ be non-empty permutations
    with decompositions 
    $\sigma = \sigma_\oneplus \ldots \oplus \sigma_m$
    and
    $\pi = \pi_\oneplus \ldots \oplus \pi_n$,
    with $n \geq 2$.
    Assume that $\pi_1 = 1$, 
    and let $k$ be the largest integer such that
    $\pi_1, \pi_2, \ldots ,\pi_k$ are all equal to $1$.
    Let $l \geq 0$ be the largest integer such that
    $\sigma_1, \sigma_2, \ldots, \sigma_l$ are all equal to $1$.
    Then
    \begin{align*}
    \mobfn{\sigma}{\pi} & =
    \begin{cases*}
    0 & \text{if $k-1 > l$,} \\
    -\mobfn{\sigma_{> k-1}}{\pi_{> k}} & \text{if $k-1 = l$,} \\
    \mobfn{\sigma_{> k}}{\pi_{> k}} - \mobfn{\sigma_{> k-1}}{\pi_{> k}} & \text{if $k-1 < l$.}
    \end{cases*}
    \end{align*}
\end{proposition}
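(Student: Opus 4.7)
The plan is to proceed by strong induction on $|\pi|$, using the defining recursion $\mobfn{\sigma}{\pi} = -\sum_{\lambda \in [\sigma, \pi)} \mobfn{\sigma}{\lambda}$. The base cases, for instance $|\pi| = 2$, can be checked directly. For the inductive step, I would partition $[\sigma, \pi)$ by writing each $\lambda$ uniquely in its finest sum decomposition as $\lambda = 1^{\oplus j} \oplus \lambda''$, where $j \geq 0$ is the number of leading $1$ components and $\lambda''$ either is empty or does not itself start with a $1$. Since $|\lambda| < |\pi|$ for every $\lambda \in [\sigma, \pi)$, the inductive hypothesis applies to $\mobfn{\sigma}{\lambda}$ whenever $\lambda$ satisfies the hypotheses of the proposition, rewriting $\mobfn{\sigma}{\lambda}$ as either zero, a single value of the \mob function, or a difference of two such values on intervals of the form $[\sigma_{>\cdot}, \lambda'']$ of strictly smaller size.

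Substituting these expressions back into the defining recursion and regrouping the outer sum by $\lambda''$ (rather than by $\lambda$), the double sum should collapse, for each fixed $\lambda''$, into a small number of surviving terms indexed by the extreme admissible values of $j$. A case analysis on the relative sizes of $k-1$ and $l$ then yields the three cases in the conclusion: full cancellation giving $0$ when $k-1 > l$; a single surviving term identifiable with $-\mobfn{\sigma_{>k-1}}{\pi_{>k}}$ when $k-1 = l$; and two surviving terms combining to $\mobfn{\sigma_{>k}}{\pi_{>k}} - \mobfn{\sigma_{>k-1}}{\pi_{>k}}$ when $k-1 < l$.

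The main obstacle I expect is making the cancellation argument precise. For each fixed $\lambda''$, the contributions arising from $\lambda = 1^{\oplus j} \oplus \lambda''$ as $j$ varies must telescope or pair off, leaving only the boundary terms dictated by the leading structure of $\pi$. To handle this I would exhibit a sign-reversing involution matching $\lambda$'s across adjacent values of $j$ and verify that both $\lambda$ and its partner lie in $[\sigma, \pi)$. The delicate point is the constraint $\lambda \leq \pi$: extra leading $1$'s in $\lambda$ may in principle be absorbed as a pattern into the non-leading part $\pi_{>k}$ of $\pi$, so one must determine precisely the maximum admissible $j$ for each $\lambda''$. This boundary behaviour is exactly what distinguishes the three cases of the proposition, and identifying the surviving terms correctly is the crux of the argument.
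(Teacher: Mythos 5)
First, note that this statement is not proved in the thesis at all: it is quoted verbatim as a known result of Burstein, Jel{\'i}nek, Jel{\'i}nkov{\'a} and Steingr{\'i}msson (\cite[Proposition 1]{Burstein2011}), so there is no in-paper proof to compare your argument against; your attempt has to stand on its own as a proof of the cited result.

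As it stands, it does not. Your plan is to apply the defining recursion $\mobfn{\sigma}{\pi} = -\sum_{\lambda\in[\sigma,\pi)}\mobfn{\sigma}{\lambda}$, rewrite each $\lambda = 1^{\oplus j}\oplus\lambda''$, and then invoke the inductive hypothesis to replace each $\mobfn{\sigma}{\lambda}$ before regrouping. But the proposition you are inducting on only applies to permutations that are sum-decomposable with first component equal to $1$; for every $\lambda$ that is sum-indecomposable, or whose finest decomposition does not begin with a $1$ (i.e.\ exactly the terms with $j=0$, including all $\lambda\le\pi_{>k}$), the inductive hypothesis says nothing, so the ``substitute and regroup'' step cannot be carried out uniformly. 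These unrewritten terms are precisely the ones that must produce the surviving boundary contributions $\mobfn{\sigma_{>k}}{\pi_{>k}}$ and $\mobfn{\sigma_{>k-1}}{\pi_{>k}}$, so you would need either a companion statement covering the case $\lambda_1\neq 1$ (in effect Proposition~\ref{BJJS_proposition_2}) inside the same induction, or a direct sign-reversing involution on $[\sigma,\pi)$ itself rather than on the rewritten sums. Second, the cancellation that is the heart of the matter is only asserted (``the double sum should collapse''), and the one genuinely delicate ingredient --- characterising, for a fixed $\lambda''$, exactly which $j$ give $1^{\oplus j}\oplus\lambda''\le 1^{\oplus k}\oplus\pi_{>k}$, bearing in mind that leading $1$'s of $\lambda$ may embed into $\pi_{>k}$ rather than into the leading $1$'s of $\pi$, and likewise that $\sigma\le\lambda$ must be tracked through the pairing --- is flagged by you as an obstacle but never resolved. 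Since the three cases of the statement are determined entirely by this boundary analysis and by which terms escape the involution, the proposal is an outline of a plausible strategy rather than a proof: the missing containment lemma and the explicit matching of chains of leading $1$'s are exactly what would have to be supplied.
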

\begin{proposition}[{%
    \cite[Proposition 2]{Burstein2011}}]
    \label{BJJS_proposition_2}
    Let $\sigma$ and $\pi$ be non-empty permutations
    with decompositions 
    $\sigma = \sigma_1 \oplus \ldots \oplus \sigma_m$
    and
    $\pi = \pi_1 \oplus \ldots \oplus \pi_n$,
    with $n \geq 2$.
    Assume that $\pi_1 \neq 1$, 
    and let $k$ be the largest integer such that
    $\pi_1, \pi_2, \ldots ,\pi_k$ are all equal to $\pi_1$.
    Then
    \begin{align*}
    \mobfn{\sigma}{\pi} & =
    \sum_{i=1}^m
    \sum_{j=1}^k
    \mobfn{\sigma_{\leq i}}{\pi_{1}}
    \mobfn{\sigma_{> i}}{\pi_{> j}}.
    \end{align*}    
\end{proposition}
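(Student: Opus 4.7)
The plan is to argue by induction on $\order{\pi}$, starting from the defining recursion $\mobfn{\sigma}{\pi} = -\sum_{\sigma \leq \lambda < \pi} \mobfn{\sigma}{\lambda}$. Because $\pi_1$ is now sum-indecomposable of length at least $2$, any $\lambda$ in the open interval $[\sigma, \pi)$ has considerable flexibility in how its finest decomposition $\lambda = \lambda^1 \oplus \cdots \oplus \lambda^r$ embeds into $\pi$, so the first step is to tame this flexibility. I would classify each such $\lambda$ by a pair $(i, j)$ that records where the embedding ``crosses'' from $\pi_1$ into $\pi_{>k}$: $i$ is the largest index such that $\lambda^1 \oplus \cdots \oplus \lambda^i$ fits inside $\pi_1$, and $j \in \{1, \ldots, k\}$ is chosen minimally so that the remaining part $\lambda^{i+1} \oplus \cdots \oplus \lambda^r$ fits inside $\pi_{>j}$. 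Such a pair always exists because $\pi_1, \ldots, \pi_k$ are all equal to $\pi_1$ and hence interchangeable.

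The second step is to show that the partial sum of $\mobfn{\sigma}{\lambda}$ over $\lambda$ assigned to a given $(i,j)$ collapses, after negation, to the factor $\mobfn{\sigma_{\leq i}}{\pi_1}\,\mobfn{\sigma_{>i}}{\pi_{>j}}$. I would attack this in two stages. First, prove a multiplicativity-style lemma: for $\tau \leq \pi_1$, $\rho \leq \pi_{>j}$ with the sum decomposition of $\tau \oplus \rho$ forced (so neither piece can be absorbed into its neighbour), one has $\mobfn{\sigma}{\tau \oplus \rho} = \mobfn{\sigma_{\leq i}}{\tau}\,\mobfn{\sigma_{>i}}{\rho}$. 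Second, sum over admissible $\tau$ and $\rho$ and use the recursive characterisation of the \mob function on each factor interval to collapse the double sum into the stated product. Negating via the outer recursion then yields the right-hand side summed over $(i,j)$.

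The main obstacle I expect is the \emph{ambiguity} of the assignment $\lambda \mapsto (i,j)$: if $\lambda$ contains an interval order-isomorphic to $\pi_1$ straddling the chosen split, two distinct pairs $(i,j)$ could be equally valid. I would break ties by insisting on the minimality of $j$ (equivalently the maximality of $i$), and then verify that the multiplicativity lemma continues to hold modulo cancellations coming from empty factors; the basic identity $\mobfn{\emptyperm}{\alpha} = 0$ for $\order{\alpha} \geq 2$ dispatches most of the boundary terms cleanly, and the remaining telescoping matches exactly because $\pi_1$ is indecomposable. An alternative, more robust route — which I would reserve as a fallback if the inductive bookkeeping gets unwieldy — is to work chain-theoretically via Hall's theorem, constructing a bijection between chains in $[\sigma,\pi)$ and pairs of synchronised chains in $[\sigma_{\leq i}, \pi_1]$ and $[\sigma_{>i}, \pi_{>j}]$ indexed by the admissible $(i,j)$, and reading off the formula directly from the resulting sum of products of chain-parities.
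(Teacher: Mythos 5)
A preliminary remark on provenance: the thesis does not prove this statement at all --- it is quoted as an external tool from Burstein, Jel{\'i}nek, Jel{\'i}nkov{\'a} and Steingr{\'i}msson~\cite{Burstein2011} --- so there is no in-paper proof to compare against. Measured against the original argument (an induction on the M\"obius recursion with a careful analysis of how elements of $[\sigma,\pi)$ decompose relative to the equal leading blocks of $\pi$), your plan starts in a sensible place but leaves the genuinely hard steps unproved, and one of your stated lemmas is false.

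Concretely: the ``multiplicativity-style lemma'' $\mobfn{\sigma}{\tau\oplus\rho}=\mobfn{\sigma_{\leq i}}{\tau}\,\mobfn{\sigma_{>i}}{\rho}$ for a single fixed $i$ does not hold; at best one has a convolution over all split points of $\sigma$, and that convolution is essentially the $k=1$ case of the very proposition you are proving, so invoking it would be circular unless you establish it independently. Worse, even the $i$-convolution with $j$ frozen at $1$ is wrong once the leading blocks repeat: for $\sigma=1$ and $\pi=21\oplus 21=2143$ one has $\mobfn{1}{2143}=-1$, while every product $\mobfn{\sigma_{\leq i}}{21}\,\mobfn{\sigma_{>i}}{21}$ vanishes because $\mobfn{\emptyperm}{21}=0$; the correct value comes entirely from the $j=2$ term $\mobfn{1}{21}\,\mobfn{\emptyperm}{\emptyperm}$ of the stated formula. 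This shows that the sum over $j$ --- the bookkeeping of how many of the equal blocks $\pi_1=\dots=\pi_k$ are consumed --- is the heart of the result, and your proposal only gestures at it (``minimality of $j$'', ``the remaining telescoping matches'') without an argument. There is also a conflation of indices: your pair $(i,j)$ classifies $\lambda\in[\sigma,\pi)$ by the components of $\lambda$, whereas the $i$ in the target formula ranges over components of $\sigma$; the ``collapsing'' step that is supposed to bridge the two is asserted rather than proved, and it is essentially the entire content of the proposition. The Hall-sum fallback has the same status: the synchronised-chain bijection would have to be constructed and its parity bookkeeping verified, none of which is sketched.
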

\begin{corollary}[{%
    \cite[Corollary 3]{Burstein2011}}]
    \label{BJJS_corollary_3}
    Let $\sigma$ and $\pi$ be as in Proposition~\ref{BJJS_proposition_2}.
    Suppose that $\sigma$ is sum indecomposable, so $m = 1$.
    Then
    \begin{align*}
    \mobfn{\sigma}{\pi} & =
    \begin{cases*}
    \mobfn{\sigma}{\pi_1} & \text{if $\pi = \nsums{k}{\pi_1$},} \\
    -\mobfn{\sigma}{\pi_1} & \text{if $\pi = \left(\nsums{k}{\pi_1} \right) \plusone$,} \\
    0 & \text{otherwise,}
    \end{cases*}
    \end{align*}
\end{corollary}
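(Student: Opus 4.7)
The plan is to obtain the corollary as a direct specialization of Proposition~\ref{BJJS_proposition_2} with $m = 1$, together with the explicit values of $\mobfn{\varepsilon}{\tau}$ recorded in the text preceding the propositions.

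First, I would substitute $m = 1$ into Proposition~\ref{BJJS_proposition_2}. Since $\sigma$ is sum indecomposable, its finest sum decomposition has a single summand, so $\sigma_{\leq 1} = \sigma$ and $\sigma_{> 1} = \varepsilon$. The double sum then collapses to
\[
\mobfn{\sigma}{\pi}
= \sum_{j=1}^{k} \mobfn{\sigma}{\pi_1}\, \mobfn{\varepsilon}{\pi_{> j}}
= \mobfn{\sigma}{\pi_1} \sum_{j=1}^{k} \mobfn{\varepsilon}{\pi_{> j}},
\]
so the whole problem reduces to deciding which $j \in \{1, \ldots, k\}$ contribute.

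Next I would use the fact that $\mobfn{\varepsilon}{\tau}$ equals $1$ when $\tau = \varepsilon$, equals $-1$ when $\tau = 1$, and equals $0$ otherwise. Thus the $j$-th summand is nonzero only when $\pi_{> j} = \varepsilon$ (which forces $j = n$) or $\pi_{> j} = 1$ (which forces $j = n - 1$ and $\pi_n = 1$). Combined with the constraint $j \leq k \leq n$, together with the fact that $\pi_1 \neq 1$ by hypothesis (so $k = n$ precludes $\pi_n = 1$), exactly one of the following three situations occurs:
\begin{itemize}
    \item $k = n$, equivalently $\pi = \nsums{k}{\pi_1}$: the single surviving term is $j = k$, giving $\mobfn{\sigma}{\pi_1} \cdot 1$.
    \item $k = n - 1$ and $\pi_n = 1$, equivalently $\pi = (\nsums{k}{\pi_1}) \plusone$: the single surviving term is $j = k$, giving $\mobfn{\sigma}{\pi_1} \cdot (-1)$.
    \item Neither of the above: no $j \in \{1, \ldots, k\}$ produces a term of the required form, so the sum is $0$.
\end{itemize}

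There is no serious obstacle here; the result is essentially a bookkeeping exercise once Proposition~\ref{BJJS_proposition_2} is in hand. The only point requiring a little care is verifying that the two nonzero cases are genuinely mutually exclusive and exhaustive among the situations where a term survives, which follows from the defining maximality of $k$ together with $\pi_1 \neq 1$.
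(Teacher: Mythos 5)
Your derivation is correct and is essentially the intended route: the paper quotes this statement as a corollary of Proposition~\ref{BJJS_proposition_2} (from Burstein et al.) without reproducing a proof, and specializing that proposition to $m=1$, then using $\mobfn{\varepsilon}{\varepsilon}=1$, $\mobfn{\varepsilon}{1}=-1$ and $\mobfn{\varepsilon}{\tau}=0$ for $\tau>1$, is exactly how it follows. Your bookkeeping of the surviving terms, including the use of the maximality of $k$ and of $\pi_1\neq 1$ to show the two nonzero cases are mutually exclusive and exhaust the possibilities, is sound.
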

A simple consequence of 
Propositions~\ref{BJJS_proposition_1}
and~\ref{BJJS_proposition_2}
is the identification
of some intervals of permutations where 
the value of the \mob function is zero.
\begin{lemma}
    \label{lemma_mobius_function_is_zero}
    Let $\pi \in 
    \{
    \oneplus \oneplus \tau,
    \tau \plusone \plusone,
    \familysum{\sumrab \oplus \tau^\prime}
    \}
    $,
    where $\tau$ is any permutation,
    $r$ is maximal,
    $\alpha$ is sum indecomposable,
    and
    $\tau^\prime$ is any permutation
    greater than $1$.
    Let $\sigma$ be a sum indecomposable permutation.
    Then
    $
    \mobfn{\sigma}{\pi}  = 0
    $.
\end{lemma}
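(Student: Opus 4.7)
The plan is to dispatch each of the three possible forms of $\pi$ by direct application of the Burstein--Jel\'{\i}nek--Jel\'{\i}nkov\'{a}--Steingr\'{\i}msson recursions recalled above as Proposition~\ref{BJJS_proposition_1}, Proposition~\ref{BJJS_proposition_2} and Corollary~\ref{BJJS_corollary_3}. Sum-indecomposability of $\sigma$ keeps every case analysis small: in the notation of Proposition~\ref{BJJS_proposition_1}, the leading-ones parameter $l$ for $\sigma$ is $0$ when $\sigma\neq 1$, and $1$ when $\sigma=1$.

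For $\pi=\oneplus\oneplus\tau$ with $\tau$ non-empty, Proposition~\ref{BJJS_proposition_1} applies with $k\geq 2$. In every sub-case except $\sigma=1$ together with $k=2$, the inequality $k-1>l$ holds and so $\mobfn{\sigma}{\pi}=0$ directly. In the remaining sub-case the recursion returns $\mobfn{\sigma}{\pi}=-\mobfn{\emptyperm}{\pi_{>2}}$; maximality of $k$ forces $\pi_{>2}=\tau$ not to begin with a $1$, so $\tau>1$ and $\mobfn{\emptyperm}{\tau}=0$. The form $\pi=\tau\plusone\plusone$ then follows by applying the reverse-complement symmetry, using $(\alpha\oplus\beta)^{RC}=\beta^{RC}\oplus\alpha^{RC}$ to reduce to the previous form, together with the invariance of sum-indecomposability of $\sigma$ under reverse-complement.

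For $\pi\in\familysum{\sumrab\oplus\tau'}$, write $\beta=\sumrab\oplus\tau'$ and take $\alpha\neq 1$ (since $\alpha=1$ with $r\geq 2$ produces a structure already covered by the first form). The two family members $\beta$ and $\beta\plusone$ begin with $\alpha\neq 1$, so Corollary~\ref{BJJS_corollary_3} applies: maximality of $r$ combined with $\tau'>1$ ensures that neither equals $\nsums{k}{\alpha}$ nor $\nsums{k}{\alpha}\plusone$, so the ``otherwise'' branch of the corollary yields $0$. For the two remaining members $\oneplus\beta$ and $\oneplus\beta\plusone$, Proposition~\ref{BJJS_proposition_1} applies with $k=1$; a short computation reduces $\mobfn{\sigma}{\oneplus\beta^*}$, for $\beta^*\in\{\beta,\beta\plusone\}$, to a combination of $\mobfn{\emptyperm}{\beta^*}$ (which is zero since $\beta^*>1$) and $\mobfn{\sigma}{\beta^*}$ (which is zero by the step just completed).

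The main obstacle is verifying in each sub-case that the two exceptional forms of Corollary~\ref{BJJS_corollary_3}, namely $\nsums{k}{\alpha}$ and $\nsums{k}{\alpha}\plusone$, are genuinely excluded by the hypotheses; this relies jointly on the full strength of ``$r$ is maximal'' (so the tail of $\pi$ does not re-start with $\alpha$) and ``$\tau'>1$'' (so the tail is neither empty nor the single permutation $1$).
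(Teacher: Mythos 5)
Your proof is correct and takes essentially the same route as the paper: each form of $\pi$ is dispatched by the Burstein--Jel\'{\i}nek--Jel\'{\i}nkov\'{a}--Steingr\'{\i}msson recursions, the only cosmetic difference being that for $\sumrab\oplus\tau'$ and $\sumrab\oplus\tau'\plusone$ you invoke Corollary~\ref{BJJS_corollary_3} where the paper applies Proposition~\ref{BJJS_proposition_2} directly (the corollary is itself just the $m=1$ specialisation of that proposition). Like the paper's own argument, your third case implicitly assumes $\alpha\neq 1$, which matches how the lemma is actually used later, so no substantive gap.
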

\begin{proof}
    Consider $\pi = \oneplus \oneplus \tau$.
    We use Proposition~\ref{BJJS_proposition_1}.  
    If $\tau_1 = 1$,
    then $k \geq 3$, and $l \leq 1$, 
    and the result follows immediately.
    Now assume that $\tau_1 \neq 1$.
    Then $k=2$.  
    If $\sigma > 1$, then again the
    result follows immediately.
    If $\sigma=1$, then we have
    $\mobfn{\sigma}{\pi} 
    = - \mobfn{\sigma_{> k-1}}{\pi_{>k}} 
    = - \mobfn{\varepsilon}{\tau} 
    = 0$.
    The case for $\pi = \tau \plusone \plusone$
    follows by symmetry.
    
    Now consider 
    $\pi = \familysum{\sumrab \oplus \tau^\prime}$.    
    If $\pi = \sumrab \oplus \tau$,
    or $\pi = \sumrab \oplus \tau \plusone$,
    then
    we use Proposition~\ref{BJJS_proposition_2}.
    In that context we have $m = 1$ and $k = r$,
    and so
    $
    \mobfn{\sigma}{\pi}
    =
    \sum_{j=1}^r
    \mobfn{\sigma}{\pi_1}
    \mobfn{\varepsilon}{\pi_{> j}} 
    $
    For every value of $j$, $\pi_{> j}$ 
    is non-empty and greater than $1$, and so
    $\mobfn{\varepsilon}{\pi_{> j}} = 0$ for all $j$,
    and hence every term in the sum is zero.     
    If $\pi = \oneplus \sumrab \oplus \tau$
    or
    $\pi = \oneplus \sumrab \oplus \tau \plusone$,
    then
    we use Proposition~\ref{BJJS_proposition_1},
    which reduces to one of the previous cases.    
\end{proof}
We now turn to identifying pairs and quartets of 
permutations
that make no net contribution to the \mob sum.
We start by showing that 
if $\sigma$ and $\alpha$ are indecomposable,
and $r \geq 1$, and
with $\pi \in \familysum{\sumra}$,
then
$\mobfn{\sigma}{\pi}$ and $\mobfn{\sigma}{\alpha}$
have the same magnitude.
\begin{lemma}
    \label{lemma_pi_has_single_block}
    Let $\pi \in \familysum{\sumra}$,
    where $r \geq 1$ and $\alpha > 1$ is sum indecomposable.
    Let $\sigma$ be a sum indecomposable permutation. 
    Then 
    \[
    \mobfn{\sigma}{\pi} = 
    \begin{cases*}
    \mobfn{\sigma}{\alpha} & 
    \text{if $\pi = \sumra$\; or\; $\oneplus \sumrab \plusone $},
    \\
    -\mobfn{\sigma}{\alpha} &
    \text{if $\pi = \oneplus \sumrab $\; or\; $\sumrab \plusone$}.
    \end{cases*}
    \]
    
    As a consequence, 
    if $\familysum{\sumra} \subseteq [\sigma,\pi)$,
    then $\familysum{\sumra}$ makes no net contribution to 
    $\mobfn{\sigma}{\pi}$.
\end{lemma}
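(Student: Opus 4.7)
The plan is a direct case analysis using the recursions of Burstein et al.\ (\cite[Propositions 1 and 2 and Corollary 3]{Burstein2011}), which are restated as Proposition~\ref{BJJS_proposition_1}, Proposition~\ref{BJJS_proposition_2}, and Corollary~\ref{BJJS_corollary_3} above. The four members of $\familysum{\sumra}$ split naturally into two pairs according to whether the first component of the finest sum decomposition is $\alpha$ (so $\pi_1>1$) or $1$ (so $\pi_1=1$), and each pair can be handled by the appropriate recursion.

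First I would dispose of the two cases where $\pi_1 = \alpha$, namely $\pi = \sumra$ and $\pi = \sumra \plusone$. Since $\sigma$ is sum indecomposable, Corollary~\ref{BJJS_corollary_3} applies with $\pi_1 = \alpha$ and $k = r$ (the integer $r$ is maximal by hypothesis). Reading off the corollary immediately gives $\mobfn{\sigma}{\sumra} = \mobfn{\sigma}{\alpha}$ and $\mobfn{\sigma}{\sumra\plusone} = -\mobfn{\sigma}{\alpha}$.

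Next I would handle the two cases where $\pi_1 = 1$, namely $\pi = \oneplus \sumra$ and $\pi = \oneplus \sumra \plusone$. Here Proposition~\ref{BJJS_proposition_1} applies with $k = 1$ (because $\pi_2 = \alpha > 1$), and I would split on whether $\sigma = 1$ (so $l = 1$ and we are in the case $k-1 < l$) or $\sigma > 1$ (so $l = 0$ and we are in the case $k-1 = l$). In the $\sigma > 1$ branch the proposition yields $\mobfn{\sigma}{\pi} = -\mobfn{\sigma}{\pi_{>1}}$, and the value of $\mobfn{\sigma}{\pi_{>1}}$ is provided by the previous paragraph. In the $\sigma = 1$ branch the proposition yields $\mobfn{1}{\pi} = \mobfn{\varepsilon}{\pi_{>1}} - \mobfn{1}{\pi_{>1}}$, and since $\pi_{>1} > 1$ the first term vanishes, reducing again to the previous paragraph. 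Collecting the signs gives the claimed values for all four members.

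The consequence is then an immediate tally: the family $\familysum{\sumra}$ has four members whose contributions to $\mobfn{\sigma}{\pi}$ are $+\mobfn{\sigma}{\alpha}$, $-\mobfn{\sigma}{\alpha}$, $-\mobfn{\sigma}{\alpha}$, $+\mobfn{\sigma}{\alpha}$, summing to zero. There is no real obstacle here; the only point requiring care is the bookkeeping of the four sign combinations and the verification that the maximality of $r$ lets us set $k = r$ when invoking Corollary~\ref{BJJS_corollary_3}.
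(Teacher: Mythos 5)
Your proof is correct and follows essentially the same route as the paper's: Corollary~\ref{BJJS_corollary_3} for the two cases beginning with $\alpha$, Proposition~\ref{BJJS_proposition_1} (split on $\sigma=1$ versus $\sigma>1$) for the two cases beginning with $1$, and then summing the four signed values to get zero net contribution. One small wording slip: the lemma does not hypothesise that $r$ is maximal --- the identification $k=r$ when invoking Corollary~\ref{BJJS_corollary_3} holds simply because all $r$ components of the finest decomposition of $\sumra$ equal the indecomposable permutation $\alpha$.
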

\begin{proof}
    If $\pi = \sumra$
    or $\pi = \sumrab \plusone$,
    then this is immediate from Corollary~\ref{BJJS_corollary_3}.
    If $\pi = \oneplus \sumrab$
    or $\pi = \oneplus \sumrab \plusone$,    
    then we use Proposition~\ref{BJJS_proposition_1}.
    
    For the net contribution of $\familysum{\sumra}$,
    $\sum_{\lambda \in \familysum{\sumra}} \mobfn{\sigma}{\lambda} = 0$.
\end{proof}
We now have a lemma that adds a further restriction to
the permutations that have a non-zero
contribution to the \mob sum.
\begin{lemma}
    \label{lemma_only_r_and_rplusone_count}
    If $\sigma \leq \pi$, 
    and $\alpha \in [\sigma, \pi]$ is sum indecomposable,
    and $r$ is the smallest integer such that 
    $\oneplus \sumrab \plusone \not\leq \pi$, then
    $\familysum{\nsums{k}{\alpha}} \subseteq [\sigma, \pi)$
    for all $k \in [1, r)$.
\end{lemma}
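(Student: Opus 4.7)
The plan is to verify the two required containment conditions directly from the definitions. The family $\familysum{\nsums{k}{\alpha}} = \{k\alpha,\; 1\oplus k\alpha,\; k\alpha\oplus 1,\; 1\oplus k\alpha \oplus 1\}$ has $k\alpha$ as its unique pattern-minimum and $1 \oplus k\alpha \oplus 1$ as its unique pattern-maximum, since each of the four elements lies between these two under classic pattern containment. Consequently, to establish $\familysum{\nsums{k}{\alpha}} \subseteq [\sigma, \pi)$ for every $k \in [1,r)$, it suffices to prove the two inequalities $\sigma \leq k\alpha$ and $1 \oplus k\alpha \oplus 1 < \pi$.

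The lower inequality is immediate: by hypothesis $\alpha \in [\sigma, \pi]$ gives $\sigma \leq \alpha$, and $\alpha \leq k\alpha$ for $k \geq 1$ is visible from the sum decomposition (take $\alpha$ as the first summand), so $\sigma \leq \alpha \leq k\alpha$. For the non-strict upper bound, monotonicity of $\oplus$ in both arguments combined with $k \leq r-1$ yields
\[
1 \oplus k\alpha \oplus 1 \;\leq\; 1 \oplus (r-1)\alpha \oplus 1 \;\leq\; \pi,
\]
where the right-hand containment is exactly the minimality of $r$ in its definition.

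The one subtle point, and the main obstacle, is \emph{strictness} of the upper bound $1 \oplus k\alpha \oplus 1 < \pi$. The only way strictness can fail is when $k = r - 1$ and, simultaneously, $\pi = 1 \oplus (r-1)\alpha \oplus 1$. In this degenerate situation $\pi$ itself belongs to $\familysum{\nsums{r-1}{\alpha}}$, and Lemma~\ref{lemma_pi_has_single_block} already supplies an explicit value for $\mobfn{\sigma}{\pi}$; thus the situation can be carved off as a base case of the subsequent theorem rather than handled by the present lemma. Outside of this corner case, strictness is automatic, and the chain of inclusions $\sigma \leq k\alpha \leq \lambda \leq 1 \oplus k\alpha \oplus 1 < \pi$ holds for every $\lambda \in \familysum{\nsums{k}{\alpha}}$, completing the argument.
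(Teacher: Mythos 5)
Your argument is correct and is essentially the paper's proof: the paper disposes of the lemma with the single chain $\sigma \leq \nsums{k}{\alpha} < \oneplus \left(\nsums{k}{\alpha}\right) \plusone \leq \pi$, which is exactly your combination of $\sigma \leq \alpha \leq \nsums{k}{\alpha}$, the fact that every member of $\familysum{\nsums{k}{\alpha}}$ lies between $\nsums{k}{\alpha}$ and $\oneplus \left(\nsums{k}{\alpha}\right) \plusone$, and the minimality of $r$. The strictness caveat you isolate (only possible at $k=r-1$ with $\pi = \oneplus \left(\nsums{r-1}{\alpha}\right) \plusone$, when the top member of the family coincides with $\pi$) is genuine and is passed over silently by the paper's one-line proof as well; as you observe, it is harmless downstream, since a $\pi$ that itself belongs to a family is handled directly by Lemma~\ref{lemma_pi_has_single_block}.
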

\begin{proof}
        For any $k < r$, 
        $\sigma \leq 
        \nsums{k}{\alpha} < 
        \oneplus \left(\nsums{k}{\alpha}\right) \plusone \leq
        \pi$.    
        Note that by Lemma~\ref{lemma_pi_has_single_block}
        the net contribution of the family 
        $\familysum{\nsums{k}{\alpha}}$ to $\mobfn{\sigma}{\pi}$ is zero.
\end{proof}
\begin{observation}
    \label{observation_only_r_and_rplusone_count}
    Using the same terminology as 
    Lemma~\ref{lemma_only_r_and_rplusone_count}, if $k > r+1$
    then we must have $\nsums{k}{\alpha} \not\leq \pi$.
    As a consequence, for each indecomposable 
    $\alpha \in [\sigma, \pi]$, the only families of $\alpha$
    that can have a non-zero net contribution
    to $\mobfn{\sigma}{\pi}$ are
    $\familysum{\sumra}$ 
    and $\familysum{\nsums{r+1}{\alpha}}$.
\end{observation}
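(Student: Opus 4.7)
The plan is to first establish the containment claim by a contradiction argument, and then combine it with the two preceding lemmas to deduce the statement about families.

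For the containment claim, I would suppose for contradiction that $\nsums{k}{\alpha} \leq \pi$ for some $k \geq r+2$. Since containment is transitive and $\nsums{r+2}{\alpha}$ is trivially a pattern in $\nsums{k}{\alpha}$ (take the first $r+2$ of the $k$ diagonal copies of $\alpha$), this would give $\nsums{r+2}{\alpha} \leq \pi$. The crux is then to show that $\nsums{r+2}{\alpha}$ itself contains $\oneplus\sumrab\plusone$ as a pattern: viewing $\nsums{r+2}{\alpha}$ as $r+2$ diagonal copies of $\alpha$, select any single point from the first copy, all points of the middle $r$ copies, and any single point from the last copy. The resulting subsequence is order-isomorphic to $1 \oplus \sumra \oplus 1$, which would contradict the minimality of $r$.

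For the consequence, the $\alpha$-associated families are $\familysum{\nsums{k}{\alpha}}$ indexed by $k \geq 1$. Lemma~\ref{lemma_only_r_and_rplusone_count} places each family with $k \in [1, r)$ entirely inside $[\sigma, \pi)$, and Lemma~\ref{lemma_pi_has_single_block} then guarantees that any such family contributes zero net to $\mobfn{\sigma}{\pi}$. On the other hand, for $k \geq r+2$ the containment claim just proved gives $\nsums{k}{\alpha} \not\leq \pi$; since every element of $\familysum{\nsums{k}{\alpha}}$ contains $\nsums{k}{\alpha}$ as a pattern, no member of that family lies in $[\sigma, \pi)$ at all, and so the family contributes zero for the trivial reason that it is empty as a subset of the poset interval. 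Only the families for $k = r$ and $k = r+1$ remain as possible sources of a non-zero contribution.

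I do not anticipate any serious obstacle. The one point that needs a moment of care is the ``pick one point from each end block'' construction used to show $\oneplus\sumrab\plusone \leq \nsums{r+2}{\alpha}$; I would check that this works uniformly, including the degenerate case $\alpha = 1$, where each block is a single point and the selected subsequence coincides with $\nsums{r+2}{\alpha}$ itself and equals $\oneplus\sumrab\plusone$ on the nose. Everything else is a direct appeal to the two lemmas already proved.
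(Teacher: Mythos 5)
Your proposal is correct and follows exactly the route the paper intends (the Observation is stated there without an explicit proof): if $\nsums{k}{\alpha}\leq\pi$ for some $k\geq r+2$, then taking one point from the first block, all of the middle $r$ blocks and one point from the last block of $\nsums{r+2}{\alpha}$ gives $\oneplus\sumrab\plusone\leq\pi$, contradicting the defining property of $r$, and the families with $k<r$ and $k>r+1$ are then disposed of by Lemmas~\ref{lemma_only_r_and_rplusone_count} and~\ref{lemma_pi_has_single_block} and by emptiness of the family inside $[\sigma,\pi)$, respectively. The only cosmetic remark is that the contradiction is with the defining property of $r$ (that $\oneplus\sumrab\plusone\not\leq\pi$) rather than with its minimality, which does not affect the argument.
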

We now eliminate two specific permutations from the \mob sum.
\begin{lemma}
    \label{lemma_order_greater_than_three_ignore_o_opo}
    If $\pi$ is any permutation with $\order{\pi} > 3$
    apart from the identity permutation and its reverse,
    and $\sigma$ is sum indecomposable,
    then the permutations $1$ and $\oneplusone$ make no net contribution
    to the \mob sum $\mobfn{\sigma}{\pi}$.
\end{lemma}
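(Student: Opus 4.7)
The plan is to read the conclusion as the assertion that, in the defining sum $\mobfn{\sigma}{\pi} = -\sum_{\lambda \in [\sigma,\pi)}\mobfn{\sigma}{\lambda}$, the two specific terms coming from $\lambda = 1$ and $\lambda = \oneplusone$ sum to zero, using the convention that $\mobfn{\sigma}{\lambda} = 0$ whenever $\lambda$ is not in the interval (so that such terms effectively do not appear). Given this reading, I would attack the statement by case analysis on $\sigma$.

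The key preliminary observation is that since $\oneplusone = 1 \oplus 1$ is sum decomposable and $\sigma$ is assumed sum indecomposable, the condition $\sigma \leq \oneplusone$ forces $\sigma = 1$; likewise $\sigma \leq 1$ forces $\sigma = 1$. Consequently, if $\sigma \neq 1$ then both $\mobfn{\sigma}{1}$ and $\mobfn{\sigma}{\oneplusone}$ vanish directly from Equation~\ref{incosc_equation_mobius_function}, and there is nothing to prove.

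This reduces the proof to the single case $\sigma = 1$. Here I would need to check two things: that both $1$ and $\oneplusone$ actually lie in the half-open interval $[1,\pi)$, and that the corresponding \mob values cancel. The inclusion $1 < \pi$ is immediate from $\order{\pi} > 3$; the inclusion $\oneplusone < \pi$ is where the hypothesis that $\pi$ is not the reverse of the identity enters the argument, since the only permutations avoiding $\oneplusone$ are strictly decreasing, so $\pi$ must contain $\oneplusone$ as a pattern, and $\order{\pi} > 2$ then promotes this to a strict inequality. The cancellation itself is a one-line computation: $\mobfn{1}{1} = 1$ by definition, while $\mobfn{1}{\oneplusone} = -\mobfn{1}{1} = -1$, again directly from Equation~\ref{incosc_equation_mobius_function}.

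There is essentially no obstacle here; the whole argument rests on sum indecomposability of $\sigma$ filtering out all but the trivial case, and on noticing that the exclusion of the reverse of the identity from the allowed upper bounds is precisely what is needed to ensure $\oneplusone$ appears in the open interval when $\sigma = 1$.
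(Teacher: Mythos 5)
Your proposal is correct and follows essentially the same route as the paper: split on whether $\sigma = 1$, note that for $\sigma = 1$ the values $\mobfn{1}{1} = 1$ and $\mobfn{1}{12} = -1$ cancel, and for sum indecomposable $\sigma > 1$ neither $1$ nor $12$ lies in the interval. Your additional remark that excluding the reverse of the identity is exactly what guarantees $12 < \pi$ is a detail the paper leaves implicit, but the argument is the same.
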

\begin{proof}
    If $\sigma = 1$, then the interval contains
    both $1$ and $\oneplusone$.  Since 
    $\mobfn{1}{1} = 1$ and $\mobfn{1}{12} = -1$,
    there is no net contribution to $\mobfn{\sigma}{\pi}$.
    If $\sigma > 1$, then $\sigma \neq 12$,
    and so neither $1$ nor $12$ is in the interval.
\end{proof}

Before we present the main theorem for this section,
we formally define the weight function
and the contributing set.
Let $\alpha$ be a sum indecomposable permutation.
The weight function, $\weightgen{\sigma}{\alpha}{\pi}$,
is defined as 
\begin{align}
\label{equation_general_weight_function}
\weightgen{\sigma}{\alpha}{\pi}
& =
\begin{cases*}
1 & 
If
$
\left\lbrace
\begin{array}{l}
\sigma \leq \sumra \leq \pi
\text{ and } \\
\oneplus \sumrab \not \leq \pi
\text{ and } \\
\sumrab \plusone \not \leq \pi,
\end{array}
\right.
$
\\
-1 & 
If
$
\left\lbrace
\begin{array}{l}
\sigma \leq \sumra \leq \pi
\text{ and } \\
\oneplus \sumrab \leq \pi
\text{ and } \\
\sumrab \plusone \leq \pi 
\text{ and } \\
\nsums{r+1}{\alpha} \not\leq \pi,
\end{array}
\right.
$
\\
0 &
Otherwise,
\end{cases*}
\end{align}
where $r$ is the smallest integer 
such that $\oneplus \sumrab \plusone \not \leq \pi$.

The contributing set $\contrib{\sigma}{\pi}$ 
is defined as
\begin{align*}
\contrib{\sigma}{\pi}
& = 
\left\lbrace
\alpha :
\begin{array}{l}
\alpha \in [\sigma, \pi), \\
\alpha \text{ is sum indecomposable, and } \\
\weightgen{\sigma}{\alpha}{\pi} \neq 0
\end{array}
\right\rbrace.
\end{align*}

We have one last lemma before we move on
to the main theorem.
\begin{lemma}
    \label{lemma_weight_of_alpha}
    If $\sigma$ and $\alpha$ are sum indecomposable,
    then for any permutation $\pi$,
    $\mobfn{\sigma}{\alpha}\weightgen{\sigma}{\alpha}{\pi}$
    gives the contribution of the set of 
    families
    $\familysum{\sumra}$ to the \mob sum,
    where $r$ is any positive integer.
\end{lemma}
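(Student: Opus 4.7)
The plan is to combine Observation~\ref{observation_only_r_and_rplusone_count}, which limits the families that can contribute, with a short case analysis governed by which elements of the $r$-family lie in $[\sigma, \pi)$. First I would dispose of the trivial case $\sigma \not\leq \alpha$: both $\mobfn{\sigma}{\alpha}$ and every $\mobfn{\sigma}{\lambda}$ with $\lambda \in \familysum{\nsums{k}{\alpha}}$ vanish, and $\weightgen{\sigma}{\alpha}{\pi}=0$ as well. So I assume $\sigma \leq \alpha$, whence $\sigma \leq \nsums{k}{\alpha}$ for every $k \geq 1$.

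Let $r$ be the smallest positive integer with $\oneplus \sumrab \plusone \not\leq \pi$. By Lemma~\ref{lemma_only_r_and_rplusone_count} each family $\familysum{\nsums{k}{\alpha}}$ with $k < r$ is contained in $[\sigma, \pi)$ and so by Lemma~\ref{lemma_pi_has_single_block} makes no net contribution. For $k > r+1$ we have $\nsums{k}{\alpha} \not\leq \pi$, so these families contribute nothing either. Only the $r$-family and the $r+1$-family can contribute.

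Next I would record the containment facts that drive the case split. Because $\alpha \geq 1$, the element $\nsums{r+1}{\alpha} = \alpha \oplus \sumra$ dominates each of $\oneplus \sumrab$, $\sumrab \plusone$ and $\oneplus \sumrab \plusone$; and the three other elements of $\familysum{\nsums{r+1}{\alpha}}$ each dominate $\oneplus \sumrab \plusone$ and are therefore not $\leq \pi$. In particular, the only candidate from the $(r+1)$-family in $[\sigma, \pi)$ is $\nsums{r+1}{\alpha}$, and it can lie there only if all three of $\sumra$, $\oneplus\sumrab$ and $\sumrab \plusone$ lie there as well. Within the $r$-family, $\oneplus \sumrab \plusone \not\leq \pi$ by choice of $r$, while the other three elements may or may not lie in $[\sigma, \pi)$.

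Finally I would run through the cases according to which of $\sumra$, $\oneplus \sumrab$, $\sumrab \plusone$ are $\leq \pi$, using Lemma~\ref{lemma_pi_has_single_block} to evaluate each contribution:
\begin{itemize}
\item $\sumra \not\leq \pi$: nothing in either family lies in $[\sigma, \pi)$; contribution $0$, weight $0$.
\item $\sumra \leq \pi$ but $\oneplus \sumrab, \sumrab \plusone \not\leq \pi$: only $\sumra$ contributes, giving $\mobfn{\sigma}{\alpha}$; weight $+1$.
\item $\sumra \leq \pi$ and exactly one of $\oneplus\sumrab, \sumrab \plusone$ is $\leq \pi$: the two contributions cancel, and $\nsums{r+1}{\alpha}$ is ruled out by the containment observation above; contribution $0$, weight $0$.
\item $\sumra, \oneplus\sumrab, \sumrab \plusone \leq \pi$ and $\nsums{r+1}{\alpha} \not\leq \pi$: contributions sum to $-\mobfn{\sigma}{\alpha}$; weight $-1$.
\item All four of $\sumra, \oneplus\sumrab, \sumrab\plusone, \nsums{r+1}{\alpha}$ lie in $[\sigma, \pi)$: the $r$-family gives $-\mobfn{\sigma}{\alpha}$ and $\nsums{r+1}{\alpha}$ contributes $+\mobfn{\sigma}{\alpha}$; total $0$, weight $0$.
\end{itemize}
In each case the total contribution equals $\mobfn{\sigma}{\alpha}\weightgen{\sigma}{\alpha}{\pi}$, proving the lemma. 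The main obstacle is bookkeeping: verifying that the $(r+1)$-family really does reduce to the single element $\nsums{r+1}{\alpha}$, and that this element is forced out of the interval whenever one of $\oneplus \sumrab$ or $\sumrab \plusone$ is, so that the weight's trichotomy faithfully tracks the arithmetic of the Möbius contributions.
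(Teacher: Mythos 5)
Your proposal follows the paper's own route: Observation~\ref{observation_only_r_and_rplusone_count} cuts everything down to the $r$- and $(r+1)$-families, Lemma~\ref{lemma_pi_has_single_block} supplies the value of each member's contribution, and your bullet-by-bullet split on which of $\oneplus\sumrab$, $\sumrab\plusone$, $\nsums{r+1}{\alpha}$ sit below $\pi$ is exactly the paper's Table~\ref{table_family_members_in_an_interval}; the totals you record in each case agree with $\weightgen{\sigma}{\alpha}{\pi}$, so the substance matches the paper's proof.

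One auxiliary assertion you make is false, though: $\nsums{r+1}{\alpha}$ does \emph{not} contain $\oneplus\sumrab\plusone$. Since $\alpha$ is sum indecomposable, each of the $r$ middle copies of $\alpha$ in $\oneplus\sumrab\plusone$ must embed into its own summand of $\nsums{r+1}{\alpha}$, and the outer two $1$s need summands strictly before the first and strictly after the last of these, so at least $r+2$ summands would be required; concretely, for $\alpha=21$ and $r=1$ one has $\oneplus 21 \plusone = 1324 \not\leq 2143 = \nsums{2}{21}$. Had this claim been load-bearing it would have been fatal: by the choice of $r$ we always have $\oneplus\sumrab\plusone\not\leq\pi$, so it would force $\nsums{r+1}{\alpha}\not\leq\pi$ in every case and erase your own final bullet (and the first row of the paper's table), where $\nsums{r+1}{\alpha}\leq\pi$ is permitted. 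Fortunately nothing else in your argument uses it: the exclusions you actually need are that the other three members of $\familysum{\nsums{r+1}{\alpha}}$ contain $\oneplus\sumrab\plusone$, and that $\nsums{r+1}{\alpha}$ contains each of $\sumra$, $\oneplus\sumrab$ and $\sumrab\plusone$ -- all true. Delete that one clause and your case analysis is the paper's proof.
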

\begin{proof}
    By Observation~\ref{observation_only_r_and_rplusone_count},
    we only need consider the contribution made
    by $\sumra$ and $\nsums{r+1}{\alpha}$,
    where $r$ is the smallest integer 
    such that $\oneplus \sumrab \plusone \not \leq \pi$.
    
    If $\sigma \not \leq \sumra$, 
    or $\sumra \not \leq \pi$, then
    $\familysum{\sumra}$ makes no net contribution 
    to the \mob sum.  Now assume that
    $\sigma \leq \sumra \leq \pi$.
    First, we can see that
    if $\oneplus \sumrab \not \leq \pi$,
    or $\sumrab \plusone \not \leq \pi$
    then $\nsums{r+1}{\alpha} \not \leq \pi$.
    We can also see that 
    if $\oneplus \sumrab \plusone \not \leq \pi$
    then $\oneplus \left(\nsums{r+1}{\alpha}\right) \not \leq \pi$
    and $\nsums{r+1}{\alpha} \not \leq \pi$.
    The possibilities remaining are itemised
    in Table~\ref{table_family_members_in_an_interval},
    \begin{table}[btp]
        \centering
        \begin{tabular}{cccc}
            \toprule
            $\oneplus \sumrab$ &
            $\sumrab \plusone$ &
            $\nsums{r+1}{\alpha}$ &
            \mob contribution \\    
            \midrule
            $\leq \pi$ &
            $\leq \pi$ &
            $\leq \pi$ &
            $0$
            \\    
            $\leq \pi$ &
            $\leq \pi$ &
            $\not \leq \pi$ &
            $- \mobfn{\sigma}{\alpha}$
            \\    
            $\leq \pi$ &
            $\not \leq \pi$ &
            $\not \leq \pi$ &
            $0$
            \\    
            $\not \leq \pi$ &
            $\leq \pi$ &
            $\not \leq \pi$ &
            $0$
            \\    
            $\not \leq \pi$ &
            $\not \leq \pi$ &
            $\not \leq \pi$ &
            $\mobfn{\sigma}{\alpha}$
            \\    
            \bottomrule
        \end{tabular}    
        \caption{\mob contribution from family members.}
        \label{table_family_members_in_an_interval}
    \end{table}      
    where the \mob contribution is determined
    by applying 
    Lemma~\ref{lemma_pi_has_single_block}.    
    We can see that in every case 
    $\weightgen{\sigma}{\alpha}{\pi}$ provides the
    correct weight for the \mob function
    $\mobfn{\sigma}{\alpha}$.
\end{proof}

We are now in a position 
to present the main theorem for this section.
\begin{theorem}
\label{theorem_mobius_sum_bottom_level_indecomposable}
    If $\sigma$ is a sum indecomposable permutation,
    and $\order{\pi} > 3$,
    then
    \[
    \mobfn{\sigma}{\pi} 
    = 
    - \sum_{\alpha \in \contrib{\sigma}{\pi}}
    \mobfn{\sigma}{\alpha} \weightgen{\sigma}{\alpha}{\pi} .
    \] 
\end{theorem}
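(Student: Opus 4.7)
The plan is to partition the index set of the defining recursion
\[
\mobfn{\sigma}{\pi} = -\sum_{\lambda \in [\sigma, \pi)} \mobfn{\sigma}{\lambda}
\]
into three classes and handle each separately. The three classes are: (i) the two small permutations $1$ and $\oneplusone$; (ii) the ``bad form'' permutations covered by Lemma~\ref{lemma_mobius_function_is_zero}, whose contributions vanish individually; and (iii) the elements of families $\familysum{\nsums{k}{\alpha}}$, ranging over all sum-indecomposable $\alpha > 1$ and all integers $k \geq 1$.

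First I would verify that these three classes exhaust $[\sigma, \pi)$ via a case analysis on the finest sum decomposition $\lambda = \lambda_1 \oplus \cdots \oplus \lambda_s$. If $s = 1$ then $\lambda$ is itself sum-indecomposable and (assuming $\lambda > 1$) belongs to $\familysum{\lambda}$ with $k = 1$. If $s \geq 2$, take the first non-trivial component to be $\alpha$ and let $r$ be the number of consecutive copies of $\alpha$ occurring at that position. The remaining material either vanishes, consists of a single trailing $1$, or contains further non-trivial blocks: in the first two sub-cases $\lambda \in \familysum{\sumra}$ (possibly with a $\oneplus$ wrapper when $\lambda_1 = 1$), while in the third $\lambda$ lands in the bad form $\familysum{\sumra \oplus \tau'}$ with $\tau' > 1$. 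The degenerate scenarios $\lambda_1 = \lambda_2 = 1$ and $\lambda_{s-1} = \lambda_s = 1$ give the other two bad forms. Checking that every $\lambda$ falls into exactly one class is the main technical obstacle, though the analysis is mechanical.

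With the partition in hand, class (i) disappears by Lemma~\ref{lemma_order_greater_than_three_ignore_o_opo} (the pair $1, \oneplusone$ either cancels when $\sigma = 1$ or is absent from $[\sigma, \pi)$ when $\sigma > 1$), and every term in class (ii) is zero by Lemma~\ref{lemma_mobius_function_is_zero}. For class (iii), I would regroup the remaining terms by the indecomposable $\alpha$. For each fixed $\alpha$, Lemma~\ref{lemma_only_r_and_rplusone_count} together with Observation~\ref{observation_only_r_and_rplusone_count} collapse the contributions of all families $\familysum{\nsums{k}{\alpha}}$ to just those of $\familysum{\sumra}$ and $\familysum{\nsums{r+1}{\alpha}}$, where $r$ is the smallest integer with $\oneplus \sumrab \plusone \not\leq \pi$. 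Lemma~\ref{lemma_weight_of_alpha} then evaluates this combined contribution to be exactly $\mobfn{\sigma}{\alpha}\weightgen{\sigma}{\alpha}{\pi}$. Indecomposables $\alpha$ whose weight is zero drop out, leaving precisely the sum over $\alpha \in \contrib{\sigma}{\pi}$; carrying the outer minus sign then yields the stated identity.
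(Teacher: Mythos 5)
Your proposal is correct and follows essentially the same route as the paper's own proof: discard the zero contributions via Lemmas~\ref{lemma_mobius_function_is_zero} and~\ref{lemma_order_greater_than_three_ignore_o_opo}, then group the surviving terms into the families $\familysum{\sumra}$ and evaluate each group as $\mobfn{\sigma}{\alpha}\weightgen{\sigma}{\alpha}{\pi}$ using Lemma~\ref{lemma_only_r_and_rplusone_count}, Observation~\ref{observation_only_r_and_rplusone_count} and Lemma~\ref{lemma_weight_of_alpha}. The only difference is that you spell out the exhaustiveness of the three classes via the finest sum decomposition, a step the paper states without detail.
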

\begin{proof}
    Let $\alpha \leq \pi$ be an indecomposable permutation.
    
    Using Lemmas~\ref{lemma_mobius_function_is_zero}
    and~\ref{lemma_order_greater_than_three_ignore_o_opo}
    we can see that any permutations not in the set
    $\familysum{\sumra}$
    can be excluded from $\contrib{\sigma}{\pi}$, as these permutations 
    make no net contribution to the \mob sum.
    
    For every $\alpha$, 
    by Lemma~\ref{lemma_weight_of_alpha},
    $\mobfn{\sigma}{\alpha} \weightgen{\sigma}{\alpha}{\pi}$
    provides the contribution to the \mob sum
    of all families $\familysum{\sumra}$, where
    $r$ is a positive integer.  
\end{proof}
Theorem~\ref{theorem_mobius_sum_bottom_level_indecomposable}
reduces the number of permutations that need to be considered
as part of the \mob sum.  
We can see that the largest permutation in 
$\contrib{\sigma}{\pi}$ must have length less than $\order{\pi}$,
and so we can apply 
Theorem~\ref{theorem_mobius_sum_bottom_level_indecomposable}
recursively to the permutations in $\contrib{\sigma}{\pi}$
to determine their \mob values.
In this recursion, if we are attempting to determine
$\mobfn{\sigma}{\lambda}$, we can stop
if $\order{\sigma} = \order{\lambda}$
or $\order{\sigma} = \order{\lambda} - 1$, as
in these cases $\mobfn{\sigma}{\lambda}$ is $+1$ and $-1$ respectively.

\section{Increasing oscillations}\label{section-increasing-oscillations}

We now move on to increasing oscillations.
Given an indecomposable permutation
$\sigma$, 
and an increasing oscillation $\pi$,
our aim in this section is to describe
$\contrib{\sigma}{\pi}$
in precise terms.  
We will find a sum for the \mob function,
$\mobfn{\sigma}{\pi}$,
which only requires the evaluation of simple inequalities.

If $\pi$ is an increasing oscillation with 
length less than 4, then
$\mobfn{\sigma}{\pi}$ is trivial to determine
for any $\sigma$.
For the remainder of this section we assume that 
$\pi$ has length at least 4.

We partition the set of increasing oscillations
with length greater than 1 
into 
five disjoint subsets.
These subsets are
$\{ \dtwo \},\;$
$\{\ildtwokp \},\;$ 
$\{\oneil \ildtwokb\},\;$ 
$\{\ildtwokb \ilone\},\;$
and
$\{\oneil \ildtwokb \ilone \}$,
where $k$ is a positive integer.
If two increasing oscillations are in the 
same subset, then we
say that they have the same 
\emph{shape}\extindex{shape}.

We now determine what 
permutations contained in an increasing oscillation
have a non-zero contribution to the \mob sum.
\begin{lemma}
    \label{lemma_form_of_permutations_in_increasing_oscillations}
    Let $\pi$ be an increasing oscillation,
    and let $\sigma \leq \pi$ be sum indecomposable.
    Let $S$ be the subset of the permutations
    in the interval $[\sigma, \pi)$ that can be 
    written in the form
    $
    \familysum{ \nsums{r}{\familyil{\nils{k}{\dtwo}} } }
    $
    for some $k,r \geq 1$.
    If $\lambda \in [\sigma, \pi)$,
    and $\lambda \not \in S$, then
    $\mobfn{\sigma}{\lambda} = 0$.
\end{lemma}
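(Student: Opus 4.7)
The plan is to combine the structural properties of sum-indecomposable patterns inside an increasing oscillation with the vanishing results recorded in Lemma~\ref{lemma_mobius_function_is_zero}.

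First, I would establish a structural fact: if $\lambda \leq \pi$ where $\pi$ is an increasing oscillation, then in the finest sum decomposition $\lambda = \lambda_1 \oplus \cdots \oplus \lambda_m$, every indecomposable component $\lambda_i$ is either the singleton $1$ or an increasing oscillation of length at least $2$. Intuitively this is because the zigzag layout of the points of $\pi$ forces any subset of points that is itself sum-indecomposable to inherit the zigzag pattern: if we select points realising a pattern $\lambda_i$ that is sum-indecomposable, then between any two consecutive selected indices we cannot simultaneously have all later values larger than all earlier values, which forces the selected points to alternate in the same high--low fashion as $\pi$ itself. Hence $\lambda_i$ is itself an increasing oscillation, and up to the $\familyil{\cdot}$ family this means $\lambda_i \in \familyil{\nils{k_i}{\dtwo}}$ for some $k_i \geq 1$.

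Next I would apply Lemma~\ref{lemma_mobius_function_is_zero} to rule out decomposition shapes. If the decomposition of $\lambda$ begins with $1 \oplus 1 \oplus \tau$ or ends with $\tau \oplus 1 \oplus 1$, then $\mobfn{\sigma}{\lambda}=0$ and we are done. If the decomposition is of the form $\familysum{\sumrab \oplus \tau^\prime}$ with $\alpha$ sum indecomposable, $r$ maximal, and $\tau^\prime > 1$, then again $\mobfn{\sigma}{\lambda}=0$. So it remains to handle $\lambda$ whose decomposition, after peeling off at most one leading $1$ and at most one trailing $1$, consists of $r$ identical copies of a single indecomposable component $\alpha$.

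In this remaining case, the structural fact forces $\alpha$ to be either $1$ or a member of $\familyil{\nils{k}{\dtwo}}$ for some $k \geq 1$. In the latter situation $\lambda \in \familysum{\nsums{r}{\familyil{\nils{k}{\dtwo}}}} \subseteq S$, contradicting $\lambda \notin S$. The only remaining sub-case is $\alpha = 1$, which makes $\lambda$ a small identity (length at most~$2$ after excluding the longer identities via Lemma~\ref{lemma_mobius_function_is_zero} with $\tau$ nonempty); the identities of length at least $3$ fall under the $\oneplus\oneplus\tau$ case, and the residual boundary values are absorbed by the companion Lemma~\ref{lemma_order_greater_than_three_ignore_o_opo}.

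The main obstacle will be the structural step: cleanly proving that every sum-indecomposable pattern of length at least $2$ in an increasing oscillation is itself an increasing oscillation, i.e.\ a member of some $\familyil{\nils{k}{\dtwo}}$. I expect to handle this by induction on the length of the selected pattern, using that each of the four symmetry types (starting with an ascent/descent, ending with an ascent/descent) is preserved when one pares the pattern down from either end, and that attempting to split a zigzag pattern as $\beta \oplus \gamma$ with both parts nontrivial is impossible because every point after a given position of the oscillation has some later point strictly below it.
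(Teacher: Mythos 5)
Your high-level plan is the same as the paper's: first classify the sum-indecomposable components of any $\lambda\leq\pi$, then dispose of all shapes outside $\familysum{\nsums{r}{\alpha}}$ with Lemma~\ref{lemma_mobius_function_is_zero}, and finally absorb the boundary permutations $1$ and $12$ via Lemma~\ref{lemma_order_greater_than_three_ignore_o_opo}. Your second and third steps match the paper's proof (which is equally brief about the case split over decomposition shapes), so the only substantive question is the structural step.

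That structural step is where there is a genuine gap, and you flag it yourself as the main obstacle. The assertion that every sum-indecomposable pattern of length at least $2$ contained in an increasing oscillation is itself an increasing oscillation is what needs proving, but your justification begs the question: the observation that ``between any two consecutive selected indices we cannot have all later values larger than all earlier values'' is merely a restatement of sum-indecomposability of the selected pattern, and it does not by itself force the selected points to zigzag the way $\pi$ does. The auxiliary claim that ``every point after a given position of the oscillation has some later point strictly below it'' is false at the right end of $\pi$, and in any case it argues that $\pi$ is indecomposable rather than classifying its indecomposable subpatterns. The proposed induction (``paring the pattern down from either end'', with the four start/end symmetry types preserved) is not set up to reach arbitrary subsets of points: an embedding of $\lambda$ into $\pi$ is not obtained by deleting end points only, so preservation under end-paring does not control it. The paper closes exactly this gap by a different, concrete device: it shows that deleting any single point of an increasing oscillation yields either an increasing oscillation or the direct sum of two increasing oscillations (an explicit case analysis over the position of the deleted point in $\oneil \ildtwokb$, the other shapes following by symmetry); since $\pi$ is itself an increasing oscillation, induction on the number of deleted points then gives that every $\lambda\leq\pi$ is a sum of increasing oscillations, so every component $\lambda_i$ is one. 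To make your version work you would need an argument of comparable concreteness for arbitrary selected subsets, or you could simply adopt the deletion-closure argument, after which the rest of your proposal goes through as written.
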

We note here that 
$\familyil{\nils{k}{\dtwo}}$
is a set containing only increasing oscillations. 

\begin{proof}
    We start by showing that if $\pi$ is an increasing oscillation,
    and
    $\lambda = \lambda_1 \oplus \ldots \oplus \lambda_m \leq \pi$,
    where each $\lambda_i$ is sum indecomposable,
    then every $\lambda_i$
    is an increasing oscillation.
    This is trivially true if $\lambda$
    is itself an increasing oscillation,
    thus it is 
    sufficient to show
    that if  
    $\lambda$ is an increasing oscillation, 
    then
    deleting a single point results in either 
    an increasing oscillation, or
    a permutation that is the sum of
    two increasing oscillations.
    
    If $k = 1$, 
    then we can see that deleting a single point
    results in a permutation with the required 
    characteristic.
    
    Now assume that $k > 1$.
    Let $\lambda = \oneil \ildtwokb$.
    Deleting the leftmost point gives $\ildtwok$,
    and deleting the rightmost point gives
    $\oneil \left( \nils{k-1}{\dtwo} \right) \oplus 1$.
    Deleting the second point
    gives
    $\dtwo \oplus \left( \nils{k-1}{\dtwo} \right)$,
    and deleting the last-but-one point
    gives
    $\oneil \left( \nils{k-1}{\dtwo} \right) \ilone$.
    Deleting any even point $2t$ 
    except the second or second-to-last
    results in 
    $
    \left(
    \oneil \left( \nils{t-1}{\dtwo} \right) \ilone 
    \right)
    \oplus
    \left(
    \left( \nils{k-t}{\dtwo} \right)
    \right)
    $.
    Finally, deleting any odd point $2t+1$
    apart from the first or last
    results in
    $
    \left(
    \oneil \left( \nils{t-1}{\dtwo} \right)
    \right)
    \oplus
    \left(
    \oneil \left( \nils{k-t}{\dtwo} \right)
    \right)
    $.
    Thus if $\lambda = \oneil \ildtwokb$,
    then deleting a single point from $\lambda$
    results in either 
    an increasing oscillation, or
    a permutation that is the sum of
    two increasing oscillations.
    
    A similar argument applies to the other three cases,
    which we omit for brevity.
       
    To complete the proof, 
    we now see that by
    Lemma~\ref{lemma_order_greater_than_three_ignore_o_opo},
    we can ignore $\lambda = 1 $ and $\lambda = \oneplusone$.    
    If 
    $\lambda = \lambda_1 \oplus \lambda_2 \oplus \ldots \oplus \lambda_m \leq \pi$,
    then by the argument above, every $\lambda_i$
    is an increasing oscillation.
    Applying Lemma~\ref{lemma_mobius_function_is_zero} 
    completes the proof.
\end{proof}
Following Observation~\ref{observation_only_r_and_rplusone_count},
it is clear that,
if $\alpha \in \familyil{\ildtwok}$,
then for any family $\familysum{\sumra}$,
we only need consider the cases 
$\sumra$ and $\nsums{r+1}{\alpha}$
where
$r$ is the smallest integer such that
$\oneplus \sumrab \plusone \not \leq \pi$.

Given some $\pi = W_n \text{ or } M_n$, we will 
find inequalities that relate $n$, $r$ and $k$ and the
shape of $\alpha$ that will allow us to
find the values that contribute
to the \mob sum.
We know from
Lemma~\ref{lemma_form_of_permutations_in_increasing_oscillations}
the shape of the permutations that contribute to the \mob sum.
For each of the four types of increasing oscillation
($W_{2n}$, $W_{2n-1}$, $M_{2n}$ and $M_{2n-1}$),
we can examine how each shape can be embedded 
so that the unused points at the start of the increasing
oscillation are minimised.  
Figure~\ref{figure_unused_points_at_start_w2n}
shows examples of embeddings into $W_{2n}$.
This gives us an inequality relating to the start
of the embedding.
Similarly, we can find inequalities for the 
end of the embedding.
We can also find inequalities 
that relate to the interior (when $r > 1$), and
Figures~\ref{figure_unused_points_interior_not_21}
and~\ref{figure_unused_points_interior_21}
show examples of this.
We can use these inequalities to determine what
values of $k$ will allow the shape to be embedded.
For each allowable value of $k$, we
can then determine the maximum value of $r$
such that $\oneplus \sumrab \plusone \not \leq \pi$.
This then means that, by evaluating inequalities alone,
we can identify the specific permutations
that could contribute to the \mob sum.

We first have two lemmas that examine
inequalities at the start and end of an embedding.
  
\begin{lemma}
    \label{lemma_unused_points_at_start}
    If $\pi$ is an increasing oscillation,
    and $\alpha \leq \pi$ is sum indecomposable, 
    then in any embedding of an 
    element $\lambda$ of $\familysum{\sumra}$ into $\pi$,
    the minimum number of unused points at the start of $\pi$
    depends on the start of $\lambda$, and on $\pi$,
    and is as shown below:
    
    \centering
    \begin{tabular}{lcccc}
        \toprule
        Start of $\lambda$ & $\pi=W_{2n}$ & $\pi=W_{2n-1}$ & $\pi=M_{2n}$ & $\pi=M_{2n-1}$ \\
        \midrule
        $\dtwo \ldots $ & $0$ & $0$ & $0$ & $0$ \\
        $\ildtwokp \ldots $ & $0$ & $0$ & $1$ & $1$ \\
        $\oneil \left( \ildtwok \right) \ldots $ & $1$ & $1$ & $0$ & $0$ \\
        \midrule
        $\oneplus \dtwo \ldots $ & $1$ & $1$ & $1$ & $1$ \\
        $\oneplus \left( \ildtwokp \right) \ldots$ & $1$ & $1$ & $2$ & $2$ \\
        $\oneplus \oneil \left( \ildtwok \right) \ldots$ & $2$ & $2$ & $1$ & $1$ \\
        \bottomrule
    \end{tabular}        
\end{lemma}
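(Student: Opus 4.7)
The plan is to verify the 24 entries of the table by a direct case analysis on the six leading shapes of $\lambda$ and the four shapes of $\pi$. I would organise it into three steps.

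Step 1 (column reduction). I first reduce the four columns of the table to two. Provided $n$ is large enough compared to the length of $\lambda$'s leading prefix, the first few values of $W_{2n}$ and $W_{2n-1}$ agree as patterns, and likewise for the pair $M_{2n}$ and $M_{2n-1}$. So it suffices to distinguish whether $\pi$ begins with a descent (the $W$-case) or an ascent (the $M$-case). This immediately accounts for why the $W_{2n}$ and $W_{2n-1}$ columns of the table coincide and the $M_{2n}$ and $M_{2n-1}$ columns coincide.

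Step 2 (upper bound). For each of the $6 \times 2$ remaining cases I would exhibit an explicit embedding of $\lambda$ into $\pi$ whose first used position of $\pi$ is exactly the tabulated offset. For the top three rows (no leading $\oneplus$), the leading prefixes $\dtwo$, $\ildtwokp$, and $\oneil(\ildtwok)$ of $\lambda$ are themselves initial segments of increasing oscillations, and the embedding is obtained by aligning these with the leading oscillation of $\pi$: when the two oscillatory ``phases'' match, the overlay starts at position $1$ (giving $0$ unused); when they differ (a descent-start meeting an ascent-start or vice versa), the overlay starts at position $2$ (giving $1$ unused). For the bottom three rows, I would take the top-row embedding and prepend, at a strictly earlier position of $\pi$ whose value lies below every subsequently used value, the image of $\lambda$'s leading $\oneplus$ point; this increases the unused count by exactly one, giving the ``$+1$'' pattern seen between the top and bottom halves of the table.

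Step 3 (lower bound). I would argue that no embedding can beat the tabulated offset by examining the rigid inequalities that the first few values of $\pi$ impose on any embedding using them. In the matched cases nothing is forced and the offset of $0$ (or $1$, with $\oneplus$) is tight. In the mismatched cases, any embedding using $\pi_1$ would force the role of $\pi_1$ to be incompatible with $\lambda$'s required leading role (e.g.\ $\pi_1$ is a local maximum in $W$-oscillations but must play the role of an interleaved bottom point in $\lambda$), so at least one leading position is skipped. When $\lambda$ begins with $\oneplus$, the extra leading point of $\lambda$ must be placed at a position of $\pi$ whose value sits below every subsequently used value, and the structure of the increasing oscillating sequence forces this to consume one additional leading position.

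The main obstacle, in my view, lies in the lower-bound argument for the two table entries equal to $2$, namely the $\oneplus \oneil(\ildtwok)$ row in the $W$ columns and the $\oneplus (\ildtwokp)$ row in the $M$ columns. There one must show that the skip forced by the leading $\oneplus$ and the skip forced by the mismatched oscillation phase cannot be absorbed by a single leading position of $\pi$. The cleanest approach is to track the relative ranks of the first four values of $\pi$ in each of the two shape-cases and observe that the positions of $\pi$ available to host the $\oneplus$ point are disjoint from those available to absorb the phase mismatch, so that the two skips must be counted separately.
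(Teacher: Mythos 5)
There is a genuine gap, and it starts with how you have read the quantity being tabulated. You interpret ``unused points at the start'' as the offset of the first used position of $\pi$, and your Step 3 rests on the claim that in the mismatched cases no embedding can use $\pi_1$. That claim is false, and under your reading the table itself would be false, so the lower bound you plan to prove cannot be proved. Concretely, take $\alpha=\lambda=\oneil\dtwo=231$ (row 3, $r=1$) and $\pi=W_{2n}=3,1,5,2,7,4,\ldots$: the subsequence $3,5,2$ in positions $1,3,4$ is an embedding of $231$ that uses $\pi_1$, so the minimal ``offset'' is $0$, not the tabulated $1$. What the lemma counts (and what is needed when it is combined with Lemmas~\ref{lemma_unused_points_at_end} and~\ref{lemma_unused_points_interior} to get the length inequalities of Lemma~\ref{lemma_inequalities_for_pi}) is the number of points of $\pi$ wasted within the initial stretch of the embedding, not the number of skipped positions before the first used one; in the example the wasted point is $\pi_2$, while $\pi_1$ is used. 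The paper's own Figure~\ref{figure_unused_points_at_start_w2n} makes exactly this point: for a start of shape $\oneil\left(\ildtwok\right)$ embedded in $W_{2n}$ the hollow point is $\pi_2$ and $\pi_1$ is used, and for $\oneplus\oneil\left(\ildtwok\right)$ the two unused points are $\pi_2$ and $\pi_4$, with $\pi_1$ hosting the leading $\oneplus$ point (e.g.\ $1342$ embeds in $W_6=315264$ as $3,5,6,4$).

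The same misreading breaks your constructions in Step 2. The ``overlay starting at position $2$'' does not exist when an $M$-shaped start meets a $W$-type $\pi$: the pattern of $W_{2n}$ from position $2$ onwards begins with its minimum, whereas $\oneil\left(\ildtwok\right)$ does not, so no block of consecutive positions $2,\ldots,m+1$ is order-isomorphic to the leading block; the efficient embedding instead uses $\pi_1$ and skips $\pi_2$. Likewise ``prepend the $\oneplus$ point at a strictly earlier position whose value lies below every subsequently used value'' fails whenever the row-above embedding already consumes the value $1$ of $\pi$ (a $\oneplus\dtwo$ start into $W_{2n}$, or a $\oneplus\left(\ildtwokp\right)$ start into $M_{2n}$, where the count goes $1\to2$): there you must re-seat the whole leading block further right, not extend the previous embedding. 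Your closing discussion of the entries equal to $2$ inherits the same false premise, since in the $W$ columns the minimal embedding does use $\pi_1$ and the two wasted points are $\pi_2$ and $\pi_4$. Your overall plan -- a case-by-case check of the six starts against the two phases, which is all the paper itself does -- is fine, but to make it work you must first restate precisely what is being minimised (the number of initial points of $\pi$ consumed by the leading block minus the size of that block), then give, for each case, both an embedding achieving the tabulated waste and an argument that no embedding of the leading block fits into fewer initial points of $\pi$; the arguments you sketch bound the wrong quantity.
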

\begin{proof}
    It is clear that if we
    minimise the number of points at the start of
    an embedding, then the number of unused points
    depends on $\pi$, 
    and the start of $\alpha$.  
    The values in Lemma~\ref{lemma_unused_points_at_start}
    are found by considering each of the possibilities.
    We illustrate some of these cases in Figure~\ref{figure_unused_points_at_start_w2n}.
    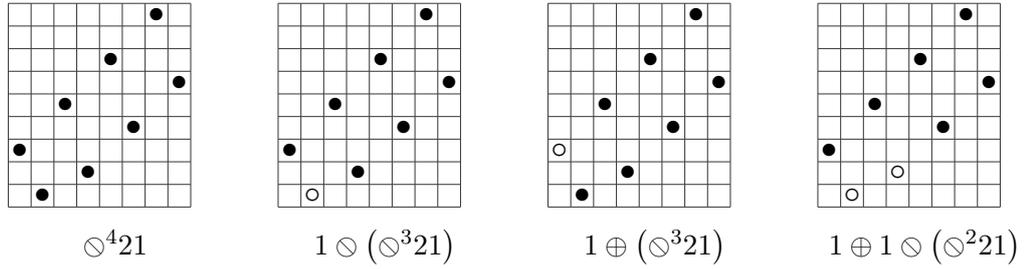
\begin{figure}      
        \centering
        \begin{subfigure}[c]{0.2\textwidth}
            \begin{tikzpicture}[scale=0.3]
            \plotgrid{8}{9}
            \plotperm{3, 1, 5, 2, 7, 4, 9, 6}
            \end{tikzpicture}
            \caption*{$\nils{4}{\dtwo}$}
        \end{subfigure}
        \quad
        \begin{subfigure}[c]{0.2\textwidth}
            \begin{tikzpicture}[scale=0.3]
            \plotgrid{8}{9}
            \plotperm{3, 1, 5, 2, 7, 4, 9, 6}
            \opendot{(2,1)}
            \end{tikzpicture}
            \caption*{$\oneil \left( \nils{3}{\dtwo} \right)$}
        \end{subfigure}
        \quad
        \begin{subfigure}[c]{0.2\textwidth}
            \begin{tikzpicture}[scale=0.3]
            \plotgrid{8}{9}
            \plotperm{3, 1, 5, 2, 7, 4, 9, 6}
            \opendot{(1,3)}
            \end{tikzpicture}
            \caption*{$\oneplus \left( \nils{3}{\dtwo} \right)$}
        \end{subfigure}
        \quad
        \begin{subfigure}[c]{0.2\textwidth}
            \begin{tikzpicture}[scale=0.3]
            \plotgrid{8}{9}
            \plotperm{3, 1, 5, 2, 7, 4, 9, 6}
            \opendot{(2,1)}
            \opendot{(4,2)}
            \end{tikzpicture}
            \caption*{$\oneplus \oneil \left( \nils{2}{\dtwo} \right)$}
        \end{subfigure}    
        \caption{Embedding the start of $\alpha$ in $W_{2n}$.}
        \label{figure_unused_points_at_start_w2n}
    \end{figure} 
\end{proof}
%
\begin{lemma}
    \label{lemma_unused_points_at_end}
    If $\pi$ is an increasing oscillation,
    and $\alpha \leq \pi$ is sum indecomposable, 
    then in any embedding of an 
    element $\lambda$ of $\familysum{\sumra}$ into $\pi$,
    the minimum number of unused points at the end of $\pi$
    depends on the end of $\lambda$, and on $\pi$,
    and is as shown below:    
    
    \centering
    \begin{tabular}{lcccc}
        \toprule
        End of $\lambda$                     & 
        $\pi=W_{2n}$ & $\pi=W_{2n-1}$ & $\pi=M_{2n}$ & $\pi=M_{2n-1}$ \\
        \midrule
        $\ldots \dtwo $ & $0$ & $0$ & $0$ & $0$ \\
        $\ldots \ildtwokp $ & $0$ & $1$ & $1$ & $0$ \\
        $\ldots \ildtwokb \ilone $ & $1$ & $0$ & $0$ & $1$ \\
        \midrule
        $\ldots \dtwo \plusone$ & $1$ & $1$ & $1$ & $1$ \\
        $\ldots \ildtwokpb \plusone $ & $1$ & $2$ & $2$ & $1$ \\
        $\ldots \ildtwokb \ilone \plusone$ & $2$ & $1$ & $1$ & $2$ \\
        \bottomrule
    \end{tabular}
\end{lemma}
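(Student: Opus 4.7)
My plan is to prove this lemma by mirroring the proof of Lemma~\ref{lemma_unused_points_at_start} and exploiting the reverse-complement symmetry so that I can reuse the case analysis already carried out for beginnings. The reverse-complement operation preserves the \mob function, sends increasing oscillations to increasing oscillations, and acts on the four oscillation types by fixing $W_{2n}$ and $M_{2n}$ and interchanging $W_{2n-1}$ with $M_{2n-1}$. I would check this on small examples first (e.g.\ $W_4 = 3142$ is fixed, while $W_5 = 31524$ maps to $24153 = M_5$) and then extend to general lengths by the inductive interleave structure of $\nils{n}{\dtwo}$.

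Next I would verify that reverse-complement converts each possible right-end shape category of $\alpha$ into the corresponding left-end category used in Lemma~\ref{lemma_unused_points_at_start}, and converts a trailing $\plusone$ coming from the $\familysum{\cdot}$ construction into a leading $\oneplus$. Specifically, an ending $\ldots\dtwo$ becomes a beginning $\dtwo\ldots$; an ending $\ldots\ildtwokp$ (covering $\alpha$ of shape $\ildtwokp$ or $\oneil\ildtwokb$) becomes a beginning $\ildtwokp\ldots$ (covering $\alpha$ of shape $\ildtwokp$ or $\ildtwokb\ilone$); and an ending $\ldots\ildtwokb\ilone$ becomes a beginning $\oneil\ildtwokb\ldots$. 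Under the resulting bijection on pairs (end-type, $\pi$-type), every entry of the target table corresponds to a single entry of the start-table, and a direct comparison confirms the values match. A figure drawn at the right end of $\pi$, analogous to Figure~\ref{figure_unused_points_at_start_w2n} at the left end, would accompany the argument.

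The main obstacle I expect is verifying the action of reverse-complement on the shape labels: because the shapes are defined recursively through the interleave operator, it is not visually obvious that, say, the reverse-complement of an oscillation of shape $\ildtwokb\ilone$ has shape $\oneil\ildtwokb$. I would resolve this by explicit computation on oscillations of length at most six, and then by induction on $k$, using that appending $\ilone$ on the right commutes under reverse-complement with prepending $\oneil$ on the left. If the symmetry route proves awkward, the alternative is a fully direct case-by-case inspection with six ending types and four oscillation types, exactly matching the style of the proof of Lemma~\ref{lemma_unused_points_at_start}; this is routine but loses the conceptual economy of invoking symmetry.
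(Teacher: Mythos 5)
Your proposal is correct, but it takes a genuinely different route from the paper: the paper disposes of this lemma with a direct case-by-case examination of the right end of the embedding, exactly parallel to its proof of Lemma~\ref{lemma_unused_points_at_start}, whereas you reduce the end-table to the start-table via the reverse-complement (180-degree rotation) symmetry. Your reduction does go through: reverse-complement preserves containment, carries the last points of $\pi$ to the first points of its image, fixes $W_{2n}$ and $M_{2n}$ while exchanging $W_{2n-1}$ with $M_{2n-1}$ (a permutation ending with a descent maps to one beginning with a descent, and for each length at least $4$ there are only the two oscillations), reverses the order of sum components so a trailing $\plusone$ becomes a leading $\oneplus$, and on the indecomposable shapes it fixes $\dtwo$ and $\ildtwokp$, swaps $\oneil \ildtwokb$ with $\ildtwokb \ilone$, and fixes $\oneil \ildtwokb \ilone$ -- the induction on $k$ you sketch, or even just the observation that rotation exchanges ``prepend $\oneil$'' with ``append $\ilone$'', settles the point you flag as the main obstacle. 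Checking the six rows under the correspondence $(W_{2n},M_{2n})$ fixed and $W_{2n-1}\leftrightarrow M_{2n-1}$ reproduces exactly the stated table from the start-table, so no new geometric analysis is needed. What each approach buys: the paper's direct inspection is self-contained and matches the style of the surrounding lemmas (and of the later interior-packing arguments), while your symmetry argument is more economical and less error-prone, at the modest cost of verifying the action of reverse-complement on the oscillation types and shapes; your stated fallback of a full six-by-four case check is precisely what the paper does.
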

\begin{proof}
    We examine all the possibilities as we did in
    Lemma~\ref{lemma_unused_points_at_start}.
\end{proof}

We now consider how closely copies of 
some sum indecomposable $\alpha$ can be 
embedded into $\pi$.
This leads to two inequalities
that relate $\alpha$, $\pi$
and the maximum number of copies of $\alpha$
that can be embedded in $\pi$.
Where $\alpha \neq \dtwo$, the shape
of $\alpha$ fixes the way the two copies
can be embedded in an increasing oscillation.
If $\alpha = \dtwo$, then we will see that
there are choices for the embedding.
\begin{lemma}
    \label{lemma_unused_points_interior}
    If $\pi$ is an increasing oscillation,
    and $\alpha \neq \dtwo$,
    and $\alpha \leq \pi$ is sum indecomposable, 
    then in any embedding of $\sumra$ into $\pi$,
    the minimum number of points 
    between the start and end of $\sumra$
    depends on  $\alpha$,
    and is as shown below:
    
    \centering
    \begin{tabular}{lccc}
        \toprule
        Shape of $\alpha$ & 
        Points in $\sumra$ &
        Unused points &
        Minimum points
        \\ 
        \midrule
        $\ildtwokp$ & $2kr $ & $2r-2$ & $2kr+2r-2$ \\
        $\oneil \ildtwokb$ & $2kr+r $ & $r-1 $ & $2kr+2r-1$ \\
        $\ildtwokb \ilone$ & $2kr+r $ & $r-1 $ & $2kr+2r-1$ \\
        $\oneil \ildtwokb \ilone$ & $2kr+2r $ & $2r-2$ & $2kr+4r-2$ \\
        \bottomrule
    \end{tabular}    
\end{lemma}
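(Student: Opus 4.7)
The plan is to prove the length formula for $\sumra$ first, and then argue about the minimum spacing between consecutive copies of $\alpha$ inside $\pi$.

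First I would note that the ``Points in $\sumra$'' column is just $r$ times the length of $\alpha$: the shape $\ildtwok$ has length $2k$, the shapes $\oneil \ildtwokb$ and $\ildtwokb \ilone$ each have length $2k+1$, and $\oneil \ildtwokb \ilone$ has length $2k+2$. Multiplying by $r$ reproduces the stated values, so the only substantive content is the middle column counting unused points.

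The main step is to analyse how two consecutive summands in $\sumra$ must sit inside $\pi$. Since $\pi$ is an increasing oscillation and each summand is itself an increasing oscillation of one of the four listed shapes, each summand occupies a ``compact'' block inside $\pi$, and the transition from one block to the next is forced by the rigid interleaved structure of $\pi$. The key observation is: in order for the second block to be both strictly above and strictly to the right of the first (as required by $\oplus$), the last point used by the earlier copy and the first point used by the later copy must be separated in $\pi$ by all the oscillation points that are still comparable to them. If a copy ends with a $\dtwo$ at the rightmost end (shape $\ildtwok$, or the inner part of $\oneil\ildtwokb\ilone$ viewed through the $\ilone$ tail), the next copy must skip two oscillation points before its first $\dtwo$ can begin cleanly above and to the right; if instead the copy ends with the isolated $\ilone$ point (shape $\ildtwokb\ilone$, or after shifting $\oneil\ildtwokb$), only one oscillation point needs to be skipped before the next copy can start. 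Summing over the $r-1$ gaps gives the unused totals: $2(r-1) = 2r-2$ for shapes $\ildtwok$ and $\oneil\ildtwokb\ilone$ (two skipped at each junction), and $r-1$ for the two asymmetric shapes $\oneil\ildtwokb$ and $\ildtwokb\ilone$ (one skipped per junction).

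For the upper bound (existence of an embedding achieving the claimed minimum), I would exhibit the explicit embedding: line up the first copy of $\alpha$ against the leftmost suitable positions of $\pi$ (as enumerated in Lemma~\ref{lemma_unused_points_at_start}), then repeatedly shift forward by the claimed gap size to place each subsequent copy. Because every copy of $\alpha$ is itself a subpattern of $\pi$ by assumption, and the oscillation is self-similar under the shift by the relevant number of positions, this gives a valid embedding.

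The hard part will be the matching lower bound: showing that no embedding can do better. Here I would argue by contradiction. Suppose consecutive copies of $\alpha$ occupy oscillation-positions with a smaller gap than claimed. The rigidity of the oscillation (each position has exactly one predecessor above and one below within a narrow window) then forces one of the boundary points of the first copy and one of the boundary points of the second to lie in the same $\dtwo$-pair of $\pi$, violating the $\oplus$ condition that every value of the second copy exceeds every value of the first. This argument splits into the four shape cases; the cases with an internal $\dtwo$-ending (shapes $\ildtwok$ and $\oneil\ildtwokb\ilone$) require two skipped oscillation points, whereas an isolated $\ilone$ endpoint permits the tighter one-point gap. Because $\alpha \neq \dtwo$ is assumed, there is no ambiguity in how to align each copy (this is precisely why $\dtwo$ is excluded and the interior inequalities depicted in Figures~\ref{figure_unused_points_interior_not_21} and~\ref{figure_unused_points_interior_21} are well-defined), so the case analysis is finite and mechanical.
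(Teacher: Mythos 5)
Your overall strategy is the same as the paper's: each copy of $\alpha$ sits rigidly in the oscillation (this is where $\alpha\neq\dtwo$ is used), so one packs the $r$ copies as tightly as possible and counts the unused points at the $r-1$ junctions, the count depending only on the shape of $\alpha$; your totals for the four shapes agree with the table. The genuine gap is in the rule you use to derive those counts. You claim the junction cost is governed by how a copy \emph{ends}: an ending $\dtwo$ forces two skipped points, an ending interleaved singleton forces one. That rule is false, and your own groupings contradict it: $\oneil\ildtwokb$ ends with a $\dtwo$ yet costs only one unused point per junction, while $\oneil\ildtwokb\ilone$ ends with the interleaved singleton yet costs two (these are exactly the second and fourth packings shown in Figure~\ref{figure_unused_points_interior_not_21}). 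The cost depends on \emph{both} the end of the preceding copy and the start of the following one --- equivalently, since all summands are equal, on both the start and the end of $\alpha$: it is $2$ in the ``matched'' cases ($\dtwo$-start against $\dtwo$-end for $\ildtwok$, $\oneil$-start against $\ilone$-end for $\oneil\ildtwokb\ilone$) and $1$ in the two mixed cases. Your parenthetical patches (``viewed through the $\ilone$ tail'', ``after shifting $\oneil\ildtwokb$'') are precisely where this analysis is being skipped; they reverse-engineer the right totals but are not arguments. Since the entire quantitative content of the lemma is this four-case determination, stating an incorrect governing principle and deferring the cases as ``mechanical'' is a real gap: carried out as written, your mechanism would assign $2r-2$ to $\oneil\ildtwokb$ and $r-1$ to $\oneil\ildtwokb\ilone$, contradicting the table.

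A smaller imprecision: for $\oneil\ildtwokb\ilone$ the two unused points per junction do not both lie ``before the next copy's first point''. In the tight packing the next copy starts immediately after the previous one ends, and the two skipped positions sit inside the adjacent spans (one just before the end of the earlier copy, one just after the start of the later one). The count is unaffected, but your lower-bound argument, phrased as ``the next copy must skip two oscillation points before it can start'', would not go through verbatim in this case. The rest of your bookkeeping (the point counts $r\order{\alpha}$, the $r=1$ case with no junctions, and the rigidity claim underpinning both bounds) matches the paper's proof, which likewise reduces the lemma to tight packing plus a check of the four configurations illustrated in its figures.
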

\begin{proof}
    If $r$ = 1, then there are no unused points,
    and so the minimum number of points depends solely
    on the points in $\alpha$, and the table 
    reflects this.
    
    Assume now that $r > 1$.
    If $\alpha \neq \dtwo$, then we can see that the 
    interleave fixes the layout of each copy of $\alpha$,
    so we simply pack each copy as close as possible.
    This packing clearly depends on the 
    start and end of $\alpha$, and it
    is simple to examine the four possibilities.
    Examples are shown in Figure~\ref{figure_unused_points_interior_not_21}.
    \begin{figure}
        \centering
        \captionsetup[subfigure]{justification=centering}
        \begin{subfigure}[c]{0.22\textwidth}        
            \begin{tikzpicture}[scale=0.3]
            \plotgrid{10}{12}
            \plotperm{4,1,6,3,8,5,10,7,12,9}
            \opendot{(5,8)}
            \opendot{(6,5)}
            \darkhline{6}{10}
            \darkvline{5}{12}
            \end{tikzpicture}
            \caption*{$\dtwo \interleave \dtwo \oplus$\\$\dtwo \interleave \dtwo$}
        \end{subfigure}
        \quad
        \begin{subfigure}[c]{0.22\textwidth}
            \begin{tikzpicture}[scale=0.3]
            \plotgrid{10}{12}
            \plotperm{4,1,6,3,8,5,10,7,12,9}
            \opendot{(6,5)}
            \darkhline{6}{10}
            \darkvline{4}{12}
            \end{tikzpicture}
            \caption*{$\dtwo \interleave \dtwo \oplus$\\$\oneil \dtwo \interleave \dtwo$}
        \end{subfigure}
        \quad
        \begin{subfigure}[c]{0.22\textwidth}
            \centering
            \begin{tikzpicture}[scale=0.3]
            \plotgrid{10}{12}
            \plotperm{4,1,6,3,8,5,10,7,12,9}
            \opendot{(5,8)}
            \darkhline{6}{10}
            \darkvline{6}{12}
            \end{tikzpicture}
            \caption*{$\dtwo \interleave \dtwo \ilone \oplus$\\$\dtwo \interleave \dtwo$}
        \end{subfigure}
        \quad
        \begin{subfigure}[c]{0.22\textwidth}
            \begin{tikzpicture}[scale=0.3]
            \plotgrid{10}{12}
            \plotperm{4,1,6,3,8,5,10,7,12,9}
            \opendot{(5,8)}
            \opendot{(8,7)}
            \darkhline{7}{10}
            \darkvline{6}{12}
            \end{tikzpicture}
            \caption*{$\dtwo \interleave \dtwo \ilone \oplus$\\$\oneil \dtwo$}
        \end{subfigure}
        \caption{Packing $\alpha$ as close as possible when $\alpha \neq \dtwo$.}
        \label{figure_unused_points_interior_not_21}
    \end{figure}
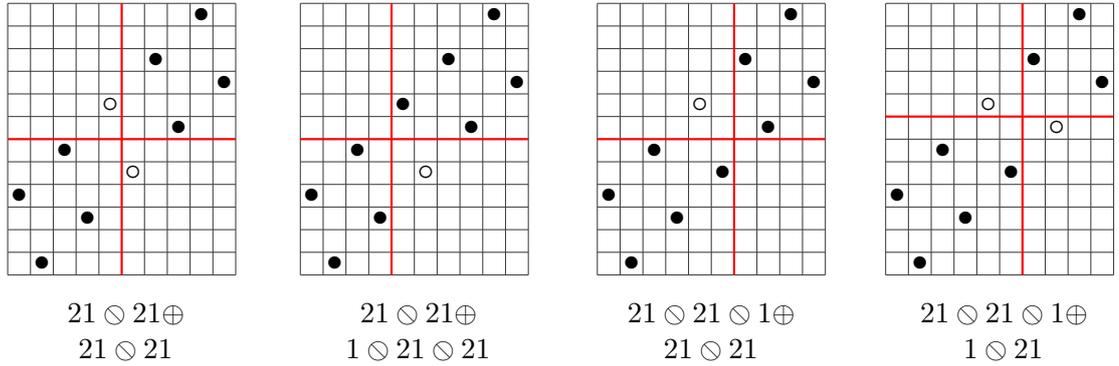 
\end{proof}

We now turn to the case where $\alpha = \dtwo$.  
This is more complex than the previous cases.
We can see that
there must be at least one point between each copy of $\alpha$.
We can insert each copy of $\dtwo$
in two ways,
one where the points are horizontally adjacent,
and one where the points are vertically adjacent.
These alternatives can be seen in 
Figure~\ref{figure_unused_points_interior_21}.
Alternating these means that there will be exactly
one point between each copy of $\alpha$,
so this embedding minimises the number of 
points between the start and end of $\sumra$.
The complication in this case relates to how we
start and end the embedding.  We illustrate this by
showing, in Figure~\ref{figure_unused_points_interior_21},
maximal embeddings where
we are embedding into $W_{8}$, $W_{10}$ and $W_{12}$.
\begin{figure}       
    \begin{center}
        \begin{subfigure}[c]{0.3\textwidth}
            \begin{center}
            \begin{tikzpicture}[scale=0.3]
            \plotgrid{8}{8}
            \plotperm{3, 1, 5, 2, 7, 4, 8, 6}
            \opendot{(4,2)}
            \opendot{(5,7)}
            \darkhline{3}{8}
            \darkhline{5}{8}
            \darkvline{2}{8}
            \darkvline{6}{8}
            \end{tikzpicture}
            \caption*{$W_{8}$}
            \end{center}
        \end{subfigure}
        \quad
        \begin{subfigure}[c]{0.3\textwidth}
            \begin{center}
            \begin{tikzpicture}[scale=0.3]
            \plotgrid{10}{10}
            \plotperm{3, 1, 5, 2, 7, 4, 9, 6, 10, 8}
            \opendot{(4,2)}
            \opendot{(5,7)}
            \opendot{(9,10)}
            \opendot{(10,8)}
            \darkhline{3}{10}
            \darkhline{5}{10}
            \darkvline{2}{10}
            \darkvline{6}{10}
            \end{tikzpicture}
            \caption*{$W_{10}$}
            \end{center}
        \end{subfigure}
        \quad
        \begin{subfigure}[c]{0.3\textwidth}
            \begin{center}
            \begin{tikzpicture}[scale=0.3]
            \plotgrid{12}{12}
            \plotperm{3, 1, 5, 2, 7, 4, 9, 6, 11, 8, 12, 10}
            \opendot{(4,2)}
            \opendot{(5,7)}
            \opendot{(10,8)}
            \opendot{(11,12)}
            \darkhline{3}{12}
            \darkhline{5}{12}
            \darkhline{9}{12}
            \darkvline{2}{12}
            \darkvline{6}{12}
            \darkvline{8}{12}
            \end{tikzpicture}
            \caption*{$W_{12}$}
            \end{center}
        \end{subfigure}
        \caption{Examples of unused points when embedding $\nsums{r}{\dtwo}$.}
        \label{figure_unused_points_interior_21}
    \end{center}
\end{figure}      
A detailed examination of each possible case
gives us our second inequality.
\begin{lemma}
    \label{lemma_unused_points_21}
    If $\pi$ is an increasing oscillation,
    and $\alpha = \dtwo$
    then for $\sumra$ to be contained
    in $\pi$
    we must have 
    $3r - 1 \leq 2n$ for $\pi \in \{W_{2n}, M_{2n} \}$,
    and
    $3r \leq 2n$ for $\pi \in \{W_{2n-1}, M_{2n-1} \}$.
\end{lemma}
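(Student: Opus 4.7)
The plan is to show that any embedding of $\nsums{r}{\dtwo}$ into an increasing oscillation $\pi$ uses at least $3r - 1$ positions of $\pi$, so that $|\pi| \geq 3r - 1$. This immediately yields $3r - 1 \leq 2n$ when $|\pi| = 2n$ and $3r \leq 2n$ (equivalently $3r - 1 \leq 2n - 1$) when $|\pi| = 2n - 1$, as claimed.

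For concreteness I would fix $\pi = W_{2n}$ and use the explicit description: at an odd position $2k - 1$ the value is $2k + 1$, and at an even position $2k$ the value is $2k - 2$, with small endpoint adjustments. An easy inspection then shows that every copy of $\dtwo$ in the interior of $\pi$ is either Type I, sitting at adjacent positions $(a, a+1)$ with $a$ odd, or Type II, sitting at positions $(a, a+3)$ with $a$ odd; the corresponding copy occupies $2$ and $4$ positions respectively. A further case-check of the four possible transitions between consecutive copies in $\nsums{r}{\dtwo}$ shows that only a Type I to Type I transition forces any additional skipped positions (two of them), while the other three transitions can be made with the two copies separated by no skipped position at all. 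Letting $n_I, n_{II}$ be the numbers of copies of each type and $t$ the number of Type I to Type I transitions, a direct computation of $p_{2r} - p_1$ gives
\[
p_{2r} - p_1 + 1 \;=\; 2 n_I + 4 n_{II} + 2 t.
\]
Since $t = n_I - k$, where $k$ is the number of maximal runs of Type I copies, and consecutive runs are separated by at least one Type II copy, $r = n_I + n_{II} \geq k + (k - 1) = 2k - 1$, which rearranges to $2 n_I + 4 n_{II} + 2 t \geq 3r - 1$. Combined with $|\pi| \geq p_{2r} \geq p_{2r} - p_1 + 1$ this gives $|\pi| \geq 3r - 1$, as required.

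The main obstacle is the boundary behaviour: in $W_{2n-1}$, $M_{2n}$, and $M_{2n-1}$ the extreme values of $\pi$ deviate from the interior formulas, and it becomes possible to insert a boundary copy of $\dtwo$ that spans three positions rather than two or four (for example, in $W_5 = 31524$, the embedding $(1,2,3,5)$ of $\nsums{2}{\dtwo}$ uses such a boundary copy at $(3,5)$). I would handle these cases one at a time, verifying that although a boundary copy can replace an interior copy, the associated position count still satisfies the inequality $2 n_I + 4 n_{II} + 2 t \geq 3r - 1$ with appropriate boundary-adjusted summands. The remaining shapes $M_{2n}$ and $M_{2n-1}$ follow from $W_{2n}$ and $W_{2n-1}$ by the reverse or inverse symmetry, and Figure~\ref{figure_unused_points_interior_21} illustrates three of the extremal cases in which the bound is tight.
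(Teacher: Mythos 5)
Your proposal is correct, and it takes a genuinely more detailed route than the paper. The paper's own proof is a two-sentence greedy-packing argument: embed each copy of $\dtwo$ as close as possible to the previous one, assert that $r$ copies need $2r$ elements plus at least one unused element per consecutive pair, and read off $3r-1\le\order{\pi}$, which is exactly the stated inequality in both the even and the odd case. You instead prove the same lower bound for an \emph{arbitrary} embedding: you classify the copies of $\dtwo$ inside $W_{2n}$ by their position span (Type I spanning $2$ positions, Type II spanning $4$), observe that only a Type I to Type I transition forces skipped positions between consecutive copies (at least two of them), and close the count with the run inequality $r\ge 2k-1$, giving span at least $4r-2k\ge 3r-1$; the $M$ shapes then reduce to the $W$ shapes because $M_m=W_m^{-1}$ and $\nsums{r}{\dtwo}$ is its own inverse (the inverse alone suffices here; reversal does not send $W$ to $M$). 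What your bookkeeping buys is rigour: the paper's justification that ``there must be at least one point between each copy'' is not literally true position-wise -- in your Type I to Type II transition the two copies can be positionally adjacent, with the slack hidden inside the span of the Type II copy -- whereas your accounting places the $r-1$ units of slack correctly. The paper's version buys brevity at the cost of this looseness.

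Two small repairs to your write-up. First, the displayed relation $p_{2r}-p_{1}+1=2n_I+4n_{II}+2t$ should be ``$\ge$'' for an arbitrary embedding (loosely packed embeddings violate the equality); only that direction is needed, and it is exactly what your case-check establishes. Second, the boundary analysis you defer does go through: in $W_{2n-1}$ the only new inversion type is the boundary copy at positions $(2n-3,2n-1)$, spanning $3$ positions, and since it uses the last position it can only be the final copy of the chain; repeating your run argument with this one adjusted summand again yields span at least $3r-1$, i.e.\ $3r\le 2n$.
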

\begin{proof}
    In every case we start by embedding the first $\dtwo$
    into the first two elements of the permutation.
    Thereafter, we embed each successive $\dtwo$
    as close as possible to the preceding $\dtwo$.
    The minimum number of elements to embed $r$ copies
    of $\dtwo$ will be $2r$ elements to hold the 
    points of the $\dtwo$s, and $r-1$ intermediate empty elements.
    For $W_{2n}$ and $M_{2n}$, this then gives
    $3r-1 \leq 2n$, and 
    for $W_{2n-1}$ and $M_{2n-1}$ we obtain
    $3r-1 \leq 2n-1$.
\end{proof}

We now have a complete understanding of the number of points
required to embed any 
permutation that contributes to the \mob sum
into an increasing oscillation.
The following Lemma summarises the situation.

\begin{lemma}
    \label{lemma_inequalities_for_pi}
    If $\pi$ is an increasing oscillation,
    and $\alpha \in \familyil{\ildtwok} \leq \pi$
    (so $\alpha$ is sum indecomposable),
    then for $\sumra$ to be contained
    in $\pi$,
    the inequality in the table below must be satisfied,
    where $k \geq 1$.
    
    \centering
    \begin{tabular}{l@{\phantom{xxxxxxx}}c@{\phantom{xxxxxxx}}r}
        \toprule
        $\pi$ & Shape of $\alpha$ & Inequality \\ 
        \midrule
        $W_{2n},   M_{2n}$    & $\dtwo$ &  $3r -1 \leq 2n $ \\
        $W_{2n-1}, M_{2n-1}$  & $\dtwo$ &  $3r    \leq 2n $\\
        $W_{2n}$ & $ \ildtwokp $ & $ 2kr+2r-2 \leq 2n $ \\
        $W_{2n-1}$ & $ \oneil \ildtwokb $ & $ 2kr+2r+2 \leq 2n $ \\
        $M_{2n-1}$ & $ \ildtwokb \ilone $ & $ 2kr+2r+2 \leq 2n $ \\
        $M_{2n}$ & $ \oneil \ildtwokb \ilone $ & $ 2kr+4r-2 \leq 2n $ \\ 
        $W_{2n}, W_{2n-1}, M_{2n-1}$ & $ \oneil \ildtwokb \ilone $ & $ 2kr+4r \leq 2n $ \\
        \multicolumn{2}{c}{All other cases} & $ 2kr+2r \leq 2n $ \\
        \bottomrule
    \end{tabular}
\end{lemma}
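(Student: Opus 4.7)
The plan is to combine Lemmas~\ref{lemma_unused_points_at_start}, \ref{lemma_unused_points_at_end}, and \ref{lemma_unused_points_interior} (for $\alpha \neq \dtwo$) together with Lemma~\ref{lemma_unused_points_21} (for $\alpha = \dtwo$) by a straightforward case analysis. For each pair (shape of $\alpha$, shape of $\pi$), I would write
\[
\order{\pi} \;\geq\; (\text{start unused}) + (\text{interior min from Lemma~\ref{lemma_unused_points_interior}}) + (\text{end unused}),
\]
where the start and end contributions depend on the first and last indecomposable blocks of $\sumra$, which coincide with the start and end of $\alpha$, and the middle term already packages the points of $\sumra$ plus the minimum inter-copy gaps.

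First I would dispose of $\alpha = \dtwo$, which is not handled by Lemma~\ref{lemma_unused_points_interior}; here Lemma~\ref{lemma_unused_points_21} gives the two inequalities $3r-1\le 2n$ and $3r\le 2n$ directly, producing the first two rows of the table. Next I would run through the remaining four shapes of $\alpha$ (namely $\ildtwokp$, $\oneil\ildtwokb$, $\ildtwokb\ilone$, and $\oneil\ildtwokb\ilone$) against the four shapes of $\pi$ (namely $W_{2n}$, $W_{2n-1}$, $M_{2n}$, $M_{2n-1}$). In each of these sixteen subcases, reading off the start contribution from Lemma~\ref{lemma_unused_points_at_start}, the end contribution from Lemma~\ref{lemma_unused_points_at_end}, and the interior count from Lemma~\ref{lemma_unused_points_interior}, and then adding, yields one of the arithmetic expressions $2kr+2r-2$, $2kr+2r$, $2kr+4r-2$, or $2kr+4r$.

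The rows of the stated table will then arise by grouping subcases with the same resulting inequality. As a sample verification I would check the $M_{2n}$, $\alpha=\oneil\ildtwokb\ilone$ case: the start is $\oneil\ldots$ and the end is $\ldots\ilone$, both of which contribute $0$ unused points in $M_{2n}$, so the bound is exactly the interior value $2kr+4r-2\le 2n$, matching the sixth row. Similarly, for $\pi=W_{2n}$ with the same $\alpha$, start and end each contribute $1$ unused point, yielding $2kr+4r\le 2n$, which is the seventh row. All sixteen subcases fall into one of the four listed rows or into the residual ``All other cases'' row where start$+$end contributions total $1$.

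The main obstacle is purely organisational rather than mathematical: one must be careful that the ``start of $\sumra$'' and ``end of $\sumra$'' columns in Lemmas~\ref{lemma_unused_points_at_start} and~\ref{lemma_unused_points_at_end} are matched correctly to the first and last copy of $\alpha$ (recalling that $\oneplus$ only appears in $\lambda\in\familysum{\sumra}$ and not in $\sumra$ itself, so only the first three rows of each of those tables are relevant here), and that the interior count from Lemma~\ref{lemma_unused_points_interior} already subsumes the internal structure of the $r$ copies of $\alpha$ and is not double-counted with the start/end contributions. Once the bookkeeping is set up cleanly, the sixteen subcases can be presented as a single table which collapses to the seven rows of the statement.
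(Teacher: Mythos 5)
Your proposal is correct and is essentially the paper's own proof, which simply applies Lemmas~\ref{lemma_unused_points_at_start}, \ref{lemma_unused_points_at_end}, \ref{lemma_unused_points_interior} and~\ref{lemma_unused_points_21} to each choice of shape of $\alpha$ and $\pi$; your bookkeeping (start $+$ interior $+$ end, with $\dtwo$ handled separately and only the first three rows of the start/end tables relevant) is exactly the intended case analysis. One small caveat: your parenthetical claim that the residual ``All other cases'' row always has start-plus-end contributions totalling $1$ is not uniformly true (e.g.\ for $M_{2n}$ with $\alpha=\ildtwokp$ they total $2$, which is compensated by $\order{\pi}=2n$ rather than $2n-1$), but since each subcase is computed individually this does not affect the argument.
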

\begin{proof}
    We apply Lemmas~\ref{lemma_unused_points_at_start},
    \ref{lemma_unused_points_at_end}, 
    ~\ref{lemma_unused_points_interior} 
    and~\ref{lemma_unused_points_21} to the
    possibilities for $\pi$ and $\alpha$.
\end{proof}

As a consequence of 
Lemmas~\ref{lemma_unused_points_at_start}
and~\ref{lemma_unused_points_at_end}
we can define a relationship
between the minimum number of points
required to embed some $\sumra$,
and the minimum number of points required
to embed 
$\oneplus \sumrab$,
$\sumra \plusone$
and
$\oneplus \sumrab \plusone$.
\begin{corollary}
    \label{corollary_add_2_for_op_or_po}
    If $\pi$ is an increasing oscillation,
    and $\alpha \leq \pi$ is sum indecomposable
    and if
    the minimum number of points required to embed $\sumra$ into $\pi$
    is $C$, then
    the minimum number of points required to embed 
    $\oneplus \sumrab$ into $\pi$ is $C+2$,
    the minimum number of points required to embed 
    $\sumra \plusone$ into $\pi$ is $C+2$,
    and
    the minimum number of points required to embed 
    $\oneplus \sumrab \plusone$ into $\pi$ is $C+4$.
\end{corollary}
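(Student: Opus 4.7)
The plan is to decompose the minimum embedding size $C$ for $\sumra$ into four disjoint contributions: the unused points at the start of $\pi$, the $|\lambda|$ points actually used by $\lambda$, the unused points in the interior (i.e.\ the gaps between successive copies of $\alpha$), and the unused points at the end of $\pi$. Writing $C = S + |\lambda| + I + E$, Lemmas~\ref{lemma_unused_points_interior} and~\ref{lemma_unused_points_21} tell us that $I$ depends only on $\alpha$ and $r$, so $I$ is identical for all four permutations $\sumra$, $\oneplus\sumrab$, $\sumrab\plusone$, $\oneplus\sumrab\plusone$; the modifications at the start and end affect only $S$, $|\lambda|$ and $E$.

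Next, I would note that prepending a $\oneplus$ increases $|\lambda|$ by exactly one and transforms the start of $\lambda$ from one of $\dtwo\ldots$, $\ildtwokp\ldots$, or $\oneil(\ildtwok)\ldots$ into the corresponding row from the lower block of Lemma~\ref{lemma_unused_points_at_start}. A row-by-row inspection of the two blocks of that table, running over every shape and every choice of $\pi \in \{W_{2n}, W_{2n-1}, M_{2n}, M_{2n-1}\}$, shows that $S$ increases by exactly $1$ in every case. Combined with the extra point in $|\lambda|$, this yields a total increase of $2$ in $C$. A symmetric row-by-row check of the two blocks of Lemma~\ref{lemma_unused_points_at_end} establishes the same increment of $2$ when $\plusone$ is appended. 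Since the two modifications affect disjoint summands and neither disturbs $I$, their effects add, giving $C+4$ for $\oneplus\sumrab\plusone$.

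The main obstacle is the bookkeeping: one has to verify that the decomposition $C = S + |\lambda| + I + E$ really realises the minimum embedding size in every case, including $\alpha = \dtwo$, where Lemma~\ref{lemma_unused_points_21} records a separate constraint because the embedding of each $\dtwo$ can be placed in two orientations inside the oscillation. Reconciling $S + |\lambda| + I + E$ with the totals in Lemma~\ref{lemma_inequalities_for_pi} for every combination of shape and parity confirms the decomposition, after which the uniform $+1$ increments in the two start/end tables deliver the three claimed equalities at once.
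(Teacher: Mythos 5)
Your proposal is correct and follows essentially the same route as the paper: the published proof likewise reads off Lemmas~\ref{lemma_unused_points_at_start} and~\ref{lemma_unused_points_at_end}, noting that prepending $\oneplus$ (or appending $\plusone$) costs one new point plus exactly one additional unused point, the interior packing being unaffected. Your explicit decomposition $C = S + \order{\lambda} + I + E$ just makes the paper's brief bookkeeping argument more formal.
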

\begin{proof}
    We can see from Lemmas~\ref{lemma_unused_points_at_start}
    and~\ref{lemma_unused_points_at_end} that 
    adding $\oneplus {}$ 
    at the start of a permutation
    increases the number of points required by two -- 
    one for the new point, and one that is unused.
    Similarly, adding ${} \plusone$
    at the end increases the points required by two.
\end{proof}

Lemma~\ref{lemma_inequalities_for_pi} gives
us inequalities that any $\sumra$ must satisfy
to ensure that $\sumra \leq \pi$.  Further,
Corollary~\ref{corollary_add_2_for_op_or_po}
gives us inequalities that, for a given $\sumra$
allow us to determine if 
$\oneplus \sumrab \leq \pi$, 
$\sumrab \plusone \leq \pi$ and
$\oneplus \sumrab \plusone \leq \pi$.
We can therefore determine what values
of $r$ and $k$ will result in 
$\familysum{\sumra}$ contributing to the \mob function.
We now consider inequalities that
relate $\sigma$ and $\alpha$,
so that we can determine
if $\sigma \leq \alpha$ 
using an inequality.
\begin{lemma}
    \label{lemma_inequalities_for_sigma}
    If $\sigma > 1$ is an increasing oscillation,
    and $\alpha \in \familyil{\ildtwok}$ for some $k$,
    then for $\sigma$ to be contained
    in $\alpha$
    the inequality in the table below must be satisfied,
    where $k \geq 1$.
    
    \centering
    \begin{tabular}{l@{\phantom{xxxxxxx}}c@{\phantom{xxxxxxx}}r}
        \toprule
        $\sigma$ & Shape of $\alpha$ & Inequality \\ 
        \midrule
        $W_{2n-1},M_{2n},M_{2n-1}$ & $ \dtwo $ & False \\        
        $W_{2n-1}$ & $ \ildtwokb \ilone $ & $ k \geq n-1 $ \\
        $M_{2n-1}$ & $ \oneil \ildtwokb $ & $ k \geq n-1 $ \\
        $W_{2n-1}, M_{2n}, M_{2n-1}$ & $ \oneil \ildtwokb \ilone $ & $ k \geq n-1 $ \\         
        $M_{2n}$   & $ \ildtwokp $ & $ k \geq n+1 $ \\
        \multicolumn{2}{c}{All other cases} & $ k \geq n $ \\
        \bottomrule
    \end{tabular}
\end{lemma}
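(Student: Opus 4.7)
The plan is a case analysis on the shape of $\sigma$ (one of the four forms $W_{2n-1}$, $W_{2n}$, $M_{2n-1}$, $M_{2n}$) combined with the shape of $\alpha$, which is either the trivial shape $\dtwo$ or one of the four members of $\familyil{\ildtwok}$, together with the auxiliary shape $\ildtwokp$ appearing in the table. Since both $\sigma$ and $\alpha$ are sum indecomposable increasing oscillations, containment is governed purely by their lengths and by how the endpoint shapes of $\sigma$ match those of $\alpha$: specifically, whether the leftmost entry of $\sigma$ comes from a $\oneil$ or from the body $\ildtwok$, and likewise at the right.

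The key structural fact, which was already established in the proof of Lemma~\ref{lemma_form_of_permutations_in_increasing_oscillations}, is that deleting an interior point of an increasing oscillation splits it into a direct sum of two smaller increasing oscillations, while deleting one of the four designated endpoint points preserves the property of being a single increasing oscillation. Hence any embedding of the sum indecomposable $\sigma$ into $\alpha$ corresponds to a sequence of endpoint deletions of $\alpha$ that never forces an interior deletion. This reduces the problem to counting how many endpoint deletions are required to convert $\alpha$ into $\sigma$ and comparing this count with $\order{\alpha}-\order{\sigma}$.

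First I would dispose of the case $\alpha=\dtwo$: here $\order{\alpha}=2$, so no $\sigma>1$ with $\order{\sigma}\geq 3$ can embed, producing the \emph{False} rows. Next, for the matching-endpoint cases (for example $\sigma=W_{2n-1}$ paired with $\alpha=\ildtwok\ilone$, both terminating in $\ilone$; or $\sigma=M_{2n-1}$ paired with $\alpha=\oneil\ildtwok$, both beginning with $\oneil$) the matching endpoint of $\alpha$ can be retained and trimming is confined to the opposite end, yielding after a length comparison a bound of the form $k\geq n-1$. For the cases in which $\sigma$ and $\alpha$ mismatch at one or both endpoints, each mismatched end forces the sacrifice of one additional endpoint point of $\alpha$, tightening the required length of $\alpha$ accordingly; this produces the $k\geq n$ rows (a single mismatch) and the strictest $k\geq n+1$ row (both endpoints mismatched, as in $\sigma=M_{2n}$ paired with $\alpha=\ildtwokp$).

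The main obstacle is purely bookkeeping: there are on the order of sixteen sub-cases, and in each one must confirm both that the stated inequality is \emph{necessary} (no shorter $\alpha$ admits an embedding, because of the endpoint sacrifices forced by the shape mismatch) and \emph{sufficient} (that a sequence of endpoint deletions of $\alpha$ actually produces $\sigma$ without ever requiring an interior deletion). No single case is mathematically deep; the difficulty lies in treating all cases uniformly, since many distinct shape pairings collapse into the same final inequality and it is easy to overlook an endpoint that must be sacrificed.
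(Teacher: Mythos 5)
Your overall strategy---reduce the statement to a finite case analysis over the shape of $\sigma$ and the shape of $\alpha$---is the same as the paper's, whose entire proof is the sentence ``we examine all possible cases''. The difficulty is that the uniform bookkeeping principle you substitute for the actual case checks is exactly where all the content lies, and it does not hold up. First, the structural reduction is overstated: an embedding of a sum-indecomposable $\sigma$ into $\alpha$ is not forced to avoid ``interior'' deletions (once a deletion splits an oscillation into a direct sum, $\sigma$ only has to lie inside one indecomposable summand), and which near-extremal points of an increasing oscillation can be deleted without destroying indecomposability depends on its shape; for $\nils{m}{\dtwo}$ it is the second and the penultimate points, not the literal first and last, that can be removed while staying an increasing oscillation. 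These points are fixable, but they matter for the counting you want to do.

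Second, and more seriously, the mismatch-costing rule is internally inconsistent and fails on the very row you single out. Under your rule (one sacrificed point of $\alpha$ per mismatched end), the pair $\sigma=M_{2n}$ with $\alpha$ of shape $\ildtwokp$, mismatched at both ends, gives $2k+2\ge 2n+2$, i.e.\ $k\ge n$, not the $k\ge n+1$ you claim to obtain; meanwhile pairs such as $\sigma=W_{2n-1}$ against shape $\oneil\ildtwokb$, which are also mismatched at both ends, are assigned only $k\ge n$. Moreover, since $W_m=M_m^{-1}$, containment of $M_{2n}$ in a $\ildtwokp$-shaped $\alpha$ is equivalent to containment of $W_{2n}$ in a $\oneil\ildtwokb\ilone$-shaped one, for which your scheme (and the ``all other cases'' row) gives $k\ge n$, so the two answers must be reconciled. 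Indeed, in the smallest instance $M_4=2413$ embeds into $\nils{3}{\dtwo}=315264$ (take the subsequence $3,5,2,4$), so the four-point sacrifice your argument predicts for the double-mismatch case simply does not occur, and the $k\ge n+1$ row cannot be established by the ``both endpoints mismatched'' reasoning; you would instead have to confront this case with an explicit embedding/obstruction analysis, which the proposal defers entirely. As written, the plan establishes neither the necessity nor the sufficiency of any row of the table, so it falls short of a proof rather than merely differing in route from the paper's (unspecified) case check.
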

\begin{proof}
    We examine all possible cases.
\end{proof}

We are now nearly ready to present the main theorem 
for this section.  
Informally, for each possible shape
of permutation $\alpha$,
we will first find the
minimum and maximum values of $k$
such that $\sigma \leq \alpha \leq \pi$,
as any other values of $k$ result in 
$\alpha$ being outside the interval.
For each $\alpha$ and each $k$, we then 
determine the minimum value of $r$ such that
$\oneplus \sumrab \plusone \not\leq \pi$.
We can then use this value of $r$ (assuming it is non-zero)
to determine the weight to be applied to 
$\mobfn{\sigma}{\alpha}$.
The set of $\alpha$'s with a non-zero weight 
is a contributing set $\contrib{\sigma}{\pi}$.
At this point we can substitute a value
for any $\mobfn{\sigma}{\alpha}$
where $\order{\sigma} \leq \order{\alpha} - 1$.
We then use the same process
recursively to determine
the contributing set for the
remaining elements of $\contrib{\sigma}{\pi}$.

We first define some supporting functions.
Let $\rawmink(\sigma, \alpha)$
be the 
minimum value of $k$ that satisfies the inequality
in Lemma~\ref{lemma_inequalities_for_sigma}.
For the first inequality, which is always false,
we set $k=\order{\pi}$, as this will force
the sum, defined later in
Theorem~\ref{theorem_mobius_sum_increasing_oscillations},
to be empty.

Let $\mink(\sigma, \alpha)$ be defined as
\begin{align*}
\mink(\sigma, \alpha)
& =
\begin{cases*}
1 & If $\sigma = 1$ and $\alpha \neq \ildtwokp$, \\
2 & If $\sigma = 1$ and $\alpha = \ildtwokp$, \\
\rawmink(\sigma,\alpha) & otherwise.
\end{cases*}
\end{align*}
Observe that for any $k < \mink(\sigma, \alpha)$,
we have $\alpha < \sigma$, and so 
$\familysum{\nsums{k}{\alpha}}$ makes no net contribution to the \mob sum.

Let $\maxk(\alpha, \pi)$ be defined as
the maximum value of $k$ that satisfies the inequality in
Lemma~\ref{lemma_inequalities_for_pi},
if the shape of $\alpha$ and the shape of $\pi$ are different;
and
one less than the maximum value of $k$ that satisfies the inequality 
if the shape of $\alpha$ and the shape of $\pi$ are the same.    
For the first two inequalities, which do not involve $k$,
we set $\maxk(\alpha, \pi)=1$ if the inequality is satisfied, 
and $\maxk(\alpha, \pi)=0$ if not.
Observe here that 
for any $k > \maxk(\alpha, \pi)$ we have
$\alpha \not < \pi$, and so
$\familysum{\nsums{k}{\alpha}}$ makes no contribution to the \mob sum.

We define the weight function
for increasing oscillations, $\weightosc{\sigma}{\alpha}{\pi}$,
as
\begin{align*}
\weightosc{\sigma}{\alpha}{\pi}
& =
\begin{cases*}
1 & 
If
$
\sumrab \plusone \not \leq \pi,
$
\\
-1 & 
If
$
\sumrab \plusone \leq \pi 
\text{ and } 
\nsums{r+1}{\alpha} \not\leq \pi,
$
\\
0 &
Otherwise,
\end{cases*}
\end{align*}
where $r$ is the smallest integer 
such that $\oneplus \sumrab \plusone \not \leq \pi$.
These conditions are simpler than
those given in the weight function~(\ref{equation_general_weight_function})
for Theorem~\ref{theorem_mobius_sum_bottom_level_indecomposable} as,
by Corollary~\ref{corollary_add_2_for_op_or_po},
if 
$\sumrab \plusone \not \leq \pi$ then
$\oneplus \sumrab \not \leq \pi$ and vice-versa.
Furthermore, we will see that
this weight function
is only used when $\sigma \leq \sumra \leq \pi$.

We are now in a position to state our main theorem for 
this section.
In this theorem, we consider the contribution
to the \mob sum of each possible shape of 
some sum indecomposable $\alpha$.
There are five possible shapes, and, given that the 
expression for each shape is identical, 
we abuse notation slightly by writing our theorem
as a sum over the shapes, thus the
first sum in 
Theorem~\ref{theorem_mobius_sum_increasing_oscillations}
is over the possible shapes of $\alpha$,
where four of the shapes
have a parameter $k$.
For each shape, the limits on the interior sum
determine the minimum and maximum values
of $k$, using the summation variable $v$.  We use the 
notation $\alpha_v$ to represent the actual
permutation that has the shape $\alpha$,
where the parameter $k$ has been set to the value of $v$.
As an example, if $\alpha = \oneil \ildtwokb$,
and $v = 2$,
then $\alpha_v = \oneil \left( \nils{2}{\dtwo} \right) = 24153$.
 
\begin{theorem}
    \label{theorem_mobius_sum_increasing_oscillations}
    Let $\pi$ be an increasing oscillation,
    and let $\sigma \leq \pi$ be sum indecomposable.
    Then
    \begin{align*}
    \mobfn{\sigma}{\pi}
    & = 
    \sum_{\alpha \in \shapes} \
    \sum_{v=\mink(\sigma,\alpha)}^{\maxk(\alpha,\pi)}
    \mobfn{\sigma}{\alpha_{v}} 
    \weightosc{\sigma}{\alpha_{v}}{\pi}    
    \end{align*}
    where the first sum
    is over the possible shapes of a sum indecomposable
    permutation contained in an increasing oscillation,
    so 
    $\shapes = \{ \dtwo, \;$
    $\ildtwokp,\;$ 
    $\oneil \ildtwokb,\;$ 
    $\ildtwokb \ilone,\;$
    $\oneil \ildtwokb \ilone \}$.
\end{theorem}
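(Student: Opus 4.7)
The plan is to derive this from Theorem~\ref{theorem_mobius_sum_bottom_level_indecomposable}, which already tells us that $\mobfn{\sigma}{\pi} = -\sum_{\alpha \in \contrib{\sigma}{\pi}} \mobfn{\sigma}{\alpha}\weightgen{\sigma}{\alpha}{\pi}$ for any sum indecomposable $\sigma$ and $\pi$ with $\order{\pi} > 3$. The task therefore reduces to two things: (i) identifying which sum indecomposable $\alpha$ can lie in $\contrib{\sigma}{\pi}$ when $\pi$ is an increasing oscillation, and (ii) showing that the parameters $\mink(\sigma,\alpha)$ and $\maxk(\alpha,\pi)$ correctly cut off the summation range, with $\weightosc$ matching $\weightgen$ on that range (up to the overall sign built into the formula).

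First, I would invoke Lemma~\ref{lemma_form_of_permutations_in_increasing_oscillations} to restrict attention to $\alpha \in \familyil{\ildtwok}$ for some $k \geq 1$, since all other sum indecomposable $\lambda \leq \pi$ contribute zero. This partitions $\contrib{\sigma}{\pi}$ according to the five shapes in $\shapes$, matching the outer sum of the theorem. For a fixed shape, the parameter $k$ is what varies, and the inner sum over $v$ is supposed to range over exactly those $k$ for which $\sigma \leq \alpha_v \leq \pi$ and $\weightosc{\sigma}{\alpha_v}{\pi}$ is nonzero.

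Next, I would translate the bracketing conditions into numerical inequalities. The lower bound $\mink(\sigma,\alpha)$ comes from Lemma~\ref{lemma_inequalities_for_sigma}, which expresses $\sigma \leq \alpha_v$ as an inequality on $k$ for each combination of shapes of $\sigma$ and $\alpha$; the special cases $\sigma = 1$ are handled by hand (giving the $1$ or $2$ floor, because $1 \leq \ildtwokp$ already requires $k \geq 2$ in the definitions, while $1 \leq \alpha$ is automatic for the other shapes). The upper bound $\maxk(\alpha,\pi)$ is obtained by applying Lemma~\ref{lemma_inequalities_for_pi} with $r = 1$ to get $\alpha_v \leq \pi$, with the off-by-one reduction when $\sigma$ and $\pi$ have the same shape handled by the first two rows of the table in that lemma (since in that case the increasing oscillation $\alpha_v$ equal to $\pi$ itself must be excluded from the half-open interval). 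For each permissible $v$, Corollary~\ref{corollary_add_2_for_op_or_po} together with the computation of the smallest $r$ with $\oneplus\sumrab\plusone \not\leq \pi$ determines whether $\oneplus\sumrab \leq \pi$ and $\sumrab\plusone \leq \pi$, so $\weightosc{\sigma}{\alpha_v}{\pi}$ returns the same value as $\weightgen{\sigma}{\alpha_v}{\pi}$ when $\sigma \leq \sumra \leq \pi$, and returns $0$ precisely when $\weightgen$ does.

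The hard part, I expect, will be the bookkeeping in two places. First, the distinction between ``shape of $\alpha$ equals shape of $\pi$'' and ``shape of $\alpha$ differs from shape of $\pi$'' in the definition of $\maxk(\alpha,\pi)$ needs to be cross-checked against Lemma~\ref{lemma_inequalities_for_pi}: saturating the inequality with $r=1$ produces an $\alpha_v$ that literally is $\pi$, which must be removed since the outer Hall-style sum ranges over $[\sigma,\pi)$. Second, the sign discrepancy between the minus sign in Theorem~\ref{theorem_mobius_sum_bottom_level_indecomposable} and its absence in the statement here must be reconciled; this is absorbed into the sign conventions of $\weightosc$ relative to $\weightgen$, and I would verify this by case analysis on the four rows of Table~\ref{table_family_members_in_an_interval}. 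Once the correspondence of ranges and weights is established shape-by-shape, the identity reduces to Theorem~\ref{theorem_mobius_sum_bottom_level_indecomposable} restricted to the increasing-oscillation setting.
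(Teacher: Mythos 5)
Your route is the paper's route: restrict to the five shapes via Lemma~\ref{lemma_form_of_permutations_in_increasing_oscillations}, obtain the summation range from Lemmas~\ref{lemma_inequalities_for_sigma} and~\ref{lemma_inequalities_for_pi} with $r=1$, observe that the $\alpha_v$ so obtained form a contributing set, and then invoke Theorem~\ref{theorem_mobius_sum_bottom_level_indecomposable} together with Corollary~\ref{corollary_add_2_for_op_or_po} to identify $\weightosc{\sigma}{\alpha_v}{\pi}$ with $\weightgen{\sigma}{\alpha_v}{\pi}$. Your reading of the ``one less'' in $\maxk$ (it removes the $\alpha_v$ that would equal $\pi$ itself, which cannot lie in the half-open interval) is the intended one; just note that this adjustment is triggered by $\alpha$ and $\pi$ having the same shape, not $\sigma$ and $\pi$, and it is a separate clause from the two $\dtwo$ rows of Lemma~\ref{lemma_inequalities_for_pi}, which are special only because they do not involve $k$.

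The step that does not work as you describe it is the sign reconciliation. You propose that the leading minus of Theorem~\ref{theorem_mobius_sum_bottom_level_indecomposable} is ``absorbed into the sign conventions of $\weightosc$ relative to $\weightgen$'' and that the four rows of Table~\ref{table_family_members_in_an_interval} will confirm this. They will not: the two weight functions carry the \emph{same} sign convention ($+1$ when $\sumrab \plusone \not\leq \pi$, which by Corollary~\ref{corollary_add_2_for_op_or_po} forces $\oneplus \sumrab \not\leq \pi$ as well; $-1$ when $\sumrab\plusone\leq\pi$ and $\nsums{r+1}{\alpha}\not\leq\pi$; $0$ otherwise), and the paper's own proof argues exactly that $\weightosc{\sigma}{\alpha}{\pi}=\weightgen{\sigma}{\alpha}{\pi}$ whenever $\sigma\leq\sumra\leq\pi$. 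So carrying out your plan faithfully lands on $\mobfn{\sigma}{\pi}=-\sum_{\alpha\in\shapes}\sum_v \mobfn{\sigma}{\alpha_v}\weightosc{\sigma}{\alpha_v}{\pi}$, with the minus sign inherited intact; nothing in the weights can absorb it. A small check makes this concrete: for $\sigma=1$ and $\pi=\nils{2}{\dtwo}=3142$, the permutations entering the double sum are $\dtwo$, $\oneil\dtwo=231$ and $\dtwo\ilone=312$ with weights $-1$, $+1$, $+1$, so the double sum is $(-1)(-1)+1+1=3$, whereas $\mobfn{1}{3142}=-3$. Treat the absence of the minus sign in the displayed identity as a sign-convention slip to be flagged (the paper's argument, like yours, actually establishes the minus-sign form via Theorem~\ref{theorem_mobius_sum_bottom_level_indecomposable}), rather than as something your proposed case analysis can repair.
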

\begin{proof}
    By Lemma~\ref{lemma_form_of_permutations_in_increasing_oscillations}
    the only sum-decomposable permutations contained
    in an increasing oscillation that contribute to the 
    \mob sum are $\familysum{\sumra}$,
    where $\alpha \in \shapes$.
    
    If we set $r=1$, then for each $\alpha$ in $\shapes$
    Lemma~\ref{lemma_inequalities_for_sigma} provides the
    smallest value of $k$ such that $\sigma \leq \alpha$.
    If there is no such value of $k$, then we use $\order{\pi}$,
    as the maximum value of $k$ must be smaller than this,
    and so the sum is empty.
    
    Again setting $r=1$, for each $\alpha$ in $\shapes$
    Lemma~\ref{lemma_inequalities_for_pi} provides the 
    maximum value of $k$ such that $\alpha \leq \pi$.
    If there is no value of $k$ that satisfies the 
    inequality, then we set $\maxk(\alpha,\pi) = 0$,
    thus forcing the sum to be empty.
    
    Thus the permutations $\alpha_v$ in the sum
    \[
    \sum_{\alpha \in \shapes} \
    \sum_{v=\mink(\sigma,\alpha)}^{\maxk(\alpha,\pi)}
    \] 
    are those that could contribute to the \mob sum,
    and for any $\alpha_v$ not included in the sum,
    $\familysum{\nsums{r}{\alpha_v}}$ has a zero contribution
    to the \mob sum for any $r$.
    
    Further, we can see from the construction method that
    any $\alpha_v$ included in the sum 
    has $\sigma \leq \nsums{r}{\alpha_v} \leq \pi$
    for at least one value of $r$, as if this was
    not the case, then we would have
    $\mink(\sigma,\alpha) > \maxk(\alpha,\pi)$, 
    and so the sum would be empty.
    
    We have therefore shown that the $\alpha_v$-s included 
    in the sum form a contributing set, and we could 
    therefore set $\contrib{\sigma}{\pi}$ to be those
    $\alpha_v$-s, and use
    Theorem~\ref{theorem_mobius_sum_bottom_level_indecomposable}.
    We now show that the increasing oscillation weight function
    $\weightosc{\sigma}{\alpha}{\pi}$ is equivalent
    to $\weightgen{\sigma}{\alpha}{\pi}$ as defined in the
    general case.
    
    By Corollary~\ref{corollary_add_2_for_op_or_po},
    if 
    $\sumrab \plusone \not \leq \pi$ then
    $\oneplus \sumrab \not \leq \pi$ and vice-versa,
    and so the condition for $\sumrab \plusone$ 
    also covers $\oneplus \sumrab$.
    As discussed above, 
    we know that there is at least one value of $r$ such that
    $\sigma \leq \sumra \leq \pi$, and so 
    $\weightosc{\sigma}{\alpha}{\pi}$ does not need to 
    include this condition.  
    Thus the increasing oscillation weight function
    $\weightosc{\sigma}{\alpha}{\pi}$ is equivalent
    to $\weightgen{\sigma}{\alpha}{\pi}$ as defined in the
    general case.
\end{proof}

\subsection{Example of Theorem~\ref{theorem_mobius_sum_increasing_oscillations}}
As an example of 
Theorem~\ref{theorem_mobius_sum_increasing_oscillations}
in action, we show how to determine
\[
\mobfn{3142}{315274968} = \mobfn{\nils{2}{\dtwo}}{\nils{4}{\dtwo} \ilone}.
\]
We start by considering each 
possible shape of $\alpha$,
setting $r=1$, and then using
the inequalities in
Lemmas~\ref{lemma_inequalities_for_pi}
and~\ref{lemma_inequalities_for_sigma}
to determine the minimum and maximum 
values of $k$.  This gives us

\begin{center}
\begin{tabular}{ccc}
    \toprule
    Shape of $\alpha$ & Minimum $k$ & Maximum $k$  \\ 
    \midrule
    $\dtwo$ & 1 & 1 \\
    $\ildtwok$ & 2 & 4 \\
    $\oneil \ildtwokb$ & 2 & 3 \\
    $\ildtwokb \ilone$ & 2 & 3 \\
    $\oneil \ildtwokb \ilone$ & 2 & 3 \\
    \bottomrule
\end{tabular}
\end{center}

For each shape of $\alpha$, and each value of $k$, we then
use the inequalities in
Lemma~\ref{lemma_inequalities_for_pi}
to determine the minimum value of $r$ such that
$\oneplus \sumrab \plusone \not \leq \pi$,
and we then calculate the weight using this
value of $r$.  This gives

\begin{center}
\begin{tabular}{ccr}
    \toprule
    $\alpha$ & $r$ & Weight \\ 
    \midrule
    $\dtwo$ & \multicolumn{2}{c}{No possibilities} \\
    $\nils{2}{\dtwo}$ & $2$ & $1$ \\
    $\nils{3}{\dtwo}$ & $1$ & $-1$ \\
    $\nils{4}{\dtwo}$ & $1$ & $-1$ \\
    $\oneil \left( \nils{2}{\dtwo} \right)$ & $1$ & $-1$ \\
    $\oneil \left( \nils{3}{\dtwo} \right)$ & $1$ & $-1$ \\
    $\left( \nils{2}{\dtwo} \right) \ilone$ & $2$ & $1$ \\
    $\left( \nils{3}{\dtwo} \right) \ilone$ & $1$ & $-1$ \\
    $\oneil \left( \nils{2}{\dtwo} \right) \ilone$ & $1$ & $-1$ \\    
    $\oneil \left( \nils{3}{\dtwo} \right) \ilone$ & $1$ & $-1$ \\
    \bottomrule
\end{tabular}
\end{center}

This leads to the following initial
expression:
\begin{align*}
\mobfn{\nils{2}{\dtwo}}{\nils{4}{\dtwo} \ilone}
= &
  \mobfn{\nils{2}{\dtwo}}{\nils{2}{\dtwo}}
- \mobfn{\nils{2}{\dtwo}}{\nils{3}{\dtwo}}
- \mobfn{\nils{2}{\dtwo}}{\nils{4}{\dtwo}} 
\\ &
- \mobfn{\nils{2}{\dtwo}}{\oneil \left( \nils{2}{\dtwo} \right)} 
- {} \mobfn{\nils{2}{\dtwo}}{\oneil \left( \nils{3}{\dtwo} \right)} 
\\ &
+ \mobfn{\nils{2}{\dtwo}}{\nils{2}{\dtwo} \ilone}
- \mobfn{\nils{2}{\dtwo}}{\nils{3}{\dtwo} \ilone}
\\ & 
- \mobfn{\nils{2}{\dtwo}}{\oneil \left( \nils{2}{\dtwo} \right) \ilone}
- \mobfn{\nils{2}{\dtwo}}{\oneil \left( \nils{3}{\dtwo} \right) \ilone}
\end{align*}

We know that $\mobfn{\nils{2}{\dtwo}}{\nils{2}{\dtwo}} = 1$,
and that 
\[
\mobfn{\nils{2}{\dtwo}}{\oneil \left( \nils{2}{\dtwo} \right)} =
\mobfn{\nils{2}{\dtwo}}{\nils{2}{\dtwo} \ilone} = 
-1.
\]  
Applying 
Theorem~\ref{theorem_mobius_sum_increasing_oscillations}
recursively to the other intervals eventually yields
\[
\mobfn{\nils{2}{\dtwo}}{\nils{4}{\dtwo} \ilone} = -6.
\]
\section{Concluding remarks}
\label{incosc_section-concluding-remarks}

The results in~\cite{Burstein2011} provide
two recurrences to handle the case where 
$\pi$ is decomposable.  
This work handles the case where $\sigma$ is indecomposable.
It overlaps with~\cite{Burstein2011}
when $\sigma$ is indecomposable 
and $\pi$ is decomposable.
This leaves the case where 
$\sigma$ is decomposable and 
$\pi$ is indecomposable
for further investigation.

We can see that
by symmetry
$\mobfn{\sigma}{W_{n}} = \mobfn{\sigma^{-1}}{M_{n}}$.
If we consider the value of the 
principal \mob function, $\mobfn{1}{\pi}$,
where $\pi$ is either $W_n$ or $M_n$,
then it is simple to show that the absolute value
of the principal \mob function
is bounded above by $2^n$.
The weight function for increasing oscillations
can be $\pm 1$, and we can see no obvious reason why
there should not be two distinct values, $i$ and $j$,
with the same parity, 
such that the signs of $\mobfn{1}{W_i}$ and 
$\mobfn{1}{W_j}$ were different.
We have experimental evidence,
based on the values of $W_n$ and $M_n$
for 
$n=1 \ldots 
\text{2,000,000}  
$ 
that suggests that
$\mobfn{1}{W_{2n}} < 0$, and that 
$\mobfn{1}{W_{2n-1}} > 0$.

Figure~\ref{figure_increasing_oscillation_values}
is a log-log plot of the values of $- \mobfn{1}{W_{2n}}$
from $n=\text{8,000}$ to $n=\text{10,000}$.  
As can be seen, there seems to be some 
evidence that the values
fall into distinct bands, and we 
have confirmed that this pattern
continues up to $n=\text{1,000,000}$.
Examination of the values of 
$\mobfn{1}{W_{2n-1}$} reveals the same 
patterns.
\begin{figure}
    \centering
        \centering
        \input{incosc_loglogplotofwn.tex}
        \caption{Log-Log plot of $\order{W_{2n}}$.}
        \label{figure_increasing_oscillation_values}
\end{figure}

Following discussions
at Permutation Patterns 2017,
V{\'{i}}t Jel{\'{i}}nek~\cite{Jelinek2017a} provided the following conjecture
(rephrased to reflect our notation).
\begin{conjecture}
    [{Jel{\'{i}}nek~\cite{Jelinek2017a}}]
    \label{incosc_conjecture_vit}
    Let $M(n)$ denote the absolute value of the 
    \mob function $\mobfn{1}{W_n} = \mobfn{1}{M_n}$.  
    Then for $n > 50$ we have
    \begin{align*}
    M(2n) & = n^2 
    \Longleftrightarrow  
    \text{$n+1$ is prime and $n \equiv 0 \mymod 6$} 
    \\
    M(2n) & = n^2 - 1 
    \Longleftrightarrow  
    \text{$n+1$ is prime and $n \equiv 4 \mymod 6$} 
    \\
    M(2n+1) & = n^2 - n
    \Longleftrightarrow  
    \text{$n+1$ is prime and $n \equiv 0 \mymod 6$} 
    \\
    M(2n+1) & = n^2 - n - 1 
    \Longleftrightarrow  
    \text{$n+1$ is prime and $n \equiv 4 \mymod 6$} 
    \\
    \end{align*}    
    Further, Jel{\'{i}}nek notes that 
    there does not seem to be any other small constant $k$ such that $M(n) = (n^2-k)/4$ infinitely often.    
\end{conjecture}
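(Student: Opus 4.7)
The plan is to expand $\mobfn{1}{W_{2n}}$ via Theorem~\ref{theorem_mobius_sum_increasing_oscillations} and track how the resulting sum reflects the arithmetic of $n+1$. Setting $\sigma = 1$ and $\pi = W_{2n} = \nils{n}{\dtwo}$, the theorem writes
\[
\mobfn{1}{W_{2n}} = \sum_{\alpha \in \shapes}\ \sum_{v=\mink(1,\alpha)}^{\maxk(\alpha,W_{2n})} \mobfn{1}{\alpha_v}\, \weightosc{1}{\alpha_v}{W_{2n}},
\]
where Lemma~\ref{lemma_inequalities_for_pi} gives each range of $v$ length $n - O(1)$; each inner factor $\mobfn{1}{\alpha_v}$ is itself a principal Möbius value on an interval whose upper bound is an increasing oscillation of length at most $2v+2$, so Theorem~\ref{theorem_mobius_sum_increasing_oscillations} applies a second time.

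Applying the recursion twice produces a weighted count of $\Theta(n^2)$ length-$2$ shape paths, each contributing a $\pm 1$ given by the product of two $\weightosc$ weights together with a terminal factor $\mobfn{1}{\tau}$ for some $\tau$ of length $O(1)$. The first goal is to show that, after collecting these $\pm 1$ terms, the total can be written as $\mobfn{1}{W_{2n}} = -n^2 + E(n)$, where $E(n) = O(n)$ is an error controlled by the boundary behaviour of the $\weightosc$ weights. The leading $-n^2$ arises from the ``same-shape cascade'' in which both levels of the recursion select a shape in $\{\dtwo, \ildtwokp\}$ at its maximum $v$, after which cancellations between the remaining four shapes (reducible via the reverse/complement symmetry noted at the end of Section~\ref{section-increasing-oscillations}) leave only boundary contributions.

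The second goal is to express $E(n)$ as an arithmetic correction depending on the divisors of $n+1$. By Corollary~\ref{corollary_add_2_for_op_or_po}, the sign flips of $\weightosc{1}{\alpha_v}{\pi}$ are located precisely at the $(r,v)$ for which $r(v+1) = n + 1 - c$ for a shape-dependent constant $c \in \{0,1,2\}$, so each integer solution to such an equation contributes a $\pm 1$ kick to $E(n)$. Collecting these contributions yields a decomposition
\[
E(n) = E_0(n \bmod 6) + \sum_{\substack{d \mid n+1 \\ 1 < d < n+1}} \delta(d, n+1),
\]
in which $E_0$ depends only on $n$ modulo $6$ because the three shapes $\oneil\ildtwokb$, $\ildtwokb\ilone$, and $\oneil\ildtwokb\ilone$ carry weights sensitive to both the parity of $v$ and to whether $3 \mid v$. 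If $n+1$ is prime the divisor sum vanishes and $E(n) = E_0(n \bmod 6)$; verifying $E_0(0) = 0$ and $E_0(4) = -1$, together with $E_0(r) \notin \{0,-1\}$ for $r \in \{1,2,3,5\}$, then yields the first two statements of the conjecture. The odd-length cases reduce by the analogous argument applied to $W_{2n+1} = \left(\nils{n}{\dtwo}\right)\ilone$.

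The main obstacle is the second step: rigorously isolating $E(n)$ as an explicit divisor sum. Naive telescoping of the two-level recursion absorbs the $O(1)$ corrections into the leading $-n^2$, so one probably needs a transfer-matrix encoding with the five elements of $\shapes$ as states, writing $\mobfn{1}{W_{2n}}$ as a single matrix entry so that $E(n)$ can be extracted cleanly. The reverse directions of the biconditionals, which rule out sporadic composite $n+1$ with $M(2n) \in \{n^2, n^2-1\}$, then reduce to proving that $\sum_{d} \delta(d, n+1) \neq 0$ whenever $n+1$ admits a non-trivial divisor---a non-vanishing statement that is plausible from the numerical data in Figure~\ref{figure_increasing_oscillation_values} but likely requires a delicate sign-control argument propagated across the recursion.
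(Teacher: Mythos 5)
You are attempting to prove a statement that the paper does not prove: Conjecture~\ref{incosc_conjecture_vit} is recorded as a conjecture arising from a private communication of Jel\'{\i}nek, and the only evidence the thesis offers is computational verification for $50 < n \leq 2{,}000{,}000$. So there is no paper argument for your proposal to match, and it would have to stand on its own. It does not: you yourself leave the two decisive steps open (isolating $E(n)$ as an explicit divisor sum, and showing that sum is non-zero whenever $n+1$ is composite), and those steps are exactly the content of the conjecture, namely a link between $\mobfn{1}{W_n}$ and the primality of $n+1$ that nobody has derived from the machinery of Theorem~\ref{theorem_mobius_sum_increasing_oscillations}.

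Beyond the admitted gaps, the skeleton you do commit to has concrete flaws. First, your claim that two applications of Theorem~\ref{theorem_mobius_sum_increasing_oscillations} reduce everything to terminal factors $\mobfn{1}{\tau}$ with $\order{\tau} = O(1)$ is false: the inner factors $\mobfn{1}{\alpha_v}$ are principal \mob values of increasing oscillations of length up to roughly $2n$, so after two levels the terminal intervals still have length $\Theta(n)$ and the recursion must be unrolled to depth $\Theta(n)$, not $2$. Second, your intended conclusion $\mobfn{1}{W_{2n}} = -n^2 + E(n)$ with $E(n) = O(n)$ contradicts the paper's own data: Conjecture~\ref{incosc_conjecture_dwm} and Figure~\ref{figure_increasing_oscillation_values} show $M(2n)/n^2$ falling into bands with lower estimates around $0.615$--$0.923$ depending on $n$ modulo $6$, so $n^2 - M(2n)$ is typically of order $n^2$; the ``error'' you propose to control by boundary effects is of the same order as your main term, and the reduction to checking $E_0(n \bmod 6)$ collapses. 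Third, the sign changes of $\weightosc{1}{\alpha_v}{\pi}$ are governed by the inequalities of Lemma~\ref{lemma_inequalities_for_pi} and Corollary~\ref{corollary_add_2_for_op_or_po} (for instance $2kr+2r \leq 2n$ or $3r-1 \leq 2n$), not by an equation of the form $r(v+1) = n+1-c$, so the asserted correspondence between weight flips and divisors of $n+1$ is not something the paper's results give you; it would itself require proof, and it is precisely where the arithmetic difficulty of the conjecture lives.
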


We also have the following conjecture relating to the banding
of the values.
\begin{conjecture}
    \label{incosc_conjecture_dwm}
    Let $M(n)$ denote the absolute value of the 
    \mob function $\mobfn{1}{W_n} = \mobfn{1}{M_n}$.  
    Let $E(n) = M(n) / (n^2)$, and
    let $O(n) = M(n)/(n^2 + n)$.
    Then, with $n \geq 1$, there exist constants 
    $0 < a < b < c < d < e < f < g < 1$ such that
    \begin{align*}
    E(12n + 10) & \in [ a , b ]  & O(12n + 11) & \in [ a , b ] \\
    E(12n +  2) & \in [ c , d ]  & O(12n +  3) & \in [ c , d ] \\
    E(12n +  6) & \in [ c , d ]  & O(12n +  7) & \in [ c , d ] \\
    E(12n +  4) & \in [ e , f ]  & O(12n +  5) & \in [ e , f ] \\
    E(12n +  8) & \in [ g , 1 ]  & O(12n +  9) & \in [ g , 1 ] \\
    E(12n     ) & \in [ g , 1 ]  & O(12n +  1) & \in [ g , 1 ] \\    
    \end{align*}
    Examining the first 2,000,000 values of 
    $\mobfn{1}{W_n}$
    gives the following
    estimates for the constants.
    \[
    \begin{array}{ccccccc}
        a & b & c & d & e & f & g \\
        0.615 & 0.680 &
        0.692 & 0.760 &
        0.821 & 0.896 &
        0.923
    \end{array}
    \]
\end{conjecture}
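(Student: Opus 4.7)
The plan is to exploit the recursive structure provided by Theorem~\ref{theorem_mobius_sum_increasing_oscillations}. Starting from $\sigma = 1$ and $\pi = W_n$, a single application of that theorem expresses $\mobfn{1}{W_n}$ as a signed sum, indexed by a shape in $\shapes$ and a valid interleave parameter $k$, of terms $\mobfn{1}{\alpha_v}\weightosc{1}{\alpha_v}{W_n}$. Because each such $\alpha_v$ is itself an increasing oscillation (possibly with one $\oneplus$ or ${}\plusone$ adornment, which only shifts $\mobfn{1}{\cdot}$ by a sign), the theorem can be iterated. This rewrites $M(n) = \abs{\mobfn{1}{W_n}}$ as a signed count of admissible decoration trees, whose branching is controlled entirely by the linear inequalities collected in Lemmas~\ref{lemma_inequalities_for_pi} and~\ref{lemma_inequalities_for_sigma}, with each leaf contributing $\pm 1$.

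First I would show that the depth-one contribution already accounts for the conjectured quadratic order of magnitude. Each of the five shapes in $\shapes$ admits $\Theta(n)$ values of $k$, and for each such pair $(\text{shape},k)$ the admissible $r$-range contributes at most an $O(n)$ factor; terms at greater recursion depth contribute only $O(n)$ each, because every further interleave strictly halves the remaining length budget. This justifies the normalisations $E(n) = M(n)/n^2$ and $O(n) = M(n)/(n^2+n)$ and identifies the leading quadratic coefficient as a piecewise function determined by the depth-one combinatorics alone. The period $12$ should then emerge as $\operatorname{lcm}(2,3)\cdot 2$: a mod-$2$ contribution from the parity of $n$ and from the $W$/$M$ symmetry, a mod-$3$ contribution from the inequalities $3r-1 \le 2n$ and $3r \le 2n$ peculiar to $\alpha = \dtwo$ in Lemma~\ref{lemma_inequalities_for_pi}, and a further factor of $2$ tracking whether a recursive length decrement swaps the $W$ and $M$ types. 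The coincidences observed in the conjecture (residues $0,8$ sharing a band, and $2,6$ sharing a band) would then be read off from the leading coefficient being a piecewise-constant function of $n \pmod{12}$ taking only four distinct values.

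The hard part is upgrading this asymptotic calculation into hard uniform bounds on $M(n)/n^2$. The bands $[a,b]$, $[c,d]$, $[e,f]$ and $[g,1]$ must be proved pairwise disjoint uniformly in $n$, which requires controlling all sub-leading linear (and possibly logarithmic) corrections to the leading $n^2$ term precisely enough to separate, for instance, $[e,f] \approx [0.82,0.90]$ from $[g,1] \approx [0.92,1]$. This demands at least a depth-two refinement of the recursion, together with a finite check for small $n$ to handle outliers below some explicit threshold. Without a substantially stronger structural result governing the recursion, making the banding rigorous (as opposed to verified numerically up to $n = 2{,}000{,}000$) appears to be the most delicate step, and is the bottleneck this plan leaves open.
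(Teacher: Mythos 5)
There is nothing in the paper for your plan to be measured against: the statement you are addressing is Conjecture~\ref{incosc_conjecture_dwm}, which the paper does not prove. It is put forward purely on the strength of the computed values of $\mobfn{1}{W_n}$ for $n$ up to $2{,}000{,}000$, and the authors themselves say in the chapter summary that they lack an explanation for the banding, suspecting it is tied to the prime-number phenomena of Conjecture~\ref{incosc_conjecture_vit}. So the honest assessment is that your submission is a research programme, not a proof, and you concede as much in your final paragraph: the entire content of the conjecture is the existence of uniform, disjoint bands $[a,b]$, $[c,d]$, $[e,f]$, $[g,1]$, and that is exactly the step your plan leaves open.

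Beyond that, the quantitative accounting in your first two paragraphs does not hold up as stated. In Theorem~\ref{theorem_mobius_sum_increasing_oscillations} the depth-one sum has $\Theta(n)$ terms of the form $\mobfn{1}{\alpha_v}\,\weightosc{1}{\alpha_v}{W_n}$, and each $\alpha_v$ is again an increasing oscillation (up to a corner adjustment), so by the very data you are trying to explain each such term has magnitude of order $\order{\alpha_v}^2$, not $\pm1$; the naive bound on the sum is therefore cubic, and the claim that ``the depth-one contribution already accounts for the conjectured quadratic order'' presupposes a precise cancellation structure among quadratically large signed terms that you never establish. Likewise, the period-$12$ heuristic ($\operatorname{lcm}(2,3)\cdot 2$ from the $3r-1\le 2n$ inequalities and the $W$/$M$ parity) and the claim that the residues $0,8$ and $2,6$ share bands are asserted, not derived; and the paper's own evidence that the extreme values $M(2n)=n^2$, $n^2-1$, etc.\ occur exactly when $n+1$ is prime indicates that the fine structure of the bands is entangled with number-theoretic input that a purely combinatorial analysis of the linear inequalities in Lemmas~\ref{lemma_inequalities_for_pi} and~\ref{lemma_inequalities_for_sigma} is unlikely to capture. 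As it stands, your proposal reproduces the plausibility argument behind the conjecture but supplies no mechanism for the uniform separation of the constants $a<b<c<d<e<f<g$, which is the one thing a proof would have to deliver.
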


The 
\emph{complete nearly-layered}\extindex[permutation]{complete nearly-layered} 
permutations 
are formed by interleaving descending
permutations.  Formally,
a complete nearly-layered permutation has the form
\[
\alpha_1 \interleave \alpha_2 \interleave
\ldots
\interleave \alpha_{k-1} \interleave \alpha_k
\]
where each $\alpha_i$ is a descending permutation,
with $\alpha_i > 1$ for $i = 2, \ldots ,k-1$.
If we set $\alpha_i = 21$ for $i = 2, \ldots ,k-1$,
and $\alpha_1, \alpha_k \in \{1,21 \}$,
then we obtain the increasing oscillations.

The computational approach taken 
for increasing oscillations
could, we think, be adapted to 
complete nearly-layered permutations. 
It is clear
that the equivalent of the inequalities
in 
Lemmas~\ref{lemma_inequalities_for_pi}
and~\ref{lemma_inequalities_for_sigma}
would be somewhat more complex than those found here,
but we believe that it should be possible
to define an algorithm that could determine
the \mob function for complete nearly-layered
permutations where the lower bound is sum indecomposable.

\section{Chapter summary}

We started this chapter by saying that
our motivation was to find
a contributing set $\contrib{\sigma}{\pi}$
that is significantly smaller than
the poset interval $[\sigma, \pi)$,
and a $\{0, \pm1 \}$ weighting function $\weightgen{\sigma}{\alpha}{\pi}$
such that
\begin{align*}
\mobfn{\sigma}{\pi} 
& = 
- \sum_{\alpha \in \contrib{\sigma}{\pi}}
\mobfn{\sigma}{\alpha} \weightgen{\sigma}{\alpha}{\pi}.
\end{align*}

Our results are, essentially, computational.
By this we mean that 
if $\sigma$ is indecomposable, then 
Theorem~\ref{theorem_mobius_sum_bottom_level_indecomposable}
can be used to compute 
the value of the \mob function
on an interval $[\sigma, \pi]$
with less computational resources
than that required by the 
standard recursion of
Equation~\ref{equation_mobius_function}.
During the preparation of this thesis
the author generated 1000 random permutations
with length 14, then determined the 
poset $[1, \pi)$ defined by the random permutation ($\pi$),
and then
counted 
the permutations in the poset, 
the sum-indecomposable permutations in the poset,
and 
the skew-indecomposable permutations in the poset.
The details are summarised
in 
Table~\ref{table_indecomposable_improvement}.
\begin{table}
    \centering
    \begin{tabular}{lr}
        \toprule
        Average number of \ldots & Value \\
        \midrule
        Permutations & 3373 \\
        Sum-indecomposable permutations & 2492 \\
        Skew-indecomposable permutations & 2445 \\
        \bottomrule
    \end{tabular}    
    \caption{Statistics for 1000 posets $[1, \pi)$ defined by a random permutation of length 14,
         with figures rounded to the nearest integer.}
    \label{table_indecomposable_improvement}
\end{table}   
   
These statistics seem to indicate that
the improvement, at least for 
small intervals, does not seem to be 
as significant as the author hoped.
With hindsight this is not unexpected.
The proportion of permutations
that are sum or skew decomposable tends to zero
as the length of the permutation increases,
from which one can readily deduce that
the proportion of strongly indecomposable
permutations must tend to 1
as the length of the permutation increases.
This essentially means that the 
size of the contributing set
$\contrib{\sigma}{\pi}$
is likely to be only slightly
smaller than the size of the overall poset.

The author wrote a computer program, 
Permutation WorkShop (PWS)~\cite{Marchant2020a}, 
which can be used to investigate the \mob function
on the permutation poset.  
The author found that the overhead of identifying
which permutations were in the contributing set
$\contrib{\sigma}{\pi}$, 
and the overhead of 
calculating $\weightgen{\sigma}{\alpha}{\pi}$
meant that, in general, calculations for permutations
with an indecomposable lower bound
took longer than using the 
standard recursive definition
in Equation~\ref{equation_mobius_function}.
This observation is, however,
limited to the way in which PWS operates,
and, indeed, the hardware it runs on.
We suspect, however, that 
this observation is likely to be applicable
to other routines that
calculate the \mob function
on the permutation poset.

Despite the comments above, 
we still feel that the 
overall approach of finding
a contributing set and a weighting function
so that we can write
\[
\mobfn{\sigma}{\pi} 
= 
- \sum_{\alpha \in \contrib{\sigma}{\pi}}
\mobfn{\sigma}{\alpha} \weightgen{\sigma}{\alpha}{\pi}.
\]
is valid, and indeed, 
although it is not phrased in these terms, the
results in Chapter~\ref{chapter_2413_balloon_paper}
use this method successfully. 

By contrast, when we consider 
intervals $[\sigma, \pi]$
where $\pi$ is an increasing oscillation,
we find that the number of indecomposable permutations
contained in $W(n)$ or $M(n)$
is, apart from trivial values of $\pi$,
no greater than $2n - 4$,
and this upper bound is only achieved when $\sigma = 1$.
This is because any indecomposable permutation contained in 
an increasing oscillation must itself be a (smaller)
increasing oscillation, and there are only
two increasing oscillations of each length.
It is well-known that,
for most intervals $[1, \pi]$,
the number of permutations in the poset grows exponentially
as $\order{\pi}$ increases,
and so we believe that in this specific
case we have indeed found a contributing set
that is significantly smaller than the poset.
As supporting evidence for this claim,
we note that we were easily able to calculate
$\mobp{\pi}$ where $\pi$ was an increasing
oscillation with $\text{2,000,000}$ elements.

This gave us the raw data to
notice the banding 
shown in
Figure~\ref{figure_increasing_oscillation_values},
and led to 
Conjecture~\ref{incosc_conjecture_dwm}.
We are not aware of any other 
set of permutations 
with a simple length-based construction
where the 
values of the \mob function
fall into bands as the length of the permutation(s)
increase.
The \mob function on the permutation
pattern poset is, however, notoriously hard to compute in general,
so it is quite possible that such sets do exist,
but we do not have the understanding 
and/or the technology 
to be able to calculate values that would
exhibit banding.

We are not the only researchers to have considered
the behaviour of $\mobp{W_n}$, and
Conjecture~\ref{incosc_conjecture_vit}
comes from a personal communication
with V{\'{i}}t Jel{\'{i}}nek~\cite{Jelinek2017a}. 
We have used our computations of  
$\mobp{W_n}$ to confirm that this conjecture 
holds for $50 < n \leq \text{2,000,000}$.

We suspect that the banding behaviour 
in Conjecture~\ref{incosc_conjecture_dwm}
is a consequence of the 
link with the prime numbers in
Conjecture~\ref{incosc_conjecture_vit}.
While there are results that 
give us expressions for the 
principal \mob function value, 
we are not aware of any result,
whether relating to the value of the 
principal \mob function, or
the growth of the 
principal \mob function, 
where the result has a link to the prime numbers.
This suggest to us that
one possible area for future research
would be to develop a better understanding of
the behaviour of the principal \mob function of
increasing oscillations.
We would hope that if we could
find a relationship that accounted
for the apparent link with prime numbers,
then we would also have 
a better understanding of the
permutation pattern poset.

    \chapter{Zeros of the \mob function of permutations}
\label{chapter_oppadj_paper}

\section{Preamble}

This chapter
is based on a published 
paper~\cite{Brignall2020},
which is joint work with
Robert Brignall,
V{\'i}t Jel{\'i}nek
and
Jan Kyn{\v c}l.

In this chapter we show that if a permutation 
$\pi$ contains two intervals of length 2, 
where one interval is an 
ascent and the other a descent, 
then the \mob function $\mobfn{1}{\pi}$ of the interval $[1,\pi]$ is 
zero. As a consequence, we prove that the 
proportion of permutations of length $n$ with principal \mob 
function equal to zero is asymptotically bounded below 
by $(1-1/e)^2\ge\zpmfr$. This is the first 
result determining the value of $\mobfn{1}{\pi}$ 
for an asymptotically positive proportion of 
permutations~$\pi$. 

We further establish other
general conditions on a permutation $\pi$ that ensure 
$\mobfn{1}{\pi}=0$ including
the occurrence in $\pi$ 
of any interval of the form
$\alpha\oplus 1 \oplus\beta$.

\section{Introduction}

In this section we describe our principal results, 
and give an overview of the 
previous work in this area.  
Formal definitions are given in the next section.

In this chapter, we are mainly concerned with the 
principal \mob function.
We focus on the zeros of the principal \mob function, 
that is, on the permutations $\pi$ for which
$\mobp{\pi}=0$. We show that we can often determine 
that a permutation $\pi$ is such a \mob zero by 
examining small localities of~$\pi$.  
We formalize this idea using the notion of an 
``annihilator''. 
Informally, an annihilator is a permutation $\alpha$ 
such that any permutation 
$\pi$ containing an interval copy of $\alpha$ 
is a \mob zero. 
We will describe an infinite family of 
annihilators. 

We will also prove that any permutation 
containing an increasing as well as a decreasing 
interval of size 2 is a \mob zero. 
Based on this result, we show that the asymptotic proportion 
of \mob zeros among the permutations of a 
given length is at least $(1-1/e)^2\ge \zpmfr$. 
This is the first known result 
determining the values of the principal \mob function for an 
asymptotically positive fraction of permutations. 
We will also demonstrate how our results on the 
principal \mob function can be extended 
to intervals whose lower bound is not~$1$.

Burstein, Jel{\'{i}}nek, Jel{\'{i}}nkov{\'{a}} 
and Steingr{\'{i}}msson~\cite{Burstein2011} found
a recursion for the \mob function
for sum and skew decomposable permutations.
They used this to determine
the \mob function for separable permutations.
Their results 
for sum and skew decomposable permutations
implicitly include a result that only concerns small localities,
which is that, up to symmetry, 
if a permutation $\pi$ of length greater than two begins $12$,
then $\mobp{\pi} = 0$.

Smith~\cite{Smith2013}
found an explicit formula for the \mob function on the interval
$[1, \pi]$ for all permutations $\pi$ with a single descent.
Smith's paper includes a lemma
stating that if a permutation $\pi$
contains an interval order-isomorphic to
$123$, then $\mobp{\pi}=0$.
While the result in~\cite{Burstein2011} requires  
that the permutation starts with a particular sequence,
Smith's result is, in some sense, more general,
as the critical interval (123)
can occur in any position.
Smith's lemma may be viewed
as the first instance of an
annihilator result. 
Our results on annihilators provide a common generalization of Smith's 
lemma and the above mentioned result of Burstein et al.~\cite{Burstein2011}.

\section{Definitions and notation}
\label{sect-definitions-and-notation}

Recall that an \emph{adjacency} in a permutation is an interval of length two.
If a permutation contains a monotonic interval 
of length three or more, then each subinterval 
of length two is an adjacency.
As examples, $367249815$ has two adjacencies, $67$ and $98$;
and $1432$ also has two adjacencies, $43$ and $32$.
If an adjacency is ascending, then it is an 
\emph{up-adjacency}, otherwise it is a 
\emph{down-adjacency}.

If a permutation $\pi$ contains at least one
up-adjacency, and at least one down-adjacency,
then we say that $\pi$ has \emph{opposing adjacencies}.
An example of a permutation with
opposing adjacencies is $367249815$,
which is shown in Figure~\ref{figure-example-oppadj}.
\begin{figure}[!ht]
    \begin{center}
        \begin{tikzpicture}[scale=0.25]
                    \plotpermgrid{3,6,7,2,4,9,8,1,5};
\draw [color=blue, very thick] (1.5, 5.5) rectangle (3.5, 7.5);
\draw [color=blue, very thick] (5.5, 7.5) rectangle (7.5, 9.5);    
        \end{tikzpicture}
    \end{center}%
    \caption{A permutation with opposing adjacencies.}
    \label{figure-example-oppadj}
\end{figure}
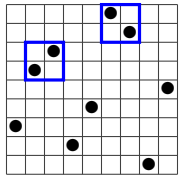

A permutation that does not 
contain any adjacencies is
\emph{adjacency-free}\extindex{adjacency-free}.
Some early papers use the term ``strongly irreducible''
for what we call adjacency-free permutations.  
See, for example, Atkinson and Stitt~\cite{Atkinson2002}.

Given a permutation $\sigma$ of length $n$, and
permutations $\alpha_1, \ldots, \alpha_n$,
not all of them equal to
the empty permutation $\emptyperm$,
the 
\emph{inflation}\extindex{inflation}
of $\sigma$ by $\alpha_1, \ldots, \alpha_n$, written as $\inflateall{\sigma}{\alpha_1, \ldots, \alpha_n}$,
is
the permutation obtained by 
removing the element $\sigma_i$
if $\alpha_i = \emptyperm$, and replacing $\sigma_i$
with an interval isomorphic to $\alpha_i$ otherwise.
Note that this is slightly different to the standard
definition of inflation, originally given in Albert and Atkinson~\cite{Albert2005},
which does not allow inflation by the empty permutation.
As examples,
$\inflateall{3624715}{1,12,1,1,21,1,1}=367249815$,
and
$\inflateall{3624715}{\emptyperm,1,1,\emptyperm,1,\emptyperm,1}=3142$.

In many cases we will be interested 
in permutations where most positions
are inflated by the singleton permutation $1$.
If $\sigma = 3624715$,
then 
we will write
$\inflateall{\sigma}{1,12,1,1,21,1,1} = 367249815$
as 
$\inflatesome{\sigma}{2,5}{12,21}$.
Formally, 
$\inflatesome{\sigma}{i_1, \ldots, i_k}{\alpha_1, \ldots, \alpha_k}$
is the inflation of $\sigma$ 
where $\sigma_{i_j}$ is inflated by $\alpha_{j}$ for
$j = 1, \ldots , k$, and all other positions of $\sigma$
are inflated by $1$. When using this notation, we always assume that the indices $i_1,\dotsc,i_k$ 
are distinct; however, we make no assumption about their relative order. 

Our aim is to study the \mob function of the permutation poset, that is, the poset of finite 
permutations ordered by containment. We are interested in describing general examples of 
intervals $[\sigma,\pi]$ such that $\mobfn{\sigma}{\pi}=0$, with particular emphasis on the case 
$\sigma=1$. We say that $\pi$ is a 
\emph{\mob zero}\extindex{\mob zero} 
(or just \emph{zero}) if $\mobp{\pi}=0$, and 
we say that $\pi$ is a 
\emph{$\sigma$-zero}\extindex{$\sigma$-zero} 
if $\mobfn{\sigma}{\pi}=0$.

It turns out that many sufficient conditions for $\pi$ to be a \mob zero can be stated in terms of 
inflations. We say that a permutation $\phi$ is an 
\emph{annihilator}\extindex{annihilator} 
if every permutation that has 
an interval copy of $\phi$ is a \mob zero; in other words, for every $\tau$ and every $i\le|\tau|$ 
the permutation $\tau_i[\phi]$ is a \mob zero. More generally, we say that $\phi$ is a 
\emph{$\sigma$-annihilator}\extindex{$\sigma$-annihilator}
 if every permutation with an interval copy of $\phi$ is a 
$\sigma$-zero.

We say that a pair of permutations $\phi$, $\psi$ is an 
\emph{annihilator pair}\extindex{annihilator pair} 
if for every 
permutation $\tau$ and every pair of distinct indices $i,j\le |\tau|$, the permutation 
$\inflatesome{\tau}{i,j}{\phi,\psi}$ is a \mob zero.

Observe that for an annihilator $\phi$, any permutation containing an interval copy of $\phi$ is 
also an annihilator. Likewise, if $\phi$ and $\psi$ form an annihilator pair then any 
permutation containing disjoint interval copies of $\phi$ and $\psi$ is an annihilator.

As our first main result, presented in Section~\ref{sec-opposing}, we show that the two 
permutations $12$ and $21$ are an annihilator pair, or equivalently, any permutation with opposing 
adjacencies is a \mob zero. Later, in Section~\ref{section-bounds-for-zn}, we use this result
to prove that \mob zeros have asymptotic density at least $(1-1/e)^2$. 

We also prove that for any two non-empty permutations $\alpha$ and $\beta$, the permutation 
$\alpha\oplus1\oplus\beta=\inflateall{123}{\alpha,1,\beta}$ is an annihilator, 
and generalize this result to a 
construction of $\sigma$-annihilators for general~$\sigma$. These results are presented in 
Section~\ref{sec-annihilator}.

Finally, in Section~\ref{sec-special}, we give several examples of annihilators and 
annihilator pairs that do not directly follow from the results in the previous sections.

\subsection{Intervals with vanishing \mob function}

We will now present several basic facts 
about the \mob function, which are valid in an arbitrary 
finite poset. 
The first fact is a simple observation following directly from the 
definition of the 
\mob function, and we present it without proof.

\begin{fact}\label{fac-del}
    Let $P$ be a finite poset with \mob function $\mu_P$,  and let $x$ and $y$ be two elements of 
    $P$ satisfying $\mobxfn{P}{x}{y}=0$. Let $Q$ be the poset obtained from $P$ by deleting the element $y$, 
    and let $\mu_Q$ be its \mob function. Then for every $z\in Q$, we have $\mobxfn{Q}{x}{z}=\mobxfn{P}{x}{z}$.
\end{fact}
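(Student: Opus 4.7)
The plan is to prove this by induction on the size of the down-set $\{w\in P : x\le w\le z\}$, exploiting the defining recursion of the Möbius function directly in both $P$ and $Q$. First, note that by hypothesis $x\neq y$ (otherwise $\mobxfn{P}{x}{y}=1\ne 0$), so $x\in Q$ and the expression $\mobxfn{Q}{x}{z}$ is well-defined. The base cases are immediate: if $x=z$ both Möbius values equal $1$, and if $x\not\le z$ in $P$ then $x\not\le z$ in $Q$ also and both values are $0$.

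For the inductive step with $x<z$ in $Q$, I would start from the defining recursion
\[
\mobxfn{Q}{x}{z} \;=\; -\sum_{w\in[x,z)_Q} \mobxfn{Q}{x}{w},
\]
and use the inductive hypothesis to replace every $\mobxfn{Q}{x}{w}$ on the right by $\mobxfn{P}{x}{w}$. Then split into cases according to whether the deleted element $y$ belongs to the open interval $[x,z)_P$. If it does not, then $[x,z)_Q=[x,z)_P$ and the right-hand side is already the sum defining $-\mobxfn{P}{x}{z}$, so we are done. If $y\in[x,z)_P$, then $[x,z)_Q=[x,z)_P\setminus\{y\}$, and the two sums differ by the single term $\mobxfn{P}{x}{y}$, which vanishes by hypothesis; so once again the two recursions match and $\mobxfn{Q}{x}{z}=\mobxfn{P}{x}{z}$.

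There is essentially no obstacle here: the content of the statement is exactly that a vertex whose Möbius contribution is $0$ can be removed without affecting any Möbius value computed from $x$. The only minor bookkeeping is to handle separately the case $x\not\le y$ in $P$ (in which $y$ never appears in any interval $[x,z)_P$, so its deletion is trivially invisible) and the case $x\le y$, where the hypothesis $\mobxfn{P}{x}{y}=0$ is what makes the cancellation work.
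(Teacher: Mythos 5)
Your proof is correct: the induction on the size of $[x,z]$, with the single case split on whether $y$ lies in $[x,z)_P$ and the cancellation of the missing term via $\mobxfn{P}{x}{y}=0$, is exactly the standard argument. The paper states this fact without proof as a simple observation following directly from the definition of the \mob function, and your write-up is precisely the natural formalization of that observation, so there is nothing to correct or compare.
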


Next, we introduce two types of intervals 
whose specific structure ensures that their \mob function 
is zero.

Let $[x,y]$ be a finite interval in a poset $P$. 
We say that $[x,y]$ is 
\emph{narrow-tipped}\extindex[poset]{narrow-tipped} 
if it 
contains an element $z$ different from $x$ 
such that $[x,y)=[x,z]$. The element $z$ is then called 
the 
\emph{core}\extindof{poset}{core}{(poset)}
of $[x,y]$. 

We say that the interval $[x,y]$ is 
\emph{diamond-tipped}\extindex[poset]{diamond-tipped} 
if there are three elements $z$, $z'$ 
and $w$, all different from $x$, and such that 
\begin{enumerate}
    \item $[x,y)=[x,z]\cup[x,z']$ and  
    \item $[x,z]\cap[x,z']=[x,w]$.
\end{enumerate}
Condition 2 is equivalent to $w$ being the 
greatest lower bound of $z$ and $z'$ in the interval $[x, y]$.
The triple of elements $(z,z',w)$ is again called the 
\emph{core} 
of $[x,y]$.  
Figure~\ref{fig-example-diamond-tipped-poset} shows examples
of narrow-tipped and diamond-tipped posets.
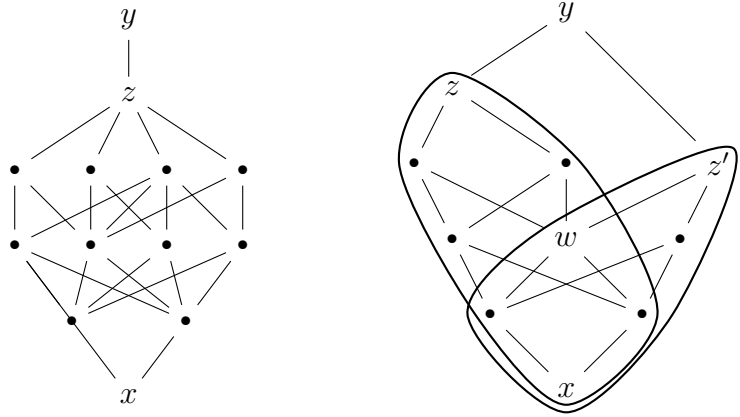
\begin{figure}
    \begin{center}
        \begin{tikzpicture}[xscale=1,yscale=1]
        \tnode{5}{0}{5}{$y$};
        \tnode{4}{0}{4}{$z$};        
        \dnode{31}{-1.5}{3};
        \dnode{32}{-0.5}{3};
        \dnode{33}{0.5}{3};
        \dnode{34}{1.5}{3};
        \dnode{21}{-1.5}{2};
        \dnode{22}{-0.5}{2};
        \dnode{23}{0.5}{2};
        \dnode{24}{1.5}{2};
        \dnode{11}{-0.75}{1};
        \dnode{12}{0.75}{1};
        \tnode{0}{0}{0}{$x$};
        \dline{5}{4};
        \dline{4}{31,32,33,34};
        \dline{31}{21,22};
        \dline{32}{22,23};
        \dline{33}{21,22,23,24};
        \dline{34}{22,24};
        \dline{21}{11,12};
        \dline{22}{11,12};
        \dline{23}{11,12};
        \dline{24}{11,12};
        \dline{12}{0};
        \dline{21}{0};
        \end{tikzpicture}		
        \qquad\qquad
        \begin{tikzpicture}[xscale=1,yscale=1]
        \tnode{1}{0}{0}{$x$};
        \dnode{12}{-1}{1};
        \dnode{21}{1}{1};
        \dnode{231}{-1.5}{2};
        \tnode{132}{0}{2}{$w$};
        \dnode{213}{1.5}{2};
        \dnode{2431}{-2}{3};
        \dnode{1342}{0}{3};
        \tnode{2143}{2}{3}{$z^\prime$};
        \tnode{13542}{-1.5}{4}{$z$};
        \tnode{214653}{0}{5}{$y$};
        \dline{1}{12};
        \dline{1}{21};
        \dline{12}{231};
        \dline{12}{132};
        \dline{12}{213};
        \dline{21}{231};
        \dline{21}{132};
        \dline{21}{213};
        \dline{231}{2431};
        \dline{231}{1342};
        \dline{132}{2431};
        \dline{132}{1342};
        \dline{132}{2143};
        \dline{213}{2143};
        \dline{2431}{13542};
        \dline{1342}{13542};
        \dline{2143}{214653};
        \dline{13542}{214653};
        \draw [thick] plot [smooth cycle] coordinates {
            (-1.5,  4.2) 
            (-2.2,  3.0)
            (-1.2,  1.0)
            ( 0.0, -0.2)
            ( 1.2,  1.0)
            ( 0.2,  3.0)
        };
        \draw [thick] plot [smooth cycle] coordinates {
            (2.2,  3.2) 
            ( -0.2,  2.2)
            (-1.3,  1.0)
            ( 0.0, -0.3)
            ( 1.4,  1.0)
        };
        \end{tikzpicture}		
    \end{center}
    \caption{Examples of narrow-tipped (left) and diamond-tipped (right) posets.}
    \label{fig-example-diamond-tipped-poset}
\end{figure}

\begin{fact}\label{fac-nd}
    Let $P$ be a poset with \mob function $\mu_P$, 
    and let $[x,y]$ be a finite interval in~$P$. If 
    $[x,y]$ is narrow-tipped or diamond-tipped, then $\mobxfn{P}{x}{y}=0$.
\end{fact}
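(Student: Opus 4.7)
The plan is to use the standard identity $\sum_{x\le\lambda\le y}\mu_P(x,\lambda)=0$, valid whenever $x<y$, in the equivalent form
\[
\mobxfn{P}{x}{y}=-\sum_{\lambda\in[x,y)}\mobxfn{P}{x}{\lambda}.
\]
In both cases the strategy is to decompose the sum on the right-hand side so that each piece is itself a Möbius sum over an interval of the form $[x,v]$ with $v>x$, and hence vanishes.

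For the narrow-tipped case this is immediate. By definition $[x,y)=[x,z]$ with $z\ne x$, so $z>x$, and applying the identity at the pair $(x,z)$ gives $\sum_{\lambda\in[x,z]}\mobxfn{P}{x}{\lambda}=0$. Substituting into the recursion yields $\mobxfn{P}{x}{y}=0$.

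For the diamond-tipped case I would use inclusion--exclusion on the decomposition $[x,y)=[x,z]\cup[x,z']$ with $[x,z]\cap[x,z']=[x,w]$:
\[
\sum_{\lambda\in[x,y)}\mobxfn{P}{x}{\lambda}=\sum_{\lambda\in[x,z]}\mobxfn{P}{x}{\lambda}+\sum_{\lambda\in[x,z']}\mobxfn{P}{x}{\lambda}-\sum_{\lambda\in[x,w]}\mobxfn{P}{x}{\lambda}.
\]
Since $z$, $z'$ and $w$ are all strictly greater than $x$ (by the hypothesis that they are distinct from $x$ and lie in $[x,y]$), each of the three sums on the right equals zero by the identity above, and therefore $\mobxfn{P}{x}{y}=0$.

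There is no serious obstacle; the only subtlety worth flagging is the explicit requirement in the definition of a diamond-tipped interval that the greatest lower bound $w$ of $z$ and $z'$ is distinct from $x$ — without this, the third sum in the inclusion--exclusion would collapse to the single term $\mobxfn{P}{x}{x}=1$ and the argument would fail.
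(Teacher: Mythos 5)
Your proof is correct and matches the paper's argument essentially verbatim: both cases are handled by rewriting $\mobxfn{P}{x}{y}=-\sum_{\lambda\in[x,y)}\mobxfn{P}{x}{\lambda}$ and then using inclusion--exclusion together with the vanishing of $\sum_{\lambda\in[x,v]}\mobxfn{P}{x}{\lambda}$ for each of $v\in\{z,z',w\}$ (and just $v=z$ in the narrow-tipped case). The subtlety you flag about $w\neq x$ is indeed exactly why the definition of diamond-tipped requires all three core elements to differ from $x$.
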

\begin{proof}
    If $[x,y]$ is narrow-tipped with core $z$, then 
    \begin{align*}
    \mobxfn{P}{x}{y} 
    = 
    -\sum_{v\in[x,y)} \mobxfn{P}{x}{v} 
    = 
    -\sum_{v\in[x,z]} \mobxfn{P}{x}{v} 
    = 
    0.
    \end{align*}
    If $[x,y]$ is diamond-tipped with core $(z,z',w)$ then
    \begin{align*}
    \mobxfn{P}{x}{y} 
    &= -\sum_{v\in[x,y)} \mobxfn{P}{x}{v} \\
    &= -\sum_{v\in[x,z]\cup[x,z']} \mobxfn{P}{x}{v} \\
    &= -\sum_{v\in[x,z]} \mobxfn{P}{x}{v} 
    - \sum_{v\in[x,z']} \mobxfn{P}{x}{v} 
    + \sum_{v\in[x,z]\cap[x,z']} \mobxfn{P}{x}{v} \\
    &= -\sum_{v\in[x,z]} \mobxfn{P}{x}{v} 
    - \sum_{v\in[x,z']} \mobxfn{P}{x}{v} 
    + \sum_{v\in[x,w]} \mobxfn{P}{x}{v} \\
    & =0.\qedhere
    \end{align*}
\end{proof}

\subsection{Embeddings}
\label{subsect-embeddings}

Recall that an \emph{embedding} of a permutation $\sigma\in\cS_k$ 
into a permutation $\pi\in\cS_n$ is a 
function $f\colon [k]\to[n]$ with the following properties:
\begin{itemize}
    \item $1\le f(1)<f(2)<\dotsb<f(k)\le n$.
    \item For any $i,j\in[k]$, we have $\sigma_i<\sigma_j$ 
    if and only if $\pi_{f(i)}<\pi_{f(j)}$.
\end{itemize}

We let $\sE(\sigma,\pi)$ denote the set of 
embeddings of $\sigma$ into $\pi$, and $E(\sigma,\pi)$ 
denote the cardinality of $\sE(\sigma,\pi)$.

For an embedding $f$ of $\sigma$ into $\pi$, 
the 
\emph{image}\extindex[embedding]{image} 
of $f$, denoted $\Img(f)$, is the set 
$\{f(i);\;i\in[k]\}$. In particular, $|\Img(f)|=|\sigma|$. 
The permutation $\sigma$ is the 
\emph{source}\extindex[embedding]{source} 
of the embedding $f$, 
denoted $\src_\pi(f)$. When $\pi$ is clear from the context (as 
it usually will be) we write $\src(f)$ instead of $\src_\pi(f)$. 
Note that for a fixed $\pi$, the 
set $\Img(f)$ determines both $f$ and $\src_\pi(f)$ uniquely. 

We say that an embedding $f$ is 
\emph{even}\extindex[embedding]{even} 
if the cardinality of $\Img(f)$ is even, otherwise 
$f$ is 
\emph{odd}\extindex[embedding]{odd}. 
In our arguments, we will frequently consider 
\emph{sign-reversing}\extindex[embedding]{sign-reversing} 
mappings on 
sets of embeddings (with different sources), 
which are mappings that map an odd embedding to an even one and vice versa. A 
typical example of a sign-reversing mapping 
is the so-called $i$-switch, which we now define. For a 
permutation $\pi\in\cS_n$, 
let $\sE(*,\pi)$ be the set $\bigcup_{\sigma\le\pi}\sE(\sigma,\pi)$. For 
an index $i\in[n]$, the 
\emph{$i$-switch}\extindex[embedding]{$i$-switch} 
of an embedding $f\in\sE(*,\pi)$, denoted $\Delta_i(f)$, 
is the embedding $g\in\sE(*,\pi)$ 
uniquely determined by the following properties:
\begin{align*}
\Img(g)&= \Img(f)\cup\{i\} \text{ if } i\not\in\Img(f)\text{, and}\\
\Img(g)&= \Img(f)\setminus\{i\} \text{ if } i\in\Img(f).
\end{align*} 

For example, consider the permutations 
$\sigma=132$ and $\pi=41253$, and the embedding 
$f\in\sE(\sigma,\pi)$ 
satisfying $f(1)=2$, $f(2)=4$, and $f(3)=5$. 
We then have $\Img(f)=\{2,4,5\}$. 
Defining $g=\Delta_3(f)$, we see that 
$\Img(g)=\{2,3,4,5\}$, and $\src(g)$ is the permutation 
$1243$. 
Similarly, for $h=\Delta_5(g)$, we have $\Img(h)=\{2,3,4\}$ and $\src(h)=123$.

Note that for any $\pi\in\cS_n$ and any $i\in[n]$, 
the function $\Delta_i$ is a sign-reversing 
involution on the set $\sE(*,\pi)$.

Consider, for a given $\pi\in\cS_n$, 
two embeddings $f,g\in\sE(*,\pi)$. We say that $f$ 
\emph{is contained in}\extindex[embedding]{contained in} 
$g$ if $\Img(f)\subseteq \Img(g)$. 
Note that if $f$ is contained in $g$, 
then the permutation $\src(f)$ is contained in~$\src(g)$, 
and if a permutation $\lambda$ is 
contained in a permutation $\tau$, 
then any embedding from $\sE(\tau,\pi)$ contains at least one 
embedding from $\sE(\lambda,\pi)$. 
In particular, the mapping $f\mapsto \src(f)$ is a poset 
homomorphism from the set $\sE(*,\pi)$ 
ordered by containment onto the interval $[\emptyperm,\pi]$ 
in the permutation pattern poset.

\subsection{\mob function via normal embeddings}\label{sec-form}

We will now derive a general formula 
which will become useful in several subsequent arguments. The 
formula can be seen as a direct consequence of the 
well-known \mob inversion formula.
The following form of the \mob inversion formula can be deduced, 
for example, from Proposition 3.7.2
in Stanley~\cite{Stanley2012}. 
Recall that a poset is \emph{locally finite} if each of its intervals is finite.

\begin{fact}[M\"obius inversion formula]
    \label{fac-mif} 
    Let $P$ be a locally finite poset with 
    maximum element $y$, 
    let $\mu$ be the M\"obius function of $P$, and let $F\colon 
    P\to\bbR$ be a function. 
    If a function $G\colon P\to\bbR$ is defined by
    \[
    G(x)=\sum_{z\in[x,y]} F(z),
    \]
    then for every $x\in P$, we have
    \[
    F(x)=\sum_{z\in[x,y]} \mobfn{x}{z}G(z).
    \]
\end{fact}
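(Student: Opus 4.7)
The plan is to proceed by direct substitution followed by a swap of summation order, invoking the defining recursion of the Möbius function at the final step. Because $P$ is locally finite, every interval $[x,y]$ is finite, so every sum in sight is a finite sum and Fubini-style manipulations are unproblematic.

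First I would substitute the definition $G(z) = \sum_{w\in[z,y]} F(w)$ into the right-hand side of the claimed identity:
\[
\sum_{z\in[x,y]} \mobfn{x}{z} G(z) = \sum_{z\in[x,y]} \sum_{w\in[z,y]} \mobfn{x}{z} F(w).
\]
The pairs $(z,w)$ ranging over this double sum are exactly those satisfying $x\le z\le w\le y$. Swapping the order of summation accordingly rewrites the expression as
\[
\sum_{w\in[x,y]} F(w) \left( \sum_{z\in[x,w]} \mobfn{x}{z} \right).
\]

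The final step is to evaluate the inner sum. From the recursive definition of $\mu$ recalled earlier in the excerpt, $\sum_{z\in[x,w]} \mobfn{x}{z}$ equals $1$ when $w=x$ and $0$ when $w > x$. Thus only the term $w=x$ survives in the outer sum, contributing $F(x)\cdot 1 = F(x)$, which is the claimed identity. I do not anticipate any real obstacle here; the two points that deserve a brief mention are the legitimacy of the swap (immediate from local finiteness, since each interval is finite) and the identification of the inner sum with a Kronecker delta (immediate from the definition of $\mu$ stated in the preliminaries).
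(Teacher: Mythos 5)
Your argument is correct and complete: the substitution, the reindexing over pairs $x\le z\le w\le y$ (legitimate since local finiteness makes $[x,y]$ finite), and the evaluation of the inner sum $\sum_{z\in[x,w]}\mobfn{x}{z}$ as $1$ for $w=x$ and $0$ for $w>x$ are all sound, the last point being exactly the identity the thesis records immediately after its definition of the M\"obius function. Note that the paper itself does not prove this statement --- it is stated as a Fact with a citation to Proposition 3.7.2 of Stanley --- so your write-up simply supplies the standard self-contained proof that the citation stands in for, and nothing in it conflicts with the paper's usage.
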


As a consequence, we obtain the following result.

\begin{proposition}
    \label{pro-form}
    Let $\sigma$ and $\pi$ be arbitrary permutations, 
    and let $F\colon [\sigma,\pi]\to\bbR$ be a 
    function satisfying $F(\pi)=1$. We then have
    \begin{equation}
    \mobfn{\sigma}{\pi}= F(\sigma) - \sum_{\lambda\in 
        [\sigma,\pi)} 
    \mobfn{\sigma}{\lambda}\sum_{\tau\in[\lambda,\pi]} F(\tau).
    \label{eq-form}
    \end{equation}
\end{proposition}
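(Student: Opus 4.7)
The plan is to deduce the identity directly from the Möbius inversion formula (Fact~\ref{fac-mif}), applied to the poset $P = [\sigma,\pi]$ whose maximum element is $y = \pi$. The only work is choosing the right auxiliary function and then isolating the $\lambda = \pi$ term.

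First I would define $G\colon [\sigma,\pi] \to \mathbb{R}$ by
\[
G(\lambda) \;=\; \sum_{\tau \in [\lambda,\pi]} F(\tau),
\]
so that $G$ is precisely the function produced from $F$ by the summation hypothesis of Fact~\ref{fac-mif} on the interval $[\sigma,\pi]$. Note in particular that $G(\pi) = F(\pi) = 1$ by the assumption on $F$; this is the single place where the normalisation $F(\pi)=1$ enters.

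Next, I would apply Fact~\ref{fac-mif} at the element $x = \sigma$, which gives
\[
F(\sigma) \;=\; \sum_{\lambda \in [\sigma,\pi]} \mobfn{\sigma}{\lambda}\, G(\lambda).
\]
Splitting off the top term $\lambda = \pi$, and using $\mobfn{\sigma}{\pi}\,G(\pi) = \mobfn{\sigma}{\pi}$, this becomes
\[
F(\sigma) \;=\; \mobfn{\sigma}{\pi} \;+\; \sum_{\lambda \in [\sigma,\pi)} \mobfn{\sigma}{\lambda}\, G(\lambda).
\]
Rearranging and unfolding the definition of $G$ yields equation~\eqref{eq-form}.

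There is no real obstacle; the whole proof is bookkeeping once the right $G$ is chosen. The only thing to be slightly careful about is making sure that the poset $[\sigma,\pi]$ is locally finite, so that Fact~\ref{fac-mif} applies; but every interval in the permutation pattern poset is finite, so this is automatic. If anything, the subtle point worth flagging explicitly is the role played by $F(\pi)=1$: without this normalisation, the $\lambda=\pi$ term would contribute $\mobfn{\sigma}{\pi}\,F(\pi)$ instead of $\mobfn{\sigma}{\pi}$, and one could not solve for $\mobfn{\sigma}{\pi}$ in the clean form of~\eqref{eq-form}.
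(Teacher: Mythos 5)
Your proof is correct and follows essentially the same route as the paper: define $G(\lambda)=\sum_{\tau\in[\lambda,\pi]}F(\tau)$, apply Fact~\ref{fac-mif} on $P=[\sigma,\pi]$ at $x=\sigma$, split off the $\lambda=\pi$ term using $F(\pi)=1$, and rearrange. No gaps.
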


\begin{proof}
    Fix $\sigma$, $\pi$ and $F$. 
    For $\lambda\in[\sigma,\pi]$, define
    $G(\lambda)=\sum_{\tau\in[\lambda,\pi]} 
    F(\tau)$. 
    Using Fact~\ref{fac-mif} for the poset $P=[\sigma,\pi]$, we obtain
    \[
    F(\sigma)=\sum_{\lambda\in[\sigma,\pi]} \mobfn{\sigma}{\lambda}G(\lambda).
    \]
    Substituting the definition of $G(\lambda)$ into the above identity and 
    noting that $F(\pi)=1$, we get
    \begin{align*}
    F(\sigma)
    & =\sum_{\lambda\in[\sigma,\pi]}\mobfn{\sigma}{\lambda}
    \sum_{\tau\in[\lambda,\pi]} F(\tau)\\
    &=\mobfn{\sigma}{\pi} + \sum_{\lambda\in[\sigma,\pi)}\mobfn{\sigma}{\lambda}
    \sum_{\tau\in[\lambda,\pi]} F(\tau), 
    \end{align*}
    from which the proposition follows.
\end{proof}

In our applications, the function $F(\tau)$ 
will usually be defined in terms of the number of 
embeddings of $\tau$ into $\pi$ satisfying certain additional conditions.
We call embeddings that satisfy these conditions \emph{normal embeddings}.
We briefly discussed normal embeddings
in Chapter~\ref{chapter_common_definitions}.
We extend that discussion here, as,
in this thesis, 
normal embeddings are only used in the current chapter.

The notion of normal embedding seems to originate from the work of Bj\"orner~\cite{BjornerSubword}, 
who defined normal embeddings between words, 
and showed that in the subword order of 
words over a finite alphabet, the M\"obius function of 
any interval $[x,y]$ is equal in absolute 
value to the number of normal embeddings of $x$ into~$y$. 

Bj\"orner's approach was later extended to the computation of the M\"obius 
function in the composition poset~\cite{Sagan2006}, the poset of separable 
permutations~\cite{Burstein2011}, or the poset of permutations with a fixed number of 
descents~\cite{Smith2016}. In all these cases, the authors define a notion 
of ``normal'' embeddings tailored for their poset, and then express the M\"obius 
function of an interval $[x,y]$ as the sum of weights of the ``normal'' embeddings 
of $x$ into $y$, where each normal embedding has weight $1$ or~$-1$.

For general permutations, this simple approach fails, since the M\"obius 
function $\mobfn{\sigma}{\pi}$ is sometimes larger than the number of all 
embeddings of $\sigma$ into~$\pi$. However, Smith~\cite{Smith2016a} 
introduced a notion of normal embedding applicable to arbitrary permutations, 
and proved a formula expressing $\mobfn{\sigma}{\pi}$ as a summation over certain 
sets of normal embeddings. 

For consistency, we adopt the term ``normal embedding'' in this chapter, although in 
our proofs, we will need to introduce 
several notions of normality, which are different from each 
other and from the notions of normality introduced by previous authors. 
We will always use $\sNE(\tau,\pi)$ to denote the set of embeddings of $\tau$ into $\pi$ satisfying 
the definition of normality used in the given context, 
and we let $\NE(\tau,\pi)$ be the cardinality of $\sNE(\tau,\pi)$.

The next proposition provides a general basis 
for all our subsequent applications of normal 
embeddings.

\begin{proposition}\label{pro-normal}
    Let $\sigma$ and $\pi$ be permutations. Suppose that for each $\tau\in[\sigma,\pi]$ we fix a 
    subset $\sNE(\tau,\pi)$ of $\sE(\tau,\pi)$, with the elements of $\sNE(\tau,\pi)$ being referred to 
    as \emph{normal embeddings} of $\tau$ into~$\pi$. Assume that $\sNE(\pi,\pi)=\sE(\pi,\pi)$, that is, 
    the unique embedding of $\pi$ into $\pi$ is normal. For each
    $\lambda\in[\sigma,\pi)$, define the two sets of 
    embeddings
    \begin{align*}
    \sNElo&=\bigcup_{\substack{\tau\in[\lambda,\pi]\\ |\tau| 
            \text{ odd}}}\sNE(\tau,\pi)\quad\text{and}\\
    \sNEle&=\bigcup_{\substack{\tau\in[\lambda,\pi]\\ |\tau| 
            \text{ even}}}\sNE(\tau,\pi).
    \end{align*}
    If for every $\lambda\in[\sigma,\pi)$ such that $\mobfn{\sigma}{\lambda}\neq 0$, we have the 
    identity
    \begin{equation}
    \left|\sNElo\right|= \left|\sNEle\right|,
    \label{eq-cancel}
    \end{equation}
    then $\mobfn{\sigma}{\pi} = (-1)^{|\pi|-|\sigma|}\NE(\sigma,\pi)$.
\end{proposition}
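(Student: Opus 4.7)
The plan is to apply Proposition~\ref{pro-form} with the right choice of test function $F$, so that $F(\sigma)$ delivers the desired $(-1)^{|\pi|-|\sigma|}\NE(\sigma,\pi)$ while the ``correction'' sum in \eqref{eq-form} collapses to zero under the hypothesis.

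Concretely, I would define
\[
F(\tau) = (-1)^{|\pi|-|\tau|}\NE(\tau,\pi) \qquad \text{for } \tau \in [\sigma,\pi].
\]
Since $\sNE(\pi,\pi)=\sE(\pi,\pi)$ has exactly one element, $F(\pi)=1$, so the hypothesis of Proposition~\ref{pro-form} is satisfied. Substituting into \eqref{eq-form} yields
\[
\mobfn{\sigma}{\pi} = (-1)^{|\pi|-|\sigma|}\NE(\sigma,\pi) - \sum_{\lambda\in[\sigma,\pi)} \mobfn{\sigma}{\lambda}\sum_{\tau\in[\lambda,\pi]} F(\tau),
\]
so it suffices to show that the double sum vanishes.

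The next step is to rewrite the inner sum by sorting the terms of $[\lambda,\pi]$ according to the parity of $|\tau|$. A routine sign computation gives
\[
\sum_{\tau\in[\lambda,\pi]} F(\tau) = (-1)^{|\pi|}\bigl(|\sNEle| - |\sNElo|\bigr),
\]
where $\sNEle$ and $\sNElo$ are the sets defined in the statement. For each $\lambda\in[\sigma,\pi)$ we then split into cases: if $\mobfn{\sigma}{\lambda}=0$, the corresponding outer term is zero trivially; if $\mobfn{\sigma}{\lambda}\neq 0$, hypothesis~\eqref{eq-cancel} forces $|\sNEle|=|\sNElo|$, so the inner sum is zero. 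Thus every term of the outer sum vanishes, and the identity $\mobfn{\sigma}{\pi}=(-1)^{|\pi|-|\sigma|}\NE(\sigma,\pi)$ drops out.

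There is essentially no obstacle here beyond bookkeeping; the crux of the proof is simply recognising that $F(\tau)=(-1)^{|\pi|-|\tau|}\NE(\tau,\pi)$ is the correct test function so that the inner sum in the \mob inversion formula records exactly the signed count of normal embeddings that the hypothesis controls. The only detail to watch is the sign: one must check that both parities of $|\pi|$ give the same conclusion, which they do because~\eqref{eq-cancel} is symmetric in the two parities.
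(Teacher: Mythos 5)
Your proposal is correct and follows essentially the same route as the paper: define $F(\tau)=(-1)^{|\pi|-|\tau|}\NE(\tau,\pi)$, note $F(\pi)=1$, apply Proposition~\ref{pro-form}, and observe that the inner sum over $[\lambda,\pi]$ equals $(-1)^{|\pi|}\bigl(|\sNEle|-|\sNElo|\bigr)$, which vanishes whenever $\mobfn{\sigma}{\lambda}\neq 0$ by hypothesis~\eqref{eq-cancel}. No gaps; this is the paper's argument.
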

\begin{proof}
    The trick is to define the function $F(\tau)=(-1)^{|\pi|-|\tau|}\NE(\tau,\pi)$ and apply 
    Proposition~\ref{pro-form}. This yields
    \begin{align*}
    \mobfn{\sigma}{\pi}&= F(\sigma) - \sum_{\lambda\in 
        [\sigma,\pi)} \mobfn{\sigma}{\lambda}\sum_{\tau\in[\lambda,\pi]} F(\tau)\\
    &=F(\sigma) - \sum_{\lambda\in 
        [\sigma,\pi)} \mobfn{\sigma}{\lambda}\sum_{\tau\in[\lambda,\pi]} (-1)^{|\pi|-|\tau|}\NE(\tau,\pi)\\
    &=F(\sigma) -\sum_{\lambda\in 
        [\sigma,\pi)} \mobfn{\sigma}{\lambda}(-1)^{|\pi|}\bigl( 
    \left|\sNEle\right|-\left|\sNElo\right|\bigr)\\
    &=F(\sigma)\\
    &=(-1)^{|\pi|-|\sigma|}\NE(\sigma,\pi),
    \end{align*}
    as claimed.
\end{proof}

We remark that the general formula of 
Proposition~\ref{pro-form} can be useful even in situations 
where the more restrictive assumptions of 
Proposition~\ref{pro-normal} fail. An example of such an
application of Proposition~\ref{pro-form} 
appears in
Jel{\'{i}}nek, Kantor, Kyn{\v{c}}l and Tancer~\cite{Jelinek2020}.

\section{Permutations with opposing adjacencies}\label{sec-opposing}

In this section, we show that if a permutation 
has opposing adjacencies, then the value of the 
principal \mob function is zero. 
\begin{theorem}
    \label{theorem-PMF-opposing-adjacencies}
    If $\pi$ has opposing adjacencies, then $\mobp{\pi} = 0$.
\end{theorem}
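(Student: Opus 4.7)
The plan is to prove Theorem~\ref{theorem-PMF-opposing-adjacencies} by strong induction on $|\pi|$, using Facts~\ref{fac-del} and~\ref{fac-nd} to reduce $[1,\pi]$ to a diamond-tipped or narrow-tipped interval. Small base cases are handled directly; for instance, $[1, 1243]$ is diamond-tipped with core $(123, 132, 12)$, giving $\mobp{1243} = 0$.

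For the inductive step, I would fix an up-adjacency of $\pi$ at positions $(i, i+1)$ and a down-adjacency at $(j, j+1)$, noting that these two pairs of positions are necessarily disjoint (an overlap would force a repeated value in $\pi$). Let $\pi^U$ denote the coatom of $\pi$ obtained by deleting either point of the up-adjacency (both deletions yield the same permutation, since the removed values are consecutive), and define $\pi^D$ analogously. For any other index $k \notin \{i, i+1, j, j+1\}$, the coatom obtained by deleting position $k$ still contains both the up-adjacency and the down-adjacency after relabelling, so it has opposing adjacencies, and the inductive hypothesis forces its principal \mob value to be zero. Fact~\ref{fac-del} then permits deletion of each such coatom from $[1, \pi]$ without changing $\mobp{\pi}$; the same applies, more generally, to every $\lambda < \pi$ with opposing adjacencies of its own, or containing a monotonic triple (the latter being zero by Smith's Lemma).

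After these deletions, the only candidate coatoms of $\pi$ remaining in the reduced interval are $\pi^U$ and $\pi^D$, and I would argue that $[1, \pi]$ is diamond-tipped with core $(\pi^U, \pi^D, w)$, where $w$ is the permutation obtained from $\pi$ by simultaneously deleting one point from each adjacency. In the degenerate case where $\pi^U \le \pi^D$ or vice versa, the structure collapses to narrow-tipped; in either case, Fact~\ref{fac-nd} then yields $\mobp{\pi} = 0$.

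The main obstacle will be verifying this structure after the deletions: concretely, showing that every $\lambda < \pi$ that survives lies in $[1, \pi^U] \cup [1, \pi^D]$, and that $[1, \pi^U] \cap [1, \pi^D] = [1, w]$ in the reduced poset. Spurious coatoms could appear if a deleted element was the unique witness keeping some $\lambda$ beneath it away from being covered by $\pi$, so avoiding this will require the deletion bookkeeping to be carried out in a carefully chosen order. The alternative normal-embedding proof promised in the chapter overview should bypass this bookkeeping: the notion of normality in Proposition~\ref{pro-normal} would be chosen so that any normal embedding must touch both adjacencies in a prescribed way, automatically forcing $\NE(1, \pi) = 0$, with the cancellation identity~\eqref{eq-cancel} on $\sNEl$ witnessed by a parity-reversing involution of $i$-switch type built around one of the two adjacencies, anchored by a designated ``partner'' point in the other adjacency to preserve normality and the condition $\src(f) \ge \lambda$.
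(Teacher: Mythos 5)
Your overall plan matches the paper's first proof: induct (equivalently, take a minimal counterexample), delete \mob zeros using Fact~\ref{fac-del}, and exhibit the reduced interval as diamond-tipped with core $(\pi^U,\pi^D,w)$, which is exactly the paper's core $(\phi,\phi',\tau)$ with $\phi=\tau_{i,j}[1,21]$, $\phi'=\tau_{i,j}[12,1]$. But there is a genuine gap, and it is not just the bookkeeping you flag. Your deletion policy --- remove \emph{every} $\lambda<\pi$ with opposing adjacencies, and every $\lambda$ with a monotonic triple --- can delete the intended core itself. For instance, if $\pi=1,2,4,5,7,6,3$, then contracting either up-adjacency leaves a permutation that still has opposing adjacencies, so $\pi^U$ would be removed by your own rule; after such deletions the maximal surviving elements below $\pi$ need not be $\pi^U$ and $\pi^D$, and no diamond-tipped or narrow-tipped structure with your stated core is guaranteed to remain. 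The paper avoids this by deleting only two specific families of zeros: the $\lambda\in[1,\pi)$ outside $[1,\pi^U]\cup[1,\pi^D]$, and the $\lambda\in[1,\pi^U]\cap[1,\pi^D]$ not lying in $[1,w]$; these families can never contain $\pi^U$, $\pi^D$ or $w$. (Also, the order of deletions is immaterial, since Fact~\ref{fac-del} preserves the \mob values of all remaining elements; what matters is which zeros you choose to delete, not the order or any ``unique witness'' issue.)

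Second, the two statements you defer as ``the main obstacle'' are precisely the substance of the proof, and they admit short direct arguments rather than careful ordering: (i) if $\lambda\le\pi$ has no up-adjacency, then every embedding of $\lambda$ into $\pi$ must omit one of the two points of the up-adjacency (if it used both, the corresponding two points of $\lambda$ would themselves form an up-adjacency, since no value of $\lambda$ can map strictly between two consecutive values of $\pi$), hence $\lambda\le\pi^U$, and symmetrically for $\pi^D$; so every element outside $[1,\pi^U]\cup[1,\pi^D]$ has opposing adjacencies and is a zero by induction. (ii) If $\lambda\le\pi^U$ but $\lambda\not\le w$, then every embedding of $\lambda$ into $\pi^U$ uses both points of the surviving down-adjacency, forcing a down-adjacency in $\lambda$; symmetrically membership in $[1,\pi^D]\setminus[1,w]$ forces an up-adjacency; so elements of the intersection not below $w$ again have opposing adjacencies and may be deleted. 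With the restricted deletion set and these two facts, the reduced interval really is diamond-tipped with core $(\pi^U,\pi^D,w)$ and Fact~\ref{fac-nd} finishes; deleting permutations with monotonic triples is unnecessary and only risks destroying the core. Your normal-embedding sketch points in the right direction (the paper declares an embedding normal when its image contains $[n]\setminus\{i,j\}$, and applies $\Delta_i$ or $\Delta_j$ according to whether $\lambda$ lacks an up- or a down-adjacency, rerouting any embedding of $\lambda$ through the retained partner point), but as written both routes remain plans rather than proofs.
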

For this theorem, we are able to give two proofs. One of them is based on the notion of 
diamond-tipped intervals, and the other uses the approach of normal embeddings. As both these 
approaches will later be adapted to more complicated settings, we find it instructive to include 
both proofs here.

\begin{proof}[Proof via diamond-tipped posets] For contradiction, suppose that the theorem fails, 
    and let $\pi$ be a shortest permutation with opposing adjacencies such that $\mobp{\pi}\neq0$. Since 
    $\pi$ has opposing adjacencies, there is a permutation $\tau$ and indices $i,j\le|\tau|$ such that 
    $\pi=\tau_{i,j}[12,21]$. Define $\phi=\tau_{i,j}[1,21]$ and $\phi'=\tau_{i,j}[12,1]$. 
    
    We claim that the interval $[1,\pi]$ can be transformed into a diamond-tipped interval with core 
    $(\phi,\phi',\tau)$ by deleting a set of \mob zeros from the interior of $[1,\pi]$. Since by 
    Fact~\ref{fac-del}, the deletion of \mob zeros does not affect the value of $\mobfn{1}{\pi}$, and 
    since diamond-tipped intervals have zero \mob function by Fact~\ref{fac-nd}, this claim will imply 
    that $\mobfn{1}{\pi}=0$, a contradiction.
    
    To prove the claim, note first that any permutation $\lambda\in[1,\pi)$ with opposing adjacencies is 
    a \mob zero, since $\pi$ is a minimal counterexample to the theorem. Choose any $\lambda\in[1,\pi)$. 
    Observe that if $\lambda$ has no up-adjacency, then $\lambda\le\phi$, and symmetrically, if 
    $\lambda$ has no down-adjacency, then $\lambda\le\phi'$. Thus, any $\lambda\in[1,\pi)$ not 
    belonging to $[1,\phi]\cup[1,\phi']$ has opposing adjacencies and can be deleted from $[1,\pi]$. 
    
    Next, suppose that a permutation $\lambda$ is in $[1,\phi]\cap[1,\phi']$ but not in $[1,\tau]$. 
    Observe that any permutation in $[1,\phi]\setminus[1,\tau]$ has a down-adjacency, while any 
    permutation in $[1,\phi']\setminus[1,\tau]$ has an up-adjacency. It follows that $\lambda$ 
    has opposing adjacencies and can again be deleted from $[1,\pi]$.
    
    After these deletions, the remaining poset is diamond-tipped with core $(\phi,\phi',\tau)$ as 
    claimed, hence $\mobfn{1}{\pi}=0$, a contradiction.
\end{proof}

\begin{proof}[Proof via normal embeddings] Suppose again that $\pi\in\cS_n$ is a shortest 
    counterexample. Suppose that $\pi$ has an up-adjacency at positions $i$, $i+1$, and a 
    down-adjacency at positions $j$, $j+1$. Note that the positions $i$, $i+1$, $j$ and $j+1$ are 
    all distinct, and in particular $n\ge 4$.
    
    We will say that an embedding $f\in\sE(*,\pi)$ is \emph{normal} if $\Img(f)$ is a superset of 
    $[n]\setminus\{i,j\}$. In other words, $\Img(f)$ contains all positions of $\pi$ with the possible 
    exception of $i$ and~$j$. Thus, there are four normal embeddings. 
    
    We will use Proposition~\ref{pro-normal} with the above notion of normal embeddings and with 
    $\sigma=1$. Clearly, we have $\sE(\pi,\pi)=\sNE(\pi,\pi)$. The main task is to verify equation 
    \eqref{eq-cancel}, that is, to show that for every $\lambda\in[1,\pi)$ such that $\mobp{\lambda}\neq0$ 
    we have $|\sNElo|=|\sNEle|$. To prove this identity, we let $\sNEl$ denote the set 
    $\sNElo\cup\sNEle$, and we will provide a sign-reversing involution on~$\sNEl$.
    
    Choose a $\lambda\in[1,\pi)$ with $\mobp{\lambda}\neq0$. It follows that $\lambda$ does 
    not have opposing adjacencies, otherwise it would be a counterexample shorter than~$\pi$. Without 
    loss of generality, assume that $\lambda$ has no up-adjacency. We will prove that the 
    $i$-switch operation $\Delta_i$ is a sign-reversing involution on~$\sNEl$. 
    
    It is clear that $\Delta_i$ is sign-reversing. We need to demonstrate that for every $f\in\sNEl$, the 
    embedding $g=\Delta_i(f)$ is again in $\sNEl$. It is clear that $g$ is normal. It remains to argue 
    that $\src(g)$ contains $\lambda$, or in other words, that there is an embedding of $\lambda$ into 
    $\pi$ contained in~$g$. Let $h$ be a (not necessarily normal) embedding of $\lambda$ into 
    $\pi$ contained in~$f$. If $i$ is not in $\Img(h)$, then $h$ is also contained in $g$, and 
    we are done. Suppose now that $i\in\Img(h)$. Then $i+1\not\in\Img(h)$, because $i$ and $i+1$ form an 
    up-adjacency in $\pi$ while $\lambda$ has no up-adjacency. We modify the embedding 
    $h$ so that the element mapped to $i$ will be mapped to $i+1$ instead, and the mapping of the 
    remaining elements is unchanged; let $h'$ be the resulting embedding (formally, we have 
    $\Delta_i(\Delta_{i+1}(h))=h'$). Since $i$ and $i+1$ form an adjacency in $\pi$, we have 
    $\src(h')=\src(h)=\lambda$. Since $i+1$ is in the image of all normal embeddings, we see that $h'$ 
    is contained in $g$, and so $g\in\sNEl$. This shows that $\Delta_i$ is the required sign-reversing 
    involution on $\sNEl$, verifying the assumptions of Proposition~\ref{pro-normal}.
    
    Proposition~\ref{pro-normal} then gives us that $\mobfn{1}{\pi}=(-1)^{n-1}\NE(1,\pi)$. Since every normal embedding into $\pi$ contains both $i+1$ and $j+1$ in its image, there is 
    clearly no normal embedding of 1 into $\pi$ and therefore we get $\mobfn{1}{\pi}=0$.
\end{proof}

\section{\texorpdfstring{A general construction of $\sigma$-annihilators}%
    {A general construction of sigma-annihilators}}
\label{sec-annihilator}

Let $\sigma$ be a fixed non-empty lower bound permutation (the case $\sigma=1$ being the most 
interesting). Recall that a permutation $\phi$ is a 
\emph{$\sigma$-zero} 
if $\mobfn{\sigma}{\phi}=0$, 
and $\phi$ is a 
\emph{$\sigma$-annihilator} 
if every permutation with an interval copy of
$\phi$ is a $\sigma$-zero. Clearly, any $\sigma$-annihilator is also a $\sigma$-zero. Our goal in 
this section is to present a general construction of an infinite family of $\sigma$-annihilators.

A permutation $\phi$ is 
\emph{$\sigma$-narrow}\extindex[permutation]{$\sigma$-narrow} 
if $\phi$ contains a permutation $\phi^-$ of size 
$|\phi|-1$ such that every permutation in the set $[1,\phi)\setminus [1,\phi^-]$ is a  
$\sigma$-annihilator. In this situation, we call $\phi^-$ a 
\emph{$\sigma$-core of $\phi$}\extindex{$\sigma$-core of $\phi$}. 

Note that if $\phi$ is $\sigma$-narrow with $\sigma$-core $\phi^-$, then the interval $[1,\phi]$ can be 
transformed into a narrow-tipped interval by a deletion of $\sigma$-annihilators. Our first goal is 
to show that, with a few exceptions, all $\sigma$-narrow permutations are $\sigma$-annihilators. 

\begin{proposition}\label{pro-narrow}
    If a permutation $\phi$ is $\sigma$-narrow with a $\sigma$-core $\phi^-$, and if $\sigma$ has no 
    interval copy of $\phi$ or of $\phi^-$, then $\phi$ is a $\sigma$-annihilator.
\end{proposition}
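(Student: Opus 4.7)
The plan is to prove the proposition by induction on $|\pi|$ over all permutations $\pi$ containing $\phi$ as an interval. Writing $\pi = \tau_i[\phi]$ and setting $\pi^- = \tau_i[\phi^-]$, I aim to show that after deleting a suitable collection of $\sigma$-zeros from $[\sigma, \pi]$, what remains is narrow-tipped with core $\pi^-$. Fact~\ref{fac-del} guarantees that such deletions preserve $\mobfn{\sigma}{\pi}$, and Fact~\ref{fac-nd} then forces this value to be zero. The base case $|\pi| = |\phi|$ is a special instance of the same argument, since the critical $\psi = \phi$ subcase described below vacuously does not arise.

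The first step is to dispose of degenerate cases: $\sigma \not\leq \pi$ makes $\mobfn{\sigma}{\pi} = 0$ trivially, while $\sigma = \pi$ would force $\sigma$ to contain $\phi$ as an interval, contradicting the hypothesis. The delicate point is checking $\sigma \leq \pi^-$, so that $[\sigma, \pi^-]$ is actually nonempty. Given any embedding $\sigma \hookrightarrow \pi$, the points mapped into the $\phi$-interval of $\pi$ span contiguous positions and contiguous values in $\sigma$ (because the $\phi$-interval occupies contiguous positions and values in $\pi$, and the embedding is order-preserving), so they form an interval copy in $\sigma$ of some $\rho \leq \phi$. The hypothesis rules out $\rho = \phi$. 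If $\rho \not\leq \phi^-$, then $\rho \in [1,\phi)\setminus[1,\phi^-]$ would be a $\sigma$-annihilator by $\sigma$-narrowness, so $\sigma$ itself would be a $\sigma$-zero; this contradicts $\mobfn{\sigma}{\sigma} = 1$. Hence $\rho \leq \phi^-$, and rerouting the embedding through $\phi^-$ inside $\pi^-$, using the natural bijection between the positions of $\pi$ outside the $\phi$-interval and the positions of $\pi^-$ outside the $\phi^-$-interval, yields $\sigma \leq \pi^-$.

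The core of the proof is to show that every $\lambda \in [\sigma, \pi) \setminus [\sigma, \pi^-]$ is a $\sigma$-zero. Fix such a $\lambda$ and any embedding $\lambda \hookrightarrow \pi$; by the same interval reasoning, the preimage of the $\phi$-interval is an interval copy in $\lambda$ of some $\psi \leq \phi$. If $\psi \leq \phi^-$, the same rerouting trick produces an embedding of $\lambda$ into $\pi^-$, contradicting $\lambda \not\leq \pi^-$. Therefore $\psi \in [1,\phi] \setminus [1,\phi^-]$. If $\psi = \phi$, then $\lambda$ contains an interval copy of $\phi$ and $|\lambda| < |\pi|$, so the inductive hypothesis gives $\mobfn{\sigma}{\lambda} = 0$. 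Otherwise $\psi \in [1,\phi) \setminus [1,\phi^-]$, and by $\sigma$-narrowness $\psi$ is a $\sigma$-annihilator; since $\lambda$ contains $\psi$ as an interval, $\lambda$ is itself a $\sigma$-zero.

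Deleting these $\sigma$-zeros from $[\sigma, \pi]$ via Fact~\ref{fac-del} leaves $[\sigma, \pi^-] \cup \{\pi\}$, which is narrow-tipped with core $\pi^-$, and Fact~\ref{fac-nd} then gives $\mobfn{\sigma}{\pi} = 0$. The main technical step, used twice above, is the interval-preservation observation: the preimage of the $\phi$-interval of $\pi$ under any embedding forms an interval copy of some $\psi \leq \phi$ in the source, because both the positions and the values of the $\phi$-interval are contiguous in $\pi$ and embeddings are strictly order-preserving. I would establish this as a short preparatory lemma before running the induction.
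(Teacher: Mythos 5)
Your strategy coincides with the paper's: establish that every permutation in $[\sigma,\pi)\setminus[\sigma,\pi^-]$ is a $\sigma$-zero (using the fact that the points of an embedding landing in the inflated block form an interval copy of some $\psi\le\phi$, which is either contained in $\phi^-$, equal to $\phi$, or an annihilator by $\sigma$-narrowness), delete those zeros by Fact~\ref{fac-del}, and conclude with Fact~\ref{fac-nd}. Your induction on $|\pi|$ is only a reparametrisation of the paper's induction on $|\tau|$, and your interval-preservation lemma is the embedding-language version of the paper's decomposition $\gamma=\btau_j[\bphi]$.

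There is one genuine, if small, gap: you verify $\sigma\le\pi^-$ but never that $\sigma\ne\pi^-$, and a narrow-tipped interval requires its core to be distinct from the bottom element; if $\sigma=\pi^-$, the poset remaining after the deletions is the two-element chain $\{\sigma,\pi\}$, whose M\"obius value is $-1$, not $0$. This is exactly where the hypothesis that $\sigma$ has no interval copy of $\phi^-$ must enter, and your proof never invokes it: since $\pi^-=\tau_i[\phi^-]$ contains an interval copy of $\phi^-$, equality $\sigma=\pi^-$ would contradict that hypothesis. The hypothesis is not cosmetic -- without it the statement is false (take $\sigma=1$, $\phi=12$, $\phi^-=1$: the narrowness condition holds vacuously, $\sigma$ has no interval copy of $\phi$, yet $\mobfn{1}{12}=-1$, so $12$ is not a $1$-annihilator). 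Adding this one observation closes the gap; the rest of your argument, including the rerouting of embeddings through $\phi^-$ and the handling of the $\psi=\phi$ case by induction, is sound and matches the published proof.
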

\begin{proof}
    Let $\phi$ be $\sigma$-narrow with a $\sigma$-core~$\phi^-$. Let $\pi$ be a permutation with an interval 
    copy of~$\phi$, that is, $\pi=\tau_i[\phi]$ for some $\tau$ and~$i$. We show that 
    $\mobfn{\sigma}{\pi}=0$. We may assume that $\sigma\le\pi$, otherwise $\mobfn{\sigma}{\pi}=0$ trivially. 
    Let $\pi^-$ be the permutation $\tau_i[\phi^-]$. Note that $\sigma\neq\pi$ and $\sigma\neq\pi^-$, 
    since $\sigma$ has no interval copy of~$\phi$ or of~$\phi^-$. 
    
    The key step of the proof is to show that any permutation in $[\sigma,\pi)\setminus [\sigma,\pi^-]$ 
    is a $\sigma$-zero. After we have proved this, we may use Fact~\ref{fac-del} to remove all such 
    $\sigma$-zeros from the interval $[\sigma,\pi]$ without affecting the value of $\mobfn{\sigma}{\pi}$; 
    note that $\sigma$ itself is clearly not a $\sigma$-zero, so it will not be removed, implying that 
    $\sigma<\pi^-$. After the removal of $[\sigma,\pi)\setminus [\sigma,\pi^-]$, the remainder of the 
    interval $[\sigma,\pi]$ is a narrow-tipped poset with core $\pi^-$, yielding 
    $\mobfn{\sigma}{\pi}=0$ by Fact~\ref{fac-nd}. 
    
    Therefore, to prove that $\mobfn{\sigma}{\pi}=0$ for a particular $\pi=\tau_i[\phi]$, it is enough to 
    show that all the permutations in $[\sigma,\pi)\setminus [\sigma,\pi^-]$ are $\sigma$-zeros. We 
    prove this by induction on~$|\tau|$.
    
    If $|\tau|=1$, we have $\pi=\phi$ and $\pi^-=\phi^-$. Then all the permutations in 
    $[1,\pi)\setminus[1,\pi^-]$ are $\sigma$-annihilators (and therefore $\sigma$-zeros) by definition 
    of $\sigma$-narrowness, and in particular, restricting our attention to permutations containing 
    $\sigma$, we see that all the permutations in $[\sigma,\pi)\setminus[\sigma,\pi^-]$ are 
    $\sigma$-zeros, as claimed.

    Suppose that $|\tau|>1$. Consider a permutation $\gamma \in [\sigma,\pi)\setminus [\sigma,\pi^-]$. 
    Since $\gamma$ is contained in $\pi=\tau_i[\phi]$, it can be expressed as 
    $\gamma=\btau_j[\bphi]$ for some $\emptyperm\le\bphi\le \phi$ and $1\le\btau\le\tau$, where $\btau$ 
    has an embedding into $\tau$ which maps $j$ to $i$. Note that $\bphi$ cannot be contained 
    in~$\phi^-$, because in such case we would have $\gamma\le \pi^-$. Moreover, if $\bphi=\phi$, then 
    necessarily $\btau<\tau$, and by induction $\gamma$ is a $\sigma$-zero. Finally, if $\bphi$ is in 
    $[1,\phi)\setminus [1,\phi^-]$, then $\bphi$ is a $\sigma$-annihilator by the $\sigma$-narrowness of 
    $\phi$, and hence $\gamma$ is a $\sigma$-zero.
\end{proof}

With the help of Proposition~\ref{pro-narrow}, we can now provide an explicit general construction 
of $\sigma$-annihilators.

\begin{proposition}\label{pro-sum}
    Let $\alpha$ and $\beta$ be non-empty permutations. Assume that $\sigma$ does not contain any 
    interval copy of a permutation of the form $\alpha'\oplus\beta'$ with $1\le\alpha'\le \alpha$ and 
    $1\le\beta'\le\beta$ 
    (in particular, $\sigma$ has no up-adjacency). Then $\alpha\oplus1\oplus\beta$ is 
    $\sigma$-narrow with $\sigma$-core $\alpha\oplus\beta$, and $\alpha\oplus1\oplus\beta$ is a 
    $\sigma$-annihilator. 
\end{proposition}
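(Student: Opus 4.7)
The plan is to apply Proposition~\ref{pro-narrow} to $\phi := \alpha \oplus 1 \oplus \beta$ with $\phi^- := \alpha \oplus \beta$, working by strong induction on $|\alpha| + |\beta|$. First, I would check that $\sigma$ contains no interval copy of $\phi^-$ or $\phi$. The former is immediate: $\phi^-$ has the form $\alpha' \oplus \beta'$ excluded by the hypothesis (taking $\alpha' = \alpha$, $\beta' = \beta$). For the latter, if $\sigma$ had an interval copy of $\phi = \alpha \oplus 1 \oplus \beta$, then its first $|\alpha|+1$ positions would themselves be consecutive in position and in value, giving an interval of $\sigma$ order-isomorphic to $\alpha \oplus 1$; this contradicts the hypothesis with $\alpha' = \alpha$, $\beta' = 1$.

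Next, I would characterize the set $T := [1,\phi) \setminus [1,\phi^-]$. Any subpermutation of $\phi$ corresponds to a choice of positions and thus decomposes as $\lambda_1 \oplus \lambda_2 \oplus \lambda_3$ with $\lambda_1 \le \alpha$, $\lambda_2 \in \{\emptyperm, 1\}$ depending on whether the middle point is used, and $\lambda_3 \le \beta$. If $\lambda \not\le \phi^-$, then no embedding of $\lambda$ into $\phi$ can skip the middle, which forces $\lambda_2 = 1$ in every embedding. Moreover, $\lambda_1$ must be non-empty: otherwise $\lambda = 1 \oplus \lambda_3 \le \alpha \oplus \beta = \phi^-$, since $\alpha$ is non-empty (so $1 \le \alpha$) and $\lambda_3 \le \beta$, contradicting $\lambda \not\le \phi^-$. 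By symmetry $\lambda_3$ is also non-empty. Thus every $\lambda \in T$ has the form $\alpha' \oplus 1 \oplus \beta'$ with $1 \le \alpha' \le \alpha$ and $1 \le \beta' \le \beta$, and $|\alpha'| + |\beta'| < |\alpha| + |\beta|$ (the strict inequality holds because $\lambda < \phi$ forces $|\lambda| < |\phi|$).

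To finish the $\sigma$-narrowness condition, I would then show each such $\alpha' \oplus 1 \oplus \beta' \in T$ is a $\sigma$-annihilator by the induction hypothesis. The hypothesis of Proposition~\ref{pro-sum} transfers from $(\alpha, \beta)$ to $(\alpha', \beta')$, because any $\alpha'' \oplus \beta''$ with $1 \le \alpha'' \le \alpha'$ and $1 \le \beta'' \le \beta'$ also satisfies $\alpha'' \le \alpha$ and $\beta'' \le \beta$, so $\sigma$ has no interval copy of it. The base case $|\alpha| = |\beta| = 1$ requires no appeal to induction: then $\phi = 123$, $\phi^- = 12$, and a direct check shows $T = \emptyset$, so narrowness is vacuous. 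Invoking Proposition~\ref{pro-narrow} now yields that $\phi$ is a $\sigma$-annihilator. The main conceptual step is the characterization of $T$, particularly the subtle observation that $\lambda_1$ and $\lambda_3$ cannot be empty; the rest is routine induction combined with the bookkeeping for Proposition~\ref{pro-narrow}.
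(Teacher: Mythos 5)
Your proposal is correct and follows essentially the same route as the paper: induction on $|\alpha|+|\beta|$, the key characterisation that every element of $[1,\phi)\setminus[1,\phi^-]$ has the form $\alpha'\oplus1\oplus\beta'$ with both summands non-empty, and an appeal to Proposition~\ref{pro-narrow} after checking that $\sigma$ has no interval copy of $\phi$ or $\phi^-$. The only cosmetic difference is that you invoke the inductive hypothesis directly to conclude each such $\alpha'\oplus1\oplus\beta'$ is a $\sigma$-annihilator, whereas the paper re-applies Proposition~\ref{pro-narrow} to each of them; the content is identical.
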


\begin{proof} We proceed by induction on $|\alpha|+|\beta|$. Suppose first that 
    $\alpha=\beta=1$. Then trivially $\alpha\oplus1\oplus\beta=123$ is $\sigma$-narrow with $\sigma$-core 
    $\alpha\oplus\beta=12$, since the set $[1,123)\setminus[1,12]$ is empty. Moreover, by assumption, 
    $\sigma$ has no interval copy of $12$, and therefore also no interval copy of $123$, 
    hence $123$ is a $\sigma$-annihilator by Proposition~\ref{pro-narrow}.
    
    Suppose now that $|\alpha|+|\beta|>2$. Define $\phi=\alpha\oplus1\oplus\beta$ and 
    $\phi^-=\alpha\oplus\beta$. To prove that $\phi$ is $\sigma$-narrow with $\sigma$-core $\phi^-$, we will show 
    that any permutation $\gamma\in[1,\phi)\setminus[1,\phi^-]$ is a $\sigma$-annihilator. Such a 
    $\gamma$ has the form $\alpha'\oplus 1\oplus\beta'$ for some $1\le\alpha'\le\alpha$ and 
    $1\le\beta'\le\beta$, with $|\alpha'|+|\beta'|<|\alpha|+|\beta|$; note that we here exclude the 
    cases $\alpha'=\emptyperm$ and $\beta'=\emptyperm$, because in these cases $\gamma$ would be 
    contained 
    in~$\phi^-$. By induction, $\gamma$ is $\sigma$-narrow, with $\sigma$-core $\gamma^-=\alpha'\oplus\beta'$. 
    Moreover, $\sigma$ has no interval isomorphic to $\gamma$ or $\gamma^-$: observe that if $\sigma$ 
    had an interval isomorphic to $\gamma$, it would also have an interval isomorphic to 
    $\alpha'\oplus1$, which is forbidden by our assumptions on~$\sigma$. Thus, we may apply 
    Proposition~\ref{pro-narrow} to conclude that $\gamma$ is a $\sigma$-annihilator, and in particular 
    $\phi$ is $\sigma$-narrow with $\sigma$-core $\phi^-$, as claimed. Proposition~\ref{pro-narrow} then gives us 
    that $\phi$ is a $\sigma$-annihilator.
\end{proof}

Focusing on the special case $\sigma=1$, which satisfies the assumptions of 
Proposition~\ref{pro-sum} trivially, we obtain the following result.

\begin{corollary}\label{cor-sum}
    For any non-empty permutations $\alpha$ and $\beta$, the permutation $\alpha\oplus1\oplus\beta$ is 
    an annihilator.
\end{corollary}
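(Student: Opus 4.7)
The plan is to derive Corollary~\ref{cor-sum} as a direct specialization of Proposition~\ref{pro-sum}, simply by verifying that $\sigma = 1$ satisfies the hypothesis of that proposition. Recall that an annihilator is, by definition, the same as a $1$-annihilator, since ``\mob zero'' and ``$1$-zero'' coincide.

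First I would set $\sigma = 1$ and check the hypothesis of Proposition~\ref{pro-sum}, namely that $\sigma$ contains no interval copy of a permutation of the form $\alpha' \oplus \beta'$ with $1 \le \alpha' \le \alpha$ and $1 \le \beta' \le \beta$. Since $\alpha'$ and $\beta'$ are both required to be non-empty, any such sum $\alpha' \oplus \beta'$ has length at least $2$, whereas $\sigma = 1$ has length $1$. Hence $\sigma$ vacuously contains no interval copy of any such permutation.

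Having verified the hypothesis, Proposition~\ref{pro-sum} immediately yields that $\alpha \oplus 1 \oplus \beta$ is a $1$-annihilator, which is precisely an annihilator in the sense defined in Section~\ref{sect-definitions-and-notation}. This completes the proof.

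There is essentially no obstacle here; the corollary is a one-line consequence of the proposition once one observes the trivial length argument. The only thing worth remarking is that the hypothesis of Proposition~\ref{pro-sum} is not satisfied for arbitrary $\sigma$, so the general statement of the proposition is genuinely stronger, but for the principal \mob function the hypothesis is automatic.
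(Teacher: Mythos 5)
Your proof is correct and is exactly the paper's argument: the paper obtains the corollary by noting that $\sigma=1$ satisfies the hypothesis of Proposition~\ref{pro-sum} trivially, which is precisely your vacuous length observation. Nothing further is needed.
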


\section{The density of zeros}
\label{section-bounds-for-zn}

Our goal is to find an asymptotic positive lower bound on the proportion of permutations of 
length $n$ whose principal \mob function is 
zero. The key step is the following lemma.

\begin{lemma}\label{lem-incdec}
    Let $s_n$ be the number of permutations of size $n$ with opposing adjacencies. Then 
    \[
    \frac{s_n}{n!}=\left(1-\frac{1}{e}\right)^2+\cO\left(\frac{1}{n}\right).
    \]
\end{lemma}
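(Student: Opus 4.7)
The plan is to reduce the count $s_n$ to a straightforward inclusion--exclusion on the complementary events ``no up-adjacency'' and ``no down-adjacency''. Write $U_n$ (resp. $D_n$) for the number of permutations of length $n$ with no up-adjacency (resp. no down-adjacency), and $H_n$ for the number of adjacency-free permutations of length $n$. The reverse map $\pi \mapsto \pi^R$ swaps up-adjacencies and down-adjacencies, so $U_n = D_n$. A permutation has opposing adjacencies iff it has at least one up-adjacency \emph{and} at least one down-adjacency, so a standard two-set inclusion--exclusion gives
\[
s_n \;=\; n! - 2U_n + H_n,
\]
and the task reduces to showing $U_n/n! = 1/\e + \cO(1/n)$ and $H_n/n! = 1/\e^2 + \cO(1/n)$.

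For $U_n$, I would use the classical ``no succession'' calculation. For $i\in[n-1]$, let $E_i$ be the set of permutations in which the value $i$ is immediately followed by $i+1$. If a subset $S\subseteq[n-1]$ is chosen, the constraints from $\bigcap_{i\in S} E_i$ are always consistent: overlapping indices merge into an ascending run of consecutive values, and collapsing each constrained pair leaves $n-|S|$ ``free'' symbols to be permuted. Hence $|\bigcap_{i\in S} E_i| = (n-|S|)!$, and inclusion--exclusion yields
\[
U_n \;=\; \sum_{k=0}^{n-1} (-1)^k \binom{n-1}{k} (n-k)!.
\]
Using $\binom{n-1}{k}(n-k)! = (n-1)!(n-k)/k!$ and dividing by $n!$, a short rearrangement gives
\[
\frac{U_n}{n!} \;=\; \sum_{k=0}^{n-1} \frac{(-1)^k}{k!} \;+\; \frac{1}{n}\sum_{k=0}^{n-2} \frac{(-1)^k}{k!} \;=\; \frac{1}{\e} + \cO\!\left(\frac{1}{n}\right).
\]

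For $H_n$, the same strategy applies but the event set doubles: for each $i\in[n-1]$ and each $\epsilon\in\{+,-\}$, let $E_{i,+}$ (resp.\ $E_{i,-}$) be the set of permutations in which $i$ is immediately followed by $i+1$ (resp.\ $i+1$ immediately followed by $i$). A subset $T$ of these $2(n-1)$ events gives a non-empty intersection only if no two of its events share the index $i$, and if every maximal run of consecutive indices in the projection $T\to[n-1]$ is assigned a single direction; such a consistent set of $k$ constraints collapses the permutation to $(n-k)!$ permutations of $n-k$ symbols. Summing over all valid $T$ and using the fact that the number of size-$k$ subsets of $[n-1]$ splitting into $m$ maximal runs is $\binom{n-k}{m}\binom{k-1}{m-1}$, with each run independently choosing a direction ($2^m$ choices), gives
\[
H_n \;=\; \sum_{k=0}^{n-1}(-1)^k(n-k)!\sum_{m\geq 1} 2^m\binom{n-k}{m}\binom{k-1}{m-1}.
\]
An asymptotic expansion of this sum (analogous to the one for $U_n$, but now with the factor $2^m$ producing the generating function $\sum_k (-2)^k/k! = \e^{-2}$ in the limit) yields $H_n/n! = 1/\e^2 + \cO(1/n)$; this is the classical Hertzsprung formula, so the estimate may simply be cited. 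Combining everything,
\[
\frac{s_n}{n!} \;=\; 1 - \frac{2}{\e} + \frac{1}{\e^2} + \cO\!\left(\frac{1}{n}\right) \;=\; \left(1-\frac{1}{\e}\right)^{2} + \cO\!\left(\frac{1}{n}\right).
\]

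The routine part is the $U_n$ calculation; the main obstacle is verifying the $\cO(1/n)$ error bound for $H_n$, since the double-layered inclusion--exclusion with the $2^m$ weighting from direction choices is messier to control term-by-term. I expect the cleanest route is either to cite the known Hertzsprung asymptotics directly (with an explicit error of order $1/n$) or to show that after peeling off the $k=0$ and $k=1$ terms one can bound the tail uniformly by $\cO(1/n)$ using crude estimates $\binom{n-k}{m}\binom{k-1}{m-1}\leq n^m\cdot 2^{k-1}/m!$.
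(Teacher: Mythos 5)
Your proposal is correct and takes essentially the same route as the paper: the inclusion--exclusion identity $s_n = n! - 2a_n + b_n$ (with $a_n$ the no-up-adjacency count and $b_n$ the adjacency-free count), combined with $a_n/n! = 1/\e + \cO(1/n)$ and $b_n/n! = 1/\e^2 + \cO(1/n)$. The only difference is cosmetic: you derive the $1/\e$ asymptotic explicitly by the classical succession-free inclusion--exclusion instead of citing it, and for the adjacency-free count you ultimately fall back on citing the classical (Kaplansky/Hertzsprung) asymptotics, which is exactly what the paper does.
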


\begin{proof}
    Let $a_n$ be the number of permutations of size $n$ that have no up-adjacency, and let $b_n$ be the 
    number of permutations of size $n$ that have neither an up-adjacency nor a down-adjacency.
    
    The numbers $a_n$ (sequence A000255 in the OEIS~\cite{sloane}) have already 
    been studied by Euler~\cite{Euler}, and it is known~\cite{RumneyPrimrose}
    that they satisfy $a_n/n! =e^{-1}+\cO(n^{-1})$. 
    
    The numbers $b_n$ (sequence A002464 in the OEIS~\cite{sloane}) satisfy the 
    asymptotics $b_n/n!= e^{-2} + \cO(n^{-1})$, which follows from the results of 
    Kaplansky~\cite{Kaplansky1945} (see also~Albert et al.~\cite{Albert2003}).
    
    We may now express the number $s_n$ of permutations with opposing adjacencies by inclusion-exclusion 
    as follows: we subtract from 
    $n!$ the number of permutations having no up-adjacency and the number 
    of permutations having no down-adjacency, and then we add back the 
    number of permutations having no adjacency at all. This yields $s_n=n!-2a_n 
    +b_n$, from which the lemma follows by the above-mentioned asymptotics of $a_n$ 
    and~$b_n$.
\end{proof}

Combining Theorem~\ref{theorem-PMF-opposing-adjacencies} with Lemma~\ref{lem-incdec} we obtain the 
following consequence, which is the main result of this section.

\begin{corollary}\label{cor-incdec}
    For a given $n$ and for $\pi$ a uniformly random permutation of length $n$, 
    the probability that $\mobp{\pi}=0$ is at least 
    \[
    \left(1-\frac{1}{e}\right)^2-\cO\left(\frac{1}{n}\right).
    \]
\end{corollary}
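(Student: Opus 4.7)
The plan is to combine the two results immediately preceding the corollary. Specifically, Theorem~\ref{theorem-PMF-opposing-adjacencies} guarantees that every permutation with opposing adjacencies is a \mob zero, while Lemma~\ref{lem-incdec} furnishes the exact asymptotic count of such permutations as a fraction of $\cS_n$. The corollary then follows by noting that the probability of being a \mob zero is bounded below by the probability of having opposing adjacencies, since the former event contains the latter.

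More concretely, first I would let $Z_n \subseteq \cS_n$ denote the set of $\pi \in \cS_n$ with $\mobp{\pi} = 0$, and let $S_n \subseteq \cS_n$ denote the set of permutations with opposing adjacencies, so that $|S_n| = s_n$. By Theorem~\ref{theorem-PMF-opposing-adjacencies}, $S_n \subseteq Z_n$, hence $|Z_n| \geq s_n$. For a uniformly random $\pi \in \cS_n$, this gives
\[
\Pr[\mobp{\pi} = 0] \;=\; \frac{|Z_n|}{n!} \;\geq\; \frac{s_n}{n!}.
\]
Then I would invoke Lemma~\ref{lem-incdec} to substitute $s_n/n! = (1-1/e)^2 + \cO(1/n)$, yielding the claimed lower bound.

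There is essentially no obstacle here; this is a routine corollary, and the only subtlety is that we are reporting a lower bound rather than an equality because permutations without opposing adjacencies may still be \mob zeros (for instance, by Corollary~\ref{cor-sum}). I would probably include a one-line remark to this effect so the reader understands why the inequality is not tight, and to motivate the further investigations in the remainder of the chapter aimed at enlarging the identified family of zeros beyond $S_n$.
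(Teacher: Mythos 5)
Your proposal is correct and follows exactly the argument the paper intends: the corollary is stated there as an immediate consequence of combining Theorem~\ref{theorem-PMF-opposing-adjacencies} with Lemma~\ref{lem-incdec}, which is precisely your containment $S_n \subseteq Z_n$ plus the asymptotic for $s_n/n!$. Nothing further is needed.
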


\section{More complicated examples}\label{sec-special}

We will now construct several specific examples of annihilators and annihilator pairs, which are 
not covered by the general results obtained in the previous sections. We begin with a construction 
of four new annihilator pairs, which we will later use to construct new annihilators.

\begin{theorem}\label{thm-pair}
    The two permutations $213$ and $2431$ form an annihilator pair.
\end{theorem}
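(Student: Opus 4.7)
My plan is to adapt the normal-embedding strategy used in the second proof of Theorem~\ref{theorem-PMF-opposing-adjacencies}. Let $\pi=\inflatesome{\tau}{i,j}{213,2431}$ and write $p_1,p_2,p_3$ for the positions of the $213$-copy and $q_1,q_2,q_3,q_4$ for those of the $2431$-copy. The key structural observations are that the only adjacencies inside the two intervals are the descending pairs $(p_1,p_2)$ and $(q_2,q_3)$, and that removing $p_1$ from $\pi$ creates an up-adjacency at $p_2,p_3$ after renormalisation, while symmetrically removing $q_3$ creates an up-adjacency at $q_1,q_2$.

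I would declare an embedding $f\in\sE(*,\pi)$ to be \emph{normal} when $\Img(f)\supseteq[n]\setminus\{p_1,q_3\}$. This gives four normal embeddings $f_0,f_1,f_2,f_3$, with sources $\pi$, $z_1=\inflatesome{\tau}{i,j}{12,2431}$, $z_2=\inflatesome{\tau}{i,j}{213,231}$, and $z_3=\inflatesome{\tau}{i,j}{12,231}$ respectively. Both $z_1$ and $z_2$ have opposing adjacencies and hence satisfy $\mobp{z_1}=\mobp{z_2}=0$ by Theorem~\ref{theorem-PMF-opposing-adjacencies}. Since $|\pi|\ge 7$, every normal embedding has image of size at least $|\pi|-2\ge 5$, so $\NE(1,\pi)=0$, and Proposition~\ref{pro-normal} reduces the theorem to verifying the balance identity~\eqref{eq-cancel} for each $\lambda\in[1,\pi)$ with $\mobp{\lambda}\ne 0$.

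Since $z_3$ is a pattern of both $z_1$ and $z_2$, the presence of $f_3$ in $\sNEl$ forces $f_1,f_2\in\sNEl$, ruling out the unbalanced configuration $\{f_0,f_3\}$; the configurations $\{f_0,f_1\}$, $\{f_0,f_2\}$ and $\{f_0,f_1,f_2,f_3\}$ are already parity-balanced. To eliminate the remaining three-element configuration $\{f_0,f_1,f_2\}$, I would show that $\lambda\le z_1$ and $\lambda\le z_2$ jointly imply $\lambda\le z_3$, by taking an embedding $h$ of $\lambda$ into $\pi$ that avoids $p_1$ and, whenever it uses $q_3$, replacing $q_3$ by $q_2$: the swap is legal when $q_2\notin\Img(h)$ since $(q_2,q_3)$ is a down-adjacency, and the case $q_2\in\Img(h)$ would be ruled out by exploiting that $\mobp{\lambda}\ne 0$ forbids opposing adjacencies (so the inherited down-adjacency at $(q_2,q_3)$ leaves no room for an up-adjacency that a failed swap might create).

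The main obstacle is the singleton case $\sNEl=\{f_0\}$, which arises precisely when every embedding of $\lambda$ into $\pi$ uses both $\{p_1,p_2\}$ and both $\{q_2,q_3\}$. In that event $\lambda$ inherits two disjoint down-adjacencies from $\pi$ and, since $\mobp{\lambda}\ne 0$ forbids opposing adjacencies, no up-adjacency at all. My plan here is a minimal-counterexample induction on $|\pi|$: I would argue that the forced use of $\{p_1,p_2,q_2,q_3\}$ together with the remaining positions $\{p_3,q_1,q_4\}$ propagates to full interval copies of $213$ and $2431$ inside $\lambda$, so that the inductive hypothesis yields $\mobp{\lambda}=0$; the technically delicate sub-cases, where only proper subsets of $\{p_3,q_1,q_4\}$ are ever used by any embedding, will most likely require invoking Corollary~\ref{cor-sum} or another ad hoc argument to recover $\mobp{\lambda}=0$ directly.
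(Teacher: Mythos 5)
Your set-up is fine as far as it goes: the reduction via Proposition~\ref{pro-normal}, the identification of the four normal embeddings $f_0,f_1,f_2,f_3$ and their sources, and the observation that the only configurations needing work are $\{f_0,f_1,f_2\}$ and the singleton $\{f_0\}$. But those two configurations are exactly where the proposal stops being a proof. In the $\{f_0,f_1,f_2\}$ case, the parenthetical remark about $q_2\in\Img(h)$ is not an argument: if every embedding of $\lambda$ into $\pi$ that avoids $p_1$ uses both $q_2$ and $q_3$, no switch is available, and knowing that $\lambda$ has a down-adjacency but no up-adjacency does not by itself produce an embedding avoiding $q_3$. The singleton case is worse, because it genuinely occurs and your induction hypothesis need not apply to it. Concretely, take $\tau=12$, so $\pi=213\oplus 2431=2135764$, and $\lambda=21\oplus 2431=214653$. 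Since $|\lambda|=|\pi|-1$, embeddings correspond to single-point deletions, and one checks that the only deletion of $\pi$ giving $\lambda$ removes the `3' of the $213$-block; hence every embedding of $\lambda$ uses $p_1,p_2,q_2,q_3$ and $\sNEl=\{f_0\}$. Yet $\lambda$ contains no interval copy of $213$, so the claimed ``propagation to full interval copies of $213$ and $2431$ inside $\lambda$'' fails and the minimal-counterexample hypothesis cannot be invoked; you are left needing $\mobp{\lambda}=0$ by some other route (here it does hold, e.g.\ by Corollary~\ref{BJJS_corollary_3} since $21\oplus2431$ is sum-decomposable of the right shape, and $214653$ is in fact one of the annihilators of Theorem~\ref{thm-annihil}, whose proof comes later and itself uses Theorem~\ref{thm-pair}, so care is needed to avoid circularity). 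Your own text defers precisely these sub-cases to ``another ad hoc argument'', so the gap is real, and it is structural: with only $p_1$ and $q_3$ omittable, there are $\lambda$ with no switch stabilising $\sNEl$, and the burden shifts to proving $\mobp{\lambda}=0$ for an uncontrolled family of permutations.

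The paper's proof uses the same machinery (Proposition~\ref{pro-normal} plus a minimal counterexample) but a different notion of normality: the image must contain everything except possibly $i+2$ (the `3' of the $213$-copy) and $j+2,j+3$ (the `3' and `1' of the $2431$-copy), giving eight normal embeddings. For each $\lambda$ with $\mobp{\lambda}\neq0$ it then exhibits a single sign-reversing switch stabilising $\sNEl$: $\Delta_{j+2}$ if $\lambda$ has no down-adjacency; $\Delta_{i+2}$ if $\lambda$ has a down-adjacency and an interval copy of $2431$ (using minimality to exclude interval copies of $213$, and Theorem~\ref{theorem-PMF-opposing-adjacencies} to exclude up-adjacencies); and $\Delta_{j+3}$ otherwise, where the subpermutation $\alpha\le 2431$ induced on the $2431$-block is forced into $[1,132]$ because $\alpha=321$ is excluded by Corollary~\ref{cor-sum} and $\alpha=231$ by the absence of up-adjacencies. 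The third omittable position $j+3$ is exactly what guarantees a working switch in every case, so the fix is not to patch your two bad configurations but to enlarge the omittable set (for instance to the paper's choice) so that such a case analysis becomes available.
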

\begin{proof}
    Our proof is based on the concept of normal embeddings and follows a similar structure as the normal 
    embedding proof of Theorem~\ref{theorem-PMF-opposing-adjacencies}.
    
    Suppose for contradiction that there is a permutation $\pi$ that contains an interval 
    isomorphic to $213$ as well as an interval isomorphic to~$2431$, and that $\mobp{\pi}\neq0$.
    Fix a smallest possible $\pi$, and let $n$ be its length. Note that an interval 
    isomorphic to $213$ is necessarily disjoint from an interval isomorphic to~$2431$, and in 
    particular, $n\ge 7$.
    
    Let $i$, $i+1$ and $i+2$ be three positions of $\pi$ containing an interval copy of $213$, and let 
    $j$, $j+1$, $j+2$ and $j+3$ be four positions containing an interval copy of~$2431$. We will apply 
    the approach of Proposition~\ref{pro-normal}, with $\sigma=1$. We will say that an embedding 
    $f\in\sE(*,\pi)$ is \emph{normal} if $\Img(f)$ is a superset of $[n]\setminus\{i+2, j+2, j+3\}$.  
    Informally, the image of a normal embedding contains all the positions of $\pi$, except possibly 
    some of the three positions that correspond to the value $3$ of $213$ or the values $3$ and $1$ 
    of~$2431$ in the chosen interval copies of $213$ and $2431$,
    as shown in Figure~\ref{figure-intervals-in-213-2431}. In particular, there are eight normal 
    embeddings. 
    \begin{figure}[!ht]
        \begin{center}
            \begin{tikzpicture}[scale=0.3]
            \sqat{3}{0}{0};
            \normaldot{(1,2)};
            \normaldot{(2,1)};
            \opendot{(3,3)};
            \sqat{4}{5}{5};
            \normaldot{(6,7)};
            \normaldot{(7,9)};
            \opendot{(8,8)};
            \opendot{(9,6)};
            \path [draw=gray,wavy]	(-1,4.5) -- (10,4.5);
            \path [draw=gray,wavy]	(4.5,-1) -- (4.5,10);
            \end{tikzpicture}
        \end{center}
        \caption{The intervals $213$ and $2431$ in Theorem~\ref{thm-pair}.  Normal embeddings may omit some of the hollow points.}
        \label{figure-intervals-in-213-2431}
    \end{figure}
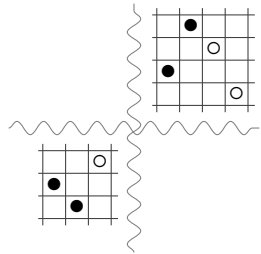
    
    We now verify the assumptions of Proposition~\ref{pro-normal}. We obviously have 
    $\sNE(\pi,\pi)=\sE(\pi,\pi)$. The main task is to verify, for a given $\lambda\in[1,\pi)$ with 
    $\mobp{\lambda}\neq0$, the identity~\eqref{eq-cancel} of Proposition~\ref{pro-normal}, that is, the 
    identity $\left|\sNElo\right|= \left|\sNEle\right|$. 
    
    Fix a $\lambda\in[1,\pi)$ such that $\mobp{\lambda}\neq0$, and let $\sNEl$ be the set 
    $\sNElo\cup \sNEle$. We will describe a sign-reversing involution $\Phi_\lambda$ on $\sNEl$. 
    The involution $\Phi_\lambda$ will always be equal to a switch operation $\Delta_k$, where the 
    choice of $k$ will depend on~$\lambda$.
    
    Suppose first that $\lambda$ does not contain any down-adjacency. We claim that 
    $\Delta_{j+2}$ is an involution on the set $\sNEl$. To see this, choose $f\in\sNEl$ and define 
    $g=\Delta_{j+2}(f)$. It is clear that $g$ is a normal embedding. 
    
    To prove that $g$ belongs to $\sNEl$, it remains to show that $\src(g)$ contains~$\lambda$, or  
    equivalently, that there is an embedding of $\lambda$ into $\pi$ that is contained in~$g$. 
    Let $h$ be an embedding of $\lambda$ into $\pi$ which is contained in~$f$. If $j+2\not\in\Img(h)$, 
    then $h$ is also contained in $g$ and we are done. 
    
    Suppose then that $j+2\in\Img(h)$. This means that $j+1$ is not in $\Img(h)$, because $\pi$ has a 
    down-adjacency at positions $j+1$ and $j+2$, while $\lambda$ has no down-adjacency. We 
    now modify $h$ in such a way that the element previously mapped to $j+2$ will be mapped to $j+1$, 
    while the mapping of the remaining elements remains unchanged. Let $h'$ be the embedding obtained 
    from $h$ by this modification; formally, we have $h'=\Delta_{j+1}(\Delta_{j+2}(h))$. Since the two 
    elements $\pi_{j+1}$ and $\pi_{j+2}$ form an adjacency, we have $\src(h')=\src(h)=\lambda$. 
    Moreover, $h'$ is contained in $g$ (recall that $g$ is normal, and therefore $\Img(g)$ contains 
    $j+1$). Consequently, $g$ is in $\sNEl$, as claimed.
    
    We now deal with the case when $\lambda$ contains a down-adjacency. Since 
    $\mobp{\lambda}\neq0$, it follows by Theorem~\ref{theorem-PMF-opposing-adjacencies} that $\lambda$ 
    has no up-adjacency. We distinguish two subcases, depending on whether $\lambda$ 
    contains an interval copy of~$2431$.
    
    Suppose that $\lambda$ contains an interval copy of~$2431$. We will prove that in this case, 
    $\Delta_{i+2}$ is a sign-reversing involution on~$\sNEl$. We begin by observing that $\lambda$ has 
    no interval copy of $213$, otherwise $\lambda$ would be a counterexample to Theorem~\ref{thm-pair}, 
    contradicting the minimality of~$\pi$. Fix again an embedding $f\in\sNEl$, and define 
    $g=\Delta_{i+2}(f)$. As in the previous case, $g$ is clearly normal, and we only need to show that 
    there is an embedding of $\lambda$ into $\pi$ contained in~$g$. Let $h$ be an embedding of $\lambda$ 
    into $\pi$ contained in $f$. If $i+2\not\in\Img(h)$, then $h$ is contained in $g$ and we are done, 
    so suppose $i+2\in\Img(h)$. If at least one of the two positions $i$ and $i+1$ belongs to $\Img(h)$, 
    then $\lambda$ contains an up-adjacency or an interval copy of $213$, contradicting our 
    assumptions. Therefore, we can modify $h$ so that the element mapped to $i+2$ is mapped to $i$ 
    instead, obtaining an embedding of $\lambda$ contained in $g$ and showing that~$g\in\sNEl$.
    
    Finally, suppose that $\lambda$ has no interval copy of $2431$. In this case, we prove that 
    $\Delta_{j+3}$ is the required involution on $\sNEl$. As in the previous cases, we fix $f\in\sNEl$, 
    define $g=\Delta_{j+3}(f)$, and let $h$ be an embedding of $\lambda$ contained in~$f$; we again 
    want to modify $h$ into an embedding $\lambda$ contained in~$g$. Let $\alpha$ be the subpermutation 
    of $\lambda$ formed by those positions that are mapped into the set $J=\{j,j+1,j+2,j+3\}$ by~$h$. 
    Recall that the positions in $J$ induce an interval copy of $2431$ in~$\pi$. In particular, 
    $\alpha\le 2431$, and $\lambda$ has an interval copy of~$\alpha$. We know that $\alpha\neq 2431$, 
    since we assume that $\lambda$ has no interval copy of $2431$. Also, $\alpha\neq 321$, since $321$ 
    is an annihilator by Corollary~\ref{cor-sum}, while $\mobp{\lambda}\neq0$. Finally, $\alpha \neq 
    231$, since $\lambda$ has no up-adjacency. This implies that $\alpha\le 132$, and we can 
    modify $h$ so that all the positions originally mapped into $J$ will get mapped into 
    $J\setminus\{j+3\}$, obtaining an embedding of $\lambda$ into $\pi$ contained in $g$.
    
    Having thus verified the assumptions of Proposition~\ref{pro-normal}, we can conclude that 
    $\mobp{\pi}=(-1)^{|\pi|-1}\NE(1,\pi)=0$, a contradiction.
\end{proof}

The following three results are 
proved using similar methods to those used in the
proof of Theorem~\ref{thm-pair}.
\begin{theorem}\label{thm-pair2}
    The permutations $2143$ and $2431$ form an annihilator pair.
\end{theorem}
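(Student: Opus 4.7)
The plan is to adapt the normal-embedding strategy of Theorem~\ref{thm-pair}, tailoring the flexible set $T$ to the new interval patterns. Suppose for contradiction that $\pi$ is a shortest counterexample: $\pi$ has length $n\ge 8$, contains disjoint interval copies of $2143$ at $I=\{i,i+1,i+2,i+3\}$ and of $2431$ at $J=\{j,j+1,j+2,j+3\}$, and satisfies $\mobp{\pi}\neq 0$. Note that $\pi$ takes the consecutive values $v+1,v,v+3,v+2$ on $I$ and $u+1,u+3,u+2,u$ on $J$ for some $u,v$.

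I would declare $f\in\sE(*,\pi)$ \emph{normal} if $\Img(f)\supseteq[n]\setminus T$ with $T=\{i+1,j+3\}$. The position $i+1$ is the second element of the down-adjacency $(i,i+1)$ inside the $2143$-interval, and $j+3$ is the isolated minimum of the $2431$-interval (exactly as in Theorem~\ref{thm-pair}). Since $|T|=2$, the mandatory positions number $n-2\ge 6$, so there is no normal embedding of the permutation $1$, and Proposition~\ref{pro-normal} will yield $\mobp{\pi}=(-1)^{n-1}\NE(1,\pi)=0$ once we verify its hypothesis.

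The main task is to produce, for every $\lambda\in[1,\pi)$ with $\mobp{\lambda}\neq 0$, a sign-reversing involution on $\sNEl$ of the form $\Delta_k$ with $k\in T$. By Theorem~\ref{theorem-PMF-opposing-adjacencies}, $\lambda$ has no opposing adjacencies, and by the minimality of $\pi$ it contains at most one of the interval patterns $2143$ and $2431$. I split into four cases:
\begin{enumerate}
    \item[(A)] $\lambda$ has no down-adjacency: use $\Delta_{i+1}$. If $h$ is an embedding of $\lambda$ contained in $f$ with $i+1\in\Img(h)$, then $i\notin\Img(h)$, and reassigning the element at $i+1$ to $i$ preserves the source because the values $\pi_i=v+1$ and $\pi_{i+1}=v$ are consecutive and all values of $\pi$ outside $I$ lie outside $[v,v+3]$.
    \item[(B)] $\lambda$ has a down-adjacency and contains a copy of $2143$ (but not of $2431$): use $\Delta_{j+3}$. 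The induced interval pattern $\alpha\le 2431$ on $\Img(h)\cap J$ is restricted to $\{1,21\}$ by the exclusions $\alpha\neq 2431$ (assumption), $\alpha\neq 12, 231$ (no up-adjacency in $\lambda$), $\alpha\neq 321$ (annihilator via symmetry from Corollary~\ref{cor-sum}), and $\alpha\neq 132$ (the only embedding of $132$ into $2431$ avoids $j+3$). The subsequent reassignment matches that in Theorem~\ref{thm-pair}.
    \item[(C)] $\lambda$ contains $2431$ but not $2143$: use $\Delta_{i+1}$. The induced interval pattern $\beta\le 2143$ on $\Img(h)\cap I$ satisfies $\beta\neq 2143$, $\beta\neq 12$, and $\beta\neq 213$ (this last exclusion uses Theorem~\ref{thm-pair}, since $\lambda$ would otherwise contain both $213$ and $2431$), leaving $\beta\in\{1,21,132\}$. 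These three alternatives force $\Img(h)\cap I$ to be $\{i+1\}$, $\{i,i+1\}$, or $\{i+1,i+2,i+3\}$ respectively; the corresponding reassignments to $\{i\}$, $\{i+2,i+3\}$, and $\{i,i+2,i+3\}$ all preserve the source by the same consecutive-values argument as in case (A).
    \item[(D)] $\lambda$ contains neither $2143$ nor $2431$: the argument of case (B) using $\Delta_{j+3}$ applies verbatim.
\end{enumerate}

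The main obstacle will be case (C). Since $2143$ has two down-adjacencies and no isolated element, the catalogue of interval sub-patterns $\beta\le 2143$ that may appear in $\lambda$ is richer than the $\alpha\le 2431$ analysis of Theorem~\ref{thm-pair}, and in particular the subcase $\beta=132$ requires reassigning a three-element portion of $\Img(h)$ inside $I$ while preserving the relative order of every value in the image. The argument hinges on the fact that the values of $\pi$ on $I$ form the contiguous block $\{v,v+1,v+2,v+3\}$ and all other values of $\pi$ lie outside this block, so the ranks of external values in the image are unaffected by any rearrangement within $I$; a case-by-case verification of the three subcases of (C) then completes the proof.
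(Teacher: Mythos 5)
Your proposal is correct: the choice $T=\{i+1,j+3\}$, the four-way case analysis, the pattern exclusions (including ruling out $213$ in case (C) via Theorem~\ref{thm-pair}, whose interval copies are automatically disjoint from any $2431$-interval), and the reassignments within the blocks all check out, and they follow exactly the normal-embedding method of Theorem~\ref{thm-pair} that the thesis says the omitted proof uses. Since the thesis defers the actual proof of this theorem to the arXiv version, I can only compare against that declared method, and your argument is essentially the same approach, executed correctly.
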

\begin{theorem}\label{thm-pair3}
    The permutations $312$ and $23514$ form an annihilator pair.
\end{theorem}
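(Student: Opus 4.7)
The plan is to adapt the normal-embeddings framework used in Theorem~\ref{thm-pair} by invoking Proposition~\ref{pro-normal} with $\sigma = 1$. Suppose for contradiction that $\pi$ is a shortest permutation of length $n$ containing an interval copy of $312$ at positions $i, i+1, i+2$ and an interval copy of $23514$ at positions $j, j+1, j+2, j+3, j+4$, with $\mobp{\pi} \neq 0$. The two interval copies are disjoint, so $n \geq 8$. The up-adjacency inside the $312$ copy sits at positions $(i+1, i+2)$, and the up-adjacency inside the $23514$ copy sits at positions $(j, j+1)$; these are the only nontrivial intervals inside the two copies.

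Choose a set $T$ of tip positions contained in $\{i, i+1, i+2, j, j+1, j+2, j+3, j+4\}$ and declare an embedding $f \in \sE(*, \pi)$ to be \emph{normal} if $\Img(f) \supseteq [n] \setminus T$. Since $|T|$ will be bounded by a small constant and $n \geq 8$, no normal embedding of $1$ exists, so $\NE(1, \pi) = 0$. After verifying the cancellation identity $|\sNElo| = |\sNEle|$ from Proposition~\ref{pro-normal} for every $\lambda \in [1, \pi)$ with $\mobp{\lambda} \neq 0$, we conclude $\mobp{\pi} = (-1)^{n-1} \NE(1, \pi) = 0$, contradicting the choice of $\pi$.

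For each such $\lambda$ we exhibit a sign-reversing involution $\Phi_\lambda = \Delta_k$ on $\sNEl$, with the tip $k \in T$ chosen according to the structure of $\lambda$. By Theorem~\ref{theorem-PMF-opposing-adjacencies}, $\lambda$ has no opposing adjacencies; by minimality of $\pi$, $\lambda$ does not contain interval copies of both $312$ and $23514$. First, if $\lambda$ has no up-adjacency, we switch at a tip at one end of an up-adjacency in $\pi$ (say $k = i+1$); for any embedding $h$ of $\lambda$ contained in a normal $f$ with $k \in \Img(h)$, the absence of up-adjacencies in $\lambda$ forces a controlled behaviour of $\Img(h) \cap \{i, i+1, i+2\}$, and we rewire $k$ to the other end of the adjacency, the consecutive-value structure preserving the source. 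Second, if $\lambda$ has an up-adjacency (hence no down-adjacency) and avoids $312$ as an interval copy, we switch at a tip in the $312$ copy (such as $k = i$): if $\Img(h)$ contained any other position of the $312$ copy, then $\lambda$ would contain either a down-adjacency (via the consecutive values at $i$ and $i+2$) or an interval copy of $312$, both forbidden, so $\Img(h) \cap \{i, i+1, i+2\} = \{i\}$ and we rewire $i \mapsto i+2$. Third, if $\lambda$ has an up-adjacency and avoids $23514$ as an interval copy, we switch at a tip inside the $23514$ copy, and enumerate the possible patterns $\alpha \le 23514$ arising as the source of $h$ restricted to the positions of $\{j, \ldots, j+4\}$ that lie in $\Img(h)$, excluding those forbidden by the case assumptions (namely $\alpha = 23514$, any $\alpha$ forcing opposing adjacencies in $\lambda$, and any $\alpha$ that combined with the interval copy of $312$ would produce a counterexample smaller than $\pi$).

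The main obstacle will be the third case. Unlike $2431$ in Theorem~\ref{thm-pair}, the permutation $23514$ has a richer substructure and only one nontrivial interval (its leading up-adjacency), so a single tip in the $23514$ copy will be insufficient. We therefore expect $T$ to contain several tips inside $\{j, \ldots, j+4\}$, with the precise choice dictated by the enumeration of allowable subpatterns $\alpha \le 23514$. For each allowable configuration of $\Img(h) \cap \{j, \ldots, j+4\}$, we need a tip $k$ whose removal can be compensated by replacing $k$ with a nearby position of $\pi$, using the consecutive-value structure of $23514$ to preserve the source; verifying that such a choice of $T$ exists and works uniformly is where the bulk of the technical work will lie.
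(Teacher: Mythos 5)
The paper itself never writes out a proof of this statement: it records only that Theorems~\ref{thm-pair2}--\ref{thm-pair4} ``are proved using similar methods to those used in the proof of Theorem~\ref{thm-pair}'' and points to the arXiv version for the details. Your plan does follow that intended method --- Proposition~\ref{pro-normal} with $\sigma=1$, a minimal counterexample $\pi$, a globally fixed tip set $T$, and a sign-reversing switch $\Delta_k$ whose index depends on the structure of $\lambda$ --- and your first two cases (no up-adjacency; up-adjacency together with an interval copy of $23514$, handled by switching inside the $312$ copy) are essentially correct adaptations of the corresponding cases of Theorem~\ref{thm-pair}, apart from a small slip in the parenthetical justification of the subcase where the embedding uses $i$ and $i+1$ (the down-adjacency there comes from the values at $i$ and $i+1$ becoming consecutive once $i+2$ is unused, not from ``$i$ and $i+2$'').

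Nevertheless, as a proof this has a genuine gap, and you acknowledge it yourself: the third case, which is where the theorem actually lives, is not carried out. You never fix the tip set $T$ (so even the statement $\NE(1,\pi)=0$ is only conditionally justified), never enumerate which patterns $\alpha\le 23514$ can arise as the restriction of an embedding of $\lambda$ to the chosen copy of $23514$ after excluding $23514$ itself, patterns forcing a down-adjacency in $\lambda$, and annihilators such as $123\le 23514$, and never exhibit the rewiring that moves the points of $\alpha$ off the switched tip. That last step is not routine: $T$ is fixed once for all cases, so every switch index must lie in $T$ while every rewiring target must be guaranteed to lie in $\Img(g)$ (in practice, outside $T$), and these constraints from your three cases interact. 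Verifying that a single $T\subseteq\{i,i+1,i+2,j,\dots,j+4\}$ satisfies them all simultaneously, together with the pattern enumeration inside $23514$ (which is longer and poorer in intervals than $2431$, so the admissible $\alpha$ need not all re-embed into a fixed prefix of the copy), is precisely the content that a proof of this theorem must supply. Until that is done, what you have is a plausible plan in the spirit of the paper's method, not a proof.
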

\begin{theorem}\label{thm-pair4}
    The permutations $25134$ and $23514$ form an annihilator pair.
\end{theorem}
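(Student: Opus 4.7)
The plan is to mirror the normal-embeddings argument used to prove Theorem~\ref{thm-pair} (and extended in Theorems~\ref{thm-pair2} and~\ref{thm-pair3}). Suppose for contradiction that a shortest permutation $\pi \in \cS_n$ exists that contains disjoint interval copies of $25134$ and $23514$ and satisfies $\mobp{\pi}\neq 0$; denote the positions of the $25134$-interval by $i,i+1,\dots,i+4$ and those of the $23514$-interval by $j,j+1,\dots,j+4$, so that $n\ge 10$. Both $25134$ and $23514$ contain a unique non-trivial adjacency, namely the up-adjacency at positions $i+3,i+4$ (values $3,4$) of the first copy, and the up-adjacency at positions $j,j+1$ (values $2,3$) of the second copy; these will be the ``flexible'' positions of the argument.

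I would declare an embedding $f\in\sE(*,\pi)$ \emph{normal} if $\Img(f)\supseteq [n]\setminus T$, for a small set $T$ of dispensable positions chosen so that (i) $1$ cannot be the source of any normal embedding (so $\NE(1,\pi)=0$), and (ii) for every $\lambda\in[1,\pi)$ with $\mobp{\lambda}\ne 0$ there is a switch $\Delta_k$ with $k\in T$ that is a sign-reversing involution on $\sNEl$. My first attempt would be $T=\{i+3,i+4,j,j+1\}$, giving sixteen normal embeddings and clearly no normal embedding of the singleton. Once property (ii) is established, Proposition~\ref{pro-normal} immediately yields $\mobp{\pi}=(-1)^{n-1}\NE(1,\pi)=0$, a contradiction.

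To build the sign-reversing involution for a fixed $\lambda\in[1,\pi)$ with $\mobp{\lambda}\ne 0$, I would exploit the combined force of the earlier results to heavily restrict $\lambda$: by Theorem~\ref{theorem-PMF-opposing-adjacencies} it cannot have opposing adjacencies; by Corollary~\ref{cor-sum} no interval of the form $\alpha\oplus 1\oplus\beta$ can occur in $\lambda$; by Theorems~\ref{thm-pair}, \ref{thm-pair2} and \ref{thm-pair3} together with the minimality of $\pi$, $\lambda$ cannot contain disjoint interval copies of any already-proved annihilator pair, nor of $(25134,23514)$ itself. The case split should then be governed by which of $25134$ and $23514$ $\lambda$ contains as an interval copy, and by whether $\lambda$ contains any down-adjacency. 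In each case the involution is $\Delta_k$ for a suitable $k\in T$: one chooses $k$ among the two positions of the up-adjacency from the interval copy that $\lambda$ does \emph{not} contain as an interval, and uses the fact that $\pi_{i+3}\pi_{i+4}$ (resp.\ $\pi_j\pi_{j+1}$) is an adjacency to slide any embedding $h$ of $\lambda$ into $\pi$ off the omitted position onto its neighbour without changing $\src(h)$.

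The main obstacle is the sub-case in which an embedding $h$ of $\lambda$ uses \emph{both} positions of the relevant up-adjacency, for then the direct slide is unavailable. Here one must rule out this situation using the structural constraints on $\lambda$: using both of, say, $i+3$ and $i+4$ forces $\lambda$ to contain an up-adjacency in its image, after which the forbidden-pattern inventory on $\lambda$ (especially the absence of opposing adjacencies and of the pair $(312,23514)$) must be invoked to derive a contradiction or, alternatively, to reroute the embedding through other positions of the $25134$-interval. Getting this rerouting correct — in particular verifying, exactly as in the proof of Theorem~\ref{thm-pair}, that $\alpha:=\src(h|_{\{i,\dots,i+4\}})$ is forced into a small set of admissible sub-patterns of $25134$ (and symmetrically for $23514$) — is where the bulk of the case analysis lies, and is the step I expect to be the most delicate part of the proof.
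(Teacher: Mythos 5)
The thesis itself does not print a proof of this theorem: Theorems~\ref{thm-pair2}--\ref{thm-pair4} are stated with the remark that they are proved by the same normal-embedding method as Theorem~\ref{thm-pair}, with the details relegated to the arXiv version of the paper. Your plan follows exactly that indicated route — minimal counterexample, a small set $T$ of dispensable positions defining normality, per-$\lambda$ sign-reversing switches $\Delta_k$, Proposition~\ref{pro-normal}, and $\NE(1,\pi)=0$ — and your structural observations are correct: each of $25134$ and $23514$ has a unique adjacency (the up-adjacencies at positions $i+3,i+4$ and $j,j+1$ respectively), the two interval copies are disjoint so $n\ge 10$, and no normal embedding can have the singleton as source.

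There are, however, two genuine gaps as the proposal stands. First, the tentative choice $T=\{i+3,i+4,j,j+1\}$ — both members of each adjacency — breaks the key containment step of the slide argument. In the proofs of Theorem~\ref{theorem-PMF-opposing-adjacencies} and Theorem~\ref{thm-pair}, when an embedding $h$ of $\lambda$ occupies the omitted position, the displaced point is slid onto its adjacency partner, and the verification that the modified embedding $h'$ is contained in $g=\Delta_k(f)$ rests precisely on that partner being \emph{non}-dispensable, hence lying in the image of every normal embedding. With both $i+3$ and $i+4$ (resp.\ both $j$ and $j+1$) in $T$, the partner need not belong to $\Img(f)$, so $h'$ need not be contained in $g$ and the involution argument collapses; you can place at most one member of each adjacency in $T$, and, as with $2431$ in Theorem~\ref{thm-pair} (where the second dispensable position is the value-$1$ point, not a second adjacency member), you will likely need an additional non-adjacency position in one of the copies to make the rerouting case go through. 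Second, the actual content of the proof — verifying, for every $\lambda\in[1,\pi)$ with $\mobp{\lambda}\neq0$, that some $\Delta_k$ with $k\in T$ maps $\sNEl$ to itself, and in particular the analogue of the ``$\alpha\le132$'' analysis showing that the sub-pattern of $\lambda$ landing in the relevant interval copy is forced into a small family re-embeddable away from the omitted position (using Theorem~\ref{theorem-PMF-opposing-adjacencies}, Corollary~\ref{cor-sum}, Theorems~\ref{thm-pair}--\ref{thm-pair3} and minimality) — is exactly the part you defer. So what you have is the correct framework and a correct set-up, but not yet a proof.
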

We omit the proofs here, 
as they were not included in the published paper~\cite{Brignall2020}
on which this chapter is based.
They can be found in
\url{https://arxiv.org/abs/1810.05449v1}~\cite{OppAdjPreviousVersion}.

\begin{theorem}\label{thm-annihil}
    %
    %
    Each of the three permutations $215463$, $236145$ and $214653$ is a \mob annihilator.
\end{theorem}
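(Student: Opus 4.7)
The plan is to prove the three annihilator statements by the normal-embedding machinery of Proposition~\ref{pro-normal}, in direct analogy with Theorems~\ref{theorem-PMF-opposing-adjacencies} and~\ref{thm-pair}--\ref{thm-pair4}. I would treat each permutation $\phi\in\{215463,236145,214653\}$ separately but by a common strategy, and run an induction on $|\pi|$.

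The setup is as follows. For each $\phi$, suppose $\pi$ of length $n$ is a shortest permutation which contains $\phi$ as an interval and satisfies $\mobp{\pi}\neq 0$, and let $p_1<\dots<p_6$ be the positions carrying that interval copy. I would declare an embedding $f\in\sE(*,\pi)$ to be \emph{normal} if $\Img(f)\supseteq[n]\setminus S_\phi$, where $S_\phi$ is a small subset of $\{p_1,\dots,p_6\}$ chosen to capture the toggleable positions dictated by the internal structure of $\phi$. A natural first candidate is $S_\phi=\{p_2,p_4,p_6\}$ for $215463$, $\{p_2,p_5,p_6\}$ for $214653$, and $\{p_1,p_3,p_5\}$ for $236145$: each picks out the ``moveable'' element of each of the two adjacencies together with one outlying point. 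With $|S_\phi|\le 3$ and $n\ge 7$, every normal embedding has size at least $n-3\ge 4$, so $\NE(1,\pi)=0$ holds automatically and Proposition~\ref{pro-normal} reduces the theorem to the identity $|\sNElo|=|\sNEle|$ for every $\lambda\in[1,\pi)$ with $\mobp{\lambda}\neq 0$.

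Verifying that identity is the main content. For any such $\lambda$, the minimality of $\pi$ rules out an interval copy of $\phi$ in $\lambda$ itself, while previously established results forbid much more: no opposing adjacencies by Theorem~\ref{theorem-PMF-opposing-adjacencies}, no interval of the form $\alpha\oplus 1\oplus\beta$ by Corollary~\ref{cor-sum}, and no realisation of any of the four annihilator pairs of Theorems~\ref{thm-pair}--\ref{thm-pair4}. I would then split into cases according to which monotone adjacencies appear in $\lambda$ and which of the short sub-intervals carved out by $\phi$ (for example the $213$ interval at positions $p_3p_4p_5$ inside $215463$, or the $2431$ interval at $p_3p_4p_5p_6$ inside $214653$) actually embed into $\lambda$. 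In each case the constraints force the existence of a position $k\in S_\phi$ such that any embedding $h$ of $\lambda$ with $k\in\Img(h)$ can be rewritten as an embedding of the same $\lambda$ whose image contains the neighbour of $k$ in the adjacency instead, so that the switch $\Delta_k$ restricts to a sign-reversing involution on $\sNEl$ exactly as in the proofs of Theorems~\ref{thm-pair}--\ref{thm-pair4}.

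The main obstacle is the bookkeeping: each of the three permutations has a different internal decomposition, so $S_\phi$ and the case split must be tailored individually, and one must confirm that every residual $\lambda$ surviving the previous annihilator results is handled by at least one switch. I expect $\phi=236145$, whose outer shape is the simple permutation $2413$ and whose two up-adjacencies are maximally separated, to be the cleanest case. The permutations $215463$ and $214653$ have the sum-decomposable outer shape $1342$ and contain non-trivial length-$3$ and length-$4$ sub-intervals, so the case analysis must interact more delicately with Theorems~\ref{thm-pair}--\ref{thm-pair4}; here the pairs $(2143,2431)$ and $(213,2431)$ are the key additional tools for ruling out the most problematic configurations of $\lambda$, and any case that slips through all of these obstructions will indicate that the proposed $S_\phi$ needs to be adjusted by swapping in one of the unused positions among $\{p_1,\dotsc,p_6\}$.
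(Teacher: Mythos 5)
Your plan is not the paper's proof (the paper does not use normal embeddings for this theorem at all), and as it stands it has a genuine gap: the only substantive step — verifying the cancellation identity \eqref{eq-cancel} for every $\lambda\in[1,\pi)$ with $\mobp{\lambda}\neq0$ — is never carried out, and with your proposed sets $S_\phi$ it is in fact false. Take $\phi=\pi=215463$ and $S_\phi=\{p_2,p_4,p_6\}=\{2,4,6\}$, so normal embeddings are exactly those whose image contains $\{1,3,5\}$, and take $\lambda=21$ (so $\mobp{\lambda}=-1\neq0$). The positions $1,3,5$ carry the increasing pattern $2,5,6$, so the image $\{1,3,5\}$ is the unique normal embedding whose source avoids $21$; the remaining seven normal embeddings all lie in $\sNEl$, and they split as four of even size ($\{1,2,3,5\},\{1,3,4,5\},\{1,3,5,6\},[6]$) against three of odd size. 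Hence $|\sNEle|\neq|\sNElo|$ and Proposition~\ref{pro-normal} simply does not apply. The failure is structural, not a matter of relabelling $S_\phi$: each of $215463$, $236145$, $214653$ contains only adjacencies of a single direction (two descents, resp.\ two ascents), so for a $\lambda$ that itself has an adjacency of that same direction — which is not excluded by any prior result — there is no ``partner'' position to reroute to, and unlike in Theorems~\ref{thm-pair}--\ref{thm-pair4} there is no second disjoint interval to fall back on; meanwhile the positions you guarantee in every normal embedding form a monotone pattern inside the interval, so a descent of $\lambda$ mapped there cannot be re-accommodated. Your own hedge that $S_\phi$ ``may need to be adjusted'' is precisely where the proof is missing.

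The paper's argument avoids this by working with diamond-tipped intervals (Facts~\ref{fac-del} and~\ref{fac-nd}) rather than embeddings. For $\alpha\in\{215463,236145,214653\}$ one sets $\beta=\alpha_1[\epsilon]$, $\beta'=\alpha_{|\alpha|}[\epsilon]$, $\gamma=\alpha_{1,|\alpha|}[\epsilon,\epsilon]$, and shows by induction on $|\tau|$ that for $\pi=\tau_i[\alpha]$ the interval $[1,\pi]$ becomes diamond-tipped with core $(\tau_i[\beta],\tau_i[\beta'],\tau_i[\gamma])$ after deleting \mob zeros. The key structural point your proposal lacks is how the zeros are certified: any $\lambda$ in $[1,\tau_i[\beta]]\cap[1,\tau_i[\beta']]$ but not in $[1,\tau_i[\gamma]]$ necessarily contains an interval copy of some $\beta^L\in[1,\beta]\setminus[1,\gamma]$ \emph{and} an interval copy of some $\beta^R\in[1,\beta']\setminus[1,\gamma]$, and these specific pairs (e.g.\ $\beta^L\in\{14352,3241,1342,231\}$, $\beta^R\in\{21435,2143\}$ for $215463$) are exactly what Theorem~\ref{theorem-PMF-opposing-adjacencies} and the annihilator-pair Theorems~\ref{thm-pair}--\ref{thm-pair4} (up to symmetry) were built to kill. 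If you want to salvage an embedding-style proof you would need an analogous mechanism for extracting two such interval copies from every problematic $\lambda$; switching at single positions of one interval copy of $\phi$ does not achieve this.
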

\begin{proof}
    We first present the proof for the permutation $215463$.
    Let $\alpha=215463$, $\beta=\alpha_1[\epsilon]=14352$, $\beta'=\alpha_6[\epsilon]=21435$ and $\gamma=\alpha_{1,6}[\epsilon,\epsilon]=1324$. From 
    Figure~\ref{fig-annihil} (left) we see that, after the removal of the annihilators $\alpha_3[\epsilon], \alpha_4[\epsilon]$ and $\alpha_5[\epsilon]$, the 
    interval $[1,\alpha]$ becomes diamond-tipped with core $(\beta,\beta',\gamma)$. Hence by Facts~\ref{fac-del} and~\ref{fac-nd} we have $\mu[1,\alpha]=0$.
    
    Let $\pi$ be a permutation of the form $\tau_i[\alpha]$ for some $\tau$ and $i\le|\tau|$. We will 
    show, by induction on $|\tau|$, that $\pi$ is a zero. The case $|\tau|=1$ has been proved in the 
    previous paragraph.
    
    Assume that $|\tau|>1$. We will demonstrate that we can remove some zeros from the interval $[1,\pi]$ to 
    end up with a diamond-tipped interval with core $(\tau_i[\beta],\tau_i[\beta'],\tau_i[\gamma])$. 
    Choose a $\lambda\in[1,\pi)$. We can then write $\lambda$ as $\lambda=\btau_j[\alpha^*]$ for some 
    $\btau\le \tau$ and some (possibly empty) $\alpha^*\le\alpha$, where $\btau$ has an embedding into 
    $\tau$ mapping $j$ to~$i$. 
    
    If $\alpha^*$ is an annihilator, then $\lambda$ is a zero and can be removed. If $\alpha^*=\alpha$, 
    then $|\btau|<|\tau|$, and by induction, $\lambda$ is a zero and can be removed. In all the other 
    cases, we have $\alpha^*\le \beta$ or $\alpha^*\le \beta'$, and in particular, $\lambda$ belongs to 
    $[1,\tau_i[\beta]]\cup[1,\tau_i[\beta']]$.
    
    Suppose now that $\lambda$ is in $[1,\tau_i[\beta]]\cap[1,\tau_i[\beta']]$ but not in 
    $[1,\tau_i[\gamma]]$. Since $\lambda\le\tau_i[\beta]$, we can write it as 
    $\lambda=\tau_j^L[\beta^L]$, for some $\tau^L\le\tau$ and $\beta^L\le\beta$, where $\tau^L$ has an 
    embedding into $\tau$ mapping $j$ to~$i$. Since $\lambda\not\le\tau_i[\gamma]$, we know that 
    $\beta^L\not\le\gamma$. This means that 
    $\beta^L\in[1,\beta]\setminus[1,\gamma]=\{14352,\allowbreak3241,1342,231\}$. 
    Similarly, $\lambda\in[1,\tau_i[\beta']]\setminus[1,\tau_i[\gamma]]$ means that $\lambda$ can be 
    written as 
    $\lambda=\tau_k^R[\beta^R]$, with $\beta^R\in\{21435,2143\}$. Since $\lambda$ has an interval copy 
    of $\beta^L$ 
    as well as an interval copy of $\beta^R$, 
    Theorem~\ref{theorem-PMF-opposing-adjacencies} shows that $\lambda$ is a zero if $\beta^L \in \{1342,231\}$, and Theorem~\ref{thm-pair2} shows that $\lambda$ is a zero if $\beta^L \in \{14352,3241\}$ (using that $3241$ is a diagonal reflection of $2431$). Therefore $\lambda$ can be removed. 
    
    After the removal described above, $[1,\pi]$ is transformed into a diamond-tipped interval, showing 
    that $\pi$ is a zero.
    
    The arguments for the other two permutations are completely analogous.  For
    $236145$ we have $\alpha=236145$, $\beta=25134$, $\beta'=23514$, $\gamma=2413$, $\beta^L\in \{25134,1423\}$ and $\beta^R\in\{23514,2314\}$, and use Theorems~\ref{thm-pair},~\ref{thm-pair3} and~\ref{thm-pair4}.
    For $214653$ we have $\alpha=214653$, $\beta=13542$, $\beta'=2143$, $\gamma=132$, $\beta^L\in 
    \{13542, 2431,\allowbreak 1342, 231\}$ and $\beta^R\in\{2143,213\}$, and use 
    Theorems~\ref{theorem-PMF-opposing-adjacencies}, \ref{thm-pair} and~\ref{thm-pair2}.
\end{proof}

\begin{figure}[!ht]
    %
    %
    %
    \centering
    \begin{subfigure}[t]{0.32\textwidth}
        \begin{tikzpicture}[scale=0.15]
        \tikzmath{\y=0;};
        \permnode{ 0}{\y}{1};
        \tikzmath{\y=8;};
        \permnode{-4}{\y}{12};
        \permnode{ 4}{\y}{21};
        \tikzmath{\y=\y+9;};
        \permnode{-6}{\y}{231};
        \permnode{ 0}{\y}{213};
        \permnode{ 6}{\y}{132};
        \tikzmath{\y=\y+10;};
        \permnode{-10.5}{\y}{3241};
        \permnode{ -3.5}{\y}{1342};
        \permnode{  3.5}{\y}{1324};
        \permnode{ 10.5}{\y}{2143};
        \tikzmath{\y=\y+11;};
        \permnode{ -6}{\y}{14352};
        \permnode{  6}{\y}{21435};
        \tikzmath{\y=\y+12;};
        \permnode{  0}{\y}{215463};
        \link{12}{1};
        \link{21}{1};
        \link{231}{12};
        \link{231}{21};
        \link{213}{12};
        \link{213}{21};
        \link{132}{12};
        \link{132}{21};
        \link{3241}{231};
        \link{3241}{213};
        \link{1342}{231};
        \link{1342}{132};
        \link{1324}{213};
        \link{1324}{132};
        \link{2143}{213};
        \link{2143}{132};
        \link{14352}{3241};
        \link{14352}{1342};
        \link{14352}{1324};
        \link{21435}{1324};
        \link{21435}{2143};
        \link{215463}{14352}
        \link{215463}{21435}
        \end{tikzpicture}					
    \end{subfigure}
    \quad    
    \begin{subfigure}[t]{0.26\textwidth}
        \begin{tikzpicture}[scale=0.15]
        \tikzmath{\y=0;};
        \permnode{ 0}{\y}{1};
        \tikzmath{\y=8;};
        \permnode{-4}{\y}{12};
        \permnode{ 4}{\y}{21};
        \tikzmath{\y=\y+9;};
        \permnode{-9}{\y}{132};
        \permnode{-3}{\y}{312};
        \permnode{ 3}{\y}{213};
        \permnode{ 9}{\y}{231};
        \tikzmath{\y=\y+10;};
        \permnode{ -8}{\y}{1423};
        \permnode{  0}{\y}{2413};
        \permnode{  8}{\y}{2314};
        \tikzmath{\y=\y+11;};
        \permnode{ -6}{\y}{25134};
        \permnode{  6}{\y}{23514};
        \tikzmath{\y=\y+12;};
        \permnode{  0}{\y}{236145};
        \link{12}{1};
        \link{21}{1};
        \link{132}{12};
        \link{132}{21};
        \link{231}{12};
        \link{231}{21};
        \link{213}{12};
        \link{213}{21};
        \link{312}{12};
        \link{312}{21};
        \link{1423}{132};
        \link{1423}{312};
        \link{2413}{132};
        \link{2413}{231};
        \link{2413}{213};
        \link{2413}{312};
        \link{2314}{213};
        \link{2314}{231};
        \link{25134}{1423};
        \link{25134}{2413};
        \link{23514}{2413};
        \link{23514}{2314};
        \link{236145}{25134}
        \link{236145}{23514}
        \end{tikzpicture}					
    \end{subfigure}
    \quad      
    \begin{subfigure}[t]{0.26\textwidth}
        \begin{tikzpicture}[scale=0.15]
        \tikzmath{\y=0;};
        \permnode{ 0}{\y}{1};
        \tikzmath{\y=8;};
        \permnode{-4}{\y}{12};
        \permnode{ 4}{\y}{21};
        \tikzmath{\y=\y+9;};
        \permnode{-6}{\y}{231};
        \permnode{ 0}{\y}{132};
        \permnode{ 6}{\y}{213};
        \tikzmath{\y=\y+10;};
        \permnode{-8}{\y}{2431};
        \permnode{ 0}{\y}{1342};
        \permnode{ 8}{\y}{2143};
        \tikzmath{\y=\y+11;};
        \permnode{-6}{\y}{13542};
        \tikzmath{\y=\y+12;};
        \permnode{ 0}{\y}{214653};
        \link{12}{1};
        \link{21}{1};
        \link{231}{12};
        \link{231}{21};
        \link{213}{12};
        \link{213}{21};
        \link{132}{12};
        \link{132}{21};
        \link{2431}{231};
        \link{2431}{132};
        \link{1342}{231};
        \link{1342}{132};
        \link{2143}{132};
        \link{2143}{213};
        \link{13542}{2431};
        \link{13542}{1342};
        \link{214653}{13542}
        \link{214653}{2143}
        \end{tikzpicture}		
    \end{subfigure}
    \caption{The three annihilators from Theorem~\ref{thm-annihil}, and the posets of their           
        subpermutations. The figures omit the permutations with opposing adjacencies, as well as 
        the permutations with an interval copy of a permutation of the form 
        $\alpha\oplus1\oplus\beta$.}
    \label{fig-annihil}
\end{figure}
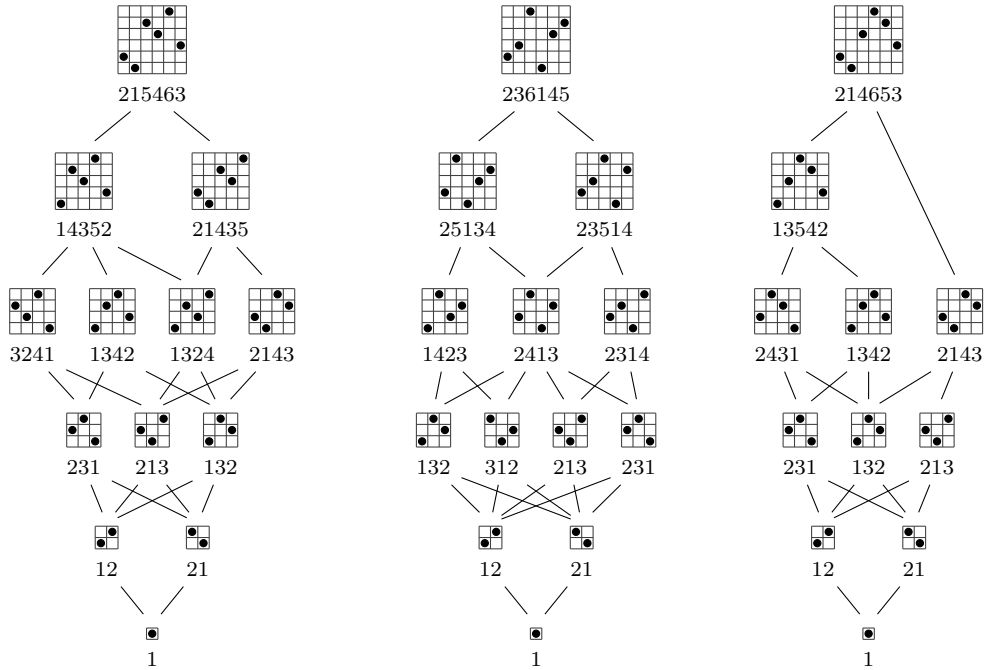

The annihilator $215463$ of Theorem~\ref{thm-annihil} can be written as a sum of two intervals, 
namely $215463=21\oplus 3241$. One might wonder whether the two summands are in fact an annihilator 
pair. This, however, is not the case, as evidenced by the permutation $32417685=3241\oplus3241$, which 
is not a \mob zero. An analogous example applies to $214653=21\oplus 2431$.

In the proof of Theorem~\ref{thm-annihil}, it was crucial that for each 
$\alpha\in\{215463,\allowbreak236145,\allowbreak214653\}$, the interval $[1,\alpha]$ becomes 
diamond-tipped after the removal of some annihilators. However, this property alone is not 
sufficient to make a permutation $\alpha$ an annihilator. Consider, for instance, the permutation 
$\alpha=214635$. We may routinely check that by removing some annihilators, the interval 
$[1,\alpha]$ can be made diamond-tipped with core $(\beta=13524,\beta'=21435,\gamma=1324)$. This 
implies that $\alpha$ is a \mob zero by Facts~\ref{fac-del} and~\ref{fac-nd}; however, it does not 
imply that $\alpha$ is an annihilator. In fact, $\alpha$ is not an annihilator, as demonstrated by 
the permutation
\begin{align*}
\pi&=582741936_{2,4,5}[\beta,\alpha,\beta']\\
&=9,17,19,21,18,20,2,12,11,14,16,13,15,5,4,7,6,8,1,22,3,10,
\end{align*}
whose principal \mob function is 1, not 0. This example also shows that not all \mob zeros are 
annihilators.

In fact, among permutations of size at most 6, there are up to symmetry four \mob zeros that are 
not annihilators. Apart from the permutation $214635$ pointed out above, there are three 
more examples: $235614$, $254613$ and $465213$. To see that these three permutations are not 
annihilators, it suffices to check that for any $\alpha\in\{235614,254613,465213\}$, the 
permutation $24153_{2}[\alpha]$ has non-zero principal \mob function. We verified, with the help 
of a computer, that all the \mob zeros of size at most 6 that are not symmetries of the four 
examples above can be shown to be annihilators by our results. 
This data is available at \url{https://iuuk.mff.cuni.cz/~jelinek/mf/zeros.txt}.

\section{Concluding remarks}

Given Theorem~\ref{theorem-PMF-opposing-adjacencies},
it is natural to wonder if we can find a  
similar result that applies 
to a permutation with multiple adjacencies, 
but no opposing adjacencies.
One difficulty here is that 
there are permutations that have 
multiple adjacencies, and do not
have
opposing adjacencies, where 
the principal \mob function value 
is non-zero.  
As an example, any permutation 
$\pi = 2,1,4,3,\dots, 2k,2k-1 = \nsums{k}{21}$
has
$\mobp{\pi} = -1$
by the results of Burstein et al.~\cite[Corollary 3]{Burstein2011}.

Let $d_n$ be the ``density of zeros'' of the \mob function, that is, the probability that 
$\mobp{\pi}=0$ for a uniformly random permutation $\pi$ of size~$n$. The asymptotic behaviour 
of~$d_n$ is still elusive.

\begin{problem}
    Does the limit $\lim_{n\to\infty} d_n$ exist? And if it does, what is its value?
\end{problem}

Corollary~\ref{cor-incdec} implies that $\liminf_{n\to\infty} d_n \ge (1-1/e)^2\ge 0.3995$. We 
have no upper bound on $d_n$ apart from the trivial bound $d_n\le 1$, but computational data 
suggest that simple permutations very often (though not always) have non-zero principal \mob 
function. 
Since a random permutation is simple with probability approaching 
$1/e^2$~\cite{Albert2003}, this would suggest that $\limsup_{n\to\infty} d_n$ is at 
most~$1-1/e^2\approx 0.8647$.

\begin{table}[!ht]
    \[
    \begin{array}{lcr}
    \begin{array}{lr}
    n & d_n \\
    \midrule
    1 & 0.0000 \\ 
    2 & 0.0000 \\
    3 & 0.3333 \\
    4 & 0.4167 \\ 
    5 & 0.4833 \\
    6 & 0.5361 \\
    7 & 0.5742 \\
    \end{array}
    & \phantom{xxx} &
    \begin{array}{lr}
    n & d_n \\
    \midrule
    8 & 0.5942 \\
    9 & 0.6019 \\
    10 & 0.6040 \\
    11 & 0.6034 \\
    12 & 0.6021 \\
    13 & 0.6006
    \end{array}
    \end{array}
    \]
    \caption{The density of \mob zeros among permutations of length $n$, with $n = 1, \ldots, 13$.}
    \label{table-zn-one-to-thirteen}
\end{table}

Table~\ref{table-zn-one-to-thirteen} lists the values of $d_n$ for  $n=1, \ldots, 13$.
The values are based on data supplied by Smith~\cite{Smith2018} for $1 \leq n \leq 
9$, and calculations performed by the author of this thesis.
Data files with the values of the
principal \mob function for all 
permutations of length twelve or less are available from
\url{https://doi.org/10.21954/ou.rd.7171997.v2}.
Based on this somewhat limited numeric 
evidence, we make the following conjecture:

\begin{conjecture}
    \label{conjecture-PMF-zero-61} The values $d_n$ are
    bounded from above by 0.6040.
\end{conjecture}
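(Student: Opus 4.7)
The plan is to turn the empirical observation that $d_n$ peaks at $n=10$ and decreases thereafter into a rigorous statement bounding the proportion of permutations with $\mobp{\pi}=0$ from above by $0.6040$, or equivalently bounding the proportion with $\mobp{\pi}\neq 0$ from below by $0.3960$. The strategy naturally splits into a finite verification for small $n$ and an asymptotic lower bound on the density of non-zeros for large $n$.

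For the finite part, I would extend the exhaustive enumeration of $\mobp{\pi}$ beyond $n=13$ using the recursions of Burstein, Jel\'inek, Jel\'inkov\'a and Steingr\'imsson to restrict the direct computation to the sum- and skew-indecomposable permutations, backed up by the annihilator identifications developed in Sections~\ref{sec-opposing}--\ref{sec-special}, which collectively eliminate a large fraction of intervals immediately. The hope is that this extended data confirms that $d_n$ stays below $0.6040$ for $n$ large enough to rule out any late spike, and reveals the rate of decay of $d_n$ to inform the asymptotic step.

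For the asymptotic step, I would aim to isolate an explicit family $T_n$ of length-$n$ permutations with $\mobp\neq 0$ of asymptotic density at least $0.3960$. The natural starting point is the class of simple permutations, which has density $1/e^2 \approx 0.1353$ by Albert, Atkinson and Klazar~\cite{Albert2003}; empirical data strongly suggests that almost all simple permutations have non-zero principal \mob function, so one would first try to prove a statement of the form ``a $(1-o(1))$-proportion of simple permutations satisfy $\mobp\neq 0$''. To amplify $1/e^2$ up to $0.3960$, I would then close this family under operations that provably preserve non-vanishing, in particular the map $\pi\mapsto 1\oplus\pi$ and its symmetries (governed by the BJJS recursions), together with the explicit formulas of Smith~\cite{Smith2013} for permutations with a single descent. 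A careful inclusion--exclusion over these enlargements, together with the measured decay of $d_n$, is the most plausible route to a bound of the required strength.

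The principal obstacle is that every technique developed in this chapter --- diamond-tipped and narrow-tipped intervals, parity-reversing involutions on normal embeddings --- is designed to prove $\mobp=0$, not to prove $\mobp\neq 0$. A positive-density non-vanishing statement for simple permutations is likely to require a genuinely new tool: perhaps a Smith-type identity expressing $\mobp{\pi}$ as a signed count of normal embeddings plus a controllable correction term (as in equation~\ref{equation_smith_all_intervals}) that can be shown to dominate for generic $\pi$, or a probabilistic analysis of the structure of $[1,\pi]$. Without such a tool the conjecture appears to be well beyond the methods of the present chapter, and it is quite possible that the bound $0.6040$ is merely an artefact of the peak at $n=10$ rather than a genuine asymptotic phenomenon.
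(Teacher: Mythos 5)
There is a fundamental mismatch here: the statement you are asked to prove is, in the paper, a \emph{conjecture}, not a theorem. It is stated purely on the strength of the computed values of $d_n$ for $n=1,\ldots,13$ (Table~\ref{table-zn-one-to-thirteen}), and the surrounding text explicitly says that no upper bound on $d_n$ is known beyond the trivial $d_n\le 1$; even the weaker heuristic bound $\limsup_n d_n\le 1-1/e^2\approx 0.8647$ is conditional on the unproven belief that almost all simple permutations have non-zero principal \mob function. So the paper contains no proof for you to match, and your proposal does not constitute one either -- it is a research programme, and you concede as much in your final paragraph.

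The concrete gap is the one you yourself identify, and it is fatal to the plan as written. Your asymptotic step requires exhibiting a family of density at least $0.3960$ on which $\mobp{\pi}\neq 0$, but every tool in this chapter (narrow-tipped and diamond-tipped intervals, annihilators, sign-reversing involutions on normal embeddings) certifies \emph{vanishing}, never non-vanishing; no positive-density non-vanishing result exists in the literature, and the amplification you suggest via $\pi\mapsto 1\oplus\pi$ cannot work in the stated form, since by the BJJS recursion (Lemma~\ref{lemma-oneplus-oneplus}) prepending $1$ to a permutation that already begins with its smallest entry produces a \mob zero, so the map does not preserve non-vanishing and in any case only relates densities at length $n$ to length $n+1$ rather than amplifying a fixed density. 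The finite-verification half is likewise not a proof: extending exhaustive computation to a few more values of $n$ (already estimated in the paper at hundreds of thousands of CPU hours for $n=14$) can never rule out a later excursion of $d_n$ above $0.6040$. In short, the statement remains open, and your closing suspicion -- that $0.6040$ may simply be an artefact of the observed peak at $n=10$ -- is exactly the caveat the paper itself attaches to it.
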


It is natural to look for further ways to identify \mob zeros
and \mob annihilators.
Characterizing all the \mob zeros would be an ambitious goal, 
since $\mobp{\pi}$ might be zero as a result of 
``accidental'' cancellations with no deeper structural 
significance for~$\pi$. 

An 
\emph{annihilator multiset}\extindex{annihilator multiset}
is a multiset of permutations
$A = \{ \alpha_1, \ldots, \alpha_n \}$
such that any permutation $\pi$ that contains disjoint interval
copies of the permutations 
$\alpha_1, \ldots, \alpha_n$
has $\mobp{\pi} = 0$.

If $A = \{ \alpha_1, \ldots, \alpha_n \}$
and
$B = \{ \beta_1, \ldots, \beta_m \}$
are annihilator multisets, 
then we say that $A$ \emph{contains} $B$
if $A \neq B$ and
we can find the elements of $B$
as disjoint interval copies in the elements of $A$.
An annihilator multiset $A$ is 
\emph{minimal}\extindex{minimal annihilator multiset}
if there is no annihilator multiset contained in $A$.

Using Corollary 3 of~\cite{Burstein2011},
which implies $\mobp{\pi} = \mobp{\pi \oplus \pi}$
for $\pi \neq 1$,
it is simple to show that the permutations
in a minimal annihilator multiset are, in fact,
all distinct, and so 
we can refer to 
\emph{minimal annihilator sets}\extindex{minimal annihilator sets}
of permutations.

\begin{problem}
    Which permutations are \mob annihilators? 
    Are there infinitely many minimal annihilator sets 
    that contain just one element, 
    and are not of the form $\alpha\oplus1\oplus\beta$?
\end{problem}

It seems likely to us that the proofs of Theorems~\ref{thm-pair}--\ref{thm-pair4} might be extended to give several more annihilator pairs, such as $(312,235614)$. However, we do not see any general pattern in these examples yet.

\begin{problem}
    Are there infinitely many minimal annihilator sets with two elements?
\end{problem}

\begin{problem}
    Are there any minimal annihilator sets with more than two elements?
\end{problem}

\section{Chapter summary}

Prior to the publication
of the paper on which this chapter is based,
the proportion of 
permutations where we had a (computationally) simple
way to determine the value of the principal \mob function
was, asymptotically, zero.  
This chapter presents two main results.

The first, 
Theorem~\ref{theorem-PMF-opposing-adjacencies},
tells us that if
a permutation has opposing adjacencies, then 
the value of the principal \mob function 
is zero.  
It is possible to determine if 
a permutation has opposing adjacencies
in time proportional to the length of
the permutation, so this is a 
test that is simple to implement.
This is potentially useful
to anyone wanting to compute values
of the principal \mob function.

The second main result
comes from
Corollary~\ref{cor-incdec}.
In essence this gives us that
the proportion of permutations
where the principal \mob function 
is zero is at least \zpmfr.
From a computational perspective,
this implies that there are significant benefits
from using 
Theorem~\ref{theorem-PMF-opposing-adjacencies}
when determining the value of the 
principal \mob function, 
as we have a linear time algorithm
which, asymptotically,
gives a positive result for nearly 40\%
of permutations.

\subsection{\texorpdfstring{Improving the lower bound for the density of zeros, $d_n$}
    {Improving the lower bound for the density of zeros}}

Conjecture~\ref{conjecture-PMF-zero-61}
suggests, based on some
rather limited numerical evidence, that 
the density of zeros, $d_n$, of the 
principal \mob function 
is bounded above by $0.6040$.
We know, from Corollary~\ref{cor-incdec},
that asymptotically $d_n$ is bounded below
by \zpmfr.  
One area for further research would be to consider
if we can find better bounds on the behaviour of $d_n$.
One difficulty with this is that we would need to
find a result
where the number of permutations covered by the result
is proportional to $n!$, where $n$ is the length 
of the permutation.  Any relationship that was 
less than factorial (for example, 
exponential or polynomial) would
mean that the proportion of 
permutations covered
would be, asymptotically, zero.  

In the concluding remarks above, 
we note that 
it is natural to wonder if we can find a  
similar result that applies 
where a permutation has multiple adjacencies, 
but no opposing adjacencies.
Such a result
would have to account for the 
permutations that have 
multiple non-opposing adjacencies where 
the principal \mob function value 
is non-zero.  
Table~\ref{table-count-of-non-opp-adj-permutations}
shows, for lengths $4, \ldots, 12$, the number of 
permutations with multiple non-opposing adjacencies
broken down by whether
the value of the 
principal \mob function is zero or not.
\begin{table}
    \[
    \begin{array}{lrr}
    \toprule
    \text{Length} 
    &  =0 & \neq 0 \\
    \midrule
    4 &        6 &        2 \\
    5 &       30 &        8 \\
    6 &      170 &       38 \\
    7 &     1154 &      212 \\
    8 &     8954 &     1502 \\
    9 &    78006 &    13088 \\
    10 &   757966 &   130066 \\
    11 &  8132206 &  1436296 \\
    12 & 95463532 & 17403612 \\
    \bottomrule
    \end{array}
    \]
    \caption{Number of permutations with non-opposing adjacencies,
        classified by the value of the 
        principal \mob function.}
    \label{table-count-of-non-opp-adj-permutations}
\end{table}

This suggests that it might be possible to find
a result similar to
Theorem~\ref{theorem-PMF-opposing-adjacencies}
for some or all of these cases, although,
as noted,
any such result will clearly need some 
additional criteria that will exclude
permutations that have a non-zero
principal \mob function value.

We remark that the non-opposing adjacency case
may be important because 
the proportion of permutations
of length $n$ that have non-opposing adjacencies
is, asymptotically, non-zero, as we show now.
\begin{theorem}
    The proportion of permutations
    that have non-opposing adjacencies is,
    asymptotically, bounded below
    by 0.1944.
\end{theorem}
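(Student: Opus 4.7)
Let $X$ and $Y$ denote the number of up- and down-adjacencies in a uniformly random $\pi \in \cS_n$. A permutation has multiple non-opposing adjacencies precisely when $X \geq 2$ and $Y = 0$, or when $X = 0$ and $Y \geq 2$. These two events are disjoint, and by reversal symmetry they have the same probability, so the claim reduces to
\[
P(X \geq 2,\, Y = 0) \to \frac{1}{e} - \frac{2}{e^2} \qquad \text{as } n \to \infty,
\]
since then the total tends to $2/e - 4/e^2 \approx 0.1944$.

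First I would write
\[
P(X \geq 2,\, Y = 0) = P(Y = 0) - P(X = 0,\, Y = 0) - P(X = 1,\, Y = 0).
\]
The proof of Lemma~\ref{lem-incdec} already gives $P(Y = 0) = a_n/n! \to 1/e$ and $P(X = 0,\, Y = 0) = b_n/n! \to 1/e^2$; the new ingredient is $P(X = 1,\, Y = 0) \to 1/e^2$, which I would deduce from the stronger statement that $(X, Y)$ converges jointly in distribution to a pair of independent $\mathrm{Poisson}(1)$ random variables.

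Writing $X = \sum_{i = 1}^{n - 1} U_i$ and $Y = \sum_{i = 1}^{n - 1} D_i$ with $U_i = [\pi_{i + 1} - \pi_i = 1]$ and $D_i = [\pi_{i + 1} - \pi_i = -1]$, I would compute the joint factorial moment
\[
E\bigl[X^{(k)} Y^{(l)}\bigr] = \sum_{\substack{i_1, \dots, i_k \text{ distinct} \\ j_1, \dots, j_l \text{ distinct}}} P\bigl(U_{i_1} = \dots = U_{i_k} = D_{j_1} = \dots = D_{j_l} = 1\bigr)
\]
and show it converges to $1$ for every $k, l \geq 0$. Call a tuple \emph{generic} when the $k + l$ pairs $\{i_s, i_s + 1\}$ and $\{j_t, j_t + 1\}$ are pairwise disjoint. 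For a generic tuple the event specifies $k + l$ disjoint pairs of consecutive values to be placed in $2(k + l)$ designated positions in prescribed orders, and so has probability
\[
\binom{n - k - l}{k + l}\,(k + l)!\,(n - 2(k + l))!\,/\,n! = n^{-(k + l)}\bigl(1 + O(1/n)\bigr);
\]
the number of generic tuples is $(n - 1)^{k + l}\bigl(1 + O(1/n)\bigr)$, so they contribute $1 + O(1/n)$ to the moment.

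The main obstacle is controlling the non-generic contribution. Overlaps with $i_s = j_t$ or $|i_s - j_t| = 1$ force either $U_i D_i = 1$ or two distinct positions to share the same value, so they contribute zero. The remaining non-generic tuples, in which $|i_s - i_t| = 1$ or $|j_s - j_t| = 1$, force a triple-adjacency; a short case analysis by overlap type shows there are only $O(n^{k + l - 1})$ such tuples, each of probability $O(n^{-(k + l - 1)})$, for a combined contribution of $O(1/n)$. Thus $E\bigl[X^{(k)} Y^{(l)}\bigr] \to 1$, and since the distribution of a pair of independent $\mathrm{Poisson}(1)$ variables is determined by its moments, the method of moments yields the required joint convergence; in particular $P(X = 1,\, Y = 0) \to 1/e^2$, completing the proof.
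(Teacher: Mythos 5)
Your route is correct but genuinely different from the paper's. The paper argues constructively: it builds permutations with (at least two, same-type) non-opposing adjacencies by inflating a simple permutation of length $n-k$ at $k\ge 2$ chosen positions by $12$ or by $21$ (all of one type), invokes uniqueness of such inflations (Albert--Atkinson~\cite{Albert2005}) together with the Albert--Atkinson--Klazar asymptotics $S(m)\sim m!/\e^2$ of Theorem~\ref{AAK-number-of-simple-permutations}, and lower-bounds the proportion by $\frac{1}{n!}\sum_{k\ge 2} 2\binom{n-k}{k}S(n-k)$, whose terms tend to $\frac{2}{\e^2 k!}$, giving $\frac{2}{\e^2}(\e-2)=2/\e-4/\e^2\approx 0.1944$ (the sum is truncated at $k=10$ to keep a clean bound). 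You instead identify the exact limit: with $X,Y$ the numbers of up- and down-adjacencies, you prove joint convergence of $(X,Y)$ to independent Poisson$(1)$ by the method of joint factorial moments and deduce $P(X\ge2,\,Y=0)+P(X=0,\,Y\ge2)\to 2/\e-4/\e^2$. Your approach uses heavier probabilistic machinery but buys more: the full limiting joint law and hence the exact asymptotic proportion rather than a one-sided bound, and beyond what Lemma~\ref{lem-incdec} already supplies it only needs the single new fact $P(X=1,\,Y=0)\to \e^{-2}$. The paper's argument is lighter and purely enumerative, reusing counts it needs anyway.

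One bookkeeping slip needs repair in your error term. For the non-generic tuples with a same-type overlap you claim each has probability $O(n^{-(k+l-1)})$ and that there are $O(n^{k+l-1})$ of them; those two bounds combine to $O(1)$, not the $O(1/n)$ you assert. The correct per-tuple bound is $O(n^{-(k+l)})$: since the $i$--$j$ coincidences and contiguities contribute zero (as you note), in the remaining tuples all $k+l$ indices are distinct, so the event imposes $k+l$ distinct adjacency constraints, and gluing each maximal run of constrained positions into a block of consecutive values, exactly as in your generic computation, shows that any consistent family of $m$ distinct adjacency constraints has probability $O(n^{-m})$ (for instance, a specified triple-adjacency has probability $(n-2)!/n!$, not of order $1/n$). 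With that correction the non-generic contribution is indeed $O(1/n)$, the factorial moments converge to $1$, and your proof goes through.
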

\begin{proof}
    We find a lower bound 
    by counting 
    permutations that have
    non-opposing adjacencies.
    
    We will need to use
    \begin{theorem}[{%
            Albert, Atkinson and Klazar~\cite[Theorem 5]{Albert2003}}]
        \label{AAK-number-of-simple-permutations}
        The number of simple permutations of length $n$,
        $S(n)$,
        is given by
        \[
        S(n) 
        =
        \dfrac{n!}{\e^2}
        \left(
        1 - \dfrac{1}{n} + \dfrac{2}{n(n-1)} +O(n^{-3})
        \right).
        \]
    \end{theorem}
    
    Let $Z^\prime(n)$ be the proportion
    of permutations of length $n$
    that have non-opposing adjacencies.
    Let $n \geq 6$ be an integer;
    and let $k$ be an integer in the
    range $2, \ldots, \lfloor n/2 \rfloor$.
    Let $\sigma$ be a simple permutation with length $n-k$.
    We will count the number of ways 
    we can inflate $\sigma$ with $k$ adjacencies
    to obtain a permutation with length $n$
    that has non-opposing adjacencies.
    We can choose the positions to inflate in 
    $\binom{n-k}{k}$ ways.  The positions chosen
    can be inflated by either 
    $12$ or $21$, so there are just $2$
    distinct inflations by adjacencies.
    It follows that 
    the number of ways to inflate $\sigma$
    that result in a permutation 
    with non-opposing adjacencies is given by
    \[
    2 \binom{n-k}{k}.
    \]
    Since we are inflating
    simple permutations, it follows from
    Albert and Atkinson~\cite[Proposition 2]{Albert2005}
    that the inflations are unique.
    
    For an inflation to contain a non-opposing
    adjacency, we need to inflate at least two points.
    Further, to obtain a permutation
    of length $n$ by inflating with adjacencies
    we can, at most, inflate $\lfloor n/2 \rfloor$ positions.
    Now using Theorem~\ref{AAK-number-of-simple-permutations}
    we can say that  
    \[
    Z^\prime(n) 
    \geq
    \dfrac{1}{n!}
    \sum_{k=2}^{\lfloor n/2 \rfloor}
    S(n-k)
    2 \binom{n-k}{k}.
    \]
    
    Since we are only interested in the
    asymptotic behaviour of $Z^\prime(n)$,
    we can assume that $n > 20$,
    and so we write
    \begin{align*}
    \lim_{n \to \infty}
    Z^\prime(n) 
    & \geq
    \lim_{n \to \infty}    
    \dfrac{1}{n!}
    \sum_{k=2}^{\lfloor n/2 \rfloor}
    S(n-k)
    2 \binom{n-k}{k} \\
    & \geq
    \lim_{n \to \infty}    
    \dfrac{1}{n!}
    \sum_{k=2}^{10}
    S(n-k)
    2 \binom{n-k}{k}  \\
    & \geq 0.1944.
    \end{align*}
\end{proof}
Thus if we could show that
asymptotically, some fixed proportion of 
the permutations of length $n$ with
non-opposing adjacencies
were all \mob zeros,
then we could improve the 
bounds given by 
Corollary~\ref{cor-incdec}.

\subsection{Extending the ``opposing adjacencies'' theorem}

It is natural to ask if we can extend 
Theorem~\ref{theorem-PMF-opposing-adjacencies}
to handle cases where 
the lower bound of the 
interval is not $1$.
This is not possible in general, as
if we take any permutation $\sigma \neq 1$,
and inflate any two distinct points in positions
$\ell$ and $r$ by $12$ and $21$
respectively, then
$\pi = \inflatesome{\sigma}{\ell,r}{12,21}$
has opposing adjacencies,
but 
$\mobfn{\sigma}{\pi} = 1$, as
can
be deduced from 
Figure~\ref{figure-extending-PFM-oppadj-fails}.
\begin{figure}
    \begin{center}
        \begin{tikzpicture}[xscale=1,yscale=1]
        \node (n12-21) at ( 0, 3) {$\inflatesome{\sigma}{\ell,r}{12,21}$};
        \node (n12-1)  at (-2, 2) {$\inflatesome{\sigma}{\ell,r}{12,1}$};  
        \node (n1-21)  at ( 2, 2) {$\inflatesome{\sigma}{\ell,r}{1,21}$};  
        \node (n1-1)   at ( 0, 1) {$\inflatesome{\sigma}{\ell,r}{1,1}$};  
        \draw (n12-21) -- (n1-21);
        \draw (n12-21) -- (n12-1);
        \draw (n12-1) --  (n1-1);
        \draw (n1-21) --  (n1-1);
        \end{tikzpicture}
        \qquad
        \begin{tikzpicture}[xscale=1,yscale=1]
        \node (n12-21) at ( 0, 3) {$1$};
        \node (n12-1)  at (-2, 2) {$-1$};  
        \node (n1-21)  at ( 2, 2) {$-1$};  
        \node (n1-1)   at ( 0, 1) {$1$};  
        \draw (n12-21) -- (n1-21);
        \draw (n12-21) -- (n12-1);
        \draw (n12-1) --  (n1-1);
        \draw (n1-21) --  (n1-1);
        \end{tikzpicture}
    \end{center}
    \caption{The Hasse diagram of the interval 
        $[\sigma, \inflatesome{\sigma}{\ell,r}{12,21}]$,
        and the corresponding values of the \mob function.}
    \label{figure-extending-PFM-oppadj-fails}
\end{figure}
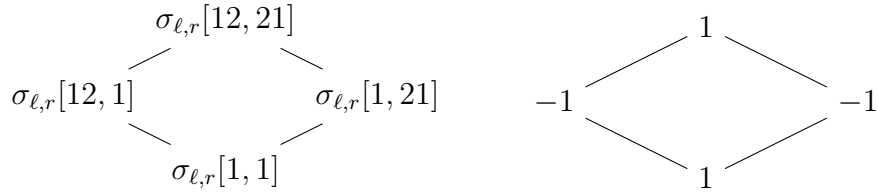

Although we do not have a general extension of
Theorem~\ref{theorem-PMF-opposing-adjacencies},
we can show that:
\begin{theorem}
    If $\sigma$ is adjacency-free,
    and $\pi$ contains an interval 
    order-isomorphic to a symmetry of $1243$,
    then $\mobfn{\sigma}{\pi} = 0$.
\end{theorem}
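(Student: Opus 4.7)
\emph{Proof plan.} The plan is to reduce the statement to Proposition~\ref{pro-sum} by observing that $1243$ admits the decomposition $1243 = 1 \oplus 1 \oplus 21$, which is of the form $\alpha \oplus 1 \oplus \beta$ with $\alpha = 1$ and $\beta = 21$. Proposition~\ref{pro-sum} then guarantees that $1243$ is a $\sigma$-annihilator provided $\sigma$ contains no interval copy of any permutation of the shape $\alpha' \oplus \beta'$ with $1 \le \alpha' \le 1$ and $1 \le \beta' \le 21$; the only candidates here are $1 \oplus 1 = 12$ and $1 \oplus 21 = 132$.

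First I would verify both of these requirements for adjacency-free $\sigma$. The absence of an interval copy of $12$ is immediate, since $12$ is itself an up-adjacency. For $132$, the argument is equally short: an interval copy $\sigma_i \sigma_{i+1} \sigma_{i+2}$ order-isomorphic to $132$ must take consecutive values $v, v+2, v+1$, and then the final two positions carry the consecutive values $\{v+1,v+2\}$ at consecutive indices, producing a down-adjacency in $\sigma$ and contradicting the hypothesis. With both requirements of Proposition~\ref{pro-sum} verified, I would conclude that any $\pi$ containing an interval copy of $1243$ satisfies $\mobfn{\sigma}{\pi} = 0$.

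Next, the remaining three symmetries of $1243$, namely $3421 = 1243^R$, $4312 = 1243^C$, and $2134 = 1243^{RC}$ (note that $1243$ is self-inverse, so its orbit has only four elements), would be handled via the \mob symmetry identity $\mobfn{\sigma}{\pi} = \mobfn{\sigma^S}{\pi^S}$. The key observations are that the property of being adjacency-free is invariant under reverse, complement and inverse, and that an interval copy of $\phi$ in $\pi$ transports to an interval copy of $\phi^S$ in $\pi^S$. Hence each of the three remaining cases immediately reduces to the case $\phi = 1243$ already established.

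I do not anticipate a significant obstacle: once $1243$ is recognised in the shape $\alpha \oplus 1 \oplus \beta$, the theorem falls out as a clean corollary of Proposition~\ref{pro-sum}, with the only mildly non-routine ingredient being the observation that an interval copy of $132$ always forces a down-adjacency in the host permutation.
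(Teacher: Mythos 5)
Your proposal is correct, but it takes a genuinely different route from the one in the thesis. You treat the theorem as a corollary of Proposition~\ref{pro-sum}: writing $1243 = 1 \oplus 1 \oplus 21$, the hypothesis of that proposition with $\alpha = 1$, $\beta = 21$ asks only that $\sigma$ have no interval copy of $12$ or of $132$, and your check that an interval copy of $132$ forces a down-adjacency (and $12$ an up-adjacency) is exactly right, so $1243$ is a $\sigma$-annihilator for every adjacency-free $\sigma$; the remaining symmetries $3421$, $4312$, $2134$ then follow from $\mobfn{\sigma}{\pi} = \mobfn{\sigma^S}{\pi^S}$ together with the (correct) observations that adjacency-freeness and interval copies are preserved by the symmetries. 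The thesis instead argues from scratch: after the same symmetry reduction it first computes $\mobfn{\sigma}{\inflatesome{\sigma}{c}{1243}} = 0$ directly from the Hasse diagram of that small interval, and then runs an induction modelled on the deletion/inclusion--exclusion proof of Theorem~\ref{theorem-PMF-opposing-adjacencies}, replacing an occurrence of $1243$ by $12$, splitting $[\sigma,\pi)$ into the sets $L$, $R$, $G_\gamma$, $G_x$, $T$, and showing the leftover permutations all contain a $1243$-interval and so vanish by induction. Your argument is shorter, avoids redoing that structural induction, and yields the slightly stronger statement that $1243$ (hence each of its symmetries) is a $\sigma$-annihilator for every adjacency-free $\sigma$; the thesis's proof buys a self-contained illustration of the narrow/diamond-tipped deletion technique with a lower bound other than $1$, independent of the annihilator machinery. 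Since Proposition~\ref{pro-sum} precedes this theorem in the chapter, there is no circularity in your reduction, and I see no gap in it.
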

\begin{proof}
    First note that if $\sigma \not\leq \pi$,
    then $\mobfn{\sigma}{\pi} = 0$ 
    from the definition of the \mob function.
    Further, since $\sigma$ is adjacency-free,
    we cannot have $\sigma = \pi$.
    
    We can now assume that $\sigma < \pi$.
    Without loss of generality 
    we can also assume, by symmetry, that the
    interval in $\pi$ is
    order-isomorphic to $1243$.
    
    We start by claiming that, for any permutation
    $\sigma$ which is adjacency-free, 
    and any $c$ with $1 \leq c \leq \order{\sigma}$,
    we have 
    $\mobfn{\sigma}{\inflatesome{\sigma}{c}{1243}} = 0$.
    The Hasse diagram of the interval 
    $[\sigma, \inflatesome{\sigma}{c}{1243}]$ is shown in
    Figure~\ref{figure-hasse-interval-inflate-1243}.
    \begin{figure}
        \begin{center}
            \begin{tikzpicture}[xscale=1,yscale=1.1]
            \node (n1243) at ( 0, 3) {$\inflatesome{\sigma}{c}{1243}$};
            \node (n123)  at (-2, 2) {$\inflatesome{\sigma}{c}{123}$};  
            \node (n132)  at ( 2, 2) {$\inflatesome{\sigma}{c}{132}$};  
            \node (n12)   at (-2, 1) {$\inflatesome{\sigma}{c}{12}$};  
            \node (n21)   at ( 2, 1) {$\inflatesome{\sigma}{c}{21}$};  
            \node (n1)    at ( 0, 0) {$\inflatesome{\sigma}{c}{1}$};  
            \draw (n1243) -- (n123);
            \draw (n1243) -- (n132);
            \draw (n123) --  (n12);
            \draw (n132) --  (n12);
            \draw (n132) --  (n21);
            \draw (n12)  --  (n1);
            \draw (n21)  --  (n1);            
            \end{tikzpicture}
        \end{center}
        \caption{The Hasse diagram of the interval 
            $[\sigma, \inflatesome{\sigma}{c}{1243}]$.}
        \label{figure-hasse-interval-inflate-1243}
    \end{figure}
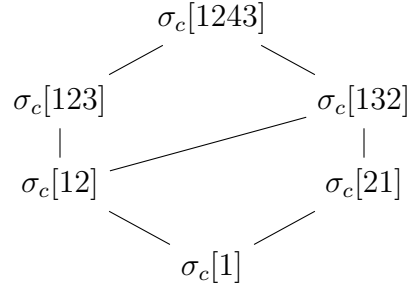
    From the definition of the \mob function, we have
    $\mobfn{\sigma}{\inflatesome{\sigma}{c}{1}} = 1$,
    $\mobfn{\sigma}{\inflatesome{\sigma}{c}{12}} = -1$, 
    $\mobfn{\sigma}{\inflatesome{\sigma}{c}{21}} = -1$, 
    $\mobfn{\sigma}{\inflatesome{\sigma}{c}{123}} = 0$,
    and
    $\mobfn{\sigma}{\inflatesome{\sigma}{c}{132}} = 1$,
    and so
    $\mobfn{\sigma}{\inflatesome{\sigma}{c}{1243}} = 0$,
    and thus our claim is true.
    
    Our argument now follows a similar pattern to
    that used by the proof of
    Theorem~\ref{theorem-PMF-opposing-adjacencies},
    and we restrict ourselves to highlighting the differences.
    
    Assume that $\pi$ is a proper inflation of $\sigma$,
    with length greater than $\order{\sigma} + 4$, 
    and $\pi$ contains an interval order-isomorphic to $1243$.
    Let $\gamma$ be the permutation formed by replacing
    an occurrence of $1243$ in $\pi$ by $12$, so if 
    $\ell$ is the position of the first point 
    of the $1243$ selected, then
    $\inflatesome{\gamma}{\ell,\ell+1}{12,21} = \pi$.
    Let $\lambda = \inflatesome{\gamma}{\ell,\ell+1}{12,1}$, 
    and let $\rho = \inflatesome{\gamma}{\ell,\ell+1}{1,21}$;
    Define sets 
    $L = [\sigma, \lambda]$,
    $R = [\sigma, \rho]$,
    $G_\gamma = [\sigma, \gamma]$,
    $G_x = L \cap R \setminus G_\gamma$, and
    $T = [\sigma, \pi) \setminus (L \cup R)$.
    
    Similarly to Theorem~\ref{theorem-PMF-opposing-adjacencies}, 
    we have 
    \[
    \mobfn{\sigma}{\pi} = 
    - \sum_{\tau \in L} \mobfn{\sigma}{\tau}
    - \sum_{\tau \in R} \mobfn{\sigma}{\tau}
    - \sum_{\tau \in T} \mobfn{\sigma}{\tau}
    + \sum_{\tau \in G_\gamma} \mobfn{\sigma}{\tau}
    + \sum_{\tau \in G_x} \mobfn{\sigma}{\tau},
    \]
    and the sums over the sets 
    $L$, $R$ and $G_\gamma$ are obviously zero.
    Using similar arguments to 
    Theorem~\ref{theorem-PMF-opposing-adjacencies}, 
    we can see that every permutation $\tau$
    in $T$ or $G_x$ contains
    an interval order-isomorphic to $1243$,
    and so by the inductive hypothesis,
    has $\mobfn{\sigma}{\tau} = 0$,
    and thus we have 
    $\mobfn{\sigma}{\pi} = 0$.
\end{proof}    

Although we cannot find a general 
extension to
Theorem~\ref{theorem-PMF-opposing-adjacencies},
we can find a necessary condition for 
a proper inflation of certain permutations
to have a \mob function value of zero.
This is
\begin{lemma}
    If $\sigma$ is adjacency-free,
    and 
    $\pi = \inflateall{\sigma}{\alpha_1, \ldots, \alpha_n}$ 
    is a proper inflation of $\sigma$,
    then $\mobfn{\sigma}{\pi} = 0$ implies that
    at least one $\alpha_i \not \in \{1, 12, 21\}$.    
\end{lemma}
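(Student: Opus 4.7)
The plan is to prove the contrapositive: if every $\alpha_i\in\{1,12,21\}$ and at least one is non-singleton, then $\mobfn{\sigma}{\pi}\neq 0$. Write $K=\{i:\alpha_i\in\{12,21\}\}$ and $k=|K|\geq 1$. I would establish that $[\sigma,\pi]$ is isomorphic to the Boolean lattice on $K$, from which $\mobfn{\sigma}{\pi}=(-1)^k\neq 0$ follows immediately.

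For $S\subseteq K$, let $\tau_S$ denote the inflation $\inflateall{\sigma}{\beta_1,\ldots,\beta_n}$ with $\beta_i=\alpha_i$ for $i\in S$ and $\beta_i=1$ otherwise, so $\tau_\emptyset=\sigma$ and $\tau_K=\pi$. The central observation is that, because $\sigma$ is adjacency-free, any embedding of $\sigma$ into $\pi$ must pick exactly one position from each of the $n$ ``inflation groups'' of $\pi$ (the block of $1$ or $2$ positions coming from a single $\sigma_i$). Indeed, picking both positions of an adjacency group of $\pi$ would preserve that adjacency in the induced subpattern, contradicting that $\sigma$ is adjacency-free; and since there are $n$ groups and $|\sigma|=n$, exactly one position per group is forced. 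A symmetric check shows that two positions drawn from distinct adjacent groups of $\pi$ never form an adjacency in any subpattern: adjacency-freeness of $\sigma$ guarantees a gap of at least one other group between the value ranges of any two consecutive groups, so the corresponding two values in $\pi$ differ by at least $2$.

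Given any $\tau\in[\sigma,\pi]$, realised by a subset $T$ of positions of $\pi$, the embedded copy of $\sigma$ inside $\tau$ forces $T$ to meet every group, and any extra positions in $T$ must come from adjacency groups. The two observations above then imply that the pattern of $T$ is exactly $\tau_{S''}$, where $S''$ is the set of groups from which $T$ takes both positions. Hence $[\sigma,\pi]=\{\tau_S:S\subseteq K\}$, and reading off the positions of the adjacencies of $\tau_S$ (e.g.\ if $i_1<\cdots<i_j$ are the elements of $S$ encountered so far, the next adjacency sits at positions determined by $S$ alone) recovers $S$ uniquely, so the $\tau_S$ are pairwise distinct.

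It remains to check the order: $S'\subseteq S$ trivially gives $\tau_{S'}\leq\tau_S$ by deletion. Conversely, if $\tau_{S'}\leq\tau_S$, an embedding of $\tau_{S'}$ into $\tau_S\subseteq\pi$ picks $n+|S'|$ positions, which by the central observation applied to the embedded $\sigma$ inside $\tau_{S'}$ consists of one representative per group plus both positions of exactly $|S'|$ adjacency groups, indexed by some $S''\subseteq K$. The resulting pattern is $\tau_{S''}$, forcing $S''=S'$ by distinctness, and the ``extra'' positions lying inside $\tau_S$ forces $S''\subseteq S$, hence $S'\subseteq S$. The main obstacle is the central observation about embeddings picking exactly one position per group, together with the parallel fact that no new cross-group adjacency can be manufactured; everything else is routine bookkeeping with the inflation structure.
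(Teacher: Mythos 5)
Your proposal is correct and takes essentially the same route as the paper: you identify $[\sigma,\pi]$ with the Boolean lattice on the set of inflated positions and invoke $\mu=(-1)^k$ for Boolean algebras, which is exactly the paper's ``isomorphism with binary numbers on $k$ bits'' argument (run in contrapositive form, as the paper also effectively does). The only difference is that you spell out the details the paper treats as obvious -- that any embedding of $\sigma$ uses exactly one point per inflation group, that no accidental cross-group adjacencies arise because $\sigma$ is adjacency-free, and hence that the $\tau_S$ are distinct and exhaust the interval with the containment order matching inclusion of subsets.
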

\begin{proof}
    Assume that every $\alpha_i \in \{1, 12, 21\}$.
    Let $k$ be the number of $\alpha_i$-s that are not equal to 1,
    and 
    let $j_1, \ldots , j_k$
    be the indexes ($i$-s) where $\alpha_i \neq 1$,
    so 
    $\pi = 
    \inflatesome{\sigma}{j_1, \ldots, j_k}
    {\alpha_{j_1}, \ldots, \alpha_{j_k}}$.
    
    Then every permutation in the interval
    $[\sigma, \pi]$ has a unique representation
    as 
    $\inflatesome{\sigma}{j_1, \ldots, j_k}{\upsilon_1, \ldots , \upsilon_k}$,
    where
    \begin{align*}
    \upsilon_i & \in
    \begin{cases}
    \{ 1, 12 \} & \text{if } \alpha_{j_i} = 12, \\
    \{ 1, 21 \} & \text{if } \alpha_{j_i} = 21. \\
    \end{cases}
    \end{align*}
    So each position $j_i$ can be inflated 
    by one of two permutations, and thus
    there is an obvious isomorphism
    between permutations in the interval
    $[\sigma, \pi]$  
    and binary numbers with $k$ bits.
    It follows that the poset can be represented as
    a Boolean algebra, and so by a 
    well-known result 
    (see, for instance,
    Example 3.8.3 in Stanley~\cite{Stanley2012}),
    $\mobfn{\sigma}{\pi} = (-1)^{\order{\pi} - \order{\sigma}}$.
    Thus if $\mobfn{\sigma}{\pi} = 0$, at least one 
    $\alpha_i \not \in \{1, 12, 21\}$.
\end{proof}

    \chapter{2413-balloons and the growth of the \mob function}
\label{chapter_2413_balloon_paper}

\section{Preamble}

This chapter
is based on a published paper~\cite{Marchant2020}
which is sole work by the author.

In this chapter
we 
show that the growth of the 
principal \mob 
function on the permutation poset
is exponential.  
This improves on previous work, which 
has shown that the growth 
is at least polynomial.

We define a method of constructing 
a permutation from a smaller permutation
which we call
``ballooning''.  
We show that if $\beta$ is a 2413-balloon,
and $\pi$ is the 2413-balloon of $\beta$,
then
$\mobfn{1}{\pi} = 2 \mobfn{1}{\beta}$.
This allows us to construct a sequence
of permutations $\pi_1, \pi_2, \pi_3\ldots$,
with lengths $n, n+4, n+8, \ldots$     
such that $\mobfn{1}{\pi_{i+1}} = 2 \mobfn{1}{\pi_{i}}$,
and this gives us exponential growth
of the principal \mob function.
Further, our construction method 
gives permutations that lie within
a hereditary class with
finitely many simple permutations.

We also find an expression for the value
of $\mobfn{1}{\pi}$, where $\pi$ is a 2413-balloon,
with no restriction
on the permutation
being ballooned.

\section{Introduction}

In the concluding remarks to their seminal paper,
Burstein, Jel{\'{i}}nek, Jel{\'{i}}nkov{\'{a}} 
and Steingr{\'{i}}msson~\cite{Burstein2011}
ask whether the principal \mob function
is unbounded, which is the first reference 
to the growth of the \mob function in the literature.
They show that 
if $\pi$ is a separable permutation,
then $\mobp{\pi} \in \{0, \pm 1 \}$, 
and thus is bounded.
The separable permutations lie in a  
hereditary class which only contains
the simple permutations
$1$, $12$ and $21$.
They ask (Question 27) for which classes 
is $\mobp{\pi}$ bounded?

Smith~\cite{Smith2013}
found an explicit case-wise formula for the principal \mob 
function for all permutations with a single descent.
For certain sets of permutations
with a single descent, the 
associated formula is, up to a sign,
\[
\mobp{\pi} = \binom{k}{2},
\]
where $k$ is a linear expression in the length of $\pi$.
This shows that
the growth of the \mob function
is at least quadratic.
Jel{\'{i}}nek, Kantor, Kyn{\v{c}}l and Tancer~\cite{Jelinek2020}
show how to construct a sequence of permutations
where the absolute value of the \mob function
grows according to the seventh power of the length.

%
%
We show that, given some permutation $\beta$,
we can construct a permutation 
that we call the ``2413-balloon'' of $\beta$.
This permutation will have four more points than $\beta$.
We then show that if 
$\pi$ is a 2413-balloon of $\beta$,
and $\beta$ is itself a 2413-balloon, 
then 
$\mobp{\pi} = 2 \mobp{\beta}$.
From this we deduce that
the growth of the principal \mob function is exponential.
If $\beta = 25314$ (which is a 2413-balloon),
then we can construct a hereditary class
that contains only the simple permutations
$\{ 1,12,21,2413,25314\}$, 
where the growth of the 
principal \mob function is exponential,
answering questions in
Burstein et al~\cite{Burstein2011}
and 
Jel{\'{i}}nek et al~\cite{Jelinek2020}.

%
%
We start by recalling some essential definitions and notation in
Section~\ref{section-definitions-and-notation},
where we also provide some extensions of 
existing results.
We formally define a 2413-balloon in 
Section~\ref{section-define-2413-balloon},
and we provide some
results which will be used
in the remainder of this chapter.
In Section~\ref{section-2413-double-balloons},
we derive an expression for the 
value of $\mobp{\pi}$ when $\pi$ is a double 2413-balloon,
and following this
we show that the growth of the \mob function
is exponential in
Section~\ref{section-growth-rate-of-mu}.
We return to the topic of 2413-balloons in
Section~\ref{section-2413-balloons}, 
and derive an expression for the 
value of $\mobp{\pi}$ when $\pi$ is
any 2413-balloon.
Finally, we discuss the generalization
of the balloon operator in
Section~\ref{section-concluding-remarks}.
We also ask some questions regarding
the growth of the \mob function.

\section{Essential definitions, notation, and results}
\label{section-definitions-and-notation}

In this section we recall some standard definitions
and notation that we will use, and add some 
simple definitions and consequences of known results.  

%
%
If $\pi$ is a permutation with length $n$,
then
the number of 
\emph{corners}\extindex[permutation]{corners} 
of $\pi$
is the number of points of $\pi$ that are extremal in both
position and value,
that is, $\pi_1 \in \{1, n\}$ or $\pi_n \in \{1, n\}$.
It is easy to see that any permutation 
with length 2 or more can have at most two corners.
We adopt the convention that the permutation
$1$ has one corner.

%
%
Recall that if a permutation $\pi$ can be written as 
$\oneplus\oneplus\tau$,
$\oneminus\oneminus\tau$,
$\tau\plusone\plusone$, or
$\tau\minusone\minusone$,
where $\tau$ is non-empty 
(so $\order{\pi} \geq 3$),
then we say that $\pi$
has a \emph{long corner}.

%
%
We now have
\begin{lemma}
    \label{lemma-oneplus-oneplus}
    If $\pi$ has a long corner,
    then
    $\mobp{\pi} = 0$.	
\end{lemma}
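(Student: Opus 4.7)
My plan is to derive this as a direct corollary of Lemma~\ref{lemma_mobius_function_is_zero} from Chapter~\ref{chapter_incosc_paper} combined with the symmetry invariance of the \mob function.

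First I would dispatch the two ``sum'' cases. Suppose $\pi = 1 \oplus 1 \oplus \tau$ or $\pi = \tau \oplus 1 \oplus 1$ with $\tau$ non-empty. Setting $\sigma = 1$ (which is sum-indecomposable), Lemma~\ref{lemma_mobius_function_is_zero} applies verbatim and gives $\mobp{\pi} = \mobfn{1}{\pi} = 0$. This handles two of the four cases in the definition of a long corner with no further work.

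Next I would handle the two ``skew'' cases by invoking the general fact, recorded in Chapter~\ref{chapter_common_definitions}, that $\mobfn{\sigma}{\pi} = \mobfn{\sigma^S}{\pi^S}$ for any symmetry $S$ of the square. If $\pi = 1 \ominus 1 \ominus \tau$, take $S$ to be the complement; then $\pi^C = 1 \oplus 1 \oplus \tau^C$ since complementation exchanges $\oplus$ and $\ominus$ while fixing $1$. Hence $\mobp{\pi} = \mobfn{1}{1} = \mobfn{1^C}{\pi^C} = \mobfn{1}{1 \oplus 1 \oplus \tau^C} = 0$ by the sum case already treated. The case $\pi = \tau \ominus 1 \ominus 1$ reduces to $\tau^C \oplus 1 \oplus 1$ in the same way.

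There is no real obstacle here; the lemma is essentially a convenient packaging of an existing observation, stated in the form required for use later in the chapter. The only point requiring minimal care is to check that the non-emptiness of $\tau$ carries through the complement operation (which is immediate, as complementation preserves length) so that Lemma~\ref{lemma_mobius_function_is_zero} remains applicable in the transported case.
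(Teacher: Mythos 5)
Your proposal is correct and matches the paper's own treatment, which simply refers the reader to Lemma~\ref{lemma_mobius_function_is_zero} for the two direct-sum cases and leaves the skew cases to the symmetry of the \mob function, exactly the reduction you carry out explicitly via complementation. One small slip to fix: in the skew case the chain should read $\mobp{\pi}=\mobfn{1}{\pi}=\mobfn{1^C}{\pi^C}=\mobfn{1}{1\oplus1\oplus\tau^C}=0$, not ``$\mobfn{1}{1}$'' as written.
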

\begin{lemma}
    \label{lemma-oneplus}
    If $\pi$ can be written as 
    $\pi = \oneplus\tau$,
    or
    $\pi = \tau\plusone$
    or 
    $\pi = \oneminus\tau$
    or
    $\pi = \tau\minusone$,
    and does not have a long corner,
    then
    $\mobp{\pi} = - \mobp{\tau}$.
\end{lemma}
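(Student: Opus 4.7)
The plan is to reduce to the case $\pi = 1 \oplus \tau$ via symmetry, and then apply Proposition~\ref{BJJS_proposition_1} of Burstein, Jel\'inek, Jel\'inkov\'a and Steingr\'imsson (which is quoted earlier in the excerpt) with $\sigma = 1$.

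First I would invoke symmetry to handle three of the four cases. If $\pi = \tau \oplus 1$, then using the reversal we have $\pi^R = 1 \oplus \tau^R$ with $\mobp{\pi} = \mobp{\pi^R}$ and $\mobp{\tau} = \mobp{\tau^R}$, and $\pi$ has a long corner if and only if $\pi^R$ does. The two skew cases $\pi = 1 \ominus \tau$ and $\pi = \tau \ominus 1$ similarly reduce to the sum cases by applying the complement (which maps $\ominus$ to $\oplus$), again using that the \mob function and the ``long corner'' property are invariant under complement. So it suffices to prove the result for $\pi = 1 \oplus \tau$.

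Next I would observe that the hypothesis ``$\pi$ has no long corner'' forces a useful structural dichotomy on $\tau$. Namely, if $\tau = 1$ (so $\pi = 12$), there is nothing to check by the definition of long corner; otherwise, $|\tau| \geq 2$, and the first component of the finest sum decomposition of $\tau$ cannot equal $1$, for if $\tau_1 = 1$ we would have $\pi = 1 \oplus 1 \oplus \tau'$ with $\tau'$ non-empty, which is a long corner. Hence in the non-trivial case $\pi = \pi_1 \oplus \cdots \oplus \pi_n$ with $\pi_1 = 1$ and $\pi_2 \neq 1$, so in the notation of Proposition~\ref{BJJS_proposition_1} we have $k = 1$. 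Since $\sigma = 1$, we also have $\ell = 1$, so the third case $k-1 < \ell$ applies and yields
\begin{align*}
\mobp{\pi} \;=\; \mobfn{\sigma_{>1}}{\pi_{>1}} - \mobfn{\sigma_{>0}}{\pi_{>1}} \;=\; \mobfn{\varepsilon}{\tau} - \mobp{\tau}.
\end{align*}
Because $\tau$ is non-empty and not equal to $1$ in this sub-case, $\mobfn{\varepsilon}{\tau} = 0$, giving $\mobp{\pi} = -\mobp{\tau}$ as required.

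Finally, I would dispose of the remaining boundary case $\tau = 1$ directly: $\mobp{12} = -1 = -\mobp{1}$, so the identity $\mobp{\pi} = -\mobp{\tau}$ still holds. No step here is really an obstacle; the only subtle point is recognising that the ``no long corner'' hypothesis is exactly what is needed to rule out $k \geq 2$ in Proposition~\ref{BJJS_proposition_1} (which would otherwise either kill the whole expression by giving $k-1 > \ell$, or force one to pick up additional terms), and to avoid the boundary issue where $\mobfn{\varepsilon}{\tau}$ could be non-zero.
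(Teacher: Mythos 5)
Your proposal is essentially the argument the paper has in mind: the text gives no proof, deferring to Propositions 1--2 and Corollary 3 of Burstein, Jel\'inek, Jel\'inkov\'a and Steingr\'imsson, and your computation via Proposition~\ref{BJJS_proposition_1} (no long corner forces $k=1$, $\sigma=1$ gives $\ell=1$, so the third case yields $\mobfn{\varepsilon}{\tau}-\mobp{\tau}=-\mobp{\tau}$, with $\pi=12$ checked directly) is exactly the ``trivial'' verification being alluded to.

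One step as written is wrong, though easily repaired: reversal does not send $\tau\plusone$ to $1\oplus\tau^R$; rather $(\tau\oplus 1)^R = 1\ominus\tau^R$ (e.g.\ $213^R = 312 = 1\ominus 12$). To reduce $\tau\plusone$ to the case you prove, use the reverse--complement, which gives $(\tau\oplus 1)^{RC} = 1\oplus\tau^{RC}$ and preserves both $\mobp{\cdot}$ and the long-corner property; alternatively, handle $\tau\plusone$ and the two skew cases all at once by noting that the symmetry group of the square carries each of the four forms to $\oneplus\tau'$ for a suitable symmetry $\tau'$ of $\tau$. Your use of the complement for the skew cases is correct as stated. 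With that substitution the proof is complete and matches the intended route.
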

These are well-known consequences of
Propositions~1~and~2 of
Burstein, 
Jel{\'{i}}nek, 
Jel{\'{i}}nkov{\'{a}} and 
Steingr{\'{i}}msson~\cite{Burstein2011},
and we refrain from
providing proofs here.
The reader is directed to    
Lemma~\ref{lemma_mobius_function_is_zero}
on page~\pageref{lemma_mobius_function_is_zero}
in Chapter~\ref{chapter_incosc_paper}
for a proof of Lemma~\ref{lemma-oneplus-oneplus}.
Lemma~\ref{lemma-oneplus}
is a trivial extension of Corollary 3 in~\cite{Burstein2011}.  

%
%
Recall that a triple adjacency is a monotonic interval of length 3.
Smith shows that
\begin{lemma}[{%
    Smith~\cite[Lemma 1]{Smith2013}}]
    \label{lemma-triple-adjacencies}
    If a permutation $\pi$
    contains a triple adjacency then $\mobp{\pi} = 0$.
\end{lemma}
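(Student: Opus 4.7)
The plan is to recognize that this is an immediate specialization of an annihilator result already established in this thesis. A triple adjacency in $\pi$ is, by definition, a monotonic interval of length three, that is, an interval copy of either $123$ or $321$. I would write $123 = 1 \oplus 1 \oplus 1$, which has the form $\alpha \oplus 1 \oplus \beta$ with $\alpha$ and $\beta$ both equal to the non-empty permutation $1$. Corollary~\ref{cor-sum} then gives that any permutation containing an interval copy of $123$ is a \mob zero, which handles the ascending case.

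For the descending case, I would invoke the symmetry of the principal \mob function under complement: $\mobp{\pi} = \mobp{\pi^C}$. Since the complement of $123$ is $321$, a permutation $\pi$ has an interval copy of $321$ precisely when $\pi^C$ has one of $123$, and the previous paragraph yields $\mobp{\pi^C} = 0 = \mobp{\pi}$.

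I do not expect any real obstacle: the argument reduces to identifying $123$ (and $321$) as the shape $\alpha \oplus 1 \oplus \beta$ and quoting Corollary~\ref{cor-sum}. If a self-contained route were preferred, one could instead mimic the narrow-tipped construction of Proposition~\ref{pro-narrow}: take a minimal counterexample $\pi$, delete one point of the triple to form $\phi$ with $|\phi|=|\pi|-1$, and check that every $\lambda \in [1,\pi) \setminus [1,\phi]$ itself contains a triple adjacency. The one non-obvious verification is this last claim, which follows because any embedding of such a $\lambda$ into $\pi$ must use all three triple positions; then, since those three positions hold values that are consecutive in $\pi$, the corresponding three entries of $\lambda$ sit in consecutive positions (no other embedded position can lie between two adjacent positions of $\pi$) and take consecutive values (no other selected value of $\pi$ lies strictly between the three). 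By induction these $\lambda$ are \mob zeros and can be removed from $[1,\pi]$, so the remaining interval is narrow-tipped with core $\phi$, and Facts~\ref{fac-del} and~\ref{fac-nd} complete the argument.
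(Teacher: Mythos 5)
Your argument is correct, but note that the thesis does not actually prove this statement: Lemma~\ref{lemma-triple-adjacencies} is quoted from Smith~\cite[Lemma 1]{Smith2013} with no proof given, so there is no in-paper argument to match. Your first route --- identify a triple adjacency as an interval copy of $123=1\oplus1\oplus1$ or of $321$, apply Corollary~\ref{cor-sum} to the ascending case, and transfer to the descending case via the complement symmetry $\mobp{\pi}=\mobp{\pi^C}$ --- is sound and, importantly, non-circular: Corollary~\ref{cor-sum} is proved in Chapter~\ref{chapter_oppadj_paper} from Propositions~\ref{pro-narrow} and~\ref{pro-sum}, which rest only on Facts~\ref{fac-del} and~\ref{fac-nd} and make no use of the triple-adjacency lemma, and that chapter precedes Chapter~\ref{chapter_2413_balloon_paper} where the lemma is invoked. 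Indeed the thesis itself remarks that its annihilator results give a common generalisation of Smith's lemma, so your derivation simply makes that remark explicit. Your fallback self-contained argument is essentially Smith's original proof recast in the narrow-tipped framework, and the one delicate step is handled correctly: since the three triple positions carry consecutive positions and consecutive values, deleting any one of them yields the same permutation $\phi$, so any $\lambda\le\pi$ with $\lambda\not\le\phi$ must use all three positions in every embedding, forcing a triple adjacency in $\lambda$; induction plus Facts~\ref{fac-del} and~\ref{fac-nd} then finish. Either route is acceptable; the first buys brevity by leaning on the stronger annihilator machinery, the second buys independence from Chapter~\ref{chapter_oppadj_paper}.
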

A trivial corollary to 
Lemma~\ref{lemma-triple-adjacencies} is
\begin{corollary}
    \label{corollary-monotonic-interval}
    If a permutation contains a monotonic interval
    with length 3 or more, then $\mobp{\pi} = 0$.    
\end{corollary}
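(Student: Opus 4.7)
The plan is very short because, as the author notes, the corollary is essentially immediate from Lemma~\ref{lemma-triple-adjacencies}. The key observation is that any monotonic interval of length at least $3$ contains a triple adjacency as a sub-structure, so the hypothesis of Lemma~\ref{lemma-triple-adjacencies} is automatically satisfied.

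More concretely, suppose $\pi$ contains a monotonic interval of length $k \geq 3$ at positions $i, i+1, \ldots, i+k-1$. By definition of an interval, the set of values $\{\pi_i, \pi_{i+1}, \ldots, \pi_{i+k-1}\}$ is contiguous, and by monotonicity these values appear either in increasing or decreasing order. In particular, the values $\pi_i, \pi_{i+1}, \pi_{i+2}$ are three consecutive integers (say $m, m+1, m+2$ or $m+2, m+1, m$), so the positions $i, i+1, i+2$ form a contiguous set of indices whose values also form a contiguous set, hence an interval of $\pi$. Because these three values are monotonic, this interval is by definition a triple adjacency.

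Having exhibited a triple adjacency inside $\pi$, we invoke Lemma~\ref{lemma-triple-adjacencies} directly to conclude $\mobp{\pi} = 0$. There is no obstacle here; the only thing worth checking carefully is that a sub-range of a monotonic interval is itself an interval in the permutation-theoretic sense, which follows immediately from the fact that a contiguous sub-sequence of a contiguous set of integers is still contiguous.
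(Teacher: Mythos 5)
Your proof is correct and matches the paper's intent exactly: the paper treats this as an immediate consequence of Lemma~\ref{lemma-triple-adjacencies}, and your check that the first three points of a monotonic interval themselves form a triple adjacency is precisely the (implicit) step being relied upon. Nothing further is needed.
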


%
%
Recall that Hall's Theorem~\cite[Proposition 3.8.5]{Stanley2012} says 
that 
\[
\mobfn{\sigma}{\pi} = 
\sum_{c \in \chainc(\sigma, \pi)} (-1)^{\order{c}}  =
\sum_{i=1}^{\order{\pi} - 1} (-1)^i K_i
\]
where $\chainc(\sigma, \pi)$ is the set of chains 
in the poset interval $[\sigma, \pi]$ which  contain both $\sigma$ and $\pi$,
and $K_i$ is the number of chains of length $i$.

%
%
We can also use Hall's Theorem if we have a subset of chains
that meet a specific criteria:
\begin{lemma}
    \label{lemma-hall-sum-second-element-psi}
    Let $\pi$ be any permutation with length three or more.
    Let $\psi$ be a permutation with $1 < \psi < \pi$.
    Let $\chainc$ be the subset of chains in 
    the poset interval $[1, \pi]$ where the second-highest element is $\psi$.
    Then 
    \[\sum\limits_{c \in \chainc}  (-1)^{\order{c}} = - \mobp{\psi}.\]
\end{lemma}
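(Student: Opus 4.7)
The plan is to establish a length-preserving (up to a shift by one) bijection between chains in $\chainc$ and chains in the poset interval $[1, \psi]$, and then apply Hall's Theorem to the smaller interval.

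First, I would note that every chain $c \in \chainc$ has the form $1 = c_0 < c_1 < \cdots < c_{k-1} < c_k = \pi$ with $c_{k-1} = \psi$. Removing the top element $\pi$ gives a chain $c' = (1 = c_0 < c_1 < \cdots < c_{k-1} = \psi)$ in the poset interval $[1, \psi]$. Conversely, any chain in $[1, \psi]$ from $1$ to $\psi$ extends uniquely to a chain in $\chainc$ by appending $\pi$ at the top. This map is a bijection $\Phi: \chainc \to \chainc(1, \psi)$, and if $\order{c} = k$ then $\order{\Phi(c)} = k - 1$, so $(-1)^{\order{c}} = -(-1)^{\order{\Phi(c)}}$.

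Summing over $\chainc$ and reindexing via $\Phi$ yields
\[
\sum_{c \in \chainc} (-1)^{\order{c}} = -\sum_{c' \in \chainc(1, \psi)} (-1)^{\order{c'}}.
\]
By Hall's Theorem applied to the poset interval $[1, \psi]$, the right-hand sum equals $\mobp{\psi}$, and so the total is $-\mobp{\psi}$, as required.

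There is essentially no obstacle here; the only thing to verify carefully is that the bijection $\Phi$ is well-defined (which uses the hypothesis $1 < \psi < \pi$ to ensure both that $\psi$ is a valid second-highest element and that $\pi$ is strictly above $\psi$) and that the parity shift is handled correctly.
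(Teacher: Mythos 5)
Your proof is correct and follows the same route as the paper: delete the top element $\pi$ to obtain a bijection with the chains of $[1,\psi]$, note the length drops by one so the sign flips, and apply Hall's Theorem to $[1,\psi]$. The parity bookkeeping and the role of the hypothesis $1<\psi<\pi$ are handled correctly.
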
	
\begin{proof}
    If we remove $\pi$ from the chains in $\chainc$, then
    we have all of the chains in the poset interval $[1, \psi]$,
    and the Hall sum of these chains is, by definition,
    $\mobp{\psi}$.  It follows that the Hall sum
    of the chains in $\chainc$ is $ - \mobp{\psi}$.	
\end{proof}
\begin{corollary}
    \label{corollary-hall-sum-second-highest-set}
    Given a permutation $\pi$,
    and a set of permutations $S$
    where every $\sigma \in S$ satisfies
    $1 < \sigma < \pi$,
    then 
    if $\chainc$ is the set of chains in the poset interval
    $[1, \pi]$ where the second-highest element is in $S$,
    then the Hall sum of $\chainc$ is
    $- \sum_{\sigma \in S} \mobp{\sigma}$.    
\end{corollary}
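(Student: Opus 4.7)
The plan is to reduce the corollary directly to Lemma~\ref{lemma-hall-sum-second-element-psi} by partitioning the set of chains $\chainc$ according to the identity of its second-highest element. Since each chain $c \in \chainc$ has a unique second-highest element (chains are totally ordered), and by hypothesis that element lies in $S$, we obtain a well-defined partition
\[
\chainc = \bigsqcup_{\sigma \in S} \chainc_\sigma,
\]
where $\chainc_\sigma$ denotes the subset of chains in $[1,\pi]$ whose second-highest element equals $\sigma$.

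Next, for each fixed $\sigma \in S$, I would apply Lemma~\ref{lemma-hall-sum-second-element-psi} to the permutation $\sigma$ (whose hypothesis $1 < \sigma < \pi$ is exactly what we have assumed for every element of $S$). This immediately yields
\[
\sum_{c \in \chainc_\sigma} (-1)^{\order{c}} = -\mobp{\sigma}.
\]
Summing this identity over all $\sigma \in S$, and using the partition of $\chainc$ to combine the Hall sums, gives
\[
\sum_{c \in \chainc} (-1)^{\order{c}} = \sum_{\sigma \in S} \sum_{c \in \chainc_\sigma} (-1)^{\order{c}} = -\sum_{\sigma \in S} \mobp{\sigma},
\]
which is the desired conclusion.

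There is no real obstacle here; this is essentially a bookkeeping exercise. The only thing to be careful about is ensuring the partition is well-defined, which relies on the fact that every chain of length at least one in $[1,\pi]$ with maximum element $\pi$ has a uniquely determined second-highest element (and that this element is distinct from $\pi$). Since $S$ does not contain $\pi$ and consists of elements strictly greater than $1$, every chain in $\chainc$ has at least three elements (namely $1$, the chosen $\sigma \in S$, and $\pi$), so the second-highest element exists and is an element of $S$ by the defining property of $\chainc$.
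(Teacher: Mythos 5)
Your proof is correct and follows exactly the paper's argument: partition $\chainc$ by the (uniquely determined) second-highest element and apply Lemma~\ref{lemma-hall-sum-second-element-psi} to each part before summing over $S$. No issues.
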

\begin{proof}	
    First, partition $\chainc$ based on the second-highest element,
    and then
    apply Lemma~\ref{lemma-hall-sum-second-element-psi} to each
    partition.
\end{proof}

\section{2413-Balloons}
\label{section-define-2413-balloon}

In this section we define the vocabulary and notation
specific to this chapter.
We also present some general results which will be used
in later sections.

%
%
Given a non-empty permutation $\beta$,
the 
\emph{2413-balloon}\extindex{2413-balloon} 
of $\beta$
is the permutation formed by
inserting $\beta$ into the centre of $2413$,
which we write as $\ball{2413}{\beta}$.
Formally, we have
\begin{align*}
(\ball{2413}{\beta})_i
& =
\begin{cases}
2 & \text{if $i = 1$}\\
\order{\beta} + 4 & \text{if $i = 2$}\\
\beta_{i-2} + 2 & \text{if $i > 2$ and $i \leq \order{\beta} + 2$ }\\
1 & \text{if $i = \order{\beta} + 3$}\\
\order{\beta} + 3 & \text{if $i = \order{\beta} + 4$}\\ 
\end{cases}
\end{align*}
Figure~\ref{figure-2413-balloons-a} 
shows $\ball{2413}{\beta}$.
\begin{figure}
    \begin{center}
        \begin{subfigure}[t]{0.35\textwidth}
            \begin{center}
                \begin{tikzpicture}[scale=0.3]
                    \foreach \i in {0,1,2,5,6,7}{
                        \draw [color=lightgray] ({\i+0.5}, 0.5)--({\i+0.5}, {7.5});
                    };
                    \foreach \i in {0,1,2,5,6,7}{
                        \draw [color=lightgray] (0.5, {\i+0.5})--({7.5}, {\i+0.5});
                    };
                \normaldot{(1,2)};
                \normaldot{(2,7)};
                \scell{4}{4}{$\beta$};
                \normaldot{(6,1)};
                \normaldot{(7,6)};
                \end{tikzpicture}
            \end{center}
            \caption{}
            \label{figure-2413-balloons-a} 
        \end{subfigure}
        \begin{subfigure}[t]{0.55\textwidth}
            \begin{center}
                \begin{tikzpicture}[scale=0.3]
                \foreach \i in {0,1,2,3,4,7,8,9,10,11}{
                    \draw [color=lightgray] ({\i+0.5}, 0.5)--({\i+0.5}, {11.5});
                };
                \foreach \i in {0,1,2,3,4,7,8,9,10,11}{
                    \draw [color=lightgray] (0.5, {\i+0.5})--({11.5}, {\i+0.5});
                };
                \normaldot{(1,2)};
                \normaldot{(2,11)};
                \normaldot{(3,4)};
                \normaldot{(4,9)};
                \scell{6}{6}{$\gamma$};                
                \normaldot{(8,3)};
                \normaldot{(9,8)};
                \normaldot{(10,1)};
                \normaldot{(11,10)};
                \end{tikzpicture}
            \end{center}
            \caption{}
            \label{figure-2413-balloons-b} 
        \end{subfigure}
    \end{center}
    \caption{
        (a) The 2413-balloon $\ball{2413}{\beta}$ and
        (b) the double 2413-balloon $\ball{2413}{\ball{2413}{\gamma}}$.}
    \label{figure-2413-balloons} 
\end{figure}
Throughout this chapter we will be 
discussing permutations that contain
an interval copy of a smaller permutation.
Examples of this smaller permutation 
are $\beta$ in $\ball{2413}{\beta}$,
and $\gamma$ in $\ball{2413}{\ball{2413}{\gamma}}$,
as shown in Figure~\ref{figure-2413-balloons}.
In figures where this is the case, 
the permutation plot
scale will be non-linear so that
the cell containing the interval copy
($\beta$ and $\gamma$ in our examples)
is larger than the other cells.

%
%
The balloon operation as defined has to be
right-associative and the definition given does not
support overriding right-associativity.
In other words, 
$\ball{2413}{\ball{2413}{\beta}}$
must be
$\ball{2413}{(\ball{2413}{\beta})}$,
and
$\ball{(\ball{2413}{2413})}{\beta}$ is not defined.
In 
Section~\ref{section-concluding-remarks} we  
suggest how the balloon operation could be generalized.

%
%
Given some $\pi = \ball{2413}{\beta}$,
if $\beta$ is itself a 2413-balloon,
so $\pi = \ball{2413}{\ball{2413}{\gamma}}$,
then we say that $\pi$ is a 
\emph{double 2413-balloon}\extindex{double 2413-balloon}.
Figure~\ref{figure-2413-balloons-b} shows a double 2413-balloon.

%
%
\begin{remark}
    We 
    can write $\ball{2413}{\beta}$
    as the inflation  
    $25314 [1,1,\beta,1,1]$. 
    We refer the reader to Albert and Atkinson~\cite{Albert2005}
    for further details of inflations.
    In this chapter we use balloon notation,
    as we feel that this leads to a simpler exposition.
\end{remark}

%
%
If we have $\pi = \ball{2413}{\beta}$,
and we have some $\sigma$ 
with $\beta \leq \sigma < \pi$,
we will frequently want to 
represent $\sigma$
in terms of 
sub-permutations of $2413$ and 
the permutation $\beta$.
We start by colouring 
the extremal points of $\pi$ red,
and all remaining points black.  
Note that the red points are a 2413 permutation,
and the black points are $\beta$.

Now consider 
a specific embedding of $\sigma$ into $\pi$,
where we use all of the black points ($\beta$).
If the embedding is 
monochromatic ($\sigma = \beta$) then
we require no special notation.
If the embedding is not monochromatic,
then it must be the case that
only some of the red points are used.
We take $2413$, and mark the red points that are 
unused with an overline, and then write
$\sigma$ using our balloon notation.
As an example of this, 
if
$\pi = \ball{2413}{21} = 264315$, and $\sigma = 213$,
then we could represent $\sigma$ as
$\ball{\ex{2}\ex{4}\ex{1}3}{21}$.
This example is shown in Figure~\ref{figure-overbar-notation}.
\begin{figure}
    \begin{center}
        \begin{tikzpicture}[scale=0.3]
        \plotpermgrid{2,6,4,3,1,5}
        \opendot{(1,2)};
        \opendot{(2,6)};
        \opendot{(5,1)};
        \end{tikzpicture}
    \end{center}
    \caption{An embedding of $213 = \ball{\ex{2}\ex{4}\ex{1}3}{21}$ in $264315 = \ball{2413}{21}$.}
    \label{figure-overbar-notation}
\end{figure}
We can see that if 
$\beta \leq \sigma < \ball{2413}{\beta}$, 
and $\beta$ is not monotonic (i.e., not the 
identity permutation or its reverse), then
there is a unique way to represent
$\sigma$ using this notation.

%
%
If we have $\pi = \ball{2413}{\beta}$, 
and $\sigma$ is a permutation such that
$\beta \leq \sigma < \pi$,
then we say that
$\sigma$ is a 
\emph{reduction}\extindex{reduction (of a 2413 balloon)}
of $\pi$.
%
If $\sigma$ is a reduction of $\pi = \ball{2413}{\beta}$,
and there is no $\eta$ with $\order{\eta} <\order{\beta}$
such that
either $\sigma$ is equal to $\ball{2413}{\eta}$,
or $\sigma$ is a reduction of $\ball{2413}{\eta}$,
then we say that $\sigma$ is a
\emph{proper reduction}\extindof{2413-balloon}{proper reduction}{(of a 2413-balloon)}
of $\pi$.
A reduction of $\pi$ that is not a proper reduction
is an 
\emph{improper reduction}\extindex{improper reduction (of a 2413 balloon)}.

The following case-by-case analysis shows the
improper reductions (of $\pi$)
based on the form of $\beta$.

\begin{itemize}
    \item If $\beta$ is a 2413-balloon,
    then
    $\beta$ is the only improper reduction of $\pi$.
    \item If $\beta$ is not a 2413-balloon,
    and $\beta$ has no corners,
    then
    there are no improper reductions of $\pi$.
    \item If $\beta$ has one corner,
    then there are four improper reductions of $\pi$.
    As an example, if $\beta = 1 \oplus \gamma$, then the 
    improper reductions of $\pi$ are
    $\ball{\ex{2}\ex{4}13}{\beta}$,
    $\ball{\ex{2}\ex{4}1\ex{3}}{\beta}$,
    $\ball{\ex{2}\ex{4}\ex{1}3}{\beta}$,
    and
    $\beta$.  
    \item If $\beta$ has two corners, then
    there are seven improper reductions of $\pi$.
    As an example, if $\beta = 1 \oplus \gamma \oplus 1$,
    then the improper reductions are
    $\ball{\ex{2}\ex{4}13}{\beta}$,
    $\ball{24\ex{1}\ex{3}}{\beta}$,
    $\ball{2\ex{4}\ex{1}\ex{3}}{\beta}$,
    $\ball{\ex{2}4\ex{1}\ex{3}}{\beta}$,
    $\ball{\ex{2}\ex{4}1\ex{3}}{\beta}$,
    $\ball{\ex{2}\ex{4}\ex{1}3}{\beta}$, 
    and    
    $\beta$.
\end{itemize}

The set of permutations that are 
proper reductions of $\pi$
is written as $\redset$.
Figure~\ref{figure-2413-reductions}
shows all the reductions (proper and improper)
of $\pi = \ball{2413}{\beta}$.

\begin{figure}
    \begin{center}
        \begin{subfigure}[t]{0.2\textwidth}
            \centering
            \begin{tikzpicture}[scale=0.3]
            \foreach \i in {0,1,4,5,6}{
                \draw [color=lightgray] ({\i+0.5}, 0.5)--({\i+0.5}, {6.5});
            };
            \foreach \i in {0,1,4,5,6}{
                \draw [color=lightgray] (0.5, {\i+0.5})--({6.5}, {\i+0.5});
            };
            \normaldot{(1,6)};
            \scell{3}{3}{$\beta$};
            \normaldot{(5,1)};
            \normaldot{(6,5)};
            \end{tikzpicture}
            \caption*{$\ball{\ex{2}413}{\beta}$}
        \end{subfigure}
        \begin{subfigure}[t]{0.2\textwidth}
            \centering
            \begin{tikzpicture}[scale=0.3]
            \foreach \i in {0,1,4,5,6}{
                \draw [color=lightgray] ({\i+0.5}, 0.5)--({\i+0.5}, {6.5});
            };
            \foreach \i in {0,1,2,5,6}{
                \draw [color=lightgray] (0.5, {\i+0.5})--({6.5}, {\i+0.5});
            };
            \normaldot{(1,2)};
            \scell{3}{4}{$\beta$};
            \normaldot{(5,1)};
            \normaldot{(6,6)};
            \end{tikzpicture}
            \caption*{$\ball{2\ex{4}13}{\beta}$}
        \end{subfigure}
        \begin{subfigure}[t]{0.2\textwidth}
            \centering
            \begin{tikzpicture}[scale=0.3]
            \foreach \i in {0,1,2,5,6}{
                \draw [color=lightgray] ({\i+0.5}, 0.5)--({\i+0.5}, {6.5});
            };
            \foreach \i in {0,1,4,5,6}{
                \draw [color=lightgray] (0.5, {\i+0.5})--({6.5}, {\i+0.5});
            };
            \normaldot{(1,1)};
            \normaldot{(2,6)};
            \scell{4}{3}{$\beta$};
            \normaldot{(6,5)};
            \end{tikzpicture}
            \caption*{$\ball{24\ex{1}3}{\beta}$}
        \end{subfigure}
        \begin{subfigure}[t]{0.2\textwidth}
            \centering
            \begin{tikzpicture}[scale=0.3]
            \foreach \i in {0,1,2,5,6}{
                \draw [color=lightgray] ({\i+0.5}, 0.5)--({\i+0.5}, {6.5});
            };
            \foreach \i in {0,1,2,5,6}{
                \draw [color=lightgray] (0.5, {\i+0.5})--({6.5}, {\i+0.5});
            };
            \normaldot{(1,2)};
            \normaldot{(2,6)};
            \scell{4}{4}{$\beta$};
            \normaldot{(6,1)};
            \end{tikzpicture}
            \caption*{$\ball{241\ex{3}}{\beta}$}
        \end{subfigure}	
        \vspace{1\baselineskip}
    \end{center} 
    \begin{center}
        \begin{subfigure}[t]{0.15\textwidth}
            \centering
            \begin{tikzpicture}[scale=0.3]
            \foreach \i in {0,3,4,5}{
                \draw [color=lightgray] ({\i+0.5}, 0.5)--({\i+0.5}, {5.5});
            };
            \foreach \i in {0,1,4,5}{
                \draw [color=lightgray] (0.5, {\i+0.5})--({5.5}, {\i+0.5});
            };
            \scell{2}{3}{$\beta$};
            \normaldot{(4,1)};
            \normaldot{(5,5)};
            \end{tikzpicture}
            \caption*{$\ball{\ex{2}\ex{4}13}{\beta}$}
        \end{subfigure}
        \begin{subfigure}[t]{0.15\textwidth}
            \centering
            \begin{tikzpicture}[scale=0.3]
            \foreach \i in {0,1,4,5}{
                \draw [color=lightgray] ({\i+0.5}, 0.5)--({\i+0.5}, {5.5});
            };
            \foreach \i in {0,3,4,5}{
                \draw [color=lightgray] (0.5, {\i+0.5})--({5.5}, {\i+0.5});
            };
            \scell{3}{2}{$\beta$};
            \normaldot{(1,5)};
            \normaldot{(5,4)};
            \hiddendot{(1,1)}; 
            \end{tikzpicture}
            \caption*{$\ball{\ex{2}4\ex{1}3}{\beta}$}
        \end{subfigure}
        \begin{subfigure}[t]{0.15\textwidth}
            \centering
            \begin{tikzpicture}[scale=0.3]
            \foreach \i in {0,1,4,5}{
                \draw [color=lightgray] ({\i+0.5}, 0.5)--({\i+0.5}, {5.5});
            };
            \foreach \i in {0,1,4,5}{
                \draw [color=lightgray] (0.5, {\i+0.5})--({5.5}, {\i+0.5});
            };
            \normaldot{(1,5)};
            \scell{3}{3}{$\beta$};
            \normaldot{(5,1)};
            \end{tikzpicture}
            \caption*{$\ball{\ex{2}41\ex{3}}{\beta}$}
        \end{subfigure}
        \begin{subfigure}[t]{0.15\textwidth}
            \centering
            \begin{tikzpicture}[scale=0.3]
            \foreach \i in {0,1,4,5}{
                \draw [color=lightgray] ({\i+0.5}, 0.5)--({\i+0.5}, {5.5});
            };
            \foreach \i in {0,1,4,5}{
                \draw [color=lightgray] (0.5, {\i+0.5})--({5.5}, {\i+0.5});
            };
            \normaldot{(1,1)};
            \scell{3}{3}{$\beta$};
            \normaldot{(5,5)};
            \end{tikzpicture}
            \caption*{$\ball{2\ex{4}\ex{1}3}{\beta}$}
        \end{subfigure}	
        \begin{subfigure}[t]{0.15\textwidth}
            \centering
            \begin{tikzpicture}[scale=0.3]
            \foreach \i in {0,1,4,5}{
                \draw [color=lightgray] ({\i+0.5}, 0.5)--({\i+0.5}, {5.5});
            };
            \foreach \i in {0,1,2,5}{
                \draw [color=lightgray] (0.5, {\i+0.5})--({5.5}, {\i+0.5});
            };
            \normaldot{(1,2)};
            \scell{3}{4}{$\beta$};
            \normaldot{(5,1)};
            \end{tikzpicture}
            \caption*{$\ball{2\ex{4}1\ex{3}}{\beta}$}
        \end{subfigure}	
        \begin{subfigure}[t]{0.15\textwidth}
            \centering
            \begin{tikzpicture}[scale=0.3]
            \foreach \i in {0,1,2,5}{
                \draw [color=lightgray] ({\i+0.5}, 0.5)--({\i+0.5}, {5.5});
            };
            \foreach \i in {0,1,4,5}{
                \draw [color=lightgray] (0.5, {\i+0.5})--({5.5}, {\i+0.5});
            };
            \normaldot{(1,1)};
            \normaldot{(2,5)};
            \scell{4}{3}{$\beta$};
            \end{tikzpicture}
            \caption*{$\ball{24\ex{1}\ex{3}}{\beta}$}
        \end{subfigure}	
        \vspace{1\baselineskip}
    \end{center}
    \begin{center}
        \begin{subfigure}[t]{0.15\textwidth}
            \centering
            \begin{tikzpicture}[scale=0.3]
            \foreach \i in {0,1,4}{
                \draw [color=lightgray] ({\i+0.5}, 0.5)--({\i+0.5}, {4.5});
            };
            \foreach \i in {0,1,4}{
                \draw [color=lightgray] (0.5, {\i+0.5})--({4.5}, {\i+0.5});
            };
            \normaldot{(1,1)};
            \scell{3}{3}{$\beta$};           
            \end{tikzpicture}
            \caption*{$\ball{2\ex{4}\ex{1}\ex{3}}{\beta}$}
        \end{subfigure}
        \begin{subfigure}[t]{0.15\textwidth}
            \centering
            \begin{tikzpicture}[scale=0.3]
            \foreach \i in {0,1,4}{
                \draw [color=lightgray] ({\i+0.5}, 0.5)--({\i+0.5}, {4.5});
            };
            \foreach \i in {0,3,4}{
                \draw [color=lightgray] (0.5, {\i+0.5})--({4.5}, {\i+0.5});
            };
            \normaldot{(1,4)};
            \scell{3}{2}{$\beta$};
            \hiddendot{(1,1)};             
            \end{tikzpicture}
            \caption*{$\ball{\ex{2}4\ex{1}\ex{3}}{\beta}$}
        \end{subfigure}
        \begin{subfigure}[t]{0.15\textwidth}
            \centering
            \begin{tikzpicture}[scale=0.3]
            \foreach \i in {0,3,4}{
                \draw [color=lightgray] ({\i+0.5}, 0.5)--({\i+0.5}, {4.5});
            };
            \foreach \i in {0,1,4}{
                \draw [color=lightgray] (0.5, {\i+0.5})--({4.5}, {\i+0.5});
            };
            \scell{2}{3}{$\beta$};
            \normaldot{(4,1)};
            \end{tikzpicture}
            \caption*{$\ball{\ex{2}\ex{4}1\ex{3}}{\beta}$}
        \end{subfigure}	
        \begin{subfigure}[t]{0.15\textwidth}
            \centering
            \begin{tikzpicture}[scale=0.3]
            \foreach \i in {0,3,4}{
                \draw [color=lightgray] ({\i+0.5}, 0.5)--({\i+0.5}, {4.5});
            };
            \foreach \i in {0,3,4}{
                \draw [color=lightgray] (0.5, {\i+0.5})--({4.5}, {\i+0.5});
            };
            \scell{2}{2}{$\beta$};
            \normaldot{(4,4)};
            \hiddendot{(1,1)};             
            \end{tikzpicture}
            \caption*{$\ball{\ex{2}\ex{4}\ex{1}3}{\beta}$}
        \end{subfigure}
        \begin{subfigure}[t]{0.15\textwidth}
            \centering
            \begin{tikzpicture}[scale=0.3]
            \foreach \i in {0,3}{
                \draw [color=lightgray] ({\i+0.5}, 0.5)--({\i+0.5}, {3.5});
            };
            \foreach \i in {0,3}{
                \draw [color=lightgray] (0.5, {\i+0.5})--({3.5}, {\i+0.5});
            };
            \scell{2}{2}{$\beta$};
            \hiddendot{(1,1)};             
            \end{tikzpicture}
            \caption*{$\beta$}
        \end{subfigure}
    \end{center}
    \caption{Reductions of $\pi = \ball{2413}{\beta}$.  Some may not be proper reductions, depending on $\beta$.}	      
    \label{figure-2413-reductions}
\end{figure}
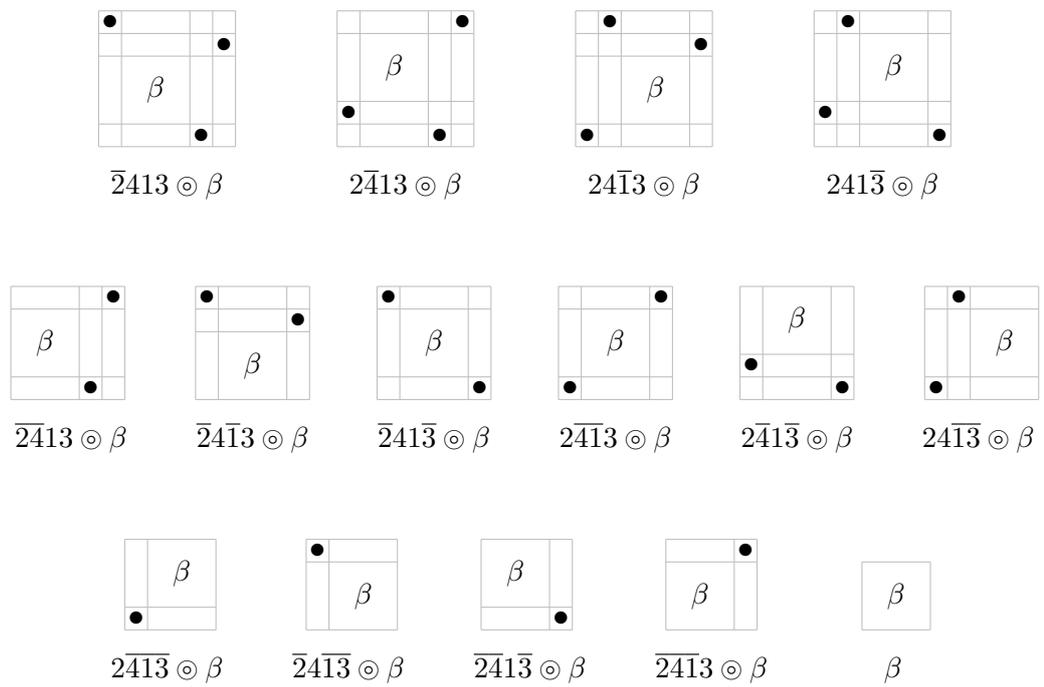

%
%
The strategy that we will use 
in 
Sections~\ref{section-2413-double-balloons} 
and~\ref{section-2413-balloons} is
to partition the chains in the poset interval 
$[1,\pi]$ into three sets,
$\chainr$, 
$\chaing$, and
$\chainb$. 
We then show that there are parity-reversing involutions
on the sets 
$\chaing$ and
$\chainb$,
and therefore, by 
Corollary~\ref{corollary-halls-corollary},
the Hall sum for each of these sets is zero,
and so $\mobp{\pi}$ is given by the 
Hall sum of the set $\chainr$.
Finally, we show that the Hall sum of $\chainr$
can be written in terms of $\mobp{\beta}$.

%
%
The chains in $\chainr$ are those chains
where the second-highest element 
is a proper reduction of $\pi$,
so if $\kappa_c$ is the second-highest
element of a chain $c$, then 
$c \in \chainr$ if and only if
$\kappa_c \in \redset$.
Note that, as mentioned earlier,
the members of $\redset$,
and hence the chains in $\chainr$,
depend on the form of $\pi$.
It is easy to see that for any permutation $\sigma \in \redset$
we must have $\order{\sigma} \geq \order{\beta}$.

We have some results that
are independent of $\redset$, 
and, once we have given some further 
definitions, we present these in
the current section
to avoid repetition.

Let $\pi$ be a 2413-balloon, and let $c$ be any chain
in the poset interval $[1, \pi]$.

%
%
Since the top of the chain is, by definition, a 2413-balloon,
it follows that $c$ has a unique maximal
segment that includes the element $\pi$,
where every element in the segment is a 2413-balloon.
We call the smallest element 
in this segment 
the 
\emph{least 2413-balloon}\extindex{least 2413-balloon}\footnote{The name 
    should really be ``least 2413-balloon in the chain that has only 2413-balloons above it''.}.

%
%
Further, since the permutation 1 is not a 2413-balloon,
it follows that $c$ has an element that is immediately below
the least 2413-balloon in the chain,
and we call this element the 
\emph{pivot}\extindof{2413-balloon}{pivot}{(in a 2413-balloon)}.

%
%
We define $\phi_c$ to be the least 2413-balloon in $c$,
$\psi_c$ to be the pivot in $c$,
$\tau_c$ to be the permutation that satisfies
$\ball{2413}{\tau_c} = \phi_c$,
and $\kappa_c$ to be the second-highest element of $c$.
Note that $\phi_c$ and $\psi_c$ must be distinct,
but we can have $\tau_c = \psi_c$.
Further, $\kappa_c$ is independent, and 
may be the same as $\phi_c$, $\psi_c$ or $\tau_c$.
Figure~\ref{figure-example-chains} shows some
example chains, highlighting these elements.
%
%
\begin{figure}
    \begin{center}
        \begin{tikzpicture}[]                
        \draw [] (0,5) -- (0,4);
        \draw [dotted] (0,4) -- (0,3);
        \draw [] (0,3) -- (0,2);
        \draw [dotted] (0,2) -- (0,1);
        \spoint{0}{5};
        \spoint{0}{4};
        \spoint{0}{3};
        \spoint{0}{2};
        \node [right] at (0.1, 5.0) {$\pi = \ball{2413}{\beta}$};
        \node [right] at (0.1, 4.0) {$\kappa_c$};
        \node [right] at (0.1, 3.0) {$\phi_c= \ball{2413}{\tau_c}$};
        \node [right] at (0.1, 2.0) {$\psi_c$};
        \draw [] (4,5) -- (4,4);
        \draw [] (4,4) -- (4,3);
        \draw [dotted] (4,3) -- (4,2);
        \spoint{4}{5};
        \spoint{4}{4};
        \spoint{4}{3};
        \node [right] at (4.1, 5.0) {$\pi = \ball{2413}{\beta}$};
        \node [right] at (4.1, 4.0) {$\kappa_c = \phi_c$};
        \node [right] at (4.1, 3.6) {$\phantom{\kappa_c} = \ball{2413}{\tau_c}$};
        \node [right] at (4.1, 3.0) {$\psi_c$};
        \draw [] (8,5) -- (8,3);
        \draw [dotted] (8,3) -- (8,2);
        \spoint{8}{5};
        \spoint{8}{3};
        \node [right] at (8.1, 5.0) {$\pi = \ball{2413}{\beta}$};
        \node [right] at (8.1, 4.6) {$\phantom{\pi} = \phi_c$};
        \node [right] at (8.1, 4.2) {$\phantom{\pi} = \ball{2413}{\tau_c}$};
        \node [right] at (8.1, 3.0) {$\kappa_c = \psi_c$};
        \end{tikzpicture}
    \end{center}
    \caption{Examples of chains, showing some possible relationships between $\pi$, $\kappa_c$, $\phi_c$, and $\psi_c$.}            
    \label{figure-example-chains}
\end{figure}
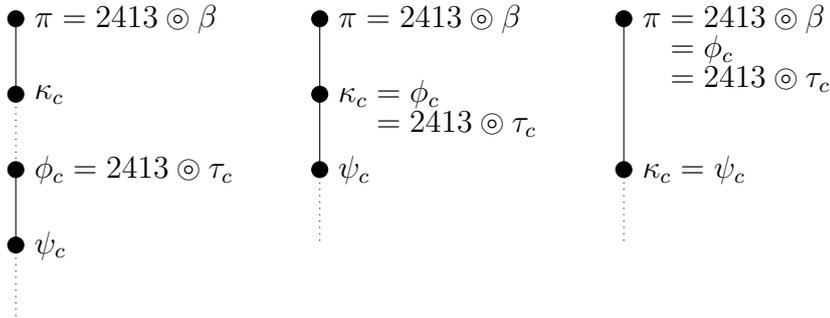    

%
%
We are now in a position to give a definition of the 
sets
$\chainr$,
$\chaing$, and
$\chainb$.
This definition
depends on the set of proper reductions of $\pi$, $\redset$,
which, as stated earlier, 
depends on the form of $\beta$.

Let $\chainc$ be the set of chains in the poset interval $[1, \pi]$.
We define subsets of $\chainc$ as follows:
\begin{align*}
\chainr &= \{ c : c \in \chainc \text{ and } \kappa_c \in \redset \}, \\
\chaing &= \{ c : c \in \chainc \setminus \chainr \text{ and } \psi_c \leq 2413 \}, \\
\chainb &= \{ c : c \in \chainc \setminus (\chainr \cup \chaing) \}.
\end{align*}
Clearly, every chain in $[1, \pi]$ is included
in exactly one of these subsets, 
and so these sets are a partition of the chains.

%
%
Given a pivot $\psi_c$, there is a unique 
permutation $\eta_c$ which we call the 
\emph{core}\extindof{2413-balloon}{core}{(2413-balloon)}
of $\psi_c$.  
In essence, $\eta_c$ is the smallest permutation
such that $\psi_c < \ball{2413}{\eta_c}$.
To determine the core,
we use the following algorithm:
\begin{align*}
\text{If $\psi_c$ can be written as~~}
& 1 \ominus ( ( \eta \ominus 1 ) \oplus 1) 
\text{~~or~~}
( ( 1 \oplus \eta ) \ominus 1 ) \oplus 1 \\
\text{or~~}
&
1 \oplus ( 1 \ominus ( \eta \oplus 1 ) ) 
\text{~~or~~}
( 1 \oplus ( 1 \ominus \eta ) ) \ominus 1, \\
\text{then set~~}
& \eta_c = \eta.\\
\text{Otherwise, if $\psi_c$ can be written as~~}
& ( \eta \ominus 1 ) \oplus 1 
\text{~~or~~}
1 \ominus ( \eta \oplus 1 ) \\
\text{~~or~~}
&1 \ominus \eta \ominus 1 
\text{~~or~~}
1 \oplus \eta \oplus 1 \\
\text{~~or~~}
& ( 1 \oplus \eta ) \ominus 1 
\text{~~or~~}
1 \oplus ( 1 \ominus \eta ), \\
\text{then set~~}
& \eta_c = \eta. \\
\text{Otherwise, if $\psi_c$ can be written as~~}
& 1 \oplus \eta 
\text{~~or~~}
1 \ominus \eta 
\text{~~or~~}
\eta \ominus 1 
\text{~~or~~}
\eta \oplus 1, \\
\text{then set~~}
& \eta_c = \eta.\\
\text{Otherwise, set~~}
& \eta_c = \psi_c.\\
\end{align*}

Since we have $\psi_c < \phi_c = \ball{2413}{\tau_c}$,
it is easy to see that 
$\eta_c \leq \tau_c$.
Note that $\ball{2413}{\eta_c}$ is the smallest 2413-balloon
that contains $\psi_c$.

%
%
We now define two functions, one for each of
$\chaing$ and
$\chainb$,
which will give us parity-reversing involutions.

\begin{align*}
\Phi_{\chaing}(c) & =
\begin{cases}
c \setminus \{2413\} & \text{If $\psi_c = 2413$} \\
c \cup \{2413\} & \text{If $\psi_c < 2413$} \\
\end{cases} \\
\Phi_{\chainb}(c) & =
\begin{cases}
c \setminus \{\ball{2413}{\eta_c}\} & \text{If $\eta_c = \tau_c$} \\
c \cup \{\ball{2413}{\eta_c}\} & \text{If $\eta_c < \tau_c$} \\
\end{cases} \\
\end{align*}

\begin{remark}
    If we were to allow the ballooning of the empty permutation $\epsilon$,
    and then treat $2413$ as $\ball{2413}{\epsilon}$
    then $\Phi_{\chaing}(c)$ is subsumed by 
    $\Phi_{\chainb}(c)$.  Doing this, however, introduces 
    additional complications in later proofs, 
    and so we prefer two involutions.
\end{remark}

For $\Phi_{\chaing}(c)$ to be a 
parity-reversing involution on $\chaing$,
we need to show that
if $c \in \chaing$, then
$\Phi_{\chaing}(c)$ is a chain, 
that $\Phi(c) \in \chaing$, 
and that
$c$ and $\Phi(c)$ have different parities.
It is easy to see that 
this last condition is true.
A similar comment applies to 
$\Phi_{\chainb}(c)$ and $\chainb$.

For $\Phi_{\chaing}(c)$ we can show that 
all the conditions hold for any $\redset$,
regardless of the form of $\beta$.
For $\Phi_{\chainb}(c)$
we show that some weaker conditions hold
for an arbitrary subset of the reductions of $\pi$,
and then, when we have an explicit set of proper reductions,
we show that all conditions hold.
The following Lemma
gives us a result that applies to 
$\Phi_{\chaing}(c)$ and $\Phi_{\chainb}(c)$
for any $\redset$,
and we
will use this result in both 
Section~\ref{section-2413-double-balloons}
and
Section~\ref{section-2413-balloons}.

%
%
\begin{lemma}
    \label{lemma-2413-balloon-phi-gb}
    Let $\pi = \ball{2413}{\beta}$, 
    with $\order{\beta} > 4$, 
    and let 
    $\chainr$,
    $\chaing$, and
    $\chainb$
    be as defined above.
    
    \begin{enumerate}[label=(\alph*)]
        \item \label{enum-lemma-balloon-g}
        If $c \in \chaing$, 
        then $\Phi_{\chaing}(c) \in \chaing$.
        
        \item \label{enum-lemma-balloon-b-eq}
        If $c \in \chainb$, 
        with $\eta_c = \tau_c$,
        and $\Phi_{\chainb}(c)$ is a chain,
        then $\Phi_{\chainb}(c) \in \chainb \cup \chainr$.
        
        \item \label{enum-lemma-balloon-b-lt}
        If $c \in \chainb$, 
        with $\eta_c < \tau_c$,
        then $\Phi_{\chainb}(c) \in \chainb \cup \chainr$.
    \end{enumerate}
\end{lemma}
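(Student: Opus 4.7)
The plan is to verify each of the three parts by tracking how the characteristic elements $\phi_c$, $\psi_c$, and $\kappa_c$ of a chain transform under the two involutions, and then checking both that the image is a chain and that it lies in the required partition class. The arguments split into subcases depending on whether $\phi_c = \pi$ (so that $\kappa_c = \psi_c$) and on whether the involution creates or destroys an element between $\psi_c$ and $\phi_c$.

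For part~\ref{enum-lemma-balloon-g} I split on whether $\psi_c = 2413$ or $\psi_c < 2413$. When $\psi_c = 2413$, $\Phi_{\chaing}$ removes $2413$ from $c$, leaving a subset of a totally ordered set, hence still a chain containing $1$ and $\pi$; since $2413$ is not itself a 2413-balloon (the empty permutation is excluded from the ballooning definition), the top segment of 2413-balloons is unaffected, so $\phi_{\Phi_{\chaing}(c)} = \phi_c$, and the new pivot is the element that previously sat immediately below $2413$, which is strictly less than $2413$. When $\psi_c < 2413$, I insert $2413$ between $\psi_c$ and $\phi_c$; insertion is legal because $\phi_c = \ball{2413}{\tau_c}$ pattern-contains $2413$ on its four extreme points, and every other element of the totally ordered $c$ lies either above $\phi_c$ (hence above $2413$) or at most $\psi_c$ (hence below $2413$). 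In both subcases the new pivot satisfies $\psi_{\Phi_{\chaing}(c)} \leq 2413$; the second-highest element is unchanged unless $\phi_c = \pi$, in which case it becomes either some $\tau < 2413$ or $2413$ itself. Since reductions of $\pi$ contain $\beta$ as an interval and therefore have length at least $\order{\beta} \geq 5$, none of these candidates lies in $\redset$, so $\Phi_{\chaing}(c) \in \chaing$.

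For part~\ref{enum-lemma-balloon-b-eq} the hypothesis $\eta_c = \tau_c$ gives $\ball{2413}{\eta_c} = \phi_c$, so $\Phi_{\chainb}(c) = c \setminus \{\phi_c\}$, and the assumption that $\Phi_{\chainb}(c)$ is a chain in $[1,\pi]$ forces $\phi_c \neq \pi$. Removing $\phi_c$ promotes the element of $c$ immediately above it (another 2413-balloon) to the role of $\phi_{\Phi_{\chainb}(c)}$, while $\psi_c$ remains the pivot. For part~\ref{enum-lemma-balloon-b-lt}, the minimality built into the definition of $\eta_c$ gives $\psi_c < \ball{2413}{\eta_c}$, and $\eta_c < \tau_c$ together with the pattern-preserving nature of ballooning gives $\ball{2413}{\eta_c} < \phi_c$; this sandwich shows that $\Phi_{\chainb}(c) = c \cup \{\ball{2413}{\eta_c}\}$ is indeed a chain, with $\phi_{\Phi_{\chainb}(c)} = \ball{2413}{\eta_c}$ and $\psi_{\Phi_{\chainb}(c)} = \psi_c$. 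In both parts~\ref{enum-lemma-balloon-b-eq} and~\ref{enum-lemma-balloon-b-lt} the new pivot equals $\psi_c$, which by $c \in \chainb$ satisfies $\psi_c \not\leq 2413$; hence $\Phi_{\chainb}(c) \notin \chaing$, and so $\Phi_{\chainb}(c) \in \chainr \cup \chainb$ according to whether its second-highest element lies in $\redset$.

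The main obstacle is the bookkeeping for the position of $\phi_c$ within $c$: when $\phi_c = \pi$ the second-highest element coincides with $\psi_c$ and hence changes under the involution, whereas when $\phi_c < \pi$ it does not. The most delicate verification is that the newly created second-highest element in part~\ref{enum-lemma-balloon-g} (namely $2413$ itself, or a subpattern of $2413$) lies outside $\redset$; this is where the hypothesis $\order{\beta} > 4$ is essential, because every proper reduction of $\pi$ contains $\beta$ as an interval and therefore has length at least $\order{\beta}$, forcing such short candidates to fall outside $\redset$.
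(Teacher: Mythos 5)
Your proposal is correct and takes essentially the same route as the paper's proof: a case analysis on the action of $\Phi_{\chaing}$ and $\Phi_{\chainb}$, checking chain-ness via the sandwich $\psi_c < \ball{2413}{\eta_c} < \phi_c$ (and $\phi_c \neq \pi$ in the deletion cases), using $\order{\beta} > 4$ to rule out short permutations being in $\redset$, and using that the pivot is preserved in parts (b) and (c) to exclude membership in $\chaing$. If anything, your tracking of the second-highest element $\kappa_c$ in part (a) is slightly more explicit than the paper's own wording, but the argument is the same.
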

\begin{proof}
    \textbf{Case~\ref{enum-lemma-balloon-g}.}
    First, assume that $c \in \chaing$ with $\psi_c = 2413$.
    Then $c$ contains a segment 
    $2413 < \ball{2413}{\tau_c}$,
    and $\cprime = \Phi_{\chaing}(c) = c \setminus \{ 2413 \}$.
    We can see that $\cprime$ is a chain,
    as 2413 is neither the smallest nor the largest entry in $\cprime$.
    Further, $\psi_{\cprime} < 2413$.  
    Since $\order{\beta} > 4$, 
    and $\order{\psi_{\cprime}} < 4$
    we must have $\cprime \not \in \chainr$,
    and therefore $\cprime \in \chaing$.
    
    Now assume that $c \in \chaing$ with $\psi_c < 2413$.
    Then $c$ contains a segment 
    $\psi_c < \ball{2413}{\tau_c}$,
    and $\cprime = \Phi_{\chaing}(c) = c \cup \{ 2413 \}$.
    We can see that $\cprime$ is a chain,
    since $\psi_c < 2413 < \ball{2413}{\tau_c}$,
    and further, $\psi_{\cprime} = 2413$.
    Since $\order{\beta} > 4$, 
    and $\order{\psi_{\cprime}} = 4$
    we must have $\cprime \not \in \chainr$,
    and therefore $\cprime \in \chaing$. 
    
    \textbf{Case~\ref{enum-lemma-balloon-b-eq}.}
    Let $c$ be a chain in $\chainb$, 
    with $\eta_c = \tau_c$.
    Then $c$ contains a segment
    $\psi_c < \ball{2413}{\tau_c}$,
    and $\cprime = \Phi_{\chainb}(c) = c \setminus \{ \ball{2413}{\tau_c} \}$.
    If $\tau_c = \beta$, then $\cprime$ is not a chain,
    so we must have $\tau_c < \beta$, and therefore
    $\cprime$ is a chain that contains a segment
    $\psi_c < \ball{2413}{\gamma}$,
    with $\tau_c < \gamma$.
    Now, $\psi_c$ is the pivot of $\cprime$,
    so we cannot have $\cprime \in \chaing$
    as this would imply that $c \in \chaing$,
    which is a contradiction.
    Thus either
    $\cprime \in \chainr$ or $\cprime \in \chainb$.
    
    \textbf{Case~\ref{enum-lemma-balloon-b-lt}.}
    Let $c$ be a chain in $\chainb$, 
    with $\eta_c < \tau_c$.
    Then $c$ contains a segment
    $\psi_c < \ball{2413}{\tau_c}$,
    and $\cprime = \Phi_{\chainb}(c) = c \cup \{ \ball{2413}{\eta_c} \}$.
    We can see that $\cprime$ is a chain since
    $\psi_c < \ball{2413}{\eta_c} < \ball{2413}{\tau_c}$.
    Now, $\psi_c$ is the pivot of $\cprime$,
    so we cannot have $\cprime \in \chaing$
    as this would imply that $c \in \chaing$,
    which is a contradiction.
    So either
    $\cprime \in \chainr$ or $\cprime \in \chainb$.
\end{proof}

%
%
We now have
\begin{observation}
    \label{observation-all-we-have-to-do}
    If $\pi = \ball{2413}{\beta}$,
    with $\order{\beta} > 4$,
    then to show that
    $\Phi_{\chainb}$ is a parity-reversing involution on $\chainb$
    it is sufficient to show that:    
    \begin{enumerate}[label=(\alph*)]
        \item \label{enum-observation-all-we-have-to-do-b-eq}
        If $c \in \chainb$ 
        and $\eta_c = \tau_c$, 
        then $\Phi_{\chainb}(c)$ is a chain, and
        $\Phi_{\chainb}(c) \not\in \chainr$.
        
        \item \label{enum-observation-all-we-have-to-do-b-chain}
        If $c \in \chainb$,
        and $\eta_c < \tau_c$, 
        then $\Phi_{\chainb}(c) \not\in \chainr$.
    \end{enumerate}
    
    Further,
    if
    $\Phi_{\chainb}$ is a parity-reversing involution on the set $\chainb$,
    then 
    $\mobp{\pi} = - \sum_{\sigma \in \redset} \mobp{\sigma}$.    
\end{observation}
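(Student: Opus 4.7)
The plan is to verify the two parts of the observation in sequence: first, that conditions (a) and (b) are sufficient to make $\Phi_{\chainb}$ a parity-reversing involution on $\chainb$; second, that once this is established, the Möbius function formula follows from Hall's Theorem and Corollary~\ref{corollary-hall-sum-second-highest-set}.

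For the first part, the map $\Phi_{\chainb}$ is parity-reversing by construction, since it either adds or removes a single element. To show it is a well-defined map from $\chainb$ to itself, I would combine the hypothesised conditions with Lemma~\ref{lemma-2413-balloon-phi-gb}. Given $c \in \chainb$ with $\eta_c = \tau_c$, hypothesis (a) supplies both that $\Phi_{\chainb}(c)$ is a chain and that $\Phi_{\chainb}(c) \notin \chainr$; combined with Lemma~\ref{lemma-2413-balloon-phi-gb}\ref{enum-lemma-balloon-b-eq}, which forces the image into $\chainb \cup \chainr$, this places the image in $\chainb$. Symmetrically, for $c \in \chainb$ with $\eta_c < \tau_c$, Lemma~\ref{lemma-2413-balloon-phi-gb}\ref{enum-lemma-balloon-b-lt} together with hypothesis (b) lands the image in $\chainb$. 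To confirm involutivity I would track how the pivot and core transform under $\Phi_{\chainb}$: since the core $\eta_c$ depends only on $\psi_c$, it suffices to show that $\psi_{\cprime} = \psi_c$ in each case. When $\eta_c = \tau_c$ we remove $\ball{2413}{\tau_c}$, so the new least 2413-balloon sits strictly above $\ball{2413}{\tau_c}$ in the original chain; the pivot immediately below it is still $\psi_c$, so $\eta_{\cprime} = \eta_c < \tau_{\cprime}$, placing $\cprime$ in the situation of hypothesis (b), and a second application reinserts $\ball{2413}{\tau_c}$. The case $\eta_c < \tau_c$ is handled symmetrically.

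For the second part, I would apply Hall's Theorem to the interval $[1,\pi]$, using the partition $\chainc = \chainr \sqcup \chaing \sqcup \chainb$. Since $\Phi_{\chaing}$ is a parity-reversing involution on $\chaing$ by Lemma~\ref{lemma-2413-balloon-phi-gb}\ref{enum-lemma-balloon-g} (adjoining or removing $2413$ is clearly involutive and parity-reversing), Corollary~\ref{corollary-halls-corollary} forces the Hall sum over $\chaing$ to vanish; by the assumed involution the Hall sum over $\chainb$ likewise vanishes. All that remains is the Hall sum over $\chainr$, which by the definition of $\chainr$ consists precisely of those chains whose second-highest element lies in $\redset$. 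Corollary~\ref{corollary-hall-sum-second-highest-set} evaluates this remaining sum as $-\sum_{\sigma \in \redset} \mobp{\sigma}$, yielding the claimed identity.

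The main obstacle is the involutivity check, because one must be confident that removing (or inserting) the distinguished 2413-balloon does not shift the pivot of the chain. The resolution hinges on the fact that the pivot is defined relative to the least 2413-balloon in the chain, and both operations preserve which element sits immediately below that new least 2413-balloon. Everything else is routine bookkeeping: the parity-reversing property is visible, and the Möbius evaluation is a direct citation of the earlier corollary.
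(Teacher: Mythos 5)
Your proposal is correct and follows essentially the same route as the paper: combine hypotheses (a) and (b) with cases~\ref{enum-lemma-balloon-b-eq} and~\ref{enum-lemma-balloon-b-lt} of Lemma~\ref{lemma-2413-balloon-phi-gb} to place $\Phi_{\chainb}(c)$ in $\chainb$, kill the Hall sums over $\chaing$ and $\chainb$ via Corollary~\ref{corollary-halls-corollary}, and evaluate the remaining sum over $\chainr$ with Corollary~\ref{corollary-hall-sum-second-highest-set}. Your explicit check that the pivot (and hence the core) is unchanged under $\Phi_{\chainb}$, so that a second application undoes the first, is a sound and slightly more careful treatment of the involutivity that the paper's proof leaves implicit.
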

\begin{proof}
    Combining~\ref{enum-observation-all-we-have-to-do-b-eq}
    and~\ref{enum-observation-all-we-have-to-do-b-chain}
    above
    with cases~\ref{enum-lemma-balloon-b-eq}
    and~\ref{enum-lemma-balloon-b-lt}
    of Lemma~\ref{lemma-2413-balloon-phi-gb}
    gives us that 
    $\Phi_{\chainb}$ is a parity-reversing involution on $\chainb$.
    
    This now gives us that
    $\sum_{c \in \chainb} (-1)^{\order{c}} = 0$.     
    By Lemma~\ref{lemma-2413-balloon-phi-gb}, we know that 
    $\sum_{c \in \chaing} (-1)^{\order{c}} = 0$, 
    so we must have
    $\mobp{\pi} = \sum_{c \in \chainr} (-1)^{\order{c}}$.
    Since the chains in $\chainr$ are defined by the 
    second-highest element ($\kappa_c$) being in $\redset$, 
    the final part of the observation follows
    by applying Corollary~\ref{corollary-hall-sum-second-highest-set}.
\end{proof}

\section[The principal \mob function of double 2413-balloons]{The principal \mob function of double\\ 2413-balloons}
\label{section-2413-double-balloons}

We are now able to state and prove our first major result.
\begin{theorem}
    \label{theorem-2413-balloon-beta-is-a-balloon}
    Let $\pi = \ball{2413}{\beta}$,
    where $\beta$ is a 2413-balloon,
    Then $\mobp{\pi} = 2 \mobp{\beta}$.	
\end{theorem}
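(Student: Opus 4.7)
The plan is to apply the framework assembled in Section~\ref{section-define-2413-balloon}. Since $\beta = \ball{2413}{\gamma}$ is itself a 2413-balloon, the case analysis following the definition of proper reductions tells us that the only improper reduction of $\pi$ is $\beta$ itself, so $\redset$ consists of exactly the fourteen reductions shown in Figure~\ref{figure-2413-reductions} other than $\beta$. By Observation~\ref{observation-all-we-have-to-do}, once we have verified that $\Phi_{\chainb}$ is a parity-reversing involution on $\chainb$ we will have
\[
\mobp{\pi} = - \sum_{\sigma \in \redset} \mobp{\sigma},
\]
so the theorem reduces to showing that this sum equals $-2\mobp{\beta}$.

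To verify the involution, I would invoke Lemma~\ref{lemma-2413-balloon-phi-gb}\ref{enum-lemma-balloon-b-eq}--\ref{enum-lemma-balloon-b-lt} and check the two conditions in Observation~\ref{observation-all-we-have-to-do}. The only way $\Phi_{\chainb}(c)$ could land in $\chainr$ is if removing (or inserting) the 2413-balloon $\ball{2413}{\eta_c}$ from $c$ leaves a chain whose second-highest element is a proper reduction of $\pi$; but any proper reduction has length strictly less than $\order{\pi}-1$ when $\beta$ is a 2413-balloon (all reductions in $\redset$ come with at least one $\ex{\cdot}$ mark), and a short calculation with the definitions of $\eta_c$ and $\tau_c$ shows this cannot happen. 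The case $\eta_c=\tau_c$ with $\tau_c=\beta$ is also excluded because $\beta$ is a 2413-balloon and hence $\Phi_{\chainb}(c)$ still contains the corresponding balloon above $\psi_c$.

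The bulk of the work is the explicit evaluation of the fourteen values $\mobp{\sigma}$. Each reduction can be written, after the relabelling forced by the deleted outer points, as an iterated application of the operators $1\oplus(\cdot)$, $(\cdot)\oplus 1$, $1\ominus(\cdot)$, $(\cdot)\ominus 1$ to $\beta$. Concretely, one finds
\[
\begin{array}{r@{\;=\;}l@{\qquad}r@{\;=\;}l}
\ball{\ex{2}413}{\beta} & 1\ominus((\beta\ominus 1)\oplus 1), &
\ball{2\ex{4}13}{\beta} & ((1\oplus\beta)\ominus 1)\oplus 1,\\
\ball{24\ex{1}3}{\beta} & 1\oplus(1\ominus(\beta\oplus 1)), &
\ball{241\ex{3}}{\beta} & (1\oplus(1\ominus\beta))\ominus 1,
\end{array}
\]
and the six length-$(n+2)$ and four length-$(n+1)$ reductions admit analogous expressions of the form $1\oplus\beta\oplus 1$, $1\ominus\beta\ominus 1$, $(1\oplus\beta)\ominus 1$, $(\beta\ominus 1)\oplus 1$, etc. For every intermediate permutation one checks that it has no long corner, using the key structural facts that $\beta_1=2$, $\beta_2$ is the maximum of $\beta$, $\beta_{n-1}=1$ and $\beta_n$ is the second-largest value of $\beta$. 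Lemma~\ref{lemma-oneplus} then applies repeatedly to give $\mobp{\sigma}=\pm\mobp{\beta}$ for each~$\sigma$.

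Tallying contributions by level yields $-4\mobp{\beta}$ from the four length-$(n+3)$ reductions, $+6\mobp{\beta}$ from the six length-$(n+2)$ reductions, and $-4\mobp{\beta}$ from the four length-$(n+1)$ reductions, for a total of $\sum_{\sigma\in\redset}\mobp{\sigma} = -2\mobp{\beta}$. Substituting into $\mobp{\pi}=-\sum_{\sigma\in\redset}\mobp{\sigma}$ gives $\mobp{\pi}=2\mobp{\beta}$. The hardest step is likely the long-corner verification for the length-$(n+2)$ reductions $1\oplus\beta\oplus 1$ and $1\ominus\beta\ominus 1$, where one must be careful that the outer singletons do not collide with the corners already present in $\beta$; these checks all go through precisely because $\beta$ is a 2413-balloon and therefore neither starts nor ends with two consecutive extremes in value.
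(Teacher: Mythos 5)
Your overall architecture is the same as the paper's: identify $\redset$ as the fourteen reductions other than $\beta$, reduce via Observation~\ref{observation-all-we-have-to-do} to showing that $\Phi_{\chainb}$ is a parity-reversing involution on $\chainb$, and then evaluate the fourteen values $\mobp{\sigma}$ with Lemma~\ref{lemma-oneplus}, using that $\beta$, being a 2413-balloon, has no corners; your tally $-4+6-4$ and the conclusion $\mobp{\pi}=2\mobp{\beta}$ agree with the paper. The gap is in the one substantive step that Observation~\ref{observation-all-we-have-to-do} leaves to you, namely showing that $\Phi_{\chainb}(c)$ is a chain when $\eta_c=\tau_c$ and that $\Phi_{\chainb}(c)\notin\chainr$ in both cases. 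Your justification rests on the claim that ``any proper reduction has length strictly less than $\order{\pi}-1$''; this is false: the four reductions obtained by deleting a single outer point, e.g.\ $\ball{\ex{2}413}{\beta}$, have length exactly $\order{\pi}-1$, so no length count can rule out the second-highest element of $\Phi_{\chainb}(c)$ being a proper reduction. Your treatment of the subcase $\eta_c=\tau_c$ with $\tau_c=\beta$ is also off: there $\ball{2413}{\eta_c}=\pi$, so deleting it would remove the top of the chain; the point is not that ``$\Phi_{\chainb}(c)$ still contains the corresponding balloon above $\psi_c$'' but that this subcase cannot occur at all when $c\in\chainb$.

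The repair is structural rather than length-based, and it is exactly what the paper does. If $c\in\chainb$ and $\tau_c=\beta$, then $\eta_c=\beta$ forces $\psi_c$ to be a proper reduction of $\pi$ (so $c\in\chainr$) or $\psi_c=\beta$ (so the pivot is a 2413-balloon), both contradictions; hence $\tau_c<\beta$, there is an element of $c$ above $\phi_c$, and deletion leaves a chain. For membership: after inserting $\ball{2413}{\eta_c}$, the second-highest element of $\Phi_{\chainb}(c)$ is either $\kappa_c$ (not in $\redset$, since $c\notin\chainr$) or the inserted element itself, which is a 2413-balloon and therefore not a proper reduction; after deleting $\ball{2413}{\tau_c}$, if the second-highest element becomes $\psi_c$ and $\psi_c$ were a proper reduction, then its core would satisfy $\eta_c=\beta$, contradicting $\tau_c<\beta$. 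With this step replaced, the remainder of your argument (the explicit sum/skew-sum expressions for the reductions, the no-long-corner checks, and the final tally) goes through as written.
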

\begin{proof}
    Note that $\beta \not \in \redset$, and further
    that $\order{\beta} > 4$, 
    since $\beta$ is a 2413-balloon.
    
    Using Observation~\ref{observation-all-we-have-to-do},
    we will show that
    $\Phi_{\chainb}$ is a parity-reversing involution on $\chainb$.
    Once we have shown that we have parity-reversing involutions,
    we will then show how to express the Hall sum of $\chainr$
    in terms of $\mobp{\beta}$.
    
    %
    %
    \begin{subproof}[Proof that $\Phi_{\chainb}$ is a parity-reversing involution on $\chainb$.]
        Let $c$ be a chain in $\chainb$.
        
        %
        First, assume that $\eta_c = \tau_c$.
        If $\tau_c = \beta$, then either 
        $\psi_c$ is a proper reduction of $\pi$,
        or $\psi_c = \beta$.
        In the first case, $c \in \chainr$,
        and in the second case 
        $\psi_c$ is a 2413-balloon, 
        and these are both contradictions.
        Thus
        we must have $\tau_c < \beta$,
        and so there is at least one permutation
        in $c$ greater than $\phi_c$.  
        It follows that $\cprime$ is a chain.
        We now show that $\cprime \not\in \chainr$.
        Assume, to the contrary, that $\cprime \in \chainr$
        which implies that $\psi_c$ is a proper reduction of $\pi$.
        But now we have $\eta_c = \beta$, which is a contradiction,
        so $\psi_c$ is not a proper reduction of $\pi$,
        therefore $\cprime \not \in \chainr$.
        
        %
        Now assume that $\eta_c < \tau_c$.
        Let $\cprime = \Phi_{\chainb}(c) = c \cup \{ \ball{2413}{\eta_c} \}$.  
        We know by Lemma~\ref{lemma-2413-balloon-phi-gb}
        that this is a chain.
        Either $\kappa_c = \kappa_{\cprime}$,
        or $\kappa_{\cprime}$ 
        is a 2413-balloon.
        If $\kappa_c = \kappa_{\cprime}$,
        then $\cprime \not \in \chainr$.
        If $\kappa_{\cprime}$ is a 2413-balloon,
        then $\kappa_{\cprime} \not \in \redset$,
        so $\cprime \not \in \chainr$.
        Thus we must have $\cprime \not\in \chainr$.
        
        So now we have that 
        if $c \in \chainb$ 
        and $\eta_c = \tau_c$, 
        then $\Phi_{\chainb}(c)$ is a chain;
        and that for any $c \in \chainb$,
        $\Phi_{\chainb}(c) \in \chainb$.
        It follows that
        $\Phi_{\chainb}$ is a parity-reversing involution
        on $\chainb$.        
    \end{subproof}
    
    We now have that
    $\Phi_{\chaing}$ and
    $\Phi_{\chainb}$ 
    are parity-reversing involutions on
    $\chaing$ and
    $\chainb$ respectively.
    It follows from 
    Observation~\ref{observation-all-we-have-to-do}
    that
    $
    \mobp{\pi} = - \sum_{\sigma \in \redset} \mobp{\sigma}.
    $
    We now show how to express $\mobp{\sigma}$, where $\sigma \in \redset$,
    in terms of $\mobp{\beta}$.
    
    We start by noting that since $\beta$ is a 2413-balloon,
    then $\beta$ has no corners.
    Now,
    take the case where $\sigma = \ball{\ex{2}413}{\beta}$,
    which is the first permutation in 
    Figure~\ref{figure-2413-reductions}.
    Note that we can write 
    $\sigma = \oneminus ((\beta \minusone) \plusone)$.
    Applying Lemma~\ref{lemma-oneplus} to the 
    outermost three points in $\sigma$ 
    (those from the $\ex{2}413$), we find that
    $\mobp{\sigma} = -\mobp{\beta}$.
    The other cases are similar, and 
    this gives us:\footnote{This table is slightly redundant,
        as the entries are determined by the parity of the red points.
        We include it as later results have similar tables where some
        values of $\mobp{\sigma}$ are zero, and this gives a consistent presentation.}
    \begin{center}
        $
        \begin{array}{ccccc}
        \begin{array}{lr}
        \sigma & \mobp{\sigma} \\
        \midrule
        \ball{\ex{2}413}{\beta} & - \mobp{\beta} \\
        \ball{2\ex{4}13}{\beta} & - \mobp{\beta} \\
        \ball{24\ex{1}3}{\beta} & - \mobp{\beta} \\
        \ball{241\ex{3}}{\beta} & - \mobp{\beta} \\
        \phantom{x} & \phantom{x} \\		    
        \phantom{x} & \phantom{x} \\		    
        \end{array} 
        & \phantom{xxx} &
        \begin{array}{lr}
        \sigma & \mobp{\sigma} \\
        \midrule
        \ball{\ex{2}\ex{4}13}{\beta} & \mobp{\beta} \\
        \ball{\ex{2}4\ex{1}3}{\beta} & \mobp{\beta} \\
        \ball{\ex{2}41\ex{3}}{\beta} & \mobp{\beta} \\
        \ball{2\ex{4}\ex{1}3}{\beta} & \mobp{\beta} \\
        \ball{2\ex{4}1\ex{3}}{\beta} & \mobp{\beta} \\
        \ball{24\ex{1}\ex{3}}{\beta} & \mobp{\beta} \\
        \end{array} 
        & \phantom{xxx} &
        \begin{array}{lr}
        \sigma & \mobp{\sigma} \\
        \midrule
        \ball{2\ex{4}\ex{1}\ex{3}}{\beta} & - \mobp{\beta} \\
        \ball{\ex{2}4\ex{1}\ex{3}}{\beta} & - \mobp{\beta} \\
        \ball{\ex{2}\ex{4}1\ex{3}}{\beta} & - \mobp{\beta} \\
        \ball{\ex{2}\ex{4}\ex{1}3}{\beta} & - \mobp{\beta} \\
        \phantom{x} & \phantom{x} \\		    
        \phantom{x} & \phantom{x} \\		    
        \end{array} \\		  		    
        \end{array}
        $
    \end{center}
    It is now easy to see that 
    \[
    \sum_{\sigma \in \redset} \mobp{\sigma} = - 2 \mobp{\beta}
    \]
    and the result follows directly.	
\end{proof}

\section{The growth of the \mob function}
\label{section-growth-rate-of-mu}

We define
$\mobmaxn = \max \{ \order{\mobp{\pi}} : \order{\pi} = n \}$.
Previous work in~\cite{Jelinek2020} and~\cite{Smith2013}
has shown that the growth of $\mobmaxn$ is at least polynomial.  
We will show that the growth is at least exponential.
We have
\begin{theorem}
    \label{theorem-growth-of-mobius-function}
    For all $n$, 
    $\mobmaxn \geq 2^{\lfloor n/4 \rfloor - 1 }$.
\end{theorem}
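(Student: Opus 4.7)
The plan is to iterate Theorem~\ref{theorem-2413-balloon-beta-is-a-balloon} starting from a small base case. Take $\beta_0 = 25314$, which equals $\ball{2413}{1}$ and is therefore a 2413-balloon of length $5$. A direct computation on the (small) interval $[1, 25314]$ confirms that $|\mobp{\beta_0}| \geq 1$.

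Define recursively $\beta_{k+1} = \ball{2413}{\beta_k}$ for $k \geq 0$. Each $\beta_k$ is itself a 2413-balloon, so $\beta_{k+1}$ is the 2413-balloon of a 2413-balloon and Theorem~\ref{theorem-2413-balloon-beta-is-a-balloon} applies, giving $\mobp{\beta_{k+1}} = 2 \mobp{\beta_k}$. A straightforward induction yields
\[
|\mobp{\beta_k}| = 2^k |\mobp{\beta_0}| \geq 2^k, \qquad |\beta_k| = 4k + 5.
\]
For $n = 4k + 5$ we have $\lfloor n/4 \rfloor = k+1$, so $\mobmaxn \geq 2^k = 2^{\lfloor n/4 \rfloor - 1}$, establishing the theorem for these lengths.

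For the remaining values of $n$, the strategy is to pad $\beta_k$ with one or two isolated corners via Lemma~\ref{lemma-oneplus}, which preserves $|\mobp{\cdot}|$ provided no long corner is introduced. Since $\beta_k$ begins with the pattern $2$ (and one checks that it ends with a value which is neither the greatest nor the least of its last two), $1 \ominus \beta_k$ has length $4k+6$ with $|\mobp{1 \ominus \beta_k}| = 2^k$, and similarly $1 \ominus \beta_k \ominus 1$ has length $4k+7$ with the same absolute Möbius value. Both meet the target bound $2^{\lfloor n/4 \rfloor - 1} = 2^k$ for $n$ in the block $\{4k+4, 4k+5, 4k+6, 4k+7\}$. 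For $n \equiv 0 \pmod 4$, the extra factor of $2$ in the bound prevents one from obtaining $n = 4k+4$ by padding $\beta_{k-1}$; instead, I would verify a separate base case at length $8$ (a 2413-balloon of length $8$ with $|\mobp| \geq 2$) and iterate the same construction, producing a parallel sequence at lengths $8, 12, 16, \ldots$. Small values $n \leq 4$ are handled by direct inspection, since the required bound is then at most $1$.

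The main obstacle is the explicit base-case verification: showing $|\mobp{25314}| \geq 1$ and establishing the analogous small base case for the residue class $n \equiv 0 \pmod 4$. Both are routine finite computations on small permutation intervals. Everything else is then a straightforward induction via Theorem~\ref{theorem-2413-balloon-beta-is-a-balloon} together with Lemma~\ref{lemma-oneplus}.
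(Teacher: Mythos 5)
Your proposal is correct and follows essentially the same route as the paper: iterate Theorem~\ref{theorem-2413-balloon-beta-is-a-balloon} starting from explicitly computed small base cases, covering the four residue classes of $n$ modulo $4$. The paper packages this as a single recursion $\pi^{(n)} = \ball{2413}{\pi^{(n-4)}}$ seeded at lengths $5$--$8$ by the balloons of $1$, $12$, $132$ and $2413$, whereas you pad the length-$5$ chain with corner points via Lemma~\ref{lemma-oneplus} for the residues $2$ and $3$ and start a second balloon chain at length $8$ for residue $0$ --- a cosmetic difference only.
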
    
\begin{proof}
    We start by defining a function
    to construct a permutation of length $n$.
    \[
    \pi^{(n)} =
    \begin{cases}
    1                   & \text{If $n = 1$} \\
    12                  & \text{If $n = 2$} \\
    132                 & \text{If $n = 3$} \\
    2413                & \text{If $n = 4$} \\
    \ball{2413}{\pi^{(n-4)}} & \text{Otherwise}
    \end{cases}
    \]
    Note that for $n > 8$, $\pi^{(n)}$ is a double 2413-balloon.
    It is simple to calculate $\mobp{\pi^{(n)}}$ for $n=1, \ldots, 8$,
    and these values are given below.
    
    \begin{center}
        \begin{tabular}{lcl}
            $\mobp{\pi^{(1)}} = \mobp{1} = 1$,  &
            \phantom{xxxxx} & 
            $\mobp{\pi^{(5)}} = \mobp{25314} =  4$, \\
            $\mobp{\pi^{(2)}} = \mobp{12} = -1$, && 
            $\mobp{\pi^{(6)}} = \mobp{263415} = -1$, \\
            $\mobp{\pi^{(3)}} = \mobp{132} = 1$, &&
            $\mobp{\pi^{(7)}} = \mobp{2735416} = 1$, \\
            $\mobp{\pi^{(4)}} = \mobp{2413} = -3$, && 
            $\mobp{\pi^{(8)}} = \mobp{28463517}  = -6$.
        \end{tabular}    
    \end{center}
    These values match
    Theorem~\ref{theorem-growth-of-mobius-function},
    and so this is true for $n \leq 8$.
    For $n > 8$, 
    $\mobp{\pi^{(n)}} = 2 \mobp{\pi^{(n-4)}}$ 
    by Theorem~\ref{theorem-2413-balloon-beta-is-a-balloon},
    and the result follows immediately.    
\end{proof}
\begin{remark}
    \label{remark-simples-in-2413-balloons}
    It is easy to see that, with the definitions above,
    the only simple permutations that can be contained
    in $\pi^{(n)}$ are
    $1$, 
    $12$, 
    $21$,
    $2413$, and
    $25314$.
    This answers
    Problem 4.4 in~\cite{Jelinek2020},
    which asks
    whether $\mobp{\pi}$ is bounded on a hereditary 
    class which contains only finitely many simple permutations,
    as, by Theorem~\ref{theorem-growth-of-mobius-function},
    we have unbounded growth, but only finitely many simple 
    permutations.
\end{remark}

If we repeat the ballooning process, as we do in $\pi^{(n)}$,
then the permutation plot is rather striking.  
We illustrate this in 
Figure~\ref{figure-multiple-2413-ballons},
which shows $\pi^{(21)}$.
\begin{figure}
    \begin{center}
        \begin{tikzpicture}[scale=0.3]
        \plotpermgrid{02,21,04,19,06,17,08,15,10,13,11,09,12,07,14,05,16,03,18,01,20}
        \end{tikzpicture}
        \caption{A permutation plot showing $\pi^{(21)}$.}
        \label{figure-multiple-2413-ballons}
    \end{center}
\end{figure}
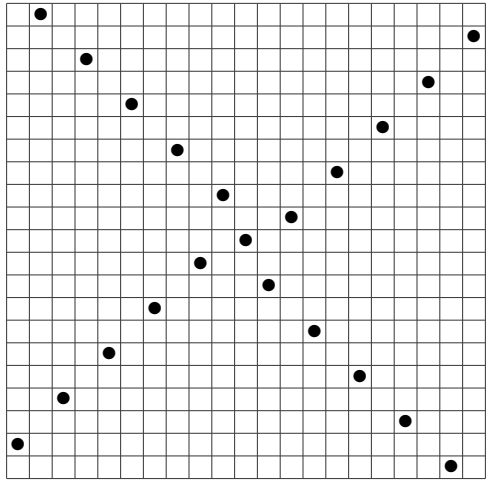

\section{The principal \mob function of 2413-balloons}
\label{section-2413-balloons}

Theorem~\ref{theorem-2413-balloon-beta-is-a-balloon} 
gives us an expression for the value of the \mob function
$\mobp{\pi}$ when $\pi$ is a double 2413-balloon.
We expand on this to find an expression for 
the \mob function $\mobp{\pi}$ when
$\pi$ is any 2413-balloon.

We start with a Lemma
that handles the case where
$\beta$ is not a 2413-balloon,
and has more than four points.
The structure of our proof is similar to that of 
Theorem~\ref{theorem-2413-balloon-beta-is-a-balloon},
but we present a complete
argument to aid readability.

We will show
\begin{lemma}
    \label{lemma-2413-balloon-beta-not-a-balloon}
    Let $\pi = \ball{2413}{\beta}$,
    where $\beta$ is not a 2413-balloon,
    and $\order{\beta} > 4$.
    Then $\mobp{\pi} = \mobp{\beta}$.	
\end{lemma}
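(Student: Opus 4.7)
The strategy mirrors the proof of Theorem~\ref{theorem-2413-balloon-beta-is-a-balloon}. I would partition the chains of $[1,\pi]$ into $\chainr,\chaing,\chainb$ as in Section~\ref{section-define-2413-balloon}. By Lemma~\ref{lemma-2413-balloon-phi-gb}\ref{enum-lemma-balloon-g} the map $\Phi_{\chaing}$ is a parity-reversing involution on $\chaing$, and parts \ref{enum-lemma-balloon-b-eq} and \ref{enum-lemma-balloon-b-lt} of the same lemma reduce the task for $\Phi_{\chainb}$ to verifying the two conditions spelled out in Observation~\ref{observation-all-we-have-to-do}.

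The only genuinely new situation, compared with the double-balloon proof, is $\eta_c=\tau_c=\beta$ (equivalently $\phi_c=\pi$), in which case the involution would have to delete the top element~$\pi$. Previously this was ruled out because $\beta$ was itself a 2413-balloon and so could not be a pivot; here I would argue directly that $\tau_c=\beta$ together with $c\notin\chainr$ forces a contradiction. Specifically, a structural analysis of $\psi_c$, using the corners of $\beta$ (if any) to describe exactly how $\psi_c$ can embed into $\pi$ without being in $\redset$, shows $\psi_c$ must in fact be a proper reduction of $\pi$, placing $c$ in $\chainr$. This verifies that $\Phi_{\chainb}$ is a parity-reversing involution on $\chainb$, so by Observation~\ref{observation-all-we-have-to-do},
\[
\mobp{\pi} = -\sum_{\sigma\in\redset}\mobp{\sigma}.
\]

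To evaluate the right-hand side I would case-split on the number of corners of $\beta$, using the enumeration of improper reductions in Section~\ref{section-define-2413-balloon} to pin down $\redset$ in each case. For each $\sigma\in\redset$ the $\ball{\cdot}{\beta}$ notation exposes $\sigma$ as an iterated sum of $\beta$ with some of the four outer points wrapped in the form $\oneplus$, $\oneminus$, $\plusone$ or $\minusone$. A single application of Lemma~\ref{lemma-oneplus-oneplus} gives $\mobp{\sigma}=0$ whenever this produces a long corner; otherwise repeated application of Lemma~\ref{lemma-oneplus} yields $\mobp{\sigma}=\pm\mobp{\beta}$ with sign equal to the parity of the number of outer wrappings.

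When $\beta$ has no corners, no long corners are created, and the fourteen non-$\beta$ reductions sum to $-2\mobp{\beta}$ exactly as in Theorem~\ref{theorem-2413-balloon-beta-is-a-balloon}; the additional contribution of $\mobp{\beta}$ from $\sigma=\beta$, which is now a proper reduction, gives the total $-\mobp{\beta}$. When $\beta$ has one or two corners, several reductions leave $\redset$ as improper while several of the remaining ones acquire long corners and vanish by Lemma~\ref{lemma-oneplus-oneplus}; a symmetry-based subcase analysis (four one-corner subcases and the corresponding two-corner combinations) checks that the surviving contributions still sum to $-\mobp{\beta}$. The conclusion $\mobp{\pi}=\mobp{\beta}$ then follows. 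The main obstacle is exactly this corners-of-$\beta$ bookkeeping, which is mechanical but lengthy; I would present it as a single table of contributions with rows blanked out whenever a long corner forces a vanishing term.
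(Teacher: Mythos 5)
Your proposal is correct and follows the paper's own proof essentially step for step: the same chain partition and involutions, the same identification and resolution of the $\tau_c=\beta$ obstruction via the corner structure of $\beta$ (the paper phrases it as the dichotomy that either $\psi_c\in\redset$, contradicting $c\in\chainb$, or $\psi_c$ is an improper reduction and then $\eta_c<\beta$, which is the same fact you invoke), and the same corner-indexed tables of proper reductions evaluated with Lemmas~\ref{lemma-oneplus-oneplus} and~\ref{lemma-oneplus}, summing to $-\mobp{\beta}$ in every case. The only step you omit is the paper's opening disposal of monotonic $\beta$ (where both $\mobp{\beta}$ and $\mobp{\pi}$ vanish by Corollary~\ref{corollary-monotonic-interval}), which is needed because the uniqueness of the reduction notation and the corner normal forms underlying your bookkeeping are only guaranteed when $\beta$ is not monotonic.
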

\begin{proof}
    First note that if $\beta$ is monotonic, then
    by Corollary~\ref{corollary-monotonic-interval}
    we have $\mobp{\beta} = 0 = \mobp{\pi}$.
    For the remainder of this proof, we assume that
    $\beta$ is not monotonic.
    
    If $\beta$ has one corner, then
    without loss of generality, we can assume by symmetry that
    $\beta = \oneplus \gamma$.
    Similarly, if $\beta$ has two corners,
    then we can assume that $\beta = \oneplus \gamma \plusone$.
    
    As before, we will use
    Observation~\ref{observation-all-we-have-to-do}.
    We will show that
    $\Phi_{\chainb}$ is a parity-reversing involution on $\chainb$.
    Once we have shown that we have parity-reversing involutions,
    we will then show how to express the Hall sum of $\chainr$
    in terms of $\mobp{\beta}$.    
    
    The proper reductions of $\pi$ depend on the number of corners of $\beta$.
    Below we list the improper reductions of $\pi$ for each case.
    
    \begin{center}    
        \begin{tabular}{lr}
            \toprule
            Corners in $\beta$ & Improper reductions of $\pi$  \\
            \midrule
            No corners & 
            None. \\
            One corner ($\beta = 1 \oplus \gamma$) &
            $\ball{\ex{2}\ex{4}13}{\beta}$,
            $\ball{\ex{2}\ex{4}1\ex{3}}{\beta}$,
            $\ball{\ex{2}\ex{4}\ex{1}3}{\beta}$,
            and
            $\beta$. \\
            Two corners ($\beta = 1 \oplus \gamma  \oplus 1$) &
            $\ball{\ex{2}\ex{4}13}{\beta}$,
            $\ball{24\ex{1}\ex{3}}{\beta}$,
            \\ & 
            $\ball{2\ex{4}\ex{1}\ex{3}}{\beta}$,
            $\ball{\ex{2}4\ex{1}\ex{3}}{\beta}$,
            $\ball{\ex{2}\ex{4}1\ex{3}}{\beta}$,
            $\ball{\ex{2}\ex{4}\ex{1}3}{\beta}$, 
            \\ & and    
            $\beta$. \\
            \bottomrule
        \end{tabular}        
    \end{center}
    
    %
    %
    \begin{subproof}[Proof that $\Phi_{\chainb}$ is a parity-reversing involution on $\chainb$.]
        Let $c$ be a chain in $\chainb$.
        
        First, assume that $\eta_c = \tau_c$, so
        $\cprime = c \setminus \{ \ball{2413}{\tau_c} \}$.
        We start by showing that $\cprime$ is a valid chain.
        Assume otherwise, which implies $\tau_c = \beta$.
        
        If $\beta$ has no corners, 
        then
        $\psi_c \in \redset$, so $c \in \chainr$,
        which is a contradiction.
        
        If $\beta$ has one corner,
        then
        either $\psi_c \in \redset$, 
        which is a contradiction,
        or $\psi_c \not \in \redset$.
        In the latter case,
        assume, without loss of generality,
        that $\beta = 1 \oplus \gamma$.
        Then 
        $\ball{\ex{2}\ex{4}13}{\beta}       = \ball{2\ex{4}13}{\gamma}$,
        $\ball{\ex{2}\ex{4}1\ex{3}}{\beta}  = \ball{2\ex{4}1\ex{3}}{\gamma}$,
        $\ball{\ex{2}\ex{4}\ex{1}3}{\beta}  = \ball{2\ex{4}\ex{1}3}{\gamma}$,
        and
        $\beta                              = \ball{2\ex{4}\ex{1}\ex{3}}{\gamma}$.
        Thus in all cases where 
        $\psi_c \not \in \redset$,
        we have that $\eta_c$ is not minimal, 
        which is a contradiction.
        
        Finally, if $\beta$ has two corners,
        then
        either $\psi_c \in \redset$,
        which is a contradiction,
        or $\psi_c \not \in \redset$.
        The latter case implies that
        $\psi_c = \beta$,
        and then we have that
        either 
        $\psi_c = 1 \oplus \gamma \oplus 1 = \ball{2\ex{4}\ex{1}3}{\gamma}$
        or
        $\psi_c = 1 \ominus \gamma \ominus 1 = \ball{\ex{2}41\ex{3}}{\gamma}$,
        so
        $\eta_c$ is not minimal,
        which is a contradiction.
        
        Thus we have that $\cprime$ must be a chain,
        and, moreover, $\tau_c \neq \beta$.
        
        We now show that $\cprime \not\in \chainr$.
        Assume, to the contrary, that $\cprime \in \chainr$
        which implies that $\psi_c$ is a proper reduction of $\pi$.
        But now we have $\eta_c = \beta$, 
        but this would give $\tau_c = \beta$,
        which is a contradiction,
        therefore $\cprime \not \in \chainr$.
        
        Now assume that $\eta_c < \tau_c$.
        Let $\cprime = \Phi_{\chainb}(c) = c \cup \{ \ball{2413}{\eta_c} \}$,
        and we know from Lemma~\ref{lemma-2413-balloon-phi-gb}
        that $\cprime$ is a chain.
        Now either $\kappa_c = \kappa_{\cprime}$,
        or $\kappa_{\cprime}=\ball{2413}{\eta_c}$ is a 2413-balloon.
        In either case we have $\cprime \not \in \chainr$.
        
        So if $c \in \chainb$, then $\Phi_{\chainb}(c)$
        is a chain in $\chainb$,
        and thus $\Phi_{\chainb}$ is a parity-reversing involution.
    \end{subproof}    
    
    We have shown that
    $\Phi_{\chaing}$ and
    $\Phi_{\chainb}$ 
    are parity-reversing involutions on
    $\chaing$ and
    $\chainb$ respectively.
    It follows from 
    Observation~\ref{observation-all-we-have-to-do}
    that
    $
    \mobp{\pi} = - \sum_{\sigma \in \redset} \mobp{\sigma}.
    $
    We now show how to express $\mobp{\sigma}$, where $\sigma \in \redset$
    in terms of $\mobp{\beta}$.
    We use a similar mechanism to that used
    in Theorem~\ref{theorem-2413-balloon-beta-is-a-balloon}.
    There are some additional considerations
    where $\beta$ has one or two corners.
    
    As an example, 
    take the case where $\sigma = \ball{2\ex{4}13}{\beta}$,
    and $\beta$ has one corner, and so, by our assumption,  can be
    written as $\oneplus \gamma$.
    We can write 
    $\sigma = ((\oneplus \beta) \minusone) \plusone$,
    and expanding $\beta$ we have
    $\sigma = ((\oneplus \oneplus \gamma) \minusone) \plusone$,    
    Applying Lemma~\ref{lemma-oneplus} to the 
    outermost two points in $\sigma$, 
    we find that
    $\mobp{\sigma} = \mobp{\oneplus \oneplus \gamma}$,
    and by 
    Lemma~\ref{lemma-oneplus-oneplus} we now have
    $\mobp{\sigma} = 0$.
    Because of this, our analysis depends on the number of corners of $\beta$,
    and we consider each case separately below.
    
    If $\beta$ has no corners, then we have
    \begin{center}
        $
        \begin{array}{ccccc}
        \begin{array}{lr}
        \sigma & \mobp{\sigma} \\
        \midrule
        \ball{\ex{2}413}{\beta} & - \mobp{\beta} \\
        \ball{2\ex{4}13}{\beta} & - \mobp{\beta} \\
        \ball{24\ex{1}3}{\beta} & - \mobp{\beta} \\
        \ball{241\ex{3}}{\beta} & - \mobp{\beta} \\
        \phantom{x} & \phantom{x} \\		    
        \phantom{x} & \phantom{x} \\		    
        \end{array} 
        & \phantom{xxx} &
        \begin{array}{lr}
        \sigma & \mobp{\sigma} \\
        \midrule
        \ball{\ex{2}\ex{4}13}{\beta} & \mobp{\beta} \\
        \ball{\ex{2}4\ex{1}3}{\beta} & \mobp{\beta} \\
        \ball{\ex{2}41\ex{3}}{\beta} & \mobp{\beta} \\
        \ball{2\ex{4}\ex{1}3}{\beta} & \mobp{\beta} \\
        \ball{2\ex{4}1\ex{3}}{\beta} & \mobp{\beta} \\
        \ball{24\ex{1}\ex{3}}{\beta} & \mobp{\beta} \\
        \end{array} 
        & \phantom{xxx} &
        \begin{array}{lr}
        \sigma & \mobp{\sigma} \\
        \midrule
        \ball{2\ex{4}\ex{1}\ex{3}}{\beta} & - \mobp{\beta} \\
        \ball{\ex{2}4\ex{1}\ex{3}}{\beta} & - \mobp{\beta} \\
        \ball{\ex{2}\ex{4}1\ex{3}}{\beta} & - \mobp{\beta} \\
        \ball{\ex{2}\ex{4}\ex{1}3}{\beta} & - \mobp{\beta} \\
        \phantom{x} & \phantom{x} \\		    
        \beta & \mobp{\beta} \\		    
        \end{array} \\		  		    
        \end{array}
        $
    \end{center}
    
    If $\beta$ has one corner, under our assumption
    that $\beta$ = $\oneplus \gamma$, we have
    \begin{center}
        $
        \begin{array}{ccccc}
        \begin{array}{lr}
        \sigma & \mobp{\sigma} \\
        \midrule
        \ball{\ex{2}413}{\beta} & - \mobp{\beta} \\
        \ball{2\ex{4}13}{\beta} & 0 \\
        \ball{24\ex{1}3}{\beta} & - \mobp{\beta} \\
        \ball{241\ex{3}}{\beta} & - \mobp{\beta} \\
        \phantom{x} & \phantom{x} \\		    
        \end{array} 
        & \phantom{xxx} &
        \begin{array}{lr}
        \sigma & \mobp{\sigma} \\
        \midrule
        \ball{\ex{2}4\ex{1}3}{\beta} & \mobp{\beta} \\
        \ball{\ex{2}41\ex{3}}{\beta} & \mobp{\beta} \\
        \ball{2\ex{4}\ex{1}3}{\beta} & 0 \\
        \ball{2\ex{4}1\ex{3}}{\beta} & 0 \\
        \ball{24\ex{1}\ex{3}}{\beta} & \mobp{\beta} \\
        \end{array} 
        & \phantom{xxx} &
        \begin{array}{lr}
        \sigma & \mobp{\sigma} \\
        \midrule
        \ball{2\ex{4}\ex{1}\ex{3}}{\beta} & 0 \\
        \ball{\ex{2}4\ex{1}\ex{3}}{\beta} & - \mobp{\beta} \\
        \phantom{x} & \phantom{x} \\ 
        \phantom{x} & \phantom{x} \\
        \phantom{x} & \phantom{x} \\		    
        \end{array} \\		  		    
        \end{array}
        $
    \end{center}
    
    Finally, if $\beta$ has two corners, under
    our assumption that $\beta = \oneplus \gamma \plusone$,
    we have
    \begin{center}
    	$
	    \begin{array}{ccc}
		    \begin{array}{lr}
			    \sigma & \mobp{\sigma} \\
    			\midrule
	     		\ball{\ex{2}413}{\beta} & - \mobp{\beta} \\
		    	\ball{2\ex{4}13}{\beta} & 0 \\
			    \ball{24\ex{1}3}{\beta} & 0 \\
    			\ball{241\ex{3}}{\beta} & - \mobp{\beta} \\
	    	\end{array} 
		    & \phantom{xxx} &
    		\begin{array}{lr}
	    		\sigma & \mobp{\sigma} \\
		    	\midrule
    			\ball{\ex{2}4\ex{1}3}{\beta} & 0 \\
	    		\ball{\ex{2}41\ex{3}}{\beta} & \mobp{\beta} \\
		    	\ball{2\ex{4}\ex{1}3}{\beta} & 0 \\
    			\ball{2\ex{4}1\ex{3}}{\beta} & 0 \\
 	    	\end{array} 
    	\end{array}
    	$
    \end{center}
    In all three cases we have    
    \[
    \sum_{\sigma \in \redset} \mobp{\sigma} = - \mobp{\beta}
    \]
    and the result follows directly.
\end{proof}	

We are now in a position to state and prove the main Theorem
for this section.
\begin{theorem}
    \label{theorem-2413-balloons}
    Let $\pi = \ball{2413}{\beta}$.  Then
    \begin{align*}
    \mobp{\pi} = 
    \begin{cases}
    4 & \text{If $\beta = 1$} \\
    -6 & \text{If $\beta = 2413$} \\
    2 \mobp{\beta} & \text{If $\beta$ is a 2413-balloon} \\
    \mobp{\beta}  & \text{Otherwise}.	
    \end{cases}
    \end{align*}
\end{theorem}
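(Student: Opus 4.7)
The plan is to decompose the statement by the length of $\beta$ and lean on the two preceding results of this chapter as far as possible. Since any 2413-balloon has length at least five, the hypothesis ``$\beta$ is a 2413-balloon'' in the third case automatically forces $|\beta| \ge 5$, which is precisely the setting of Theorem~\ref{theorem-2413-balloon-beta-is-a-balloon}. The fourth case under the extra assumption $|\beta| > 4$ is precisely Lemma~\ref{lemma-2413-balloon-beta-not-a-balloon}. The only work left is therefore to deal with $|\beta| \le 4$; for such $\beta$ there is no possibility that $\beta$ itself is a 2413-balloon, so the target of the fourth case is $\mobp{\beta}$ in every small instance except the two exceptional values $\beta = 1$ and $\beta = 2413$.

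For the two exceptional values I would verify the claim by direct computation. Ballooning with $\beta = 1$ produces $\pi = 25314$, and ballooning with $\beta = 2413$ produces $\pi = 28463517$; these are exactly the permutations $\pi^{(5)}$ and $\pi^{(8)}$ whose principal \mob values $4$ and $-6$ are recorded in the proof of Theorem~\ref{theorem-growth-of-mobius-function}, so no new calculation is strictly necessary. It is worth noting that for $\beta=2413$ the output $\mobp{\pi}=-6=2\mobp{\beta}$ is numerically consistent with ``extending'' Theorem~\ref{theorem-2413-balloon-beta-is-a-balloon} to cover $2413$ itself, which offers a useful sanity check on the tabulated value.

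For the remaining small cases --- permutations $\beta$ of length $2$, $3$, or~$4$ other than $2413$ --- the claim is $\mobp{\pi} = \mobp{\beta}$. In principle the argument of Lemma~\ref{lemma-2413-balloon-beta-not-a-balloon} applies verbatim, but its key hypothesis $|\beta| > 4$ is used inside Lemma~\ref{lemma-2413-balloon-phi-gb} to prevent short pivots from accidentally belonging to $\redset$; the main obstacle is therefore to redo the partition of chains into $\chainr$, $\chaing$, $\chainb$ and the parity-reversing involution $\Phi_\chainb$ when $\beta$ is short. Since there are only finitely many such $\beta$, and they come in symmetric groupings under reversal, complement, and inverse, I would dispatch them individually: for each $\beta$, identify the correct $\redset$ of proper reductions, re-verify the involution argument with the short-$\beta$ bookkeeping, and then evaluate the Hall sum $-\sum_{\sigma \in \redset}\mobp{\sigma}$ using Lemmas~\ref{lemma-oneplus-oneplus} and~\ref{lemma-oneplus} to reduce every $\mobp{\sigma}$ either to $0$ or to $\pm\mobp{\beta}$. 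Since the bookkeeping is delicate but finite, in a clean presentation I would simply tabulate the residual cases rather than prove a unified replacement lemma.
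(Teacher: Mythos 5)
Your proposal is correct and follows essentially the same route as the paper: the cases $\order{\beta}\le 4$ (including $\beta=1$ and $\beta=2413$) are settled by direct computation of the finitely many values, and the cases $\order{\beta}>4$ are obtained by combining Theorem~\ref{theorem-2413-balloon-beta-is-a-balloon} with Lemma~\ref{lemma-2413-balloon-beta-not-a-balloon}. The only difference is presentational: the paper does not attempt to rerun the chain-partition argument for short $\beta$ at all, but simply tabulates $\mobp{\ball{2413}{\beta}}$ for the symmetry classes with $\order{\beta}\le 4$, which is exactly the fallback you settle on.
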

\begin{proof}
    The value of $\mobp{\ball{2413}{\beta}}$
    for the symmetry classes of $\beta$ 
    with $\order{\beta} \leq 4$ are shown below.
    \begin{center}
        $
        \begin{array}{ccc}
        \begin{array}{lrr}
        \beta & \mobp{\beta} &\mobp{\ball{2413}{\beta}} \\
        \midrule
        1    &  1 &  4 \\
        12   & -1 & -1 \\
        123  &  0 &  0 \\
        132  &  1 &  1 \\
        1234 &  0 &  0 \\
        1243 &  0 &  0 \\
        \end{array} 
        & \phantom{xxx} &
        \begin{array}{lrr}
        \beta & \mobp{\beta} &\mobp{\ball{2413}{\beta}} \\
        \midrule
        1324 & -1 & -1 \\
        1342 & -1 & -1 \\
        1432 &  0 &  0 \\
        2143 & -1 & -1 \\
        2413 & -3 & -6 \\
        \phantom{x} & \phantom{x} \\		    
        \end{array} \\		  		    
        \end{array}
        $
    \end{center}        
    It is easy to see that
    these values meet Theorem~\ref{theorem-2413-balloons}.
    We now combine
    Theorem~\ref{theorem-2413-balloon-beta-is-a-balloon} and
    Lemma~\ref{lemma-2413-balloon-beta-not-a-balloon}
    to complete the proof.
\end{proof}

\section{Concluding remarks}
\label{section-concluding-remarks}

\subsection{Generalising the balloon operator}

Given two permutations $\alpha$ and $\beta$,
with lengths $a$ and $b$ respectively,
and two integers $i,j$ which satisfy
$0 \leq i,j \leq a$, the
\emph{$i,j$-balloon}\extindex{$i,j$-balloon} 
of $\beta$ by $\alpha$,
written as 
$\ballgen{i,j}{\alpha}{\beta}$,
is the 
permutation formed by inserting
the permutation $\beta$ into $\alpha$
between the $i$-th and $i+1$-th columns of $\alpha$,
and 
between the $j$-th and $j+1$-th rows of $\alpha$.
The integers $i$ and $j$ are, collectively, the
\emph{indexes}\extindex[balloon]{indexes} 
of the balloon.

Formally, we have
\begin{align*}
(\ballgen{i,j}{\alpha}{\beta})_x
& =
\begin{cases}
\alpha_x 
& \text{if $x \leq i$ and $\alpha_x \leq j $}\\
\alpha_x + \order{\beta} 
& \text{if $x \leq i$ and $\alpha_x > j $}\\
\beta_{x-i} + j 
& \text{if $x > i $ and $x \leq i+\order{\beta}$ }\\
\alpha_{x-\order{\beta}} 
& \text{if $x > i+\order{\beta}$ and $\alpha_{x-\order{\beta}} \leq j $}\\
\alpha_{x-\order{\beta}} + \order{\beta} 
& \text{if $x > i+\order{\beta}$ and $\alpha_{x-\order{\beta}} > j $}\\ 
\end{cases}
\end{align*}
As before, the balloon notation is not associative.
Unlike 2413-balloons, which have to be interpreted
as right-associative, 
generalized balloons can use brackets to define associativity.
Note that the 2413-balloon defined in Section~\ref{section-definitions-and-notation}
is written as $\ballgen{2,2}{2413}{\beta}$ in our
generalized notation.

We remark that for any $\alpha$ and any $\beta$, we have
$\ballgen{0,0}{\alpha}{\beta} = \alpha \oplus \beta$,
and we can easily determine $\mobp{\alpha \oplus \beta}$
using results from
Propositions~1~and~2 of
Burstein, 
Jel{\'{i}}nek, 
Jel{\'{i}}nkov{\'{a}} and 
Steingr{\'{i}}msson~\cite{Burstein2011}.

\subsection{Generalised 2413-balloons}

If we restrict $\alpha$ to 2413,
then, up to symmetry, there are seven 
possible values for the indexes:
$(0,0)$,
$(0,1)$,
$(0,2)$,
$(1,0)$,
$(1,1)$,
$(1,2)$, and
$(2,2)$.
Theorem~\ref{theorem-2413-balloons} 
handles the case where the indexes are $(2,2)$,
and~\cite{Burstein2011} 
handles the case where the indexes are $(0,0)$.
For the other indexes, we have
\begin{conjecture}
    \label{conjecture-most-2413-balloons}
    Let $\pi = \ballgen{i,j}{2413}{\beta}$,
    where 
    $(i,j) \in 
    \{
    (0,1),
    (0,2),
    (1,1),
    (1,2)
    \}$.
    Then
    \[
    \mobp{\pi} =
    \begin{cases}
    0 & \text{If $(i,j) = (0,1)$ and $\beta = \tau \plusone$} \\
    0 & \text{If $(i,j) = (0,2)$ and $\beta = \tau \minusone$} \\
    0 & \text{If $(i,j) = (1,1)$ and $\beta = \oneminus \tau$ or $\beta = 12$} \\
    0 & \text{If $(i,j) = (1,2)$ and $\beta = \oneplus \tau$} \\
    \mobp{\beta} & \text{Otherwise.}
    \end{cases}
    \]
\end{conjecture}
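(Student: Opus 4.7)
The plan is to prove Conjecture~\ref{conjecture-most-2413-balloons} by adapting the chain-partitioning framework developed in Sections~\ref{section-define-2413-balloon}--\ref{section-2413-balloons} to each of the four balloon positions. First, for each $(i,j) \in \{(0,1), (0,2), (1,1), (1,2)\}$, I will write down the explicit form of $\pi = \ballgen{i,j}{2413}{\beta}$ and identify the analogue of $\redset$: the set of proper reductions, namely those $\sigma \in [\beta,\pi)$ that use all of $\beta$ together with a proper subset of the four frame-points of the balloon. As in Lemma~\ref{lemma-2413-balloon-beta-not-a-balloon}, the set of improper reductions will depend on the corner structure of $\beta$, so a case split on whether $\beta$ has zero, one, or two corners will be required for each $(i,j)$.

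With the reductions in hand, I partition the chains in $[1,\pi]$ into three sets $\chainr,\chaing,\chainb$ in direct analogy with Section~\ref{section-define-2413-balloon}. The pivot and core definitions are adjusted so that the $(i,j)$-frame replaces the $(2,2)$-frame of a standard 2413-balloon, while Observation~\ref{observation-all-we-have-to-do} carries over essentially unchanged. After showing that the analogous maps $\Phi_{\chaing}$ and $\Phi_{\chainb}$ are parity-reversing involutions on their respective sets, we obtain $\mobp{\pi} = -\sum_{\sigma \in \redset} \mobp{\sigma}$. Applying Lemmas~\ref{lemma-oneplus} and~\ref{lemma-oneplus-oneplus} to each $\sigma \in \redset$, as in the proof of Lemma~\ref{lemma-2413-balloon-beta-not-a-balloon}, expresses $\mobp{\sigma}$ as either $\pm\mobp{\beta}$ or $0$, and the sign-accounting should collapse the sum to $-\mobp{\beta}$, yielding $\mobp{\pi} = \mobp{\beta}$ in the generic case.

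For the four ``zero'' subcases, the special form of $\beta$ creates a degeneracy at the boundary between the $\beta$-interval and a neighbouring frame-point of $\pi$. In many instances this degeneracy is directly visible: for $(0,1)$ with $\beta = \tau \plusone$, the top entry of $\beta$ sits numerically and positionally adjacent to the ``$2$'' of the $2413$-frame, producing an up-adjacency across the boundary. If $\tau$ contains any down-adjacency then $\pi$ has opposing adjacencies and hence $\mobp{\pi}=0$ by Theorem~\ref{theorem-PMF-opposing-adjacencies}; if $\tau$ is short enough or ends with a second ascent to its maximum, the boundary and the adjacent points of the frame together yield a triple up-adjacency, and Lemma~\ref{lemma-triple-adjacencies} applies. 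Analogous arguments, dualised by the symmetries of the $2413$-frame, cover the other three zero-cases.

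The hard part will be the remaining subcases in which the special $\beta$ is, say, adjacency-free and contains a nontrivial simple prefix/suffix, so that neither the opposing-adjacency nor the triple-adjacency shortcut succeeds on $\pi$ itself. In these cases I would return to the chain-partitioning argument and exhibit a further parity-reversing involution on $\chainr$ induced by the special form of $\beta$: informally, the extra symmetry at the boundary allows the ``adjacent'' frame-point to be reassigned to the $\beta$-interval and vice versa, pairing up the remaining reductions so that their Hall sum vanishes. Verifying that this pairing genuinely covers all of $\chainr$, and that it is compatible with $\Phi_{\chaing}$ and $\Phi_{\chainb}$ on the other two sets of chains, is likely to be the most delicate step. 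Once one of the four balloon positions --- say $(1,1)$, which is the one having two sub-conditions and therefore exhibits the full range of behaviour --- is handled in detail, the remaining three positions should follow by largely mechanical adaptations exploiting the symmetries of $2413$.
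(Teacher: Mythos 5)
First, a point of status: the statement you are proving is stated in the paper only as Conjecture~\ref{conjecture-most-2413-balloons}; the paper gives no proof. It merely remarks that the cases $(i,j)\in\{(0,1),(0,2)\}$ (which are wedge permutations) follow from Lemma~\ref{lemma-wedge-is-sum-of-reductions} combined with the techniques of Chapter~\ref{chapter_2413_balloon_paper}, while the cases $(1,1)$ and $(1,2)$ are left open. So your proposal cannot be matched against an existing proof; it has to stand on its own, and as written it does not.

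The central gap is that the two technical pillars of your plan are asserted rather than established, and they are exactly where the difficulty lies. For the frames $(1,1)$ and $(1,2)$ the permutation $\ballgen{i,j}{2413}{\beta}$ is \emph{not} a wedge permutation, so you cannot import the vanishing of the ``other'' chains for free: Chapter~\ref{chapter_balloon_permutations_preprint} shows that for general balloons defective permutations exist and the correction term $\sum_{c\in\chaing}(-1)^{\order{c}}$ need not vanish, and proving that $\Phi_{\chainb}$ (suitably redefined for these frames) is a parity-reversing involution is precisely the content that made Theorem~\ref{theorem-2413-balloon-beta-is-a-balloon} and Lemma~\ref{lemma-2413-balloon-beta-not-a-balloon} non-trivial; saying the pivot/core definitions ``are adjusted'' and Observation~\ref{observation-all-we-have-to-do} ``carries over essentially unchanged'' skips this entirely. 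Likewise the statement that ``the sign-accounting should collapse the sum to $-\mobp{\beta}$'' is not carried out, yet the exceptional zero cases of the conjecture arise exactly from that accounting (the reduction sets and the applicability of Lemmas~\ref{lemma-oneplus-oneplus} and~\ref{lemma-oneplus} depend on the corner structure of $\beta$ relative to the new frame, and the tables of Lemma~\ref{lemma-2413-balloon-beta-not-a-balloon} do not transfer verbatim).

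Your treatment of the zero cases also fails as stated. Take $(i,j)=(0,1)$ and $\beta=\tau\plusone$ with $\tau$ adjacency-free and not ending in its maximum: then $\pi$ has a single up-adjacency across the $\beta$-frame boundary and no down-adjacency, so neither Theorem~\ref{theorem-PMF-opposing-adjacencies} nor Lemma~\ref{lemma-triple-adjacencies} applies, and your fallback --- an unspecified further parity-reversing involution on $\chainr$ --- is exactly the step you admit is not done. In fact these zero cases have a much shorter route you did not use: for $(0,1)$ with $\beta=\tau\plusone$ the first $\order{\beta}+1$ positions of $\pi$ form an interval copy of $\tau\oplus1\oplus1$, which is an annihilator by Corollary~\ref{cor-sum}, giving $\mobp{\pi}=0$ at once; the $(0,2)$, $(1,2)$ and the $\beta=\oneminus\tau$ part of the $(1,1)$ case follow from the symmetric annihilators, leaving only isolated exceptions such as $\beta=12$ at $(1,1)$ to handle separately. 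As it stands, then, the proposal is a programme whose generic case is unverified for the non-wedge frames and whose degenerate cases are only partially covered, rather than a proof.
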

and
\begin{conjecture}
    \label{conjecture-1-0-2413-balloons}
    Let $\pi = \ballgen{1,0}{2413}{\beta}$.
    Then
    \[
    \mobp{\pi} =
    \begin{cases}
    6 & \text{If $\beta = 1$} \\
    -2 & \text{If $\beta = 21$} \\
    0 & \text{If $\beta = 312$} \\
    2 \mobp{\beta} & \text{If $\beta = \ballgen{1,0}{2413}{\gamma}$} \\
    \mobp{\beta} & \text{Otherwise.}
    \end{cases}
    \]
\end{conjecture}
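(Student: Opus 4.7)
The plan is to mirror the proof structure of Theorem~\ref{theorem-2413-balloons}. I would first verify the three small cases $\beta \in \{1, 21, 312\}$ by direct computation of $\mobp{\pi}$ on the corresponding permutations $\pi = 31524$, $\pi = 421635$ and $\pi = 5312746$. These are genuine exceptions: for $\beta = 1$ and $\beta = 21$ the induction has no smaller $(1,0)$-balloon to appeal to, and for $\beta = 312$ the value $0$ differs from $\mobp{\beta} = 1$, so the general rule must fail on these small inputs. In each case the value can be confirmed using a direct Hasse diagram computation, or by invoking Lemma~\ref{lemma-oneplus} to peel off the corner point at value $|\beta|+2$ and then analysing the remainder.

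For $|\beta| > 4$ I would reuse the general apparatus of Section~\ref{section-2413-balloons}. Concretely, define a \emph{reduction} of $\pi = \ballgen{1,0}{2413}{\beta}$ as a permutation $\sigma \in [\beta, \pi)$ containing $\beta$ as an interval in the same position it occupies in $\pi$, and classify reductions as proper or improper depending on whether $\sigma$ can alternatively be viewed as a reduction of a smaller $(1,0)$-balloon (this occurs exactly when $\beta$ has a corner aligning with one of the four outer points of the $2413$-shape). Partition the chains of $[1,\pi]$ into $\chainr$, $\chaing$, $\chainb$ exactly as before, and establish that $\Phi_{\chaing}$ and $\Phi_{\chainb}$ remain parity-reversing involutions on the corresponding sets; the proof of this should follow the argument of Lemma~\ref{lemma-2413-balloon-phi-gb} verbatim after replacing $\ball{2413}{\cdot}$ by $\ballgen{1,0}{2413}{\cdot}$ in the definitions of the pivot's core $\eta_c$ and the least $(1,0)$-balloon $\phi_c$ in the chain. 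Observation~\ref{observation-all-we-have-to-do} then reduces the calculation to $\mobp{\pi} = -\sum_{\sigma \in \redset} \mobp{\sigma}$.

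To finish, each proper reduction $\sigma$ retains $\beta$ together with some non-empty subset of the four outer points of values $|\beta|+2$, $|\beta|+4$, $|\beta|+1$, $|\beta|+3$. Peeling off each retained outer point with Lemma~\ref{lemma-oneplus}, and annihilating with Lemma~\ref{lemma-oneplus-oneplus} whenever peeling produces a long corner, expresses $\mobp{\sigma}$ in terms of $\mobp{\beta}$ or forces it to zero. A case analysis on the corner structure of $\beta$ (no corners; top-left corner only; bottom-right corner only; both corners; or $\beta$ itself a $(1,0)$-balloon of $2413$) should produce the totals $-\mobp{\beta}$ in the generic case and $-2\mobp{\beta}$ in the double-balloon case, matching the conjecture. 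The double-balloon case requires, as in Theorem~\ref{theorem-2413-balloon-beta-is-a-balloon}, the observation that when $\beta$ is itself a $(1,0)$-balloon the set of improper reductions shrinks to $\{\beta\}$ alone, producing an extra copy of $\mobp{\beta}$ in the final tally.

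The main obstacle will be the asymmetric geometry of the $(1,0)$-balloon. Unlike the centred 2413-balloon, where the four outer points surround $\beta$ on all sides, here the ``$2$'' sits immediately above the leftmost column of $\beta$ while the ``$4, 1, 3$'' cluster to the right of $\beta$. This means the reductions are not symmetric under reflecting the role of the four outer points, and the tables of $\mobp{\sigma}$ values will be longer and less uniform than those in Theorem~\ref{theorem-2413-balloons}; in particular the adjacency of the ``$2$'' to $\beta$ means that whenever $\beta$ has a top-left corner, several reductions collapse into one another and their Möbius values must be computed more carefully. The exceptional value $\mobp{\pi} = 0$ for $\beta = 312$ is almost certainly driven by a low-length coincidence in which multiple reductions simultaneously acquire triple adjacencies (via Lemma~\ref{lemma-triple-adjacencies}), and ruling out this coincidence for all larger $\beta$ sharing the same initial descent pattern will be the most delicate bookkeeping in the corner case analysis.
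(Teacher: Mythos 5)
First, a point of comparison: the paper does not prove this statement at all --- it is left as a conjecture, and the only guidance given is the remark that, since the index pair is $(1,0)$ (so $j=0$), the permutation $\ballgen{1,0}{2413}{\beta}$ is a wedge permutation, so the machinery of Chapter~\ref{chapter_balloon_permutations_preprint} applies: by Theorem~\ref{theorem-block-wedges-are-matryoshka} every subpermutation is matryoshka, and Lemma~\ref{lemma-wedge-is-sum-of-reductions} gives $\mobp{\pi} = -\sum_{\lambda\in\redset}\mobp{\lambda}$ with no correction term. Your plan instead rebuilds the Chapter~\ref{chapter_2413_balloon_paper} chain partition ($\chainr$, $\chaing$, $\chainb$) from scratch, and here your claim that Lemma~\ref{lemma-2413-balloon-phi-gb} and Observation~\ref{observation-all-we-have-to-do} carry over ``verbatim'' is the weak link: the algorithm defining the core $\eta_c$ (the case list $1\ominus((\eta\ominus 1)\oplus 1)$, etc.) and the arguments that $\Phi_{\chainb}(c)$ is a chain avoiding $\chainr$ are written for the centred balloon's geometry and must be re-derived for the $(1,0)$ geometry, where $\beta$ sits at the bottom with the ``2'' to its left and ``4,1,3'' to its right (not, as you write, with the ``2'' above $\beta$'s leftmost column). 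Invoking the wedge lemma would let you skip this entirely, which is presumably why the author flags that route.

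Second, even granting the reduction-sum formula, the substantive content of the conjecture --- the tables of $\mobp{\sigma}$ over proper reductions for each corner configuration of $\beta$, the verification that exactly the $\beta$-improper-reduction mechanism produces the factor $2$ when $\beta=\ballgen{1,0}{2413}{\gamma}$ (note such a $\beta$ is corner-free, so the analogue of the centred-balloon argument is plausible but not automatic), and the explanation of why $\beta=312$ and $\beta=21$ are genuine exceptions rather than instances of the generic rule --- is deferred in your write-up to ``should produce the totals'' and ``almost certainly driven by a low-length coincidence''. Those case analyses are precisely where the work lies, so as it stands this is a reasonable proof plan consistent with how the author evidently expects the conjecture to be settled, but not yet a proof; the small-case identifications ($\pi=31524$, $421635$, $5312746$) are correct, and $\mobp{31524}=\mobp{24153}=6$ does check out by symmetry, but the general cases remain open in your argument just as they do in the paper.
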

Lemma~\ref{lemma-wedge-is-sum-of-reductions}
in Chapter~\ref{chapter_balloon_permutations_preprint} 
can be applied to the cases where
the indexes are 
$(0,1)$,
$(0,2)$, or
$(1,0)$.
Using this lemma, together with
some of the techniques used earlier in this chapter,
it is easy to show that
Conjecture~\ref{conjecture-most-2413-balloons}
is true in these cases.
For brevity, we do not provide proofs here.

\subsection[Bounding the \mob function on hereditary classes]{Bounding the \mob function on hereditary\\ classes}

Corollary 24 in 
Burstein, Jel{\'{i}}nek, Jel{\'{i}}nkov{\'{a}} 
and Steingr{\'{i}}msson~\cite{Burstein2011}
gives us that if $\pi$ is separable,
then $\mobp{\pi} \in \{ 0, \pm1 \}$.  
The simple permutations in the 
hereditary class of separable permutations
are 
$1$,
$12$, and
$21$.
In Remark~\ref{remark-simples-in-2413-balloons}
we have unbounded growth where the 
simple permutations in the hereditary class are just
$1$, 
$12$, 
$21$,
$2413$, and
$25314$,
so adding $2413$ and $25314$ to the simple permutations
moves us from bounded growth
to unbounded growth.
This then leads to:
\begin{question}
    If $C$ is a hereditary class containing just the simple permutations
    $1$, 
    $12$, 
    $21$ and
    $2413$,
    and $\pi \in C$, then
    is $\mobp{\pi}$ bounded?
    Further, if $D$ is a hereditary class containing just the simple permutations
    $1$, 
    $12$, 
    $21$,
    $2413$, and
    $3142$,
    and $\pi \in D$, then
    is $\mobp{\pi}$ bounded?    
\end{question}

\section{Chapter summary}

The main result from this paper is a proof that 
the growth of 
$\mobmaxn = \max \{ \order{\mobp{\pi}} : \order{\pi} = n \}$.
is at least exponential.
This is proved by finding explicit recursions 
for the principal \mob function
of double 2413-balloons.

This result is not unexpected.  Indeed, 
while the main result from 
Jel{\'{i}}nek, Kantor, Kyn{\v{c}}l and Tancer~\cite{Jelinek2020}
is that the growth of $\mobmaxn$ is 
bounded below by an order-7 polynomial,
in the final section of their paper
they define a set of permutations $\kappa_n$ as
\[
\kappa_n 
=
n + 1, n + 3, \ldots, 3n - 1, 1, 3n + 1, 2, 3n + 2, \ldots, n, 4n, n + 2, n + 4, \ldots, 3n;
\]
and then they conjecture that 
\emph{``the absolute value of $\mobp{\kappa_n}$ is exponential in $n$''}.
The author computed the value of the principal \mob function
for $\kappa_1, \ldots, \kappa_7$, and the results are shown in
Table~\ref{table_kappa_n_values}.
\begin{table}
    \[
    \begin{array}{lrr}
    \toprule
    n & \mobp{\kappa_n} & \mobp{\kappa_{n}} / \mobp{\kappa_{n-1}} \\
    \midrule
    1  &     -1 & \text{--}  \\
    2  &    -27 & 27         \\
    3  &   -117 &  4.333     \\
    4  &   -509 &  4.350     \\
    5  &  -2389 &  4.694     \\
    6  & -10946 &  4.582     \\
    7  & -51210 &  4.678     \\
    \bottomrule
    \end{array}
    \]
    \caption{Values of the principal \mob function of $\kappa_1, \ldots, \kappa_7$.}
    \label{table_kappa_n_values}
\end{table}
Examining these values we can see two things:
\begin{itemize}
    \item The ratio $\mobp{\kappa_{n}} / \mobp{\kappa_{n-1}}$ is not an 
    integer.  In contrast, 2413 double-balloons grow by a factor of 2 
    at each iteration.
    \item $\kappa_n$ appears to grow at more than
    double the rate of a double 2413-balloon.
\end{itemize}    
The author feels that it is very unlikely that 
$\mobmaxn$ is 
bounded below by something that grows faster than
an exponential function.  

The only mechanism currently available to
determine $\mobmaxn$ is essentially to 
calculate the value 
of the principal \mob function for every permutation
of length $n$.  While this has been done 
for lengths $1, \ldots, 13$, 
the computational effort required to 
determine the value of $\mobmaxx{14}$
is very large\footnote{The author estimates the effort 
    on his HEDT PC would be 
    around half a million CPU hours.}
and it is therefore unreasonable
to expect this data to become available
in the near future.

The technique used to construct 2413-balloon permutations
can, as mentioned earlier, be thought of
in terms of inflations of $25314$.
We commented earlier that we 
used balloon notation,
as it is our belief that 
this gave us a simpler exposition.
In Section~\ref{section-concluding-remarks}
we generalized the ballooning process, and defined
$i,j$-balloons.  
Although $\ballgen{i,j}{\alpha}{\beta}$
can be considered as an inflation of a permutation
that is $\alpha$ plus a single new point, 
we think that viewing
permutations through the ``balloon'' lens
gives a sufficiently different view
that this technique could well have a wider application.
Initial investigations by the author and others
seem to indicate that
most $i,j$-balloons behave ``regularly''.  Typically
we find that $\mobp{\ballgen{i,j}{\alpha}{\beta}}$ is a 
multiple of $\mobp{\beta}$.  


    \chapter{The principal \mob function of balloon permutations}
\label{chapter_balloon_permutations_preprint}

\section{Preamble}

This chapter
is based on independent research by the author
that is, at the time of writing, still in progress.
The author intends that this chapter will
form the basis of work that will be submitted for publication,
and at present this will be a single-author paper.

Generalised balloon permutations
were introduced in 
Section~\ref{section-concluding-remarks}
of Chapter~\ref{chapter_2413_balloon_paper}.
In this chapter we drop the ``generalised'' qualifier.
A balloon permutation is formed from the merge of two 
permutations, $\alpha$ and $\beta$,
and has the property that 
$\beta$ occurs as an interval copy
in the balloon permutation.

In this chapter we find an expression
for the principal \mob function
of balloon permutations
in terms of a sum over a set of permutations, 
plus a correction factor.

We then show that for certain types of 
balloon permutation (``wedge permutations'')
the correction factor is always zero.
Further, we show that the
principal \mob function of a wedge permutation
is always a multiple of 
the principal \mob function
of $\beta$.

\section{Introduction}

In this chapter we recall the definition
of a balloon permutation from 
Chapter~\ref{chapter_2413_balloon_paper},
and provide a second, equivalent, definition.

We show how, given two permutations
$\alpha$ and $\beta$ we can construct
a balloon permutation $\ballij{\alpha}{\beta}$.
We then provide some examples of
balloon permutations,
which include direct sums and skew sums,
and introduce ``block diagrams'',
which show how $\alpha$ and $\beta$
are related.

We discuss how to represent permutations 
contained in a balloon permutation
in terms of $\alpha$ and $\beta$.
This includes discussing how to resolve
ambiguities that arise when a
permutation contained in a balloon permutation
has multiple embeddings.

We show that the principal \mob function of
$\ballij{\alpha}{\beta}$ can be expressed as a sum 
of the principal \mob function over a certain
subset of permutations contained in $\ballij{\alpha}{\beta}$,
plus a correction factor that is calculated 
from a specific set of
chains.  
The subset of permutations has
the property that they all contain 
$\beta$ as an interval copy,
although we note that not every permutation
that contains $\beta$ as an interval copy
is included in the set of permutations.

One of the types of balloon permutation
is a wedge permutation.
We show that the correction factor
for a wedge permutation
is always zero.
We further show that 
the value of the principal \mob function
of a wedge permutation
is a multiple of the 
principal \mob function of
$\beta$.

We conclude this chapter with a brief summary
of the results found.
We briefly discuss 
certain phenomena which have been observed
when the wedge operation is iterated.

\section{Definitions, examples and notation}

\subsection{Balloon permutations}

Generalised balloon permutations were introduced
in Chapter~\ref{chapter_2413_balloon_paper},
and we recall that they are defined as
\begin{align}
\label{equation-recall-balloon-definition}
(\ballgen{i,j}{\alpha}{\beta})_x
& =
\begin{cases}
\alpha_x 
& \text{if $x \leq i$ and $\alpha_x \leq j $}\\
\alpha_x + \order{\beta} 
& \text{if $x \leq i$ and $\alpha_x > j $}\\
\beta_{x-i} + j 
& \text{if $x > i $ and $x \leq i+\order{\beta}$ }\\
\alpha_{x-\order{\beta}} 
& \text{if $x > i+\order{\beta}$ and $\alpha_{x-\order{\beta}} \leq j $}\\
\alpha_{x-\order{\beta}} + \order{\beta} 
& \text{if $x > i+\order{\beta}$ and $\alpha_{x-\order{\beta}} > j $}\\ 
\end{cases}
\end{align}
with $0 \leq i,j \leq \order{\alpha}$.

If $\alpha = 1$, then
$\ballgen{i,j}{\alpha}{\beta}$
is one of 
$1 \oplus \beta$,
$1 \ominus \beta$,
$\beta \oplus 1$, or
$\beta \ominus 1$.
In these cases 
it is trivial to find $\mobp{\ballgen{i,j}{\alpha}{\beta}}$
using results from 
Burstein, 
Jel{\'{i}}nek, 
Jel{\'{i}}nkov{\'{a}} and 
Steingr{\'{i}}msson~\cite{Burstein2011},
and we exclude these cases from further consideration in
this chapter by requiring that $\order{\alpha} > 1$.

Given two permutations, $\alpha$ and $\beta$,
we say that some permutation $\pi$ 
is a 
\emph{merge}\extindex[permutation]{merge}
of $\alpha$ and $\beta$
if the points of $\pi$
can be coloured red or blue
so that the red points 
are order-isomorphic to $\alpha$,
and the blue points are order-isomorphic to $\beta$.

We can alternatively define a 
\emph{balloon}\extindex[permutation]{balloon} 
permutation
as the merge of two non-empty permutations
$\alpha$ and $\beta$,
which we write as $\ballij{\alpha}{\beta}$,
with the additional requirement that
the blue points (order-isomorphic
to $\beta$) must be an interval copy of $\beta$.
One consequence of this last requirement 
is that 
\[
\ballij{\alpha}{\eta} < \ballij{\alpha}{\zeta}
\;\;\text{~if and only if~}\;\;
\eta < \zeta.
\]
We call this the 
\emph{nesting}\extindex{nesting condition} 
condition.

Our alternative definition of a balloon 
permutation is somewhat abstract,
and, as we will see, certain values of $i$ and/or $j$
make a significant difference to our results.
Before we continue with our discussion of notation,
we first present several varieties of balloon permutations,
together with a diagrammatic way of understanding
how balloon permutations are structured.

\subsection{Types of balloon permutations}

When discussing  types of balloon
permutations $\ballij{\alpha}{\beta}$, we will generally give a 
description of how $\alpha$ and $\beta$ are merged.
We will also provide what we term a 
\emph{block diagram}\extindex{block diagram}.
A block diagram is used to give a visual
representation of the merge, showing the 
relative locations of $\alpha$ and $\beta$ in
$\ballij{\alpha}{\beta}$.  In all the examples
we show, there are parts of the permutation plot
of $\ballij{\alpha}{\beta}$ that are guaranteed to be empty.   
In a block diagram,
we highlight
these empty regions by shading them.

\subsubsection{Balloon permutations}

Figure~\ref{figure_generalised-balloon}
shows the block diagram for any balloon permutation.
\begin{figure}[!h]
    \begin{center}
        \begin{tikzpicture}[scale=1]
        \fill [color=lightgray] (0.5,1.5) rectangle (1.5,2.5);
        \fill [color=lightgray] (1.5,0.5) rectangle (2.5,1.5);
        \fill [color=lightgray] (1.5,2.5) rectangle (2.5,3.5);
        \fill [color=lightgray] (2.5,1.5) rectangle (3.5,2.5);
        \plotgrid{3}{3}
        \scell{1}{1}{$\alpha_{BL}$};
        \scell{1}{3}{$\alpha_{TL}$};
        \scell{2}{2}{$\beta$};
        \scell{3}{1}{$\alpha_{BR}$};
        \scell{3}{3}{$\alpha_{TR}$};
        \end{tikzpicture}
    \end{center}
    \caption{Block diagram for the generalised balloon permutation $\ballgen{i,j}{\alpha}{\beta}$.}
    \label{figure_generalised-balloon} 
\end{figure}
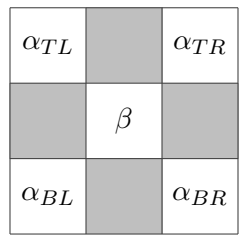
Note that, by design, a 
block diagram does not include the 
indices used (the $i$ and $j$ in $\ballgen{i,j}{\alpha}{\beta}$),
as the purpose of a block diagram is to provide 
a high-level view of the construction.

There are two sub-types of balloon permutations,
which we describe now.

\subsubsection{Direct sums and skew sums}

The simplest examples of balloon permutations, 
which have already appeared extensively in the literature,
are the direct sum of two permutations, $\alpha \oplus \beta$,
and the skew sum, $\alpha \ominus \beta$.
Figure~\ref{figure-sum-and-skew-as-blocks}
shows the block diagram for 
direct sums and skew sums.
Direct sums occur when
we have
$\ballgen{0,0}{\alpha}{\beta}$ or
$\ballgen{\order{\alpha},\order{\alpha}}{\alpha}{\beta}$.
Skew sums occur when we have
$\ballgen{0,\order{\alpha}}{\alpha}{\beta}$ or
$\ballgen{\order{\alpha},0}{\alpha}{\beta}$.
\begin{figure}[!h]
    \begin{center}
        \begin{subfigure}[t]{0.45\textwidth}
            \begin{center}
                \begin{tikzpicture}[scale=1]
                \fill [color=lightgray] (0.5,1.5) rectangle (1.5,2.5);
                \fill [color=lightgray] (1.5,0.5) rectangle (2.5,1.5);
                \plotgrid{2}{2}
                \scell{1}{1}{$\alpha$};
                \scell{2}{2}{$\beta$};
                \end{tikzpicture}                
            \end{center}
            \caption{$\alpha \oplus \beta = \ballgen{\order{\alpha},\order{\alpha}}{\alpha}{\beta}$.}
        \end{subfigure}
        \begin{subfigure}[t]{0.45\textwidth}
            \begin{center}
                \begin{tikzpicture}[scale=1]
                \fill [color=lightgray] (0.5,0.5) rectangle (1.5,1.5);
                \fill [color=lightgray] (1.5,1.5) rectangle (2.5,2.5);
                \plotgrid{2}{2}
                \scell{1}{2}{$\alpha$};
                \scell{2}{1}{$\beta$};
                \end{tikzpicture}                
            \end{center}
            \caption{$\alpha \ominus \beta = \ballgen{\order{\alpha},0}{\alpha}{\beta}$.}
        \end{subfigure}
    \end{center}
    \caption{Block diagrams for direct and skew sums.}
    \label{figure-sum-and-skew-as-blocks} 
\end{figure}

\subsubsection{Wedge permutations}
\label{subsection-wedge-permutations}

Our second type of balloon permutation
is the 
\emph{wedge permutation}\extindex[permutation]{wedge}.
Wedge permutations occur when we have
$\ballgen{i,0}{\alpha}{\beta}$ 
or
$\ballgen{i, \order{\alpha}}{\alpha}{\beta}$ 
or
$\ballgen{0,j}{\alpha}{\beta}$ 
or
$\ballgen{\order{\alpha},j}{\alpha}{\beta}$.
There are thus four (symmetric) ways in which
we can construct a wedge permutation,
For our purposes, we only need 
consider one symmetry,
and we choose the version
defined by 
$\ballgen{i, \order{\alpha}}{\alpha}{\beta}$.
We write this is as $\ballwedgex{i}{\alpha}{\beta}$.
Henceforth, we will refer to this as a wedge permutation
without qualification.

Note that if
$i \in \{ 0 , \order{\alpha} \}$,
then
$\ballwedgex{i}{\alpha}{\beta}$
is a direct or skew sum of $\alpha$ and $\beta$.
We will occasionally want to refer to 
wedge permutations that are not
direct or skew sums, and we call
these 
\emph{proper wedge permutations}\extindex[permutation]{proper wedge}.

Another way to understand the construction
of wedge permutations
is to define $\ballwedgek{\alpha}{\beta}$
as the permutation formed by taking the first 
$k$ points of $\alpha$, then appending 
all of the points from $\beta$, with their values 
increased by $\order{\alpha}$,
and finally appending the remaining points of $\alpha$.
Figure~\ref{figure_example_wedge_generic}
shows the block diagram for a wedge permutation,
where $\alpha_L$ represents the first $k$
points for $\alpha$, and $\alpha_R$ 
represents the remaining points of $\alpha$.
\begin{figure}[!h]
    \begin{center}
        \begin{tikzpicture}[scale=1]
        \fill [color=lightgray] (0.5,1.5) rectangle (1.5,2.5);
        \fill [color=lightgray] (1.5,0.5) rectangle (2.5,1.5);
        \fill [color=lightgray] (2.5,1.5) rectangle (3.5,2.5);        
        \plotgrid{3}{2}
        \scell{1}{1}{$\alpha_{L}$};
        \scell{2}{2}{$\beta$};
        \scell{3}{1}{$\alpha_{R}$};
        \end{tikzpicture}
    \end{center}
    \caption{Block diagram for the wedge permutation $\ballwedgek{\alpha}{\beta}$.}
    \label{figure_example_wedge_generic} 
\end{figure}

\subsection{Notation}

We have said that a balloon permutation will be written 
as $\ballij{\alpha}{\beta}$.
We now consider how we will represent some permutation 
$\sigma$, where $\sigma \leq \ballij{\alpha}{\beta}$.  
Our aim is to define a notation where
most permutations contained in $\ballij{\alpha}{\beta}$
have a unique representation.

Since $\ballij{\alpha}{\beta}$ is a merge of $\alpha$ and $\beta$,
we can colour the points of $\alpha$ red,
and the points of $\beta$ blue.
We note that for a merge in general there may be 
several possible colourings.
By contrast, there is only one possible
colouring for a balloon permutation.
This is because
$\beta$ is an interval at a fixed
position within $\ballij{\alpha}{\beta}$,
and so we know that 
the first point of $\beta$ 
is in column $i+1$,
and the lowest point of $\beta$ is in row $j+1$.
Since $\beta$ is an interval,
this then means that 
there is only one possible choice for 
the blue points,
and so we have
a unique colouring.

When we discuss permutations contained in $\ballij{\alpha}{\beta}$,
we will sometimes want to discuss how these permutations
can be found as embeddings.
Recall that 
if $\sigma$ is contained in $\pi$,
then an embedding of $\sigma$ in $\pi$
is a set of points of $\pi$, with cardinality $\order{\sigma}$,
that is order-isomorphic to $\sigma$.
Further, note that an embedding is
not necessarily unique.  
As a (trivial) example, if $\order{\pi} = n$,
then there are $n$ distinct embeddings of
the permutation 1 in $\pi$.

Now let $\pi = \ballij{\alpha}{\beta}$,
and assume that we have a permutation $\sigma < \pi$,
and we want to describe an embedding $\omega$ 
of $\sigma$ in $\pi$ in terms of 
the points of $\alpha$ and $\beta$.
The points of $\omega$ in $\ballij{\alpha}{\beta}$ can be partitioned
into two sets -- those that are red, and those that are blue.
The blue points used in $\omega$ will form 
a permutation $\eta$, 
where we allow $\eta$ to be the empty permutation $\emptyperm$.
In general, we will not be concerned
with exactly which red points are used.
We know that the red points are a 
(possibly improper or empty) subset of the points of $\alpha$.
We represent the embedding as
\[
\omega = \ballij{\ex{\alpha}}{\eta}.
\]
This representation must be thought of as
first finding the complete permutation $\ballij{\alpha}{\eta}$,
where $\eta$ uses the blue points chosen,
and then removing some or possibly all of the red points.
With this notation, we can clearly understand 
the permutation formed by the blue points, 
as this is $\eta$, but we do not know
which red points have been used.
This notation does not distinguish between
two embeddings where the blue points of
each embedding are equivalent to the same permutation.
This is deliberate, 
as when we consider embeddings,
we will only be concerned with the permutation formed by the blue points,
not exactly which blue points are used,
and we think that the ambiguity in notation
is more than offset by the increase in clarity
in the discourse.

We now partition the permutations 
contained in $\pi = \ballij{\alpha}{\beta}$ into four subsets.

\subsubsection{Complete permutations}

Our first set of permutations are those where
there is an embedding that uses all of the red points.
If $\sigma$ is such a permutation, then
it is possible to write
$\sigma = \ballij{\alpha}{\eta}$
for some (possibly empty) permutation $\eta$.
The permutation $\eta$ is unique,
since if we had
$\sigma 
= 
\ballij{\alpha}{\eta}
= 
\ballij{\alpha}{\zeta}$, 
then by the nesting condition we must have $\eta = \zeta$.
We call these permutations 
\emph{complete}\extindex[permutation]{complete}, 
as they
have an embedding which uses the complete set of red points.
We will always write them in the form
$\ballij{\alpha}{\eta}$.
The permutation $\pi = \ballij{\alpha}{\beta}$ is, of course, complete.

\subsubsection{Proper reductions}

Our second set of permutations
are those where every embedding uses all of the blue points, 
excluding the permutation $\pi$, 
which, as noted above, is complete.
For these permutations, we will not be interested
in understanding exactly which red points are used in 
any embedding.
We write these permutations in the form
$\ballij{\ex{\alpha}}{\beta}$.  
This representation must be thought of as
first finding the complete permutation $\ballij{\alpha}{\beta}$,
and then removing some or possibly all of the red points.
We call these permutations 
\emph{proper reductions}\extindof{balloon}{proper reduction}{(of a balloon)}.
Given $\pi = \ballij{\alpha}{\beta}$,
we denote the set of permutations that are proper reductions
as $\redset$.
Note that 
no proper reduction can be complete.

\subsubsection{Matryoshka permutations}

Our third set of permutations 
are a subset of the permutations that are
neither complete, nor proper reductions,
and satisfy a specific 
condition (the ``matryoshka'' condition).

Given any two permutations $\sigma$ and $\pi$, 
there is a set of all possible
embeddings of $\sigma$ into $\pi$, 
which we write $\sigma(\pi)$.
Since we are only interested in
cases where $\pi = \ballij{\alpha}{\beta}$,
and where $\sigma$ is neither complete, 
nor a proper reduction,
we can extend our notation to write
\[
\sigma(\pi) = 
\{
\ballij{\ex{\alpha}^1}{\eta^1},
\ldots,
\ballij{\ex{\alpha}^n}{\eta^n}
\}.
\]
Note that there may
be cases where there are two
embeddings 
$\ballij{\ex{\alpha}^k}{\eta^k}$,
and
$\ballij{\ex{\alpha}^\ell}{\eta^\ell}$,
with $k \neq \ell$, where the permutations
$\eta^k$ and $\eta^\ell$ are the same.  
We are interested in understanding  
which permutations occur as $\eta$ in 
$\ballij{\ex{\alpha}}{\eta}$
in the set of embeddings
$\sigma(\pi)$,
so 
our next step is to form the set of permutations
that occur as some $\eta$ in $\sigma(\pi)$.
We define
\[
E_{\sigma(\pi)} =
\{
\zeta :
\sigma(\pi) \text{ contains an embedding }
\ballij{\ex{\alpha}}{\zeta}
\}.
\]
and, given some $E_{\sigma(\pi)}$,  we label the
elements as
$
E_{\sigma(\pi)} =
\{
\zeta_1, \ldots, \zeta_m
\}.
$
Note that $E_{\sigma(\pi)}$ is a set of permutations,
and that this set can include the empty permutation $\emptyperm$.
Now,
if there is an integer
$k$, with $1 \leq k \leq m$
such that for all $\ell = 1, \ldots, m$, and $\ell \neq k$ we have
$\zeta_k < \zeta_\ell$,
then we say that the permutation
$\sigma$ is 
\emph{matryoshka}\extindex[permutation]{matryoshka},
and we will write these permutations
in the form
$\ballij{\ex{\alpha}}{\zeta}$,
where $\zeta = \zeta_k$.
As before, this representation must be thought of as
first finding the complete permutation $\ballij{\alpha}{\zeta}$,
and then removing some or possibly all of the red points.
The set of matryoshka permutations
forms our third set.

If a permutation $\sigma$
is not complete, and is not a proper reduction, 
and has an embedding 
$\ballij{\ex{\alpha}}{\eta}$,
where $\eta \in \{ \emptyperm, 1\}$,
then it is easy to see that $\sigma$ must be matryoshka.

\subsubsection{Defective permutations}

The remaining permutations 
are not complete permutations,
proper reductions,
or matryoshka.
We say that these permutations
are \emph{defective}\extindex[permutation]{defective}.  
We will not need to concern
ourselves with a unique representation 
of a defective permutation.

\subsubsection{Notation for elements of a chain}

Our main arguments will be based on 
partitioning the chains in the poset, 
and then applying 
Corollary~\ref{corollary-halls-corollary}.
Given $\pi = \ballij{\alpha}{\beta}$,
we now introduce the terminology
and notation we will use in handling the
chains in the poset $[1, \pi]$.

Let $c$ be a chain in the interval $[1, \pi]$.
We start by noting that the top element 
of every chain in the interval $[1, \pi]$
is $\pi = \ballij{\alpha}{\beta}$,
and the bottom element is the permutation $1$.
Recall that we have $\order{\alpha} > 1$, 
so the permutation $1$ cannot be written as 
$\ballij{\alpha}{\eta}$ for any $\eta$.
It follows that every chain contains a largest permutation
that cannot be written as $\ballij{\alpha}{\eta}$.
Note that since we cannot write this permutation as
$\ballij{\alpha}{\eta}$ for any $\eta$,
this permutation is not complete. 
As in Chapter~\ref{chapter_2413_balloon_paper},
we call this permutation the 
\emph{pivot}\extindof{balloon}{pivot}{(in a balloon)}, 
and denote it by $\psi_c$.
Since $\psi_c$ is not complete, and the highest element
of the chain, $\pi$, is complete,
there must be a permutation above $\psi_c$ in the chain.
We call the permutation above $\psi_c$ in the chain
$\phi_c$.
Finally, since every chain has at least two elements,
there is always a second-highest permutation in the chain,
and we call this $\kappa_c$.

We remark that
$\psi_c$ cannot be a complete permutation,
and that
$\phi_c$, 
and every permutation above $\phi_c$ in the chain,  
must be a complete permutation.

We will partition the chains in the poset into three
sets.
The first set, $\chainr$, consists of those chains
where $\kappa_c$ is a proper reduction.  
The second set, $\chainm$, comprises chains $c$
that are not in $\chainr$,
where $\psi_c$ is matryoshka.
Our final set $\chaing$ contains the remaining chains.

Formally, we define
\begin{align}
\label{defn_block_chains_rmg}
\begin{split}
\chainc & = \text{The set of all chains in $[1, \pi]$} \\
\chainr & =
\{
c : 
c \in \chainc; 
\kappa_c \text{~is a proper reduction} 
\}
\\
\chainm & =
\{
c : 
c \in \chainc \setminus \chainr; 
\psi_c \text{~is matryoshka}
\}
\\
\chaing & = 
\{
c :
c \in \chainc \setminus (\chainr \cup \chainm)
\}.
\end{split}
\end{align}    

We now have all the terminology and notation to 
prove our first result in this chapter.

\section{The principal \mob function of balloon permutations}

Let $\pi = \ballij{\alpha}{\beta}$.  
Our aim in this section
is to derive an expression for $\mobp{\pi}$ as follows.
\begin{theorem}
    \label{theorem-pmf-balloon-permutations}
    If $\pi = \ballij{\alpha}{\beta}$, 
    and 
    $\chaing$ is as defined in Equation~\ref{defn_block_chains_rmg}, then
    \[
    \mobp{\pi}
    =
    - \sum_{\lambda \in \redset} \mobp{\lambda}
    +
    \sum_{c \in \chaing} (-1)^{\order{c}}.
    \]   
\end{theorem}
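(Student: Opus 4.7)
The plan is to invoke Hall's Theorem on $[1,\pi]$ and then split the Hall sum using the partition $\chainc = \chainr \cup \chainm \cup \chaing$ from~\eqref{defn_block_chains_rmg}:
\[
\mobp{\pi} = \sum_{c \in \chainr} (-1)^{\order{c}} + \sum_{c \in \chainm} (-1)^{\order{c}} + \sum_{c \in \chaing} (-1)^{\order{c}}.
\]
Two of the three pieces are essentially free. Since $\chainr$ consists precisely of chains whose second-highest element lies in $\redset$, Corollary~\ref{corollary-hall-sum-second-highest-set} identifies its contribution as $-\sum_{\lambda \in \redset} \mobp{\lambda}$, and the contribution of $\chaing$ is, by definition, the correction term in the theorem. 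The real work is therefore to show that the Hall sum over $\chainm$ vanishes, which by Corollary~\ref{corollary-halls-corollary} reduces to constructing a parity-reversing involution $\Phi$ on $\chainm$.

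The proposed $\Phi$ exploits the matryoshka structure of the pivot. For each $c \in \chainm$, the pivot $\psi_c$ is matryoshka, so the set $E_{\psi_c(\pi)}$ has a unique minimum $\zeta_c$, and this yields a canonical complete permutation $\ballij{\alpha}{\zeta_c}$. Moreover $\psi_c \not\in \redset$ and $|E_{\psi_c(\pi)}| \geq 1$ force $\zeta_c < \beta$, so $\psi_c < \ballij{\alpha}{\zeta_c} < \pi$. I then set
\[
\Phi(c) = \begin{cases} c \setminus \{\ballij{\alpha}{\zeta_c}\} & \text{if } \phi_c = \ballij{\alpha}{\zeta_c}, \\ c \cup \{\ballij{\alpha}{\zeta_c}\} & \text{if } \phi_c \neq \ballij{\alpha}{\zeta_c}. \end{cases}
\]
Because $\phi_c$ is the minimum complete element above $\psi_c$ in $c$, the element $\ballij{\alpha}{\zeta_c}$ already belongs to $c$ precisely when it equals $\phi_c$. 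In the other case it can be inserted between $\psi_c$ and $\phi_c$, so $\Phi(c)$ is a chain, exactly one element is added or removed, and the two cases are clearly exchanged by $\Phi$, giving $\Phi^2 = \mathrm{id}$ and reversed parity.

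What remains, and what I expect to be the main obstacle, is checking $\Phi(\chainm) \subseteq \chainm$. The pivot is never changed, so $\psi_{\Phi(c)} = \psi_c$ stays matryoshka. For $\kappa_{\Phi(c)}$: inserting $\ballij{\alpha}{\zeta_c}$ below $\phi_c$ leaves $\kappa_c$ untouched and therefore outside $\redset$; removing $\phi_c = \ballij{\alpha}{\zeta_c}$ either leaves $\kappa_c$ unchanged (when elements above $\phi_c$ other than $\pi$ exist, which must all be complete and so not proper reductions) or replaces it by $\psi_c$ (which is matryoshka, hence not a proper reduction). The delicate bookkeeping concerns corner cases such as $\psi_c = 1$, where one must interpret $\ballij{\alpha}{\emptyperm}$ as $\alpha$ itself and verify it is complete, and the case $\kappa_c = \phi_c$ where the new top-below-$\pi$ in $\Phi(c)$ must be checked against $\redset$. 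Once these partition-disjointness checks between complete, proper-reduction and matryoshka elements are in place, combining the three Hall contributions delivers the stated identity.
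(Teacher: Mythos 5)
Your outline follows the same route as the paper (Hall partition into $\chainr$, $\chainm$, $\chaing$; Corollary~\ref{corollary-hall-sum-second-highest-set} for $\chainr$; a parity-reversing involution on $\chainm$ built from the minimal matryoshka element, invoking Corollary~\ref{corollary-halls-corollary}), but there is a genuine gap at the one point where the matryoshka hypothesis has to do real work. In the insertion case you assert that $\ballij{\alpha}{\zeta_c}$ ``can be inserted between $\psi_c$ and $\phi_c$''. You have justified $\psi_c < \ballij{\alpha}{\zeta_c} < \pi$, but not $\ballij{\alpha}{\zeta_c} \leq \phi_c$. Writing $\phi_c = \ballij{\alpha}{\tau}$, the nesting condition reduces this to $\zeta_c \leq \tau$, and that is not automatic: the chain relation $\psi_c < \phi_c$ only supplies some embedding of $\psi_c$ into $\phi_c$, and nothing yet connects its blue content to the minimal $\zeta_c$, which was computed from embeddings into $\pi$. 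If $\zeta_c \not\leq \tau$, the element you insert is incomparable to $\phi_c$, so $\Phi(c)$ is not a chain, and your claim that $\ballij{\alpha}{\zeta_c}$ lies in $c$ exactly when it equals $\phi_c$ also fails (it could a priori sit strictly above $\phi_c$, making $\Phi$ fail to change the chain at all). This is precisely the step the paper spends the bulk of its proof on: it assumes the inserted element is not below $\phi_c$ and performs an embedding surgery to produce an embedding of $\psi_c$ into $\pi$ with strictly smaller blue part, contradicting the minimality that defines matryoshka. So the minimality is needed for chain-ness, not merely for the membership check $\Phi(\chainm)\subseteq\chainm$ that you flag as the main obstacle; the latter is comparatively routine.

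The gap is fillable, either by the paper's contradiction argument or more directly: compose any embedding of $\psi_c$ into $\phi_c$ with the completeness embedding of $\phi_c$ into $\pi$ (which sends the red points of $\phi_c$ onto the red points of $\pi$ and the designated $\tau$-block into the blue block of $\pi$); the blue part $\zeta'$ of the composite satisfies $\zeta' \leq \tau$ and $\zeta' \in E_{\psi_c(\pi)}$, whence $\zeta_c \leq \zeta' \leq \tau$ by minimality, and the insertion (or the identification $\ballij{\alpha}{\zeta_c}=\phi_c$ when equality holds) goes through. One way or another this argument must appear; without it the involution, and hence the vanishing of the $\chainm$ contribution, is unproved. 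The remaining ingredients of your outline ($\zeta_c<\beta$ so the added or removed element is never $\pi$, the pivot being preserved, and the image chain's second-highest element never being a proper reduction) are correct and match the paper.
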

\begin{proof}
    Since the sets 
    $\chainr$,
    $\chainm$, and
    $\chaing$ partition the chains in the poset,
    we can write
    \[
    \mobp{\pi}
    =
    \sum_{c \in \chainr} (-1)^{\order{c}}
    +
    \sum_{c \in \chainm} (-1)^{\order{c}}
    +
    \sum_{c \in \chaing} (-1)^{\order{c}}.
    \]
    
    We start by showing that the Hall sum for the 
    set $\chainm$ is zero.
    
    Let $c$ be a chain in $\chainm$,
    $\psi_c = \ballij{\ex{\alpha}}{\eta}$,
    and     
    $\phi_c = \ballij{\alpha}{\tau}$.
    
    Define a function $\Phi$ as follows:
    \[
    \Phi(c) =
    \begin{cases}
    c \setminus \{ \ballij{\alpha}{\eta} \} & \text{if $\eta = \tau$,} \\
    c \cup \{ \ballij{\alpha}{\eta} \} & \text{otherwise.}     
    \end{cases}
    \]
    
    We have two cases to consider.  Either $\eta = \tau$, 
    or $\eta \neq \tau$.
    
    Case 1: $\eta = \tau$. 
    
    The chain $c$ has a segment
    $\ballij{\ex{\alpha}}{\eta} < \ballij{\alpha}{\eta}$.    
    If $\eta$ = $\beta$, then $\psi_c = \kappa_c$.
    Further, $\kappa_c$ is a proper reduction
    since $\eta$ is minimal,
    so $c \in \chainr$, 
    thus we must have $\eta < \beta$, and
    so $\Phi(c) = \cprime$ is a chain.  
    
    Now, since $\eta \neq \beta$,
    it follows that $\Phi(c)$ contains a segment
    $\ballij{\ex{\alpha}}{\eta} < \ballij{\alpha}{\zeta}$
    for some $\zeta \leq \beta$.
    Since $\eta < \beta$, $\ballij{\ex{\alpha}}{\eta}$
    is not a reduction.  
    If $\psi_{\cprime} = \kappa_{\cprime}$, 
    then $\cprime \not\in \chainr$.
    If $\psi_{\cprime} \neq \kappa_{\cprime}$, then
    we must have $\kappa_{\cprime} = \kappa_c$,
    and so again $\cprime \not\in \chainr$.
    Since we have $\psi_{\cprime} = \psi_c$, 
    it then follows that
    $c \in \chainm$.
    
    Case 2: $\eta \neq \tau$.
    
    The chain $c$ has a segment
    $\ballij{\ex{\alpha}}{\eta} < \ballij{\alpha}{\tau}$.
    Clearly, 
    $\ballij{\ex{\alpha}}{\eta} < \ballij{\alpha}{\eta}$.
    so $\cprime = \Phi(c)$ can fail to be 
    a chain if and only if 
    $\ballij{\alpha}{\eta} \not< \ballij{\alpha}{\tau}$ or
    equivalently, from the nesting condition,
    $\eta \not< \tau$.
    
    We show that $\eta < \tau$ by assuming otherwise,
    and showing that this leads to a contradiction.
    \begin{figure}
        \begin{center}
            \begin{subfigure}{0.22\textwidth}
                \centering
                \begin{tikzpicture}[scale=0.3]
                \fill [color=lightgray] (0.5,2.5) rectangle (3.5,6.5);
                \fill [color=lightgray] (3.5,0.5) rectangle (7.5,2.5);
                \fill [color=lightgray] (3.5,6.5) rectangle (7.5,9.5);
                \fill [color=lightgray] (7.5,2.5) rectangle (9.5,6.5);        
                \plotgrid{9}{9};
                \embedast{(1,2)}{blue};
                \embedast{(2,8)}{blue};
                \embeddot{(3,9)}{black};
                \embedast{(4,5)}{blue};
                \embedast{(5,3)}{blue};
                \embedast{(6,6)}{blue};
                \embeddot{(7,4)}{black};
                \embeddot{(8,1)}{black};
                \embeddot{(9,7)}{black};
                \end{tikzpicture}
                \caption{}
                \label{figure-block-case-2-1}
                $\ballij{\ex{\alpha}^1}{\eta} < \ballij{\alpha}{\tau}$.              
            \end{subfigure}
            \phantom{x}
            \begin{subfigure}{0.22\textwidth}
                \centering
                \begin{tikzpicture}[scale=0.3]
                \fill [color=lightgray] (0.5,2.5) rectangle (3.5,6.5);
                \fill [color=lightgray] (3.5,0.5) rectangle (7.5,2.5);
                \fill [color=lightgray] (3.5,6.5) rectangle (7.5,9.5);
                \fill [color=lightgray] (7.5,2.5) rectangle (9.5,6.5);        
                \plotgrid{9}{9};
                \embedcir{(1,2)}{black};
                \embedcir{(2,8)}{black};
                \embeddot{(3,9)}{black};
                \embedast{(4,5)}{blue};
                \embedast{(5,3)}{blue};
                \embedast{(6,6)}{blue};
                \embeddot{(7,4)}{black};
                \embeddot{(8,1)}{black};
                \embeddot{(9,7)}{black};
                \end{tikzpicture}
                \caption{}
                \label{figure-block-case-2-2}
                $\eta < \ballij{\ex{\alpha}^2}{\tau}$.
            \end{subfigure}
            \phantom{x}
            \begin{subfigure}{0.22\textwidth}
                \begin{tikzpicture}[scale=0.3]
                \fill [color=lightgray] (0.5,2.5) rectangle (3.5,6.5);
                \fill [color=lightgray] (3.5,0.5) rectangle (7.5,2.5);
                \fill [color=lightgray] (3.5,6.5) rectangle (7.5,9.5);
                \fill [color=lightgray] (7.5,2.5) rectangle (9.5,6.5);        
                \plotgrid{9}{9};
                \embedcir{(1,2)}{black};            
                \embedcir{(2,8)}{black};
                \embeddot{(3,9)}{black};
                \embedast{(4,5)}{blue};
                \embedast{(5,3)}{blue};
                \embeddot{(6,6)}{black};
                \embeddot{(7,4)}{black};
                \embeddot{(8,1)}{black};
                \embedast{(9,7)}{blue};
                \end{tikzpicture}
                \caption{}
                \label{figure-block-case-2-3}                
                $\eta < \ballij{\ex{\alpha}^3}{\tau}$.
            \end{subfigure}
            \phantom{x}
            \begin{subfigure}{0.22\textwidth}
                \begin{tikzpicture}[scale=0.3]
                \fill [color=lightgray] (0.5,2.5) rectangle (3.5,6.5);
                \fill [color=lightgray] (3.5,0.5) rectangle (7.5,2.5);
                \fill [color=lightgray] (3.5,6.5) rectangle (7.5,9.5);
                \fill [color=lightgray] (7.5,2.5) rectangle (9.5,6.5);        
                \plotgrid{9}{9};
                \embedast{(1,2)}{blue};
                \embedast{(2,8)}{blue};
                \embeddot{(3,9)}{black};
                \embedast{(4,5)}{blue};
                \embedast{(5,3)}{blue};
                \embeddot{(6,6)}{black};
                \embeddot{(7,4)}{black};
                \embeddot{(8,1)}{black};
                \embedast{(9,7)}{blue};                
                \end{tikzpicture}
                \caption{}
                \label{figure-block-case-2-4}                
                $\ballij{\ex{\alpha}^4}{\eta^\prime} < \ballij{\alpha}{\tau}$.
            \end{subfigure}
        \end{center}
        \caption{Examples to illustrate the steps in case 2.  
            The left-hand side of the inequality 
            is shown as {$\textcolor{blue}\ast$}.}
    \end{figure}
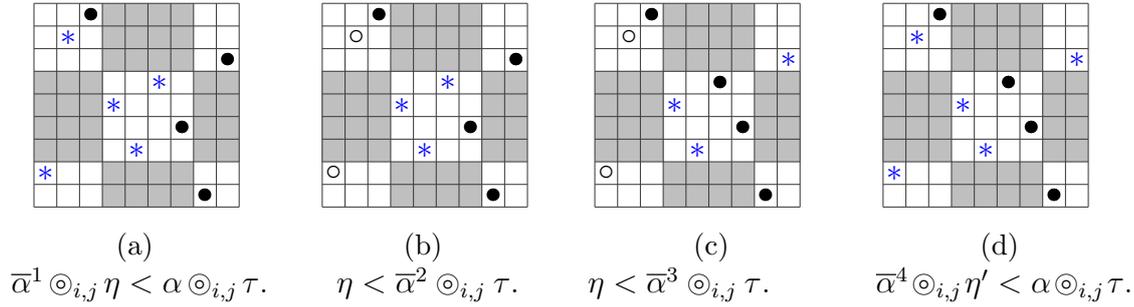
    To aid the reader, we provide a running example
    using a generalised balloon, 
    where $\alpha = 24513$, $\tau = 3142$, 
    $\psi_c = 15324$, and $\psi_c$ is matryoshka, with
    representation $\ballij{\ex{\alpha}}{213}$.
    Of course, in this example 
    the representation of $\psi_c$ is not 
    minimal, and in addition $\eta < \tau$, 
    but despite this
    we feel that the 
    diagrams are a helpful aid in understanding
    the steps in our argument.  
    
    The chain $c$ contains a segment
    $\ballij{\ex{\alpha}^1}{\eta} < \ballij{\alpha}{\tau}$.
    Figure~\ref{figure-block-case-2-1} shows how 
    $\psi_c = \ballij{\ex{\alpha}^1}{\eta}$
    might be embedded in 
    $\phi_c = \ballij{\alpha}{\tau}$.
    
    Since $\ballij{\ex{\alpha}^1}{\eta} < \ballij{\alpha}{\tau}$,
    the points of $\,\ex{\alpha}^1$ in
    $\,\ballij{\ex{\alpha}^1}{\eta}$ are a proper
    subset of the points of $\alpha$ in
    $\ballij{\alpha}{\tau}$.
    We can remove 
    the points of $\,\ex{\alpha}^1$ from
    both sides of the inequality 
    and we obtain
    $\eta < \ballij{\ex{\alpha}^2}{\tau}$.
    This is shown in Figure~\ref{figure-block-case-2-2}.
    
    Since, by assumption, 
    $\eta \not< \tau$, it follows that we must be able to
    find an embedding of $\eta$ in 
    $\ballij{\ex{\alpha}^2}{\tau}$ that
    uses at least one point that is not in $\tau$, and so
    we can write
    $\eta < \ballij{\ex{\alpha}^3}{\tau}$,
    where the points in $\tau$ 
    that are used is a proper subset of 
    the points of $\tau$ that are used
    in $\eta < \ballij{\ex{\alpha}^2}{\tau}$. 
    An example is shown in Figure~\ref{figure-block-case-2-3}.
    
    Now note that the points from $\alpha$ in this new embedding
    of $\eta$,    
    $\ex{\alpha}^3$, must be disjoint from
    the points from $\alpha$ in the original embedding,
    $\ex{\alpha}^1$.
    It follows that we can write
    $\ballij{\ex{\alpha}^1}{\eta}$ 
    as
    $\ballij{\ex{\alpha}^4}{\eta^\prime}$,
    where the points in $\ex{\alpha}^4$
    are the union of the points in
    $\ex{\alpha}^1$ and $\ex{\alpha}^3$.
    Further, we must have 
    $\order{\eta^\prime} < \order{\eta}$.
    This is shown in Figure~\ref{figure-block-case-2-4}.
    
    We now have $\psi_c = \ballij{\ex{\alpha}^4}{\eta^\prime}$,
    where $\order{\eta^\prime} < \order{\eta}$,
    but this is a contradiction, since
    $\ballij{\ex{\alpha}^1}{\eta}$
    is matryoshka, and so
    $\eta$ is minimal.
    Therefore our assumption must be wrong,
    and so we have  $\eta < \tau$,
    and thus 
    $\cprime = \Phi(c)$ is a chain.
    
    It remains to show that $\cprime \in \chainm$.
    Now, $\cprime$ has a segment
    $\ballij{\ex{\alpha}}{\eta} < 
    \ballij{\alpha}{\eta} <
    \ballij{\alpha}{\tau}$.
    Since $\psi_c = \psi_{\cprime}$,
    and $\psi_c$ is matryoshka,
    the only way for $\cprime \not\in \chainm$
    is if 
    $\kappa_{\cprime}$ is a proper reduction.
    
    Since $\psi_{\cprime}$ is the pivot of $\cprime$,
    and there are at least two permutations
    above $\psi_{\cprime}$ in $\cprime$,
    it follows that $\kappa_{\cprime}$
    is a complete permutation, and such a permutation
    cannot be a proper reduction.
    It follows, therefore, that
    $\cprime \not\in \chainr$.
    Since $\psi_{\cprime} = \psi_c$, 
    this then means that $\cprime \in \chainm$.
    
    We now have that
    $\Phi(c)$ is a parity-reversing involution
    on $\chainm$, and so,
    by Corollary~\ref{corollary-halls-corollary},
    we have
    $\sum_{c \in \chainm} (-1)^{\order{c}} = 0$.
    
    We now have 
    \[
    \mobp{\pi}
    =
    \sum_{c \in \chainr} (-1)^{\order{c}}
    +
    \sum_{c \in \chaing} (-1)^{\order{c}}.
    \]
    The chains in $\chainr$ are characterised by the second-highest
    element being an element of $\redset$.
    Using Corollary~\ref{corollary-hall-sum-second-highest-set}
    from Chapter~\ref{chapter_2413_balloon_paper} 
    on the sum over chains in $\chainr$
    completes the proof.
    \end{proof}

Theorem~\ref{theorem-pmf-balloon-permutations}
is hard to use in practice, because of the 
second term, 
$\sum_{c \in \chaing} (-1)^{\order{c}}$.

There is some numerical evidence, based
on analysing permutations with length 12 or less,
that this second term is, in fact, zero in many cases.

Clearly, one way to handle the difficulties 
of this second term would be to ensure
that the sum was zero.  
The easiest case is where
$\chaing = \emptyset$.
This occurs when every permutation
in the poset is matryoshka.
We state this formally as
\begin{corollary}[to Theorem~\ref{theorem-pmf-balloon-permutations}]
    \label{corollary-balloon-is-matryoshka}
    If $\pi = \ballij{\alpha}{\beta}$, 
    and every permutation in 
    $[1, \pi)$ is matryoshka,
    then
    \[
    \mobp{\pi} = - \sum_{\lambda \in \redset} \mobp{\lambda}.
    \]
\end{corollary}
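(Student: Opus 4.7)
The plan is to invoke Theorem~\ref{theorem-pmf-balloon-permutations} directly and show that under the hypothesis the entire second summand $\sum_{c \in \chaing} (-1)^{\order{c}}$ vanishes, because in fact $\chaing$ is empty. Thus the argument reduces to a structural analysis of the pivot $\psi_c$ of an arbitrary chain $c \in \chainc$.

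First I would recall from the definitions that the pivot $\psi_c$ of any chain $c$ is, by construction, not a complete permutation (i.e.~not of the form $\ballij{\alpha}{\eta}$), and that $\psi_c < \pi$. The hypothesis tells us that every permutation in $[1, \pi)$ is matryoshka (in the sense that the only non-matryoshka elements below $\pi$ are complete permutations or proper reductions, i.e.~no defective permutations occur). Therefore $\psi_c$ must be either matryoshka or a proper reduction. If $\psi_c$ is matryoshka, then by the definitions~\eqref{defn_block_chains_rmg} we immediately have $c \in \chainr \cup \chainm$.

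The remaining case is when $\psi_c$ is a proper reduction. Here I would use the fact that a proper reduction has the form $\ballij{\ex{\alpha}}{\beta}$, so in particular it contains $\beta$ as an interval copy. Since $\phi_c = \ballij{\alpha}{\tau}$ lies immediately above $\psi_c$ in the chain, we must have $\psi_c < \phi_c$, which by the nesting condition forces $\beta \leq \tau$; but also $\tau \leq \beta$ since $\phi_c \leq \pi = \ballij{\alpha}{\beta}$, again by the nesting condition. Hence $\tau = \beta$, so $\phi_c = \pi$, and therefore $\kappa_c = \psi_c$ is a proper reduction, giving $c \in \chainr$.

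Combining the two cases, every chain lies in $\chainr \cup \chainm$, so $\chaing = \emptyset$ and the second summand in Theorem~\ref{theorem-pmf-balloon-permutations} vanishes, yielding the claim. There is no real obstacle here: the only subtle point is the proper-reduction case for $\psi_c$, where one has to be careful to apply the nesting condition in both directions to collapse $\phi_c$ onto $\pi$.
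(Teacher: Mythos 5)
Your main line is exactly the paper's: the paper's proof of this corollary is the one-sentence observation that the hypothesis makes every pivot matryoshka, so $\chaing = \emptyset$ and Theorem~\ref{theorem-pmf-balloon-permutations} gives the result immediately. Your first case ($\psi_c$ matryoshka, hence $c \in \chainr \cup \chainm$ by the definitions of the three chain classes) is precisely that argument.

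The place to be careful is the second case you added, where $\psi_c$ is a proper reduction. Under a literal reading of the hypothesis this case is vacuous, and the paper never considers it; under your reading (``no defective permutations occur'') it does arise, but your justification is not valid as written. The nesting condition applies only when both sides are complete permutations $\ballij{\alpha}{\eta}$ and $\ballij{\alpha}{\zeta}$; here $\psi_c = \ballij{\ex{\alpha}}{\beta}$ is not complete, and the weaker fact that $\psi_c$ contains an interval copy of $\beta$ together with $\psi_c < \ballij{\alpha}{\tau}$ does not force $\beta \leq \tau$: the copy of $\beta$ may embed into $\ballij{\alpha}{\tau}$ using red points, for instance whenever $\alpha$ itself contains $\beta$. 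The conclusion you want ($\tau = \beta$, hence $\phi_c = \pi$, so $\kappa_c = \psi_c$ is a proper reduction and $c \in \chainr$) is nevertheless true, but it needs the full strength of the proper-reduction property, namely that \emph{every} embedding of $\psi_c$ into $\pi$ uses all $\order{\beta}$ blue points. Compose an embedding of $\psi_c$ into $\phi_c = \ballij{\alpha}{\tau}$ with the natural embedding of $\ballij{\alpha}{\tau}$ into $\pi$ that fixes all red points and sends the blue copy of $\tau$ into the blue copy of $\beta$; the resulting embedding of $\psi_c$ into $\pi$ uses at most $\order{\tau}$ blue points, so $\order{\tau} \geq \order{\beta}$, forcing $\tau = \beta$. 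With that repair (or by simply taking the hypothesis at face value, which empties this case), your proof is correct and coincides with the paper's.
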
    
\begin{proof}
    If every permutation in 
    $[1, \pi)$ is matryoshka,
    then $\chaing = \emptyset$,
    and the result follows immediately.
\end{proof}

It is easy to see that 
if $\pi$ is a direct or skew sum,
then every permutation contained in 
$\pi$ is matryoshka.
In the following section we show
that if $\pi$ is a wedge permutation,
then every permutation contained 
in $\pi$ is matryoshka.

\section{The principal \mob function of wedge permutations}
\label{section-pmf-wedge-permutations}

We will start by showing:
\begin{theorem}
    \label{theorem-block-wedges-are-matryoshka}
    If $\pi$ is a wedge permutation, as defined in
    Sub-section~\ref{subsection-wedge-permutations}, 
    then every permutation in $[1, \pi)$
    is matryoshka.
\end{theorem}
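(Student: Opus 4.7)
The plan is to exploit a special structural property of wedge permutations: in $\pi = \ballwedgek{\alpha}{\beta}$, the $\beta$-block occupies the topmost $\order{\beta}$ values of $\pi$ at the contiguous range of positions $[k+1, k+\order{\beta}]$. This ``above and in the middle'' placement is exactly what fails for a generic balloon (where the $\alpha$-part can surround $\beta$ on all four sides), and it is what will force every embedding of a $\sigma \leq \pi$ to have a very constrained blue content.

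First I would fix any $\sigma \in [1,\pi)$ together with an arbitrary embedding $f\colon \sigma \hookrightarrow \pi$, and let $B_\sigma \subseteq [\order{\sigma}]$ be the set of positions of $\sigma$ whose image under $f$ lies in the blue block $[k+1, k+\order{\beta}]$ of $\pi$. Using the order-preservation of $f$ on positions (combined with the fact that blue positions of $\pi$ form a contiguous interval), $B_\sigma$ must itself be a contiguous interval of positions in $\sigma$; using order-preservation on values (combined with the fact that every blue value in $\pi$ exceeds every red value), the values $\{\sigma_b : b \in B_\sigma\}$ must be precisely the top $\lvert B_\sigma\rvert$ values of $\sigma$. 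Hence the ``blue pattern'' $\eta \leq \beta$ produced by this embedding is uniquely determined by the single number $p = \lvert B_\sigma\rvert$: it is the pattern of $\sigma$ restricted to the top $p$ values, which (whenever $p$ is realised by some embedding) necessarily form a contiguous interval of positions.

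Next I would establish the nesting that makes $E_{\sigma(\pi)}$ a chain. If $p_1 < p_2$ are both achievable sizes and $\eta_{p_1}, \eta_{p_2}$ are the corresponding blue patterns, then the top $p_1$ values of $\sigma$ are contained in the top $p_2$ values of $\sigma$, and the positional interval realising $\eta_{p_1}$ sits inside the positional interval realising $\eta_{p_2}$. Restricting $\eta_{p_2}$ to its top $p_1$ values therefore recovers $\eta_{p_1}$, giving $\eta_{p_1} \leq \eta_{p_2}$ in the pattern-containment order. Consequently $E_{\sigma(\pi)} = \{\eta_p : p \text{ achievable}\}$ is totally ordered and admits a unique minimum $\eta_{\min}$, which serves as the canonical $\zeta$ in the matryoshka representation $\sigma = \ballij{\ex{\alpha}}{\zeta}$ (with proper reductions and complete permutations handled as the degenerate cases of a one-element $E_{\sigma(\pi)}$, the former with $\zeta = \beta$).

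I expect the main obstacle to be purely bookkeeping in the very first step: carefully arguing that $B_\sigma$ is a position-interval \emph{and} that its values are exactly the top values of $\sigma$, without conflating order-preservation on positions with order-preservation on values. Once this structural lemma is in hand, the ``chain'' property of $E_{\sigma(\pi)}$ and the existence of a unique minimum are essentially formal consequences, and the rest of the theorem follows immediately. A secondary subtlety worth flagging is that a given $p$ may fail to be achievable even when the top $p$ values of $\sigma$ do form an interval, because the remaining ``red'' part of $\sigma$ must embed into $\alpha$ respecting the split at position $k$; but this only restricts which $p$'s appear in the chain, not the chain property itself, so it does not affect the existence of the minimum.
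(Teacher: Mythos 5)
Your proposal is correct and follows essentially the same route as the paper's own proof: both arguments hinge on the observation that in $\ballwedgek{\alpha}{\beta}$ the blue block carries the topmost values, so the blue content of any embedding of $\sigma$ is forced to be the pattern of the top $p$ values of $\sigma$, whence the set $E_{\sigma(\pi)}$ is a chain with a unique minimum. The paper states this more tersely (it skips the positional-contiguity bookkeeping, which is in any case not needed once the values are known to be the top $p$), but the substance is identical.
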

\begin{proof}
    To prove Theorem~\ref{theorem-block-wedges-are-matryoshka},
    it is sufficient to show that an arbitrary
    permutation contained in a wedge permutation
    is matryoshka.
    
    Let $\pi = \ballwedgek{\alpha}{\beta}$,
    and let $\sigma < \pi$.
    Consider any embedding of $\sigma$ in $\pi$.
    Then if there are $n$ blue points in the embedding,
    these, from the construction method, 
    will represent the top $n$ points
    of the permutation $\sigma$.
    
    Now assume we have two embeddings of $\sigma$, say
    $\ballij{\ex{\alpha}^1}{\eta}$ and 
    $\ballij{\ex{\alpha}^2}{\zeta}$.
    If $\order{\eta} = \order{\zeta}$,
    then we have $\eta = \zeta$.
    Assume now, without loss of generality, that
    $\order{\eta} < \order{\zeta}$.
    Then $\eta$ is contained in $\zeta$,
    as the top $\order{\eta}$ points of $\zeta$
    are order-isomorphic to $\eta$.

    For any $\sigma$ there will be some 
    $\eta$ that is minimal,
    noting that this may mean that $\eta = \emptyperm$.
    
    This then means that  any
    permutation contained in a wedge permutation
    is matryoshka.
\end{proof}

We now have
\begin{lemma}
    \label{lemma-wedge-is-sum-of-reductions}
    If $\pi = \ballwedgek{\alpha}{\beta}$,
    then
    \[
    \mobp{\pi} = - \sum_{\lambda \in \redset} \mobp{\lambda}.
    \]
\end{lemma}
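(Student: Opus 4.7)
The plan is to derive this lemma as an immediate consequence of the machinery already assembled. Recall that wedge permutations were introduced as the special case $\ballwedgek{\alpha}{\beta} = \ballgen{k,\order{\alpha}}{\alpha}{\beta}$ (up to symmetry) of the general balloon construction, so $\pi = \ballwedgek{\alpha}{\beta}$ is in particular a balloon permutation $\ballij{\alpha}{\beta}$. Hence Theorem~\ref{theorem-pmf-balloon-permutations} applies and gives
\[
\mobp{\pi} = -\sum_{\lambda \in \redset} \mobp{\lambda} + \sum_{c \in \chaing} (-1)^{\order{c}}.
\]
So the entire task reduces to showing that the correction term vanishes.

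For this, I would invoke Theorem~\ref{theorem-block-wedges-are-matryoshka}, which tells us that every permutation in $[1, \pi)$ is matryoshka whenever $\pi$ is a wedge permutation. Consequently, for every chain $c$ in the poset interval $[1, \pi]$, the pivot $\psi_c$ (which lies in $[1, \pi)$ because $\pi$ itself is complete and hence not a pivot) is matryoshka. By the definitions in Equation~\ref{defn_block_chains_rmg}, this forces every chain not in $\chainr$ to lie in $\chainm$, so the residual set $\chaing$ is empty.

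With $\chaing = \emptyset$, the correction sum $\sum_{c \in \chaing} (-1)^{\order{c}}$ is trivially zero, and Theorem~\ref{theorem-pmf-balloon-permutations} reduces to the claimed identity. Equivalently, one can simply cite Corollary~\ref{corollary-balloon-is-matryoshka}, since its hypothesis (every permutation in $[1, \pi)$ is matryoshka) has just been verified via Theorem~\ref{theorem-block-wedges-are-matryoshka}. There is no genuine obstacle here; the work has been front-loaded into the matryoshka-classification of points below a wedge and into the general balloon formula, and this lemma is the clean specialisation to the wedge case.
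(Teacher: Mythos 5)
Your proposal is correct and follows essentially the same route as the paper: the paper's proof simply cites Theorem~\ref{theorem-block-wedges-are-matryoshka} to conclude that every permutation in $[1,\pi)$ is matryoshka and then applies Corollary~\ref{corollary-balloon-is-matryoshka}, which is precisely your argument (your explicit remark that $\chaing=\emptyset$ is just the content of that corollary's proof). No gaps.
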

\begin{proof}
    By Theorem~\ref{theorem-block-wedges-are-matryoshka}
    every permutation in a wedge permutation is matryoshka.
    Applying Corollary~\ref{corollary-balloon-is-matryoshka}    
    then gives us the result.
\end{proof}

We have shown that every permutation in a wedge permutation
is matryoshka.  This is not the case for balloon permutations
in general.
If 
\begin{align*}
\alpha & = 4,6,3,5,8,9,2,12,10,13,11,7,1 \\
\beta & = 2,4,1,3,7,5,8,6 \\
\pi & = \ballgen{5,8}{\alpha}{\beta} \\
    & = 4,6,3,5,8,10,12,9,11,15,13,16,14,17,2,20,18,21,19,7,1
\intertext{and}
\sigma & = (2413 \oplus 1 \oplus 3142) \ominus 21 \\
       & = 4,6,3,5,7,10,8,11,9,2,1
\end{align*}
as shown in Figure~\ref{figure-balloon-not-matryoshka},
then 
$\sigma$ is not complete, 
as $\alpha$ has 13 points, but $\sigma$ only has 11.
Further, $\sigma$ is not a proper reduction,
as it does not contain an interval copy of $\beta$.
Finally, 
$
E_{\sigma(\pi)}
=
\{
2413,
3142,
24135,
13524
\}
$
contains two permutations of length 4,
$2413$ and $3142$, and no permutations 
with length less than 4.
It follows that $\sigma$ is not matryoshka,
and therefore $\sigma$ must be defective.
\begin{figure}
    \centering
    \begin{tikzpicture}[scale=0.3]
    \fill [color=lightgray] (0.5,8.5) rectangle (5.5,16.5);      
    \fill [color=lightgray] (5.5,0.5) rectangle (13.5,8.5);
    \fill [color=lightgray] (5.5,16.5) rectangle (13.5,21.5);
    \fill [color=lightgray] (13.5,8.5) rectangle (21.5,16.5);      
    \plotpermgrid{4,6,3,5,8,10,12,9,11,15,13,16,14,17,2,20,18,21,19,7,1}
    \end{tikzpicture}
    \qquad
    \begin{tikzpicture}[scale=0.3]
    \plotpermgrid{4,6,3,5,7,10,8,11,9,2,1}
    \end{tikzpicture}
    \caption{The permutations $\pi$ and $\sigma$ 
        that demonstrate that $\sigma$ is 
        defective in the interval $[1, \pi]$.}
    \label{figure-balloon-not-matryoshka}
\end{figure}
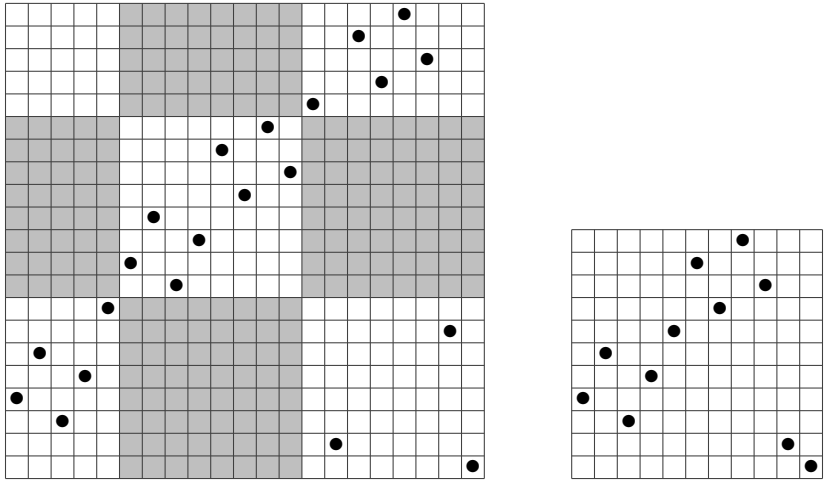

We do not claim that $\pi$ and $\sigma$ above are minimal,
although a (fairly restricted) computer search
failed to find any smaller counter-examples.

We now show that
\begin{theorem}
    \label{theorem-wedge-permutations-multiple-of-beta}
    If $\pi = \ballwedgek{\alpha}{\beta}$,
    then
    $\mobp{\pi} = c \cdot \mobp{\beta}$,
    where $c$ is an integer.
\end{theorem}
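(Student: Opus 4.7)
The plan is to proceed by strong induction on $\order{\alpha}$, using Lemma~\ref{lemma-wedge-is-sum-of-reductions} as the main engine. That lemma, combined with the observation that every proper reduction of a wedge is itself a wedge built from the same $\beta$ but a strictly smaller first argument, reduces the claim to a direct recursion: if every $\mobp{\lambda}$ for $\lambda \in \redset$ is already a multiple of $\mobp{\beta}$, then so is their signed sum, which equals $\mobp{\pi}$.

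For the base cases I would handle $\order{\alpha} \leq 1$ by hand. When $\alpha = \emptyperm$, one may interpret $\ballwedgex{0}{\emptyperm}{\beta} = \beta$ and take $c = 1$. When $\order{\alpha} = 1$, the chosen wedge symmetry $j = \order{\alpha}$ forces $\pi \in \{1 \oplus \beta,\; \beta \ominus 1\}$ according to whether $k = 1$ or $k = 0$. In each case, Lemmas~\ref{lemma-oneplus} and~\ref{lemma-oneplus-oneplus} give $\mobp{\pi} = -\mobp{\beta}$ when $\pi$ has no long corner, and $\mobp{\pi} = 0$ otherwise; both are integer multiples of $\mobp{\beta}$.

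For the inductive step, assume $\order{\alpha} \geq 2$ and that the claim holds for all wedges whose first argument is strictly shorter than $\alpha$. Applying Lemma~\ref{lemma-wedge-is-sum-of-reductions} gives $\mobp{\pi} = -\sum_{\lambda \in \redset} \mobp{\lambda}$. For each $\lambda \in \redset$, $\lambda$ is obtained from $\pi$ by deleting at least one red point while retaining every blue point, so $\lambda$ still contains $\beta$ as an interval copy, and the remaining red points form a subpermutation $\alpha'$ of $\alpha$ with $\order{\alpha'} < \order{\alpha}$ sitting in the same wedge positions relative to $\beta$. Hence $\lambda = \ballwedgex{k'}{\alpha'}{\beta}$ for some $k'$, and by the inductive hypothesis there is an integer $c_\lambda$ with $\mobp{\lambda} = c_\lambda \mobp{\beta}$. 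Setting $c = -\sum_{\lambda \in \redset} c_\lambda$ gives $\mobp{\pi} = c\,\mobp{\beta}$ with $c \in \bbZ$.

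The main obstacle, and the step that needs the most care, is justifying the geometric assertion that every $\lambda \in \redset$ has the form $\ballwedgex{k'}{\alpha'}{\beta}$ with $\alpha' < \alpha$. This is where the wedge shape (as opposed to a general balloon) is essential: in a wedge, $\beta$ sits strictly above every red point in value and strictly between $\alpha_L$ and $\alpha_R$ in column, as encoded by the block diagram of Figure~\ref{figure_example_wedge_generic}. Deleting any set of red points therefore preserves both the $\beta$-interval and the wedge arrangement of the surviving red points around it; the remaining red points, read off left-to-right, define $\alpha'$ and the new split index $k'$. I would make this precise by tracking positions and values through the defining formula~(\ref{equation-recall-balloon-definition}), from which it follows that removing red points from $\ballwedgek{\alpha}{\beta}$ produces exactly $\ballwedgex{k'}{\alpha'}{\beta}$, closing the induction.
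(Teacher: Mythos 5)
Your proposal is correct and follows essentially the same route as the paper: induction on $\order{\alpha}$, with the base case $\order{\alpha}=1$ settled by Lemmas~\ref{lemma-oneplus-oneplus} and~\ref{lemma-oneplus}, and the inductive step combining Lemma~\ref{lemma-wedge-is-sum-of-reductions} with the fact that every proper reduction of a wedge is again a wedge $\ballwedgex{k'}{\alpha'}{\beta}$ with $\order{\alpha'}<\order{\alpha}$ (or $\beta$ itself). The only difference is that you spell out the geometric justification of that last fact, which the paper simply asserts from the definition of a reduction.
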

\begin{proof}[Proof of Theorem~\ref{theorem-wedge-permutations-multiple-of-beta}]
    Our proof will use Lemmas~\ref{lemma-oneplus-oneplus} 
    and~\ref{lemma-oneplus} from Chapter~\ref{chapter_2413_balloon_paper},
    which we restate here for convenience.
    \begin{lemma}[%
        Lemma~\ref{lemma-oneplus-oneplus} from Chapter~\ref{chapter_2413_balloon_paper}]
        \label{lemma-oneplus-oneplus-balloon}
        If $\pi$ has a long corner,
        then
        $\mobp{\pi} = 0$.	    
    \end{lemma}
    \begin{lemma}[%
        Lemma~\ref{lemma-oneplus} from Chapter~\ref{chapter_2413_balloon_paper}]
        \label{lemma-oneplus-balloon}
        If $\pi$ can be written as 
        $\pi = \oneplus\tau$,
        or
        $\pi = \tau\plusone$
        or 
        $\pi = \oneminus\tau$
        or
        $\pi = \tau\minusone$,
        and does not have a long corner,
        then
        $\mobp{\pi} = - \mobp{\tau}$.
    \end{lemma}

    Let $\pi = \ballwedgek{\alpha}{\beta}$.    
    
    First, consider the case when $\order{\alpha} = 1$,
    so either 
    $\pi = 1 \oplus \beta$ 
    or
    $\pi = \beta \ominus 1$.
    In the first sub-case, if $\beta$ begins 1, then
    $\rho$ has a long corner, and so
    by Lemma~\ref{lemma-oneplus-oneplus-balloon}
    $\mobp{\pi} = 0$.
    If $\beta$ does not begin $1$, then
    by Lemma~\ref{lemma-oneplus-balloon}
    $\mobp{\pi} = - \mobp{\beta}$.
    The argument for the second sub-case is similar.
    Thus we have that 
    Theorem~\ref{theorem-wedge-permutations-multiple-of-beta}
    is true if $\order{\alpha} = 1$.
    
    Now assume that
    Theorem~\ref{theorem-wedge-permutations-multiple-of-beta}
    is true for all $\alpha$ with $\order{\alpha} < m$,
    for some $m \geq 1$,
    and assume that $\order{\alpha} = m$.
    Let $\rho$ be a reduction of $\pi$.
    From the definition of a reduction we have that
    $\rho = \ballwedgex{\ell}{\alpha^\prime}{\beta}$ or
    $\rho = \beta$,    
    where $\order{\alpha^\prime} < \order{\alpha}$.
    In the first case, by the inductive hypothesis,
    we have that $\mobp{\rho}$ is a multiple of $\mobp{\beta}$.
    In the second case, trivially, $\mobp{\rho} = \mobp{\beta}$.
    Now summing over all reductions, we can see that
    $\mobp{\pi} = c \cdot \mobp{\beta}$ as required.
\end{proof}

\section{Chapter summary}

The main results from this chapter
are expressions for the principal \mob function
of balloon permutations, and a somewhat simpler expression
for the principal \mob function of wedge permutations.
We also have that if $\pi$ is a wedge permutation
that can be written as $\ballwedgek{\alpha}{\beta}$,
then $\mobp{\pi}$ is a multiple of $\mobp{\beta}$.

We have observed that 
if we repeatedly
iterate the wedge construction,
that is, we define
\begin{align*}
W_{k,\alpha, \beta, 0}  & = \beta, \\
W_{k,\alpha, \beta , 1} & = \ballwedgek{\alpha}{\beta} \\
W_{k,\alpha, \beta , n} & = \ballwedgek{\alpha}{(W_{k,\alpha, \beta, n-1})} 
\; \text{for $n > 1$}
\end{align*}
then the sequence given by
\[
\mobp{W_{k,\alpha, \beta, 0}}, \;
\mobp{W_{k,\alpha, \beta, 1}}, \;
\mobp{W_{k,\alpha, \beta, 2}}, \;
\mobp{W_{k,\alpha, \beta, 3}}, \;
\ldots
\]
follows one of the following patterns:
\begin{align*}
    &0,0,0,0,0, \ldots &
    &c,0,0,0,0, \ldots &
    &c,c,0,0,0, \ldots \\
    &c, -c, 0, 0, 0 \ldots &
    &c, 2c, 4c, 8c, 16c, \ldots &
    &c, c, 2c, 4c, 8c, \ldots \\
    &c, -c, c, -c, c, \ldots &
    &c, c, c, c, c, \ldots &
    &c, d, d, d, d, \ldots \\
\end{align*}
where $c = \mobp{\beta}$,
and, where appropriate, 
$d = \mobp{\ballwedgek{\alpha}{\beta}}$.

For some examples, we have outline proofs that
these patterns will continue, 
based on a specific analysis of the proper reductions.
However, we do not yet have a way to characterise
wedge permutations in general, so that we can predict
their behaviour without conducting a complete
analysis of the proper reductions.
We remark that 
this is one of the aspects that
we are still researching.

    \chapter{Conclusion}
\label{chapter_conclusion}

\section{A review of our results}

Our journey through the
\mob function on the
permutation pattern poset
started, 
in Chapter~\ref{chapter_incosc_paper},
by looking at ways in which we could 
calculate the value of
the \mob function in a more efficient
way than using the recursive
definition  of Equation~\ref{equation_mobius_function},
and here we found that we could reduce
the number of permutations that needed to be considered
slightly in the general case,
and by a very significant number 
in the case of increasing oscillations.

Chapter~\ref{chapter_oppadj_paper} continues this
theme by showing that if a permutation has 
opposing adjacencies, then
the value of the principal \mob function is zero.
We also describe other cases
where, if $\sigma$ meets certain criteria, 
then any permutation $\pi$ that contains 
$\sigma$ as an interval
has $\mobp{\pi} = 0$.
The main result from this chapter is, however,
not the results that provide a way to determine
the value of the \mob function, but the result
that, asymptotically, \zpmfp~of permutations
are \mob zeros.  This represents a move
away from finding ways to determine the value
of the \mob function towards ways to
better understand the permutation pattern poset.

Chapter~\ref{chapter_2413_balloon_paper}
finds a recursion for the 
value of the principal \mob function
of 2413-balloons, and uses this to show that
$\mobmaxn$ grows at least exponentially.
In this chapter the main result is the exponential growth,
and to a large extent the results for the 
values of the principal \mob function of 2413-balloons
is the mechanism we use to prove it.

Our results from Chapter~\ref{chapter_balloon_permutations_preprint}
show that for balloon permutations generally
the value of the principal \mob function
is, essentially, related to the permutations $\beta$
plus a correction factor.  For
wedge permutations, this correction factor is guaranteed to be zero.

If we consider the results directly
relating to the \mob function
from Chapter~\ref{chapter_oppadj_paper},
one aspect that could be used to distinguish
them is that they are all related
to finding a set of permutations $S$
with the property that if a larger permutation contains each 
permutation in $S$ as an interval copy,
then the value of the \mob function is zero.

Now compare this with Chapter~\ref{chapter_2413_balloon_paper}.
Given some permutation
$\pi = \ball{2413}{\beta}$, 
first note that $\beta$ is an interval in $\pi$,
and so we can (somewhat trivially) claim that
$\pi$ contains an interval copy of $\beta$.
Our results, with some small exceptions,
all give the value of 
$\mobp{\pi}$ as a multiple of $\mobp{\beta}$.

We also note that 
Conjectures~\ref{conjecture-most-2413-balloons}
and~\ref{conjecture-1-0-2413-balloons}
relate to permutations
$\pi = \ballij{2413}{\beta}$,
and again here we see that
$\beta$ occurs as an interval copy in $\pi$.

Finally, if we look 
at the results from Chapter~\ref{chapter_balloon_permutations_preprint}, 
again we can see that $\beta$ occurs as an interval copy  
in $\ballij{\alpha}{\beta}$ and, trivially, in $\ballwedgek{\alpha}{\beta}$.

Our suggestion here is that,
in some ill-defined sense,
intervals in permutations 
have a marked effect on the 
value of the principal \mob function.
In some cases, the presence of 
a copy interval or intervals guarantees that the 
value of the principal \mob function is zero.
In other cases, where we have an interval copy of $\beta$,
we see that the 
value of the principal \mob function
can be expressed in terms of $\mobp{\beta}$,
with, possibly, some correction factor, 
although for 2413-balloons and wedge permutations, 
the correction factor
is zero for all but trivial cases.

\section{Further research into the \mob function}

In the chapter summaries 
we have already mentioned several possible avenues 
for future research.
We now consider 
more general avenues for further research, and we divide these  
into two main areas.

The first area is research that will give
expressions or recursions for the value of the 
\mob function on some interval.  
Current results, and indeed our active research,
tends to examine permutations that have some ``structure''.
We do not intend to try and formally define structure;
rather we claim that 
it is a property that is generally
recognisable when it is seen.
The classic example of a set of permutations
with structure is, we suggest,
the decomposable and separable permutations,
studied by 
Burstein, Jel{\'{i}}nek, Jel{\'{i}}nkov{\'{a}} 
and Steingr{\'{i}}msson~\cite{Burstein2011},
as, in some ill-defined sense, these permutations
have a lot of structure.
Our own research into permutations 
with opposing adjacencies, and 
permutations that are 2413-balloons again 
looks at permutations with structure.

The second area is research into
what we call ``global'' properties
of the permutation pattern poset.
The result from Chapter~\ref{chapter_oppadj_paper}
that \zpmfp~of permutations are \mob zeros is one example,
as is the exponential growth rate
of the principal \mob function proved in 
Chapter~\ref{chapter_2413_balloon_paper}.

\subsection{The \mob function of simple permutations}

Recall that the simple permutations are permutations
that only contain trivial intervals.
In other areas of permutation patterns,
such as the enumeration of permutation classes,
the simple permutations 
underpin many results.
For further details,
we refer the reader to
the survey article by Brignall~\cite{Brignall2010}.

By contrast, 
little is known about the principal \mob function
of simple permutations.
The first reference to this area 
occurs in 
the concluding remarks of
Burstein, 
Jel{\'{i}}nek, 
Jel{\'{i}}nkov{\'{a}} and 
Steingr{\'{i}}msson~\cite{Burstein2011}.
Here
(using the terminology of this thesis)
they give 
a sequence of values of $\mobmaxn$
for $n = 1, \ldots, 11$,
and they note that
there is, up to symmetry,
a unique permutation $\pi_n$ of length $n$
such that $\order{\mobp{\pi_n}} = \mobmaxn$.
They further note that
$\pi_n$ is simple
except for the case $n=3$, 
(but there are no simple permutations of length 3).
The author has confirmed 
that this is also the case  
for permutations of length $12$ and $13$
(see Table~\ref{table_values_of_true_mob_min_max} on
page~\pageref{table_values_of_true_mob_min_max}).

To understand the principal \mob function
of simple permutations,
we begin by considering some 
examples that are easy to describe,
as they have recognisable structure.

A simple parallel alternation is a permutation $\pi$
with even length $2 n$, where
$
\pi = 2, 4, \ldots, 2n, 1, 3, \ldots, 2n-1,
$
or any symmetry of this sequence.
We show an example of a simple
parallel alternation 
in Figure~\ref{figure_examples_simple_parallel_alternation}.
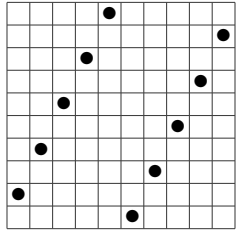
\begin{figure}
    \begin{center}
        \begin{subfigure}[t]{0.30\textwidth}
            \centering
            \begin{tikzpicture}[scale=0.3]
            \plotpermgrid{2,4,6,8,10,1,3,5,7,9}
            \end{tikzpicture}
        \end{subfigure}
    \end{center}
    \caption{A simple parallel alternation.}	      
    \label{figure_examples_simple_parallel_alternation}
\end{figure}
Smith's paper on permutations with one 
descent~\cite{Smith2013} covers 
simple parallel alternations, and so we have 
an explicit expression
for their principal \mob function value.
If $\pi$ is a simple parallel alternation of length $n$, then
\[
\mobp{\pi} = -\dbinom{\frac{n}{2} + 1}{2}.
\]
Increasing oscillations are also simple permutations.
Our paper on permutations with an 
indecomposable lower bound~\cite{Brignall2017a},
on which Chapter~\ref{chapter_incosc_paper} is based,
includes a recursion for 
the principal \mob function of increasing oscillations.
We are unaware of any other published results 
for families of simple permutations.

We now provide several
examples of simple permutations
with a recognisable structure,
and
give conjectures for the 
value of the principal 
\mob function.

We have already described simple parallel alternations.
We extend our vocabulary to define
an 
\emph{alternation}\extindex[permutation]{alternation} 
to be a permutation
where every odd entry is to the right of every even entry,
or a symmetry of such a permutation.
A 
\emph{wedge alternation}\extindex[permutation]{wedge alternation} 
is then an
alternation where the two sets of entries
point in opposite directions, 
and an example is shown in 
Figure~\ref{figure_examples_of_type_1_2_wedge_simples}.
Wedge alternations are not simple,
but a single point can be added 
in one of two ways to form a simple
permutation.
These are called type 1 and type 2 wedge simples, 
written $W_1(n)$ and $W_2(n)$,
where we require $n > 3$.
The wedge simples appear to have been 
introduced in~\cite{Brignall2008a},
where we have
\begin{theorem}[{%
        Brignall, 
        Huczynska and 
        Vatter 
        \cite[Theorem 3]{Brignall2008a}}]
    For any fixed $k$, every sufficiently long simple permutation
    contains either a proper pin sequence of length at least $k$, 
    a parallel alternation of length at least $k$, 
    or a wedge
    simple permutation of length at least $k$.
\end{theorem}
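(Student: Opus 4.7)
The plan is to prove this via a pin sequence construction combined with Ramsey-type arguments. Given $k$, I would set $N=N(k)$ to be a sufficiently large (iterated Ramsey-type) function of $k$, and let $\pi$ be a simple permutation with $\order{\pi}\ge N$. Start by choosing any two points $p_1,p_2$ of $\pi$, and attempt to build a pin sequence $p_1,p_2,p_3,\dots$ inside $\pi$, where each new pin $p_{i+1}$ separates the previous pins in the required ``proper'' manner. If this construction succeeds for $k$ steps, we are in the first case of the theorem and we are done.

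The interesting case is therefore when the construction gets stuck: there is some step $i<k$ at which no valid next pin exists in $\pi$. At such an obstruction, the geometry forces all the remaining points of $\pi$ (those not already selected as pins) to lie in a restricted collection of rectangular regions determined by $p_1,\dots,p_i$. The next step of the plan is to show that these restricted regions are so constrained that the points lying in them must form a highly structured subpattern. Applying Erd\H{o}s--Szekeres to the ``surviving'' points, I would extract a long monotone subsequence within one (or two) of these regions. Depending on the relative orientation of the monotone sequences in two such regions, this subsequence either manifests as a long parallel alternation inside $\pi$, or as a long wedge alternation.

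In the wedge case, a wedge alternation by itself is not simple, but because $\pi$ itself is simple, the interval spanned by the wedge alternation inside $\pi$ cannot be a non-trivial interval of $\pi$. This means there must be additional points of $\pi$ ``cutting'' the wedge alternation, and a short case analysis of how these cutting points can be placed shows that they must include at least one of the two points needed to turn the wedge alternation into a wedge simple permutation $W_1(n)$ or $W_2(n)$. Thus the wedge alternation extends to a wedge simple of comparable length, giving the third case.

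The main obstacle will be the termination analysis of the pin sequence: one has to enumerate the ways in which a pin sequence can fail to extend and verify that each such geometric configuration produces one of the two alternation-based structures with length growing as a function of $\order{\pi}$. The Ramsey-type bound for $N(k)$ must be chosen large enough that, at whichever step the pin sequence terminates (possibly very early), the residual structure is still long enough to yield a parallel alternation or wedge simple of length $k$. This will require an iterated application of Ramsey together with Erd\H{o}s--Szekeres, stratified by the step at which the obstruction occurs.
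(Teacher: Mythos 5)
This statement is not proved in the thesis at all: it is quoted verbatim, with citation, from Brignall, Huczynska and Vatter~\cite{Brignall2008a}, and the chapter even defers the definition of ``proper pin sequence'' to that reference. So there is no in-paper proof to compare your proposal against; the only fair comparison is with the proof in the cited source, whose overall strategy (build a pin sequence; when it cannot be continued properly, harvest the obstruction via Erd\H{o}s--Szekeres-type arguments to find a long parallel or wedge alternation; use simplicity to complete a wedge alternation to a wedge simple permutation) your sketch does echo.

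That said, as a proof plan yours has a genuine gap at its central step. In a simple permutation the pin-sequence construction never gets ``stuck'' in the sense you describe: simplicity guarantees that the rectangular hull of any proper subset of at least two points is not an interval, so some point of $\pi$ always slices it, i.e.\ a next pin always exists. What can fail is the \emph{separation} (properness) condition --- the available pins may only satisfy externality. Consequently the case analysis cannot be ``no valid pin exists, hence the remaining points are confined to a few rectangles''; it must instead classify what the permutation looks like when, over many consecutive steps, every available pin is improper, and show that the improper pins themselves accumulate into long alternations. This is precisely where the work lies in the original argument, and your sketch currently replaces it with an assertion. The remaining ingredients --- the Erd\H{o}s--Szekeres extraction of a monotone structure, the observation that a wedge alternation cannot be an interval of a simple permutation and hence is cut by an extra point yielding $W_1$ or $W_2$, and the iterated Ramsey-style bookkeeping for $N(k)$ --- are plausible and consistent with the known proof, but until the obstruction analysis is reformulated around improper pins rather than nonexistent pins, the plan does not go through as stated.
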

We refer the interested reader to~\cite{Brignall2008a}
for a definition of ``proper pin sequence''.

The type 1 and type 2 wedge simples have the form
\begin{align*}
W_1(n)
& =
\begin{cases}
3, 5, \ldots, n-1, 1, n, n-2, \ldots , 2 & \text{If $n$ is even,} \\
3, 5, \ldots, n,   1, n-1, n-3, \ldots, 2 & \text{If $n$ is odd,} \\
\end{cases}
\intertext{and}
W_2(n) 
& = 
\begin{cases}
2, 4, \ldots, n-2, n, n-3, n-5, \ldots, 1, n-1 & \text{if $n$ is even,} \\
2, 4, \ddots, n-3, n, n-2, n-4, \ldots, 1, n-1 & \text{if $n$ is odd,} \\
\end{cases}
\end{align*}
and every symmetry of these permutations.
Examples of type 1 and type 2 wedge simples
are shown in
Figure~\ref{figure_examples_of_type_1_2_wedge_simples}.
\begin{figure}
    \begin{center}
        \begin{subfigure}[t]{0.30\textwidth}
            \centering
            \begin{tikzpicture}[scale=0.3]
            \plotpermgrid{2,4,6,8,10,9,7,5,3,1}
            \end{tikzpicture}
            \caption*{A wedge alternation.}	      
        \end{subfigure}
        \begin{subfigure}[t]{0.30\textwidth}
            \centering
            \begin{tikzpicture}[scale=0.3]
            \plotpermgrid{3,5,7,9,1,10,8,6,4,2}
            \end{tikzpicture}
            \caption*{A type 1 wedge simple}
        \end{subfigure}
        \begin{subfigure}[t]{0.30\textwidth}
            \centering
            \begin{tikzpicture}[scale=0.3]
            \plotpermgrid{2,4,6,8,10,7,5,3,1,9}
            \end{tikzpicture}
            \caption*{A type 2 wedge simple}
        \end{subfigure}	
    \end{center}
    \caption{Examples of a wedge alternation, and type 1 and type 2 wedge simple permutations.}	      
    \label{figure_examples_of_type_1_2_wedge_simples}
\end{figure}
We calculated the value of the 
principal \mob function of
$W_1(n)$ and $W_2(n)$ for $n = 4, \dots, 30$, and the 
results are shown in 
Tables~\ref{table-values-of-w1}
and~\ref{table-values-of-w2}
in Appendix~\ref{chapter_appendix-a}.

These values suggest the following conjecture:
\begin{conjecture}
    For all $n > 3$,
    \begin{align*}
    \mobp{W_1(n)} 
    & = 
    (-1)^{n} 3(3-n)  \\
    \mobp{W_2(n)} 
    & = 
    (-1)^{n} (1-n).
    \end{align*}
\end{conjecture}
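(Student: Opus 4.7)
The plan is to prove both formulas simultaneously by induction on $n$, exploiting the close structural resemblance between $W_1(n)$ and $W_2(n)$. First I would verify the base cases (for small $n$, say $n \in \{4,5,6,7\}$) from the values in Tables~\ref{table-values-of-w1} and~\ref{table-values-of-w2}. For the inductive step, the first task is to understand which permutations $\lambda$ with $\lambda < W_i(n)$ satisfy $\mobp{\lambda} \neq 0$, since by Fact~\ref{fac-del} only such $\lambda$ contribute to $\mobp{W_i(n)}$.

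To identify the contributing $\lambda$, I would systematically apply the annihilator results: any $\lambda \leq W_i(n)$ with opposing adjacencies (Theorem~\ref{theorem-PMF-opposing-adjacencies}), with a long corner (Lemma~\ref{lemma_mobius_function_is_zero}), or containing an interval copy of $\alpha \oplus 1 \oplus \beta$ (Corollary~\ref{cor-sum}) can be deleted from the interval without affecting the Möbius sum. A case analysis of the shape of $W_i(n)$ --- an ascending run on odd or even values, one or two ``linking'' points near the middle, and a descending run --- should show that any sub-permutation drawing sufficiently many points from both halves acquires opposing adjacencies, while any sub-permutation drawing too few points from the extremal ``linking'' region introduces a sum decomposition of the form $\alpha \oplus 1 \oplus \beta$. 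The non-annihilated permutations ought then to form a short list: the smaller wedge simples $W_i(k)$ for $k < n$, together with a bounded collection of short ``near-apex'' patterns.

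After these deletions, I would aim to show that the residual poset sits as a narrow- or diamond-tipped piece (in the sense of Fact~\ref{fac-nd}) above an interval whose top is $W_i(n-1)$. By Fact~\ref{fac-nd} the tip vanishes, and the inductive hypothesis evaluates the base. Combining these, one obtains a first-order linear recurrence of the form
\[
\mobp{W_i(n+1)} + \mobp{W_i(n)} = c_i\cdot(-1)^n,
\]
with $c_1 = 3$ and $c_2 = 1$, which together with the base cases is readily solved to the conjectured closed form.

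The hard part will be the case analysis in the second step: since $W_i(n)$ is simple, its interval does not decompose nicely, and we cannot appeal to the recursions of Burstein et al.~\cite{Burstein2011} to prune. We must verify carefully that the list of non-zero sub-permutations is \emph{exactly} the inductive family plus a small, explicitly identifiable remainder, with no ``rogue'' contributions. If the direct case analysis proves too delicate, a fall-back is Proposition~\ref{pro-normal}: define a suitable notion of normal embedding into $W_i(n)$, exhibit a sign-reversing involution that cancels most terms via an $i$-switch adapted to the wedge structure, and extract the linear contribution from the surviving normal embeddings directly.
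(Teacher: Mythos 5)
First, note what you are comparing against: in the thesis this statement is a \emph{conjecture}, supported only by the computed values of $\mobp{W_1(n)}$ and $\mobp{W_2(n)}$ for $n=4,\ldots,30$ in Tables~\ref{table-values-of-w1} and~\ref{table-values-of-w2}; the paper offers no proof. So your proposal cannot be ``the same approach as the paper'' --- it would, if completed, be a new result. As it stands, however, it is a strategy outline whose two load-bearing steps are asserted rather than established, and both are exactly where the difficulty lies.

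The first gap is the classification of the sub-permutations of $W_i(n)$ with non-zero principal M\"obius function. The deletion machinery (Fact~\ref{fac-del} together with Theorem~\ref{theorem-PMF-opposing-adjacencies}, Lemma~\ref{lemma_mobius_function_is_zero} and Corollary~\ref{cor-sum}) only removes M\"obius zeros; everything with $\mobp{\lambda}\neq 0$ stays, and there is a lot of it. For example $W_1(10)=3,5,7,9,1,10,8,6,4,2$ contains $2413$ (via $3,5,1,4$) and $24153$ (via $3,5,1,6,4$), both with non-zero $\mobp{}$, and more generally a wedge simple contains a large family of smaller simple permutations (of both wedge types and otherwise), not just the ``inductive family plus a bounded list of near-apex patterns'' you posit. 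The second gap is the claim that after deletions the interval becomes narrow- or diamond-tipped over $[1,W_i(n-1)]$: the diamond-tipped condition of Fact~\ref{fac-nd} requires $[x,y)$ to be the union of just two principal lower intervals, which is a very rigid condition, and nothing in your sketch shows the surviving non-zero elements all sit below two covers of $W_i(n)$. Consequently the recurrence $\mobp{W_i(n+1)}+\mobp{W_i(n)}=c_i(-1)^n$ (which is indeed arithmetically consistent with the conjectured closed forms, with $c_1=3$, $c_2=1$) is not derived by any mechanism in the proposal; producing it is essentially equivalent to the conjecture itself. The fallback via Proposition~\ref{pro-normal} has the same circularity: verifying the cancellation identity there requires controlling precisely the $\lambda$ with $\mobp{\lambda}\neq0$, which is the unresolved classification problem, and since $W_i(n)$ is simple you cannot lean on the Burstein et al.\ recursions to prune. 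So the proposal identifies a reasonable line of attack, but the conjecture remains open both in the paper and after your sketch.
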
    

Schmerl and Trotter~\cite{Schmerl1993}
show that every simple permutation
of length $n$ contains a simple permutation
of length $n-1$ or $n-2$.
Further, they show that the only 
\emph{exceptional}\extindex[permutation]{exceptional}
simple permutations, which are those
permutations of length $n$ that
do not contain a simple sub permutation of length $n-1$,
are the parallel alternations, which we discussed above.
The 
\emph{nearly-exceptional}\extindex[permutation]{nearly-exceptional} 
simple permutations
are permutations which are not exceptional, but
there is only a single point which, when deleted,
results in a smaller simple permutation; so
deleting any other point results in a non-simple permutation.
The nearly-exceptional permutations have not,
as far as we are aware, appeared 
in any publication.
They were described in a talk given by
Robert Brignall 
at Permutation Patterns 2010~\cite{Brignall2010a}.
There are three types of
nearly exceptional simple permutations, 
which we refer to as
$E_1 (2n, k)$,
$E_2 (2n)$ 
and
$O (2n+1, k)$.
The first parameter gives the length of the permutation,
which, for simplicity, we require to be greater than 5.
For $E_{1} (2n, k)$, the second parameter, $k$ must satisfy $1 \leq k \leq n-2$,
and 
for $O (2n+1, k)$, $k$ must satisfy $1 \leq k \leq n-1$.
Formally, up to symmetry, we have
\begin{align*}
E_{1}(2n,k) ={ } &
n+1, 
1, 
n+2, 
2, 
\ldots, \\&
k, 
n+k+1, 
n, 
n+k+2, 
n-1, 
\ldots, \\&
2n, 
k+1;
\\
\intertext{~}
E_2 (2n) ={ } & 
n, 
1, 
n+1, 
2, 
\ldots, 
2n-2, 
2n, 
n-1, 
2n-1;
\\
\intertext{and}
O (2n+1, k) ={ } &
n-k+1, 
2n+1, 
n-k+2, 
2n,   
\ldots, \\&
2n+2 - k, 
n + 1,
n-k, 
n + 2, 
n-k-1, 
\ldots, \\&
1, 
2n-k+1.
\end{align*}
Examples of 
$E_1 (2n, k)$,
$E_2 (2n)$ 
and
$O (2n+1, k)$
are shown in
Figure~\ref{figure_examples_of_nearly_exceptional_simples}.
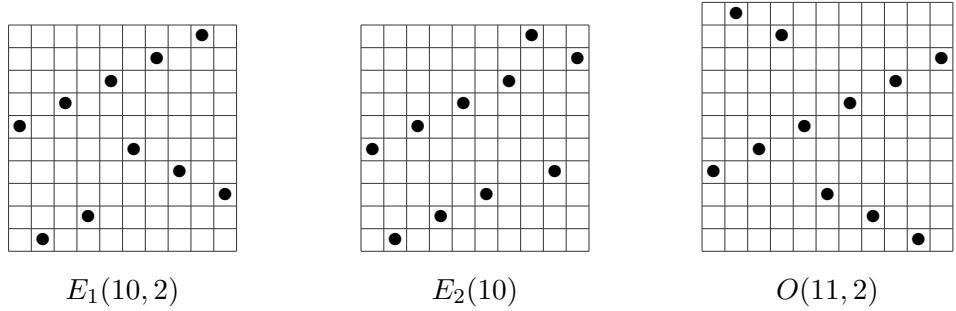
\begin{figure}
    \begin{center}
        \begin{subfigure}[t]{0.30\textwidth}
            \centering
            \begin{tikzpicture}[scale=0.3]
            \plotpermgrid{6,1,7,2,8,5,9,4,10,3}
            \end{tikzpicture}
            \caption*{$E_{1}(10,2)$}
        \end{subfigure}
        \begin{subfigure}[t]{0.30\textwidth}
            \centering
            \begin{tikzpicture}[scale=0.3]
            \plotpermgrid{5,1,6,2,7,3,8,10,4,9}
            \end{tikzpicture}
            \caption*{$E_{2} (10)$}
        \end{subfigure}	    
        \begin{subfigure}[t]{0.30\textwidth}
            \centering
            \begin{tikzpicture}[scale=0.3]
            \plotpermgrid{4,11,5,10,6,3,7,2,8,1,9}
            \end{tikzpicture}
            \caption*{$O(11,2)$}
        \end{subfigure}	
    \end{center}
    \caption{Examples of nearly exceptional simple permutations.}	      
    \label{figure_examples_of_nearly_exceptional_simples}
\end{figure}
We calculated the value of the 
principal \mob function of
$E_1(2n,k)$,
$E_2(2n)$,
and
$O(2n+1,k)$,
for $n = 3, \ldots, 15$, and all valid values of $k$,
and the 
results are shown in 
Tables~\ref{table-values-of-e1-3-12}, 
\ref{table-values-of-e2},
and~\ref{table-values-of-o}
in Appendix~\ref{chapter_appendix-a}.

These values suggest the following conjectures:
\begin{conjecture}
    For all $n \geq 5$
    and
    for all $k$ with $1 \leq k \leq n-2$,
    \[
    \mobp{E_{1} (2n, k)} 
    = 
    - \dfrac{(4n - 2) + k^2 - k}{2}.    
    \]
\end{conjecture}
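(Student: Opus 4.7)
I would prove this conjecture by induction on $n$, exploiting recursive structure under specific deletions. Direct computation with the definition $E_1(2n,k) = n+1, 1, n+2, 2, \ldots, n+k, k, n+k+1, n, \ldots, 2n, k+1$ shows that deleting the final pair $(2n, k+1)$ from $E_1(2n,k)$ yields $E_1(2(n-1), k)$, while deleting the initial pair $(n+1, 1)$ yields $E_1(2(n-1), k-1)$. This decomposes the conjectured answer $-(2n-1) - \binom{k}{2}$ into a ``length'' contribution and a ``$k$-pair'' contribution, suggesting that these two summands have distinct origins within the poset interval.

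For the base case I would verify the conjecture directly for $n = 5$ and $k = 1, 2, 3$. For the inductive step, I would partition the open interval $[1, E_1(2n, k))$ into three classes and control each via $\mobp{E_1(2n,k)} = -\sum_{\lambda \in [1, E_1(2n,k))}\mobp{\lambda}$. The first class consists of sum- or skew-decomposable permutations, handled by Propositions~\ref{BJJS_proposition_1} and~\ref{BJJS_proposition_2} to reduce to contributions from their sum- or skew-indecomposable components. The second class consists of permutations with opposing adjacencies, or with an interval copy of some $\alpha \oplus 1 \oplus \beta$; these are \mob zeros by Theorem~\ref{theorem-PMF-opposing-adjacencies} and Corollary~\ref{cor-sum}. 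The third class comprises the remaining simple (or strongly indecomposable) permutations, to which the inductive hypothesis would apply whenever they have the form $E_1(2n', k')$.

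The main obstacle will be controlling this third class: identifying the complete set of simple and strongly indecomposable sub-permutations of $E_1(2n, k)$, and verifying that the only non-cancelling contributors are smaller copies of $E_1(2n', k')$, parallel alternations, and a handful of small exceptional cases. A sign-reversing involution on chains, in the spirit of Chapter~\ref{chapter_2413_balloon_paper}, may be required to cancel stray contributions. As a backup, I would attempt the normal-embedding framework of Proposition~\ref{pro-normal}, seeking a definition for which $\NE(1, E_1(2n, k)) = (2n-1) + \binom{k}{2}$; here the $\binom{k}{2}$ term suggests that a normal embedding should, in addition to a length-based contribution, record a choice of pair of positions from among the first $k$ ``ascending bottom'' elements, reflecting the special role these elements play in the structure of $E_1(2n,k)$. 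The edge cases $k = 1$ and $k = n-2$, where the ascending or descending bottom segment degenerates, will likely need separate treatment.
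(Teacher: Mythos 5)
You should first be aware that the paper contains no proof of this statement: it appears only as a conjecture, supported by the computed values of $\mobp{E_1(2n,k)}$ for $n=3,\ldots,15$ listed in Appendix~A (and the thesis explicitly presents it among "conjectures" suggested by that data). So there is no argument of the author's to compare yours against, and what you have written must be judged as a standalone proof attempt.

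As such, it is not yet a proof; the gap sits exactly where you place your "main obstacle". Your preliminary observations are fine: the conjectured value does equal $-(2n-1)-\binom{k}{2}$, and deleting the last pair of points of $E_1(2n,k)$ does give $E_1(2n-2,k)$ while deleting the first pair gives $E_1(2n-2,k-1)$. But these deletion facts carry no Möbius-theoretic weight on their own: $\mobp{\pi}$ is a signed sum over the entire interval $[1,\pi)$, not something that localises to two particular covers, so the "decomposition into a length contribution and a $k$-pair contribution" is only a heuristic reading of the formula. In the inductive step, the decomposable elements of $[1,E_1(2n,k))$ are not disposed of by Propositions~\ref{BJJS_proposition_1} and~\ref{BJJS_proposition_2} — those recursions re-express their Möbius values in terms of indecomposable components but do not make them vanish, so their total contribution still has to be summed. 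More seriously, the third class is much larger than "smaller copies of $E_1(2n',k')$, parallel alternations, and a handful of small exceptional cases": a simple permutation of length $2n$ contains on the order of its full complement of simple subpermutations, almost none of which are of these special shapes, and nothing known (in this thesis or elsewhere) gives their Möbius values or shows that they cancel. No candidate sign-reversing involution on chains is exhibited, and no definition of normality is proposed for which $\NE(1,E_1(2n,k))=(2n-1)+\binom{k}{2}$ with the cancellation hypothesis of Proposition~\ref{pro-normal} verifiable; both are named as hopes rather than constructed. Until one of those mechanisms is actually supplied, the induction does not close, which is precisely why the statement remains a conjecture in the paper. (Incidentally, if you test your plan against the tabulated data, note that the entry for $n=8$, $k=1$ reads $-25$ where the formula predicts $-15$; this is evidently a transcription slip in the table, since all other $k=1$ entries follow $-(2n-1)$, but it is worth recomputing that value before relying on it as a base case.)
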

\begin{conjecture}
    For all $n \geq 5$,
    \[
    \mobp{E_2 (2n)}     
    = 
    \dfrac{n - n^2 - 4}{2}.    
    \]
\end{conjecture}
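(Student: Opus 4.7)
My plan is to combine the deletion-of-zeros technique (Fact~\ref{fac-del}) with a careful case analysis of the sub-permutations of $E_2(2n)$, much in the spirit of the contributing-set approach of Chapter~\ref{chapter_incosc_paper} and the chain-partition approach of Chapter~\ref{chapter_2413_balloon_paper}. Since $E_2(2n)$ is simple, there is no decomposability recursion to invoke directly; however, the interior of $E_2(2n)$ is a parallel alternation, with only the last three positions perturbing that alternation, so the smaller permutations in $[1,E_2(2n))$ should fall into a very restricted family.

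First I would observe that, by Theorem~\ref{theorem-PMF-opposing-adjacencies}, Corollary~\ref{cor-sum}, and Lemma~\ref{lemma-triple-adjacencies}, every $\lambda\in[1,E_2(2n))$ which contains opposing adjacencies, or a triple adjacency, or an interval copy of $\alpha\oplus1\oplus\beta$ (or one of its symmetries), is a \mob zero, and hence can be removed from the poset without changing $\mobp{E_2(2n)}$. I would then enumerate the surviving sub-permutations by analysing which sub-sequences of $E_2(2n)$ avoid all these patterns: this should split naturally according to how many of the three exceptional positions (holding values $2n-2,2n,n-1,2n-1$) are used in the embedding, and in each case the remaining indices must be chosen from the alternating portion of $E_2(2n)$ in a very restricted way. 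Each surviving $\lambda$ should either be a (sub-)parallel alternation (for which $\mobp{\lambda}$ is given by Smith's single-descent formula from~\cite{Smith2013}, evaluating to a binomial coefficient) or a wedge simple, or a low-order permutation whose \mob value can be read off directly. I would then sum the resulting signed contributions and check, possibly by induction on~$n$, that the total is $-\binom{n}{2}-2$, which is the cleaner form of the stated $(n-n^{2}-4)/2$.

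The main obstacle will be the bookkeeping in step two: correctly enumerating all surviving $\lambda$ without double counting, and correctly handling the interaction between the exceptional tail of $E_2(2n)$ and the alternating body. In particular, some surviving $\lambda$ may themselves fail to be simple but still have non-zero $\mobp{\lambda}$ via a long-corner or similar structure, so the deletion step does not immediately reduce the problem to counting normal embeddings; a finer subdivision, perhaps partitioning chains by the value of the pivot $\psi_c$ (as in Lemma~\ref{lemma-2413-balloon-phi-gb}) and defining a parity-reversing involution on the "generic" chains, is likely necessary to isolate the $\binom{n}{2}+2$ residual chains that produce the final signed count. Verification against the computed values for $n=5,\dots,15$ in Table~\ref{table-values-of-e2} would serve as a useful sanity check throughout.
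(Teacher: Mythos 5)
First, note that the statement you are trying to prove is presented in the thesis only as a conjecture: the paper offers no proof, just the computed values $\mobp{E_2(2n)}$ for $n=3,\dots,15$ in Table~\ref{table-values-of-e2}. So there is no argument of the paper's to compare against; your proposal has to stand on its own, and as it stands it is a plan rather than a proof, with the decisive step missing.

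The genuine gap is your step two. Deleting \mob zeros via Fact~\ref{fac-del}, using Theorem~\ref{theorem-PMF-opposing-adjacencies}, Lemma~\ref{lemma-triple-adjacencies} and Corollary~\ref{cor-sum} (and their symmetries), is legitimate and reduces the problem to $\mobp{E_2(2n)}=-\sum_\lambda\mobp{\lambda}$ over the surviving $\lambda\in[1,E_2(2n))$; but you never carry out the enumeration of those survivors, and the claim that each of them is a sub-parallel-alternation, a wedge simple, or a small permutation whose \mob value can be ``read off'' is unsubstantiated and almost certainly too optimistic. The survivors of such a deletion in a permutation like $E_2(2n)$ include, for instance, smaller simple permutations obtained by dropping points from both the alternating body and the exceptional tail, and these need not have a single descent (so Smith's formula does not apply) and need not belong to any family with a known closed form. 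Worse, the values you propose to feed into the sum for wedge simples are themselves only conjectural in this same thesis, so the argument would be circular; and the recursion does not close, since evaluating $\mobp{\lambda}$ for a surviving $\lambda$ requires the same kind of analysis all over again inside $[1,\lambda]$. Your fallback -- partitioning chains by a pivot and constructing a parity-reversing involution on the ``generic'' chains, as in Lemma~\ref{lemma-2413-balloon-phi-gb} -- is exactly where the real work would lie, but you give no candidate involution and no description of the residual $\binom{n}{2}+2$ chains, so the quadratic count (which is the whole content of the conjecture) is never actually derived. Checking against Table~\ref{table-values-of-e2} is a sanity check, not a substitute for this missing combinatorial core.
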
    
\begin{conjecture}
    For all $n \geq 5$
    and
    for all $k$ with $1 \leq k \leq n-1$,
    \[
    \mobp{O (2n+1, k)} 
    = 
    2n.  
    \]
\end{conjecture}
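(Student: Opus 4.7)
The plan is to combine a symmetry reduction, pruning by known Möbius zeros, and a careful chain-level analysis. My first step would be to verify the reverse-complement symmetry $\mathrm{RC}(O(2n+1,k)) = O(2n+1, n-k+1)$: I have checked this for $n=5$, where applying reverse then complement to $O(11,2) = 4,11,5,10,6,3,7,2,8,1,9$ yields $3,11,4,10,5,9,6,2,7,1,8 = O(11,3)$. If this symmetry holds in general it immediately gives $\mobp{O(2n+1,k)} = \mobp{O(2n+1, n-k+1)}$. To obtain the stronger claim that $\mobp{O(2n+1,k)}$ does not depend on $k$ at all, I would construct an explicit parity-preserving bijection between the contributing chains of $[1, O(2n+1,k)]$ and $[1, O(2n+1, k+1)]$, mapping the pivot of a chain (in the sense of Chapter~\ref{chapter_2413_balloon_paper}) to a corresponding pivot in the other poset.

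Next I would prune the poset $[1, O(2n+1,k)]$ using the annihilator results from Chapter~\ref{chapter_oppadj_paper}: any sub-permutation containing opposing adjacencies or an interval of the form $\alpha \oplus 1 \oplus \beta$ is a Möbius zero and, by Fact~\ref{fac-del}, can be deleted without changing $\mobp{O(2n+1,k)}$. My hope is that the surviving contributing set is small and highly structured, in analogy with the increasing-oscillation case of Chapter~\ref{chapter_incosc_paper}, where the contributing set consisted only of permutations of the form $\familysum{\sumra}$ for specific shapes $\alpha$.

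For the actual computation, the most promising route is Smith's normal-embedding formula \eqref{equation_smith_all_intervals}: since $\order{O(2n+1,k)} - 1 = 2n$ is even, we have
\[
\mobp{O(2n+1,k)} = \NE(1, O(2n+1,k)) + \sum_{\lambda \in [1,O(2n+1,k))} \mobp{\lambda} \sum_{S \in EZ^{\lambda,O(2n+1,k)}} (-1)^{\order{S}}.
\]
Since the target value $2n$ equals $\order{O(2n+1,k)} - 1$, the natural plan is to show that exactly $2n$ of the $2n+1$ singleton embeddings are normal in Smith's sense and that the correction term vanishes; the single ``missing'' normal embedding should correspond to the middle position, where $O(2n+1,k)$ takes the value $n+1$. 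After the pruning step, most $\lambda$ in the sum have $\mobp{\lambda}=0$, so the correction term should reduce to a manageable expression. As a fallback, I would use a direct parity-reversing involution on the chains of $[1, O(2n+1,k)]$, modelled on the proofs of Theorems~\ref{theorem-2413-balloon-beta-is-a-balloon} and~\ref{theorem-pmf-balloon-permutations}, leaving exactly $2n$ surplus chains.

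The main obstacle, I expect, will be controlling the correction term in Smith's formula—equivalently, proving that the parity-reversing involution genuinely leaves a surplus of size exactly $2n$. Because $O(2n+1,k)$ is simple, there is no natural inflation structure on which to build an involution, in contrast to the balloon and decomposable settings of earlier chapters. A successful attack will likely hinge on isolating one distinguished point of $O(2n+1,k)$—probably the middle element of value $n+1$—whose role relative to any sub-permutation determines whether that sub-permutation contributes and with what sign.
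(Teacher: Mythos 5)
You should be aware that the statement you set out to prove is not proved in the thesis at all: it is offered only as a conjecture, supported by machine computation of $\mobp{O(2n+1,k)}$ for $n=3,\ldots,15$ and all valid $k$ (Table~\ref{table-values-of-o} in Appendix~\ref{chapter_appendix-a}). There is therefore no proof in the paper to compare your argument against, and your proposal, as written, does not close that gap: it is a research plan whose decisive steps are exactly the ones left open. The symmetry $\mathrm{RC}(O(2n+1,k)) = O(2n+1,n-k+1)$ is plausible and easy to verify in general, but it only halves the range of $k$; the ``parity-preserving bijection between contributing chains of $[1,O(2n+1,k)]$ and $[1,O(2n+1,k+1)]$'' that would give $k$-independence is asserted, not constructed, and since $O(2n+1,k)$ is simple there is no inflation, decomposition or balloon structure of the kind that made the pivot-based involutions of Chapters~\ref{chapter_2413_balloon_paper} and~\ref{chapter_balloon_permutations_preprint} work.

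The same problem affects your main computation. In Smith's formula (Equation~\ref{equation_smith_all_intervals}) the claim that exactly $2n$ of the $2n+1$ singleton embeddings are normal is not something you can read off from the target value $2n$; Smith's normality is a specific structural condition, and you would have to verify it directly for $O(2n+1,k)$, not infer it from the answer you want. More seriously, the vanishing of the correction term $\sum_{\lambda}\mobp{\lambda}\sum_{S\in EZ^{\lambda,\pi}}(-1)^{\order{S}}$ is precisely the hard part: as Chapter~\ref{chapter_background_and_history} notes, the sets $EZ^{\lambda,\pi}$ are typically non-empty, so vanishing would have to come from cancellation across all $\lambda$ with $\mobp{\lambda}\neq 0$, and the annihilator pruning of Chapter~\ref{chapter_oppadj_paper} (Fact~\ref{fac-del} plus opposing adjacencies and $\alpha\oplus 1\oplus\beta$ intervals) does not obviously leave a contributing set as rigid as the one for increasing oscillations in Chapter~\ref{chapter_incosc_paper}, because the indecomposable subpermutations of $O(2n+1,k)$ are not confined to a short list of shapes. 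Until you either construct the parity-reversing involution with an explicitly counted surplus of $2n$ chains, or prove the cancellation in Smith's correction term, the proposal establishes nothing beyond what the thesis already records as numerical evidence.
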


We remark that these
permutations do exhibit a significant amount of structure,
and we suggest that finding an expression
for the principal \mob function of 
other, less-structured,
simple permutations 
will be difficult.

\section{Further research into global properties of the poset}
The second area for future research
is to examine global attributes of the 
permutation pattern poset.  
Our results for the 
proportion of permutations that
are \mob zeros in Chapter~\ref{chapter_oppadj_paper}
based on~\cite{Brignall2020},
and our result on the growth of $\mobmaxn$
in Chapter~\ref{chapter_2413_balloon_paper}
based on~\cite{Marchant2020} are examples of 
existing results.

\subsection{\texorpdfstring{%
        Extremal values of $\mobp{\pi}$ as a function of the length of the permutations}{%
        Extremal values of the principal \mob function as a function of the length of the permutations}}

We think that further examination of the behaviour of $\mobmaxn$
is one interesting area for research.  
We could also define
$\truemobmax(n) = \max \{ \mobp{\pi} : \order{\pi} = n \}$,
and
$\truemobmin(n) = \min \{ \mobp{\pi} : \order{\pi} = n \}$,
and then ask how these two functions behave
as functions of $n$.
Trivially, we have that, for $n > 4$,
$\truemobmax(n+1) \geq - \truemobmin(n)$
and
$\truemobmin(n+1) \leq - \truemobmax(n)$.
Table~\ref{table_values_of_true_mob_min_max} shows the first
thirteen values of these functions.
\begin{table}
    \[
    \begin{array}{lrr}
    \toprule
    n & \truemobmin(n) & \truemobmax(n) 
    \\
    \midrule
    1  &     1 &  1 \\
    2  &   -1 &  -1 \\
    3  &    0 &   1 \\
    4  &   -3 &   0 \\
    5  &    0 &   6 \\
    6  &  -11 &   1 \\
    7  &   -2 &  15 \\
    8  &  -27 &  14 \\
    9  &  -50 &  39 \\
    10 &  -58 &  55 \\
    11 &  -81 & 143 \\ 
    12 & -261 & 183 \\
    13 & -330 & 261 \\
    \bottomrule
    \end{array}
    \]
    \caption{Values of $\truemobmax(n)$ and $\truemobmin(n)$ for $n=1, \ldots, 13$.}
    \label{table_values_of_true_mob_min_max}
\end{table}

\subsection{\texorpdfstring{%
        Characterising permutations where $\mobp{\pi}$ achieves extreme values}{
        Characterising permutations where the \mob  function achieves extreme values}}
    
The functions discussed above, 
$\mobmaxn$, 
$\truemobmax(n)$, and
$\truemobmin(n)$,
concern themselves with the minimum or maximum values 
of the principal \mob function.
We could also ask for the characteristics
of the permutations that achieve the
minimum or maximum values.
This question was, of course,
first asked implicitly 
in
Burstein, 
Jel{\'{i}}nek, 
Jel{\'{i}}nkov{\'{a}} and 
Steingr{\'{i}}msson~\cite{Burstein2011},
where there is a remark that
up to length 11,
if $\pi$ is a permutation where
$\order{\mobp{\pi}} = \mobmaxn$,
then $\pi$ is simple, and, up to symmetry, unique.
We remark here that our own intuition is that 
if $\order{\mobp{\pi}} = \mobmaxn$,
then $\pi$ is likely to be simple,
but we suspect, for sufficiently large
lengths, there may be more than one 
canonical permutation that meets the equality.

A slightly different approach would be to consider
the set of canonical permutations that achieve
$\truemobmax(n)$ or
$\truemobmin(n)$.
We have results up to length 13,
and these are shown in 
Table~\ref{table_canonical_min_max}
in Appendix~\ref{chapter_appendix-b}.

We remark that, as with many aspects of the
permutation pattern poset, results for small permutations
are atypical, and, in our opinion, the
entries in the table for $n \leq 7$
certainly fall into the atypical category.

We can see that $\truemobmax(13)$ is attained by
three permutations.  
If we set 
\[\theta = 4,7,2,10,5,1,12,8,3,11,6,9\]
then the first two permutations listed achieving
$\truemobmax(13)$ are
$1 \oplus \theta$, and 
$1 \oplus ((\theta)^R)^{-1}$
respectively.
With this exception, all of the permutations
that achieve $\truemobmin(n)$ or $\truemobmax(n)$
for $n  > 7$ are simple.

We suggest that,
for lengths greater than 7,
the permutations that achieve
$\truemobmax(n)$ are either simple,
or they are a sum (either direct or skew) 
of 1 and a permutation that achieves $\truemobmin(n-1)$.
Conversely,
the permutations that achieve
$\truemobmin(n)$ are either simple,
or they are a sum  
of 1 and a permutation that achieves $\truemobmax(n-1)$.

    \appendix
    
    \chapter{\texorpdfstring{Values of $\mobp{\pi}$ where $\pi$ is a specific simple permutation}
    {Values of mu(pi) where pi is a specific simple permutation}}
\label{chapter_appendix-a}

\section{\texorpdfstring
    {Values of $\mobp{W_1(n)}$}
    {Values of the principal \mob function of W1(n)}}

\begin{table}[ht!]
    \begin{center}
        \begin{tabular}{ccccc}
            \begin{tabular}{lr}
                \toprule
                $n$ & $\mobp{W_1(n)}$  \\
                \midrule
                4  &  -3 \\    
                5  &   6 \\
                6  &  -9 \\
                7  &  12 \\
                8  & -15 \\
                9  &  18 \\
                10 & -21 \\
                11 &  24 \\
                12 & -27 \\
                \bottomrule
            \end{tabular}    
            &
            \phantom{xxx}
            &
            \begin{tabular}{lr}
                \toprule
                $n$ & $\mobp{W_1(n)}$  \\
                \midrule
                13 &  30 \\
                14 & -33 \\
                15 &  36 \\
                16 & -39 \\
                17 &  42 \\    
                18 & -45 \\
                19 &  48 \\
                20 & -51 \\
                21 &  54 \\
                \bottomrule
            \end{tabular}    
            &
            \phantom{xxx}        
            &
            \begin{tabular}{lr}
                \toprule
                $n$ & $\mobp{W_1(n)}$  \\
                \midrule
                22 & -57 \\
                23 &  60 \\
                24 & -63 \\
                25 &  66 \\
                26 & -69 \\
                27 &  72 \\
                28 & -75 \\
                29 &  78 \\
                30 & -81 \\
                \bottomrule
            \end{tabular}    
        \end{tabular}
        \caption{Values of $\mobp{W_1(n)}$ for $n = 4, \ldots, 30$.}
        \label{table-values-of-w1}   
    \end{center}
\end{table}

\section{\texorpdfstring
    {Values of $\mobp{W_2(n)}$}
    {Values of the principal \mob function of W2(n)}}

\begin{table}[ht!]
    \begin{center}
        \begin{tabular}{ccccc}
            \begin{tabular}{lr}
                \toprule
                $n$ & $\mobp{W_2(n)}$  \\
                \midrule
                4  &  -3 \\    
                5  &   4 \\
                6  &  -5 \\
                7  &   6 \\
                8  &  -7 \\
                9  &   8 \\
                10 &  -9 \\
                11 &  10 \\
                12 & -11 \\
                \bottomrule
            \end{tabular}    
            &
            \phantom{xxx}
            &
            \begin{tabular}{lr}
                \toprule
                $n$ & $\mobp{W_2(n)}$  \\
                \midrule
                13 &  12 \\
                14 & -13 \\
                15 &  14 \\
                16 & -15 \\
                17 &  16 \\    
                18 & -17 \\
                19 &  18 \\
                20 & -19 \\
                21 &  20 \\
                \bottomrule
            \end{tabular}    
            &
            \phantom{xxx}
            &
            \begin{tabular}{lr}
                \toprule
                $n$ & $\mobp{W_2(n)}$  \\
                \midrule
                22 & -21 \\
                23 &  22 \\
                24 & -23 \\
                25 &  24 \\
                26 & -25 \\
                27 &  26 \\
                28 & -27 \\
                29 &  28 \\
                30 & -29 \\
                \bottomrule
            \end{tabular}    
        \end{tabular}
        \caption{Values of $\mobp{W_2(n)}$ for $n = 4, \ldots, 30$.}
        \label{table-values-of-w2}   
    \end{center}
\end{table}

\section{\texorpdfstring
    {Values of $\mobp{E_1(2n,k)}$}
    {Values of the principal \mob function of E1(2n,k)}}

\begin{table}[ht!]
    \ContinuedFloat*
    \begin{center}
        \begin{tabular}{ccc}
            \begin{tabular}{llr}
                \toprule
                $n$ & $k$ & $\mobp{E_1(2n,k)}$  \\
                \midrule
                3  &  1 &  -5 \\    
                &    &     \\                
                4  &  1 &  -7 \\    
                4  &  2 &  -8 \\    
                &    &     \\    
                5  &  1 &  -9 \\    
                5  &  2 & -10 \\    
                5  &  3 & -12 \\    
                &    &     \\    
                6  &  1 & -11 \\    
                6  &  2 & -12 \\    
                6  &  3 & -14 \\    
                6  &  4 & -17 \\    
                &    &     \\    
                7  &  1 & -13 \\    
                7  &  2 & -14 \\    
                7  &  3 & -16 \\    
                7  &  4 & -19 \\    
                7  &  5 & -23 \\    
                &    &     \\    
                &    &     \\    
                &    &     \\    
                &    &     \\    
                \bottomrule
            \end{tabular}    
            &
            \begin{tabular}{llr}
                \toprule
                $n$ & $k$ & $\mobp{E_1(2n,k)}$  \\
                \midrule                        
                8  &  1 & -25 \\    
                8  &  2 & -16 \\    
                8  &  3 & -18 \\    
                8  &  4 & -21 \\    
                8  &  5 & -25 \\    
                8  &  6 & -30 \\    
                &    &     \\    
                9  &  1 & -17 \\    
                9  &  2 & -18 \\    
                9  &  3 & -20 \\    
                9  &  4 & -23 \\    
                9  &  5 & -27 \\    
                9  &  6 & -32 \\    
                9  &  7 & -38 \\    
                &    &     \\    
                10 &  1 & -19 \\    
                10 &  2 & -20 \\    
                10 &  3 & -22 \\    
                10 &  4 & -25 \\    
                10 &  5 & -29 \\    
                10 &  6 & -34 \\    
                10 &  7 & -40 \\    
                10 &  8 & -47 \\    
                \bottomrule
            \end{tabular}    
            &
            \begin{tabular}{llr}
                \toprule
                $n$ & $k$ & $\mobp{E_1(2n,k)}$  \\
                \midrule            
                11 &  1 & -21 \\    
                11 &  2 & -22 \\    
                11 &  3 & -24 \\    
                11 &  4 & -27 \\    
                11 &  5 & -31 \\    
                11 &  6 & -36 \\    
                11 &  7 & -42 \\    
                11 &  8 & -49 \\    
                11 &  9 & -57 \\    
                &    &     \\    
                12 &  1 & -23 \\    
                12 &  2 & -24 \\    
                12 &  3 & -26 \\    
                12 &  4 & -29 \\    
                12 &  5 & -33 \\    
                12 &  6 & -38 \\    
                12 &  7 & -44 \\    
                12 &  8 & -51 \\    
                12 &  9 & -59 \\    
                12 & 10 & -68 \\  
                &    &     \\    
                &    &     \\    
                &    &     \\    
                \bottomrule
            \end{tabular}    
        \end{tabular}
        \caption{Values of $\mobp{E_1(2n,k)}$ for $n = 3, \ldots, 12$, and all valid values of $k$.}
        \label{table-values-of-e1-3-12}   
    \end{center}
\end{table}    
\begin{table}[ht!]    
    \ContinuedFloat
    \begin{center}
        \begin{tabular}{ccc}
            \begin{tabular}{llr}
                \toprule
                $n$ & $k$ & $\mobp{E_1(2n,k)}$  \\
                \midrule   
                13 &  1 & -25 \\    
                13 &  2 & -26 \\    
                13 &  3 & -28 \\    
                13 &  4 & -31 \\    
                13 &  5 & -35 \\    
                13 &  6 & -40 \\    
                13 &  7 & -46 \\    
                13 &  8 & -53 \\    
                13 &  9 & -61 \\    
                13 & 10 & -70 \\    
                13 & 11 & -80 \\    
                &    &     \\    
                &    &     \\                
                \bottomrule
            \end{tabular}    
            &
            \begin{tabular}{llr}
                \toprule
                $n$ & $k$ & $\mobp{E_1(2n,k)}$  \\
                \midrule            
                14 &  1 & -27 \\    
                14 &  2 & -28 \\    
                14 &  3 & -30 \\    
                14 &  4 & -33 \\    
                14 &  5 & -37 \\    
                14 &  6 & -42 \\    
                14 &  7 & -48 \\    
                14 &  8 & -55 \\    
                14 &  9 & -63 \\    
                14 & 10 & -72 \\    
                14 & 11 & -82 \\    
                14 & 12 & -93 \\   
                &    &     \\                
                \bottomrule
            \end{tabular}    
            &
            \begin{tabular}{llr}
                \toprule
                $n$ & $k$ & $\mobp{E_1(2n,k)}$  \\
                \midrule            
                15 &  1 & -29 \\    
                15 &  2 & -30 \\    
                15 &  3 & -32 \\    
                15 &  4 & -35 \\    
                15 &  5 & -39 \\    
                15 &  6 & -44 \\    
                15 &  7 & -50 \\    
                15 &  8 & -57 \\    
                15 &  9 & -65 \\    
                15 & 10 & -74 \\    
                15 & 11 & -84 \\    
                15 & 12 & -95 \\    
                15 & 13 &-107 \\    
                \bottomrule
            \end{tabular}    
        \end{tabular}
        \caption{Values of $\mobp{E_1(2n,k)}$ for $n = 13, 14, 15$, and all valid values of $k$.}
    \end{center}
\end{table}

\section{\texorpdfstring
    {Values of $\mobp{E_2(2n)}$}
    {Values of the principal \mob function of E2(2n)}}
    
\begin{table}[ht!]
    \begin{center}
        \begin{tabular}{ccccc}
            \begin{tabular}{lr}
                \toprule
                $n$ & $\mobp{E_2(2n)}$  \\
                \midrule
                3  &  -5 \\    
                4  &  -8 \\    
                5  & -12 \\
                6  & -17 \\
                7  & -23 \\
                \bottomrule
            \end{tabular}    
            &
            \phantom{xxx}
            &
            \begin{tabular}{lr}
                \toprule
                $n$ & $\mobp{E_2(2n)}$  \\
                \midrule
                8  & -30 \\
                9  & -38 \\
                10 & -47 \\
                11 & -57 \\
                12 & -68 \\
                \bottomrule
            \end{tabular}    
            &
            \phantom{xxx}
            &
            \begin{tabular}{lr}
                \toprule
                $n$ & $\mobp{E_2(2n)}$  \\
                \midrule
                13 & -80 \\
                14 & -93 \\
                15 &-107 \\
                & \\
                & \\
                \bottomrule
            \end{tabular}    
        \end{tabular}
        \caption{Values of $\mobp{E_2(2n)}$ for $n = 3, \ldots, 15$.}
        \label{table-values-of-e2}   
    \end{center}
\end{table}

\section{\texorpdfstring
    {Values of $\mobp{O(2n+1,k)}$}
    {Values of the principal \mob function of O(2n+1,k)}}

\begin{table}[ht!]
    \begin{center}
        \begin{tabular}{ccc}
            \begin{tabular}{llr}
                \toprule
                $n$ & $k$ & $\mobp{O(2n+1,k)}$  \\
                \midrule
                3  & $1, 2$          &   6 \\    
                4  & $1,2,3$         &   8 \\    
                5  & $1, \ldots,  4$ &  10 \\
                6  & $1, \ldots,  5$ &  12 \\
                7  & $1, \ldots,  6$ &  14 \\
                8  & $1, \ldots,  7$ &  16 \\
                9  & $1, \ldots,  8$ &  18 \\
                \bottomrule
            \end{tabular}    
            &
            \phantom{xxx}
            &
            \begin{tabular}{llr}
                \toprule
                $n$ & $k$ & $\mobp{O(2n+1,k}$  \\
                \midrule
                10 & $1, \ldots,  9$ &  20 \\
                11 & $1, \ldots, 10$ &  22 \\
                12 & $1, \ldots, 11$ &  24 \\
                13 & $1, \ldots, 12$ &  26 \\
                14 & $1, \ldots, 13$ &  28 \\
                15 & $1, \ldots, 14$ &  30 \\
                &                 &     \\
                \bottomrule
            \end{tabular}    
        \end{tabular}
        \caption{Values of $\mobp{O(2n+1,k)}$ for $n = 3, \ldots, 15$, and all valid values of $k$.}
        \label{table-values-of-o}   
    \end{center}
\end{table}

    \chapter{\texorpdfstring
    {Canonical permutations that achieve $\truemobmax(n)$ or $\truemobmin(n)$}
    {Canonical permutations that achieve MaxMu(n) or MinMu(n)}}
\label{chapter_appendix-b}


\begin{table}[ht!]
    \[
    \begin{array}{lrr}
    \toprule
    n & 
    \mobp{\pi} = \truemobmin(n) & 
    \mobp{\pi} = \truemobmax(n) \\
    \midrule
    1 & 
    \mathbf{1} & 
    \mathbf{1} 
    \\ 
    2 & 
    \mathbf{12} & 
    \mathbf{12} 
    \\ 
    3 & 
    123 & 
    132 
    \\ 
    4 & 
    \mathbf{2413} & 
    1234; \;1243; \;1432 
    \\ 
    \multirow{3}{*}{5 \Bigg\{ } & 
    12345; \;12354; \;12435; \;12453; &  \\ &
    12543; \;13452; \;14325; \;14532; & \mathbf{24153} \\ &
    15432; \;21354; \;21453; \;21543 \phantom{;} & 
    \\ 
    6 & 
    \mathbf{351624} &
    231564 
    \\
    7 & 
    2547163; \;3416725 & 
    \mathbf{2461735} 
    \\
    8 & 
    \mathbf{35172846} & 
    \mathbf{36184725} 
    \\
    9 & 
    \mathbf{472951836} & 
    \mathbf{357182946} 
    \\
    10 & 
    \mathbf{4{,}6{,}8{,}1{,}9{,}2{,}10{,}3{,}5{,}7} & 
    \mathbf{4{,}7{,}9{,}1{,}10{,}6{,}2{,}8{,}3{,}5} 
    \\
    11 & 
    \mathbf{3{,}5{,}8{,}10{,}1{,}7{,}11{,}2{,}9{,}4{,}6} & \mathbf{3{,}6{,}1{,}9{,}4{,}11{,}7{,}2{,}10{,}5{,}8} 
    \\
    12 & 
    \mathbf{4{,}7{,}2{,}10{,}5{,}1{,}12{,}8{,}3{,}11{,}6{,}9} & \mathbf{5{,}10{,}2{,}7{,}12{,}4{,}9{,}1{,}6{,}11{,}318} 
    \\
    \multirow{3}{*}{13 \Bigg\{ } && 
    1{,}5{,}8{,}3{,}11{,}6{,}2{,}13{,}9{,}4{,}12{,}7{,}10 ; \\ &
    \mathbf{6{,}2{,}9{,}4{,}11{,}1{,}7{,}13{,}3{,}10{,}5{,}12{,}8} & 1{,}8{,}11{,}5{,}13{,}9{,}3{,}12{,}6{,}2{,}10{,}4{,}7 ; \\ &&
    \mathbf{4{,}7{,}2{,}10{,}5{,}13{,}1{,}12{,}8{,}3{,}11{,}6{,}9}  \\
    \bottomrule
    \end{array}
    \]
    \caption{Canonical permutations of length $n$ where the principal \mob function has a minimum / maximum value.  Simple permutations are highlighted.}
    \label{table_canonical_min_max}
\end{table}

    \backmatter

    \cleardoublepage
    \phantomsection
    \addcontentsline{toc}{chapter}{Bibliography}
    \bibliographystyle{dwmplain}

    \printindex       
\end{document}